\documentclass[11pt]{amsart}
\baselineskip=7.0mm
\usepackage{amsmath}
\usepackage{comment}

\usepackage{tikz}

\theoremstyle{plain}
\newtheorem{thm}{Theorem}[section]
\newtheorem{lem}[thm]{Lemma}
\newtheorem{prop}[thm]{Proposition}
\newtheorem{cor}[thm]{Corollary}

\theoremstyle{definition}

\newtheorem{rem}[thm]{Remark}

\numberwithin{equation}{section}


\setlength{\textwidth}{6.3in} \setlength{\textheight}{8.2in}
\hoffset=-0.61truein

\newcommand{\lda}{\lambda}
\newcommand{\om}{\Omega}                \newcommand{\pa}{\partial}
\newcommand{\va}{\varepsilon}            \newcommand{\ud}{\mathrm{d}}
\newcommand{\be}{\begin{equation}}      \newcommand{\ee}{\end{equation}}
\newcommand{\w}{\omega}                 
\newcommand{\Lda}{\Lambda}              \newcommand{\B}{\mathcal{B}}

\begin{document}

\title[Asymptotic Analysis of Singular Harmonic Maps]
{Asymptotic Analysis of Harmonic Maps With Prescribed Singularities}

\author[Han]{Qing Han}
\address{Department of Mathematics\\
University of Notre Dame\\
Notre Dame, IN 46556, USA} \email{qhan@nd.edu}

\author[Khuri]{Marcus Khuri}
\address{Department of Mathematics\\
Stony Brook University\\
Stony Brook, NY 11794, USA}
\email{marcus.khuri@stonybrook.edu}

\author[Weinstein]{Gilbert Weinstein}
\address{Department of Mathematics and Department of Physics\\
Ariel University\\
Ariel, 40700, Israel}
\email{gilbertw@ariel.ac.il}

\author[Xiong]{Jingang Xiong}
\address{School of Mathematical Sciences\\
Laboratory of Mathematics and Complex Systems, MOE\\
Beijing Normal University\\
Beijing 100875, China}
\email{jx@bnu.edu.cn}

\thanks{Q. Han acknowledges the support of NSF Grant DMS-2305038. M. Khuri acknowledges the support of NSF Grants DMS-2104229, DMS-2405045, and Simons Foundation Fellowship 681443. J. Xiong acknowledges the partial support of NSFC Grants 11922104 and 11631002.}


\begin{abstract}
This is the first in a series of two papers to establish the mass-angular momentum inequality for multiple black holes.
We study singular harmonic maps from domains of 3-dimensional Euclidean space to the hyperbolic plane having bounded hyperbolic distance to extreme Kerr harmonic maps. We prove that every such harmonic map admits a unique tangent harmonic map at the extreme black hole horizon. The possible tangent maps are classified and shown to be shifted `extreme Kerr' geodesics in the hyperbolic plane that depend on two parameters, one determined by angular momentum and another by conical singularities. In addition, rates of convergence to the tangent map are established. Similarly, 
expansions in the asymptotically flat end are presented. 
These results, together with those of Li-Tian \cite{LT,LT93} and Weinstein \cite{Wei90,Wei92}, provide a complete regularity theory for harmonic maps from $\mathbb R^3\setminus z\text{-axis}$ to $\mathbb  H^2$ with these prescribed singularities. The analysis is additionally utilized to prove existence of the so called near horizon limit, and to compute the associated near horizon geometries of extreme black holes. 
\end{abstract}




\maketitle

\section{Introduction}


Stationary vacuum black hole solutions play a fundamental role in general relativity. In particular, they are expected to serve as the final state of gravitational collapse for isolated systems. They also saturate fundamental geometric inequalities involving the ADM mass, such as the Penrose inequality and the mass-angular momentum inequality. It is therefore of paramount importance to classify these solutions and understand their properties.  
In this regard, the classical black hole uniqueness `no hair' conjecture asserts that the only 4-dimensional asymptotically flat solution of this type is the Kerr black hole. An important milestone in this context is the Hawking rigidity theorem
\cite{HawkingEllis}, which guarantees that a stationary vacuum spacetime possesses a rotational Killing field making it axisymmetric, under the assumption of analyticity. More recent work by 
Alexakis, Ionescu, and Klainerman \cite{AlexakisIonescuKlainerman1,AlexakisIonescuKlainerman2,AlexakisIonescuKlainerman3}
has made significant progress towards removing the analyticity hypothesis. 
It is with the axisymmetry condition that the harmonic maps of interest make an entrance to this topic. In particular, the original uniqueness result of Robinson \cite{Ro} for a connected nondegenerate horizon relied on a divergence identity, which through the observations of Bunting \cite{Bunting} and Mazur \cite{Mazur} could be explained from the point of view of a harmonic map structure. In fact, it was earlier shown by Ernst \cite{Er} and Carter \cite{Carter} that the axisymmetric stationary vacuum Einstein equations reduce to an axisymmetric singular harmonic map system from Euclidean 3-space $\mathbb{R}^3$ to the hyperbolic plane $\mathbb{H}^2$.  See the article of Chru\'{s}ciel-Costa \cite{CC} (as well as \cite{ChruscielCostaHeusler}) for further details on the uniqueness result for a single black hole.

The purpose of the present work is to establish precise regularity estimates and asymptotics for singular harmonic maps arising from stationary vacuum spacetimes with degenerate horizons. The primary aim is to use these estimates to prove in a companion paper 
the conjectured mass-angular momentum inequality for multiple black holes. 
We shall analyze solutions in all relevant regimes, namely in neighborhoods of the punctures, and in the asymptotically flat end. Since the analysis in neighborhoods of axis points and poles have been previously treated by Li-Tian \cite{LT,LT93} and Weinstein \cite{Wei92}, this fills-in the last gap in the study of the regularity of this type of harmonic map. The notion of a puncture refers to the regime associated with a degenerate horizon (extreme black hole), while a pole represents the intersection of an axis rod with a nondegenerate horizon.  In particular, we classify tangent maps at the punctures and establish rates of convergence, while precise asymptotic expansions are obtained in the asymptotically flat end. As mentioned, the analysis at punctures is primarily motivated by the mass-angular momentum inequality studied by Chru\'{s}ciel-Li-Weinstein \cite{CLW}, Dain \cite{Dain}, and Schoen-Zhou \cite{SchoenZhou}; see \cite{KhuriWeinstein} for the charged case. This Penrose-type inequality is tied to the grand cosmic censorship conjecture \cite{Penrose}, and is open for the case of multiple horizons. In connection with this problem, Chru\'{s}ciel-Li-Weinstein 
\cite[Remark 2.2]{CLW} stated that it is of interest to investigate and determine the regularity of solutions in the neighborhood of punctures. As further consequences of our results, we establish existence of the so called near horizon limit and classify the associated near horizon geometries of extreme horizon punctures, proving that they depend on two parameters. One parameter represents the angular momentum of the black hole, while the other is shown to be given explicitly in terms of the change in angle defects between adjacent axis rods.

\subsection*{Acknowledgements}
The authors would like to thank the American Institute of Mathematics for its hospitality.

\section{Background and Statement of Results}
\label{sec2}

Any solution of the asymptotically flat axisymmetric stationary vacuum Einstein equations may be obtained by specifying a harmonic map from $\mathbb{R}^3\setminus\Gamma$ to $\mathbb{H}^2$ with prescribed singularities along $\Gamma$, where $\Gamma$ denotes the $z$-axis in $\mathbb{R}^3$ and $\mathbb{H}^2$ represents the hyperbolic plane of curvature $-2$. The Dirichlet energy density of the map in question takes the form
\[
e(u,v)= |\nabla u|^2+e^{4u} |\nabla v|^2,
\]
and the corresponding harmonic map equations are given by 
\begin{align}\label{eq-HM}\begin{split}
\Delta u-2e^{4u}|\nabla v|^2&=0,\\ \Delta v+4\nabla u\cdot\nabla v&=0.
\end{split}\end{align}
The Laplacian $\Delta$ is with respect to the flat metric on $\mathbb{R}^3$ parameterized in cylindrical coordinates $(\rho,z,\phi)$, while the hyperbolic metric is parameterized in horospherical coordinates
\begin{equation*}
g_{\mathbb{R}^3}=d\rho^2 +dz^2 +\rho^2 d\phi^2,\quad g_{\mathbb{H}^2}=du^2 +e^{4u}dv^2.
\end{equation*}
Due to the nature of the singularities on $\Gamma$, the total Dirichlet energy of these harmonic maps is always infinite.

A special role in our analysis will be played by the \textit{extreme Kerr harmonic map}. This map may be expressed in polar coordinates $(r,\theta)$, with $\theta\in[0,\pi]$ and such that $\rho=r\sin\theta$, $z=r\cos\theta$, by
\begin{align} \label{eq:kerr-solution}
\begin{split}
u_K&= -\ln \sin \theta-\frac12 \ln \Big((r+m)^2 +m^2 +\frac{2m^3  (r+m) \sin ^2 \theta}{\Sigma} \Big),\\
v_K&=m^2 \cos \theta (3-\cos^2 \theta ) +\frac{m^4 \sin^4 \theta \cos \theta}{\Sigma} ,
\end{split}
\end{align} 
where $m>0$ is a parameter representing mass (or equivalently angular momentum divided by mass) and 
$\Sigma= (r+m)^2 +m^2 \cos^2 \theta$.
Notice that since $\sin \theta =0$ on $\Gamma$ the function $u_K$ is singular there, however 
\[
u_K+\ln \sin \theta \quad \text{ is smooth on }\mathbb{R}^3.
\]
Moreover, $v_K$ has a jump discontinuity at the origin, and in fact 
\begin{equation} \label{eq:trace-v-k}
v_K\big|_{\Gamma_+}= 2m^2, \quad v_K \big|_{\Gamma_-}=- 2m^2, 
\end{equation}
where $\Gamma_{+}=\Gamma\cap \{z>0\}$ and $\Gamma_{-}=\Gamma\cap \{z<0\}$. 
Although the Dirichlet energy of the extreme Kerr harmonic map is infinite, it has a finite renormalized energy where the \emph{renormalized energy density} of a map $(u,v)$ is defined as
\[
e'(U,v)=|\nabla U|^2+e^{4u} |\nabla v|^2, 
\]
where $U= u+\ln (r\sin\theta)$. This type of behavior exhibited by the extreme Kerr map will serve as a model for general solutions. Furthermore, the multi-extreme black hole family of axially symmetric harmonic maps constructed by Chru\'{s}ciel-Li-Weinstein \cite{CLW} have bounded $\mathbb{H}^2$-distance to extreme Kerr harmonic maps near each {\it puncture}, the point on $\Gamma$ where the $v$-component has a jump discontinuity.

The first result concerns the asymptotics of harmonic maps, possessing prescribed singularities modeled on extreme Kerr, near punctures. Without assuming axisymmetry, it is shown that a tangent map exists and rates of convergence to the tangent map are provided. In what follows, $B_1$ will denote the ball of unit radius centered at the origin in $\mathbb{R}^3$, while $\mathbb{S}^2$ denotes the unit sphere parameterized by the angles $(\theta,\phi)$ with $N$ and $S$ representing the north and south poles, $\nabla_{\mathbb{S}^2}$ indicates covariant differentiation on the sphere, and $H^1$ will be the Sobolev space of square integrable functions with square integrable derivatives.

\begin{thm} \label{thm:main-1} 
Let $(u,v) \in H_{\mathrm{loc}}^{1}(B_1\setminus \Gamma)$ be a harmonic map 
from $B_1\setminus \Gamma$ to  $\mathbb{H}^2$. Suppose that 
\begin{align*}
|u+\ln \sin\theta |&\le \Lda  \quad \text{in } B_1\setminus \Gamma,\\
e'(u+\ln (r\sin \theta), v)&\in L^1_{\mathrm{loc}} (B_1\setminus \{0\}),
\end{align*}
with
\begin{equation}\label{oinhiqoinh}
v=a\text{ on }B_1\cap \Gamma_+ \quad\text{ and }\quad v=-a\text{ on }B_1\cap\Gamma_- \quad\text{ in the trace sense},
\end{equation}
for some positive constants $\Lda$ and $a$. 
Then $( u+\ln \sin\theta, v) \in C^{3,\alpha}(B_1\setminus \{0\})$ for any $\alpha\in(0,1)$,
and there exists a harmonic map $(\bar u, \bar v)$ 
from $\mathbb{S}^2 \setminus \{N,S\} \to \mathbb{H}^2 $ satisfying 
\begin{equation*}
(\bar u+\ln \sin\theta, \bar v) \in C^{3,\alpha}(\mathbb{S}^2) , \qquad \bar v(N)=a, \qquad \bar v(S)= -a,
\end{equation*}
such that 
\begin{equation}\label{qaohoihqhq}
\max_{\mathbb{S}^2}\left(|(r\partial_r)^l \nabla_{\mathbb{S}^2}^k (u(r)-\bar{u})|
+e^{(3+\alpha-k)\bar{u}}|(r\partial_r)^l \nabla_{\mathbb{S}^2}^k (v(r)-\bar{v})|\right)
\le C r^{\beta},
\end{equation}
for $l+k\leq 3$, where $C$ and $\beta$ are positive constants
and $u(r)=u(r,\cdot)$, $v(r)=v(r,\cdot)$.
\end{thm}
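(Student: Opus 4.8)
The plan is to treat the harmonic map equations \eqref{eq-HM} as an elliptic system on the half-open cone $B_1\setminus\{0\}$ and to exploit the conformal equivalence between $\R^3\setminus\{0\}$ and the cylinder $\R\times\Sn$ via $r=e^{-t}$. First I would establish the claimed interior regularity $(u+\ln\sin\theta, v)\in C^{3,\alpha}(B_1\setminus\{0\})$ by a bootstrap argument: away from $\Gamma$ standard elliptic theory applies directly to \eqref{eq-HM} once one has the $L^\infty$ bound on $u+\ln\sin\theta$ and the local finiteness of the renormalized energy, while near the axis $\Gamma\setminus\{0\}$ one rewrites the system in the variable $U=u+\ln(r\sin\theta)$, uses the boundary trace condition \eqref{oinhiqoinh}, and invokes the regularity results of Li-Tian \cite{LT,LT93} and Weinstein \cite{Wei92} along interior axis rods (here the segments of $\Gamma$ not containing the puncture at $0$). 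This reduces everything to an analysis on $\Sn\setminus\{N,S\}$ of the rescaled maps $u(r,\cdot), v(r,\cdot)$.

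The core of the argument is a monotonicity/energy-decay estimate. I would introduce the renormalized energy on dyadic annuli,
\[
E(r)=\int_{\{r<|x|<2r\}} e'\big(u+\ln(r\sin\theta),v\big)\,\ud x,
\]
and, using the structure of \eqref{eq-HM} together with the target being the hyperbolic plane (nonpositive curvature, hence convexity of distance along geodesics and a Bochner-type inequality for $\mathrm{dist}_{\mathbb H^2}$), derive that $E(r)\to 0$ as $r\to 0$ with a power rate $E(r)\le Cr^{2\beta_0}$. The hypothesis that the map has bounded hyperbolic distance to the extreme Kerr map is what makes the renormalized energy finite and supplies the needed comparison; concretely, the function $w=\mathrm{dist}_{\mathbb H^2}\big((u,v),(u_K,v_K)\big)$ is a bounded subsolution of a linear elliptic equation with good coefficients, and its control forces the pointwise gradient estimates. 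From the energy decay I would extract, via a standard compactness argument on the cylinder, a sequence $r_j\to 0$ along which $(u(r_j,\cdot),v(r_j,\cdot))$ converges in $C^3$ to a limit $(\bar u,\bar v)$ solving the harmonic map equation on $\Sn\setminus\{N,S\}$ with the prescribed trace values $\pm a$ — the tangent map.

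To upgrade subconvergence to genuine convergence with a rate, i.e. to prove \eqref{qaohoihqhq}, I would set up the linearized operator at the tangent map on the cylinder and perform a spectral/ODE analysis in the variable $t=-\ln r$: writing the difference $(u-\bar u, v-\bar v)$ and projecting onto eigenspaces of the linearization on $\Sn$, the indicial roots are bounded away from $0$ (this is where uniqueness of the tangent map comes from, and why $\beta>0$), so that solutions of the linearized equation decay exponentially in $t$; one then absorbs the quadratic error terms by a Gronwall/continuity argument using the already-established energy decay as a smallness input. Weighted Schauder estimates on the cylinder then promote the $L^2$-type decay to the full pointwise estimate on all derivatives up to order $3$, with the exponential weight $e^{(3+\alpha-k)\bar u}\sim(\sin\theta)^{-(3+\alpha-k)}$ appearing precisely to compensate the degeneration of the $e^{4u}$-weighted $v$-norm near the poles $N,S$.

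The main obstacle I anticipate is the degeneracy along $\Gamma$: the system \eqref{eq-HM} is singular at the axis, the natural norm on $v$ is weighted by $e^{4u}$ which blows up there, and the puncture at $0$ is exactly the point where the boundary data jumps from $a$ to $-a$, so the tangent map itself is singular at both poles. Carrying the weighted estimates uniformly up to $N$ and $S$ — in particular showing the indicial-root gap survives in the weighted function spaces and that the nonlinear error terms are genuinely subcritical in these spaces — is the delicate part; this is presumably where the explicit "shifted extreme Kerr geodesic" structure of the tangent maps enters, pinning down the admissible boundary behavior and closing the spectral gap.
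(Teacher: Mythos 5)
Your general outline---passing to the cylinder $t=-\ln r$, using Li-Tian/Weinstein regularity near the axis, extracting a tangent map by compactness, and then controlling the difference via the linearization on $\mathbb{S}^2$---is the right skeleton and matches the structure of the paper (Sections 3 and 5). But there are two genuine gaps, and the second one is fatal to the argument as you have stated it.

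First, you assert that a Bochner-type inequality for the hyperbolic distance yields a power-rate decay $E(r)\le Cr^{2\beta_0}$ for the dyadic annular renormalized energy \emph{before} you do any spectral work. No such easy estimate exists: what comes cheaply from the structure of the equations (Proposition \ref{prop:MF} and Corollary \ref{cor:L2} in the paper) is only the qualitative statement that $\int_2^\infty\!\int_{\mathbb{S}^2}\big(|\partial_t\Phi|^2+e^{4u}|\partial_t v|^2\big)\,dvol_{g_0}\,dt<\infty$ and that $\partial_t$-derivatives tend to zero along the flow. That gives subsequential convergence to a tangent map but no rate at all. The power-rate decay is precisely the content of the theorem, and it has to be \emph{extracted} from the {\L}ojasiewicz-Simon inequality (Proposition \ref{prop:functional-F}) together with Simon's entropy argument (Lemmas \ref{lem:step1}, \ref{lem:step2}); it cannot be assumed as an input. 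Also, the hypothesis you invoke (bounded hyperbolic distance to extreme Kerr) is not literally a hypothesis of Theorem \ref{thm:main-1}; the stated assumptions are $|u+\ln\sin\theta|\le\Lambda$ and local integrability of the renormalized energy density.

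Second, and more seriously, your claim that ``the indicial roots are bounded away from $0$'' and that this is why $\beta>0$ is false. The linearized operator $\mathcal{T}$ at the tangent map (with the homogeneous Dirichlet conditions $\varphi_2(N)=\varphi_2(S)=0$) has a nontrivial kernel: Proposition \ref{prop:kerr-kernel}, restated in the last sentence of Theorem \ref{thm:classification-non-degeneracy}, shows that $\ker\mathcal{T}$ is exactly one-dimensional, spanned by $(\partial_b u_{a,b},\partial_b v_{a,b})$. Consequently $0$ is an eigenvalue of $\mathcal{T}$, the indicial roots at the critical mode are $0$ and $1$, and a na\"ive Gronwall argument on the linearized cylinder equation does \emph{not} give exponential decay---there is a genuine neutral direction. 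What rescues the rate $r^\beta$ (rather than the generic $|\ln r|^{-\gamma}$) is that this kernel is \emph{integrable} in the sense of Allard-Almgren: the Jacobi field comes from differentiating the actual one-parameter family $b\mapsto(u_{a,b},v_{a,b})$ of tangent maps. It is this integrability, fed into the {\L}ojasiewicz-Simon inequality to force the optimal exponent $\vartheta=1/2$, that produces the power rate. Your sketch skips exactly the step---classifying the Jacobi fields and verifying integrability---that the paper isolates as Theorem \ref{thm:classification-non-degeneracy}, and without it the spectral-gap argument you propose collapses.
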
 

Observe that the weight $e^{4u(x)} \sim \mathrm{dist}(x,\Gamma)^{-4}$ in $e'$ is sufficiently singular to guarantee that the trace of $v$ on $\Gamma_{\pm}$ makes sense. We note that the asserted $C^{3,\alpha}$-regularity in the punctured ball follows from Li-Tian \cite{LT} and Weinstein \cite{Wei90, Wei92}. The primary contribution of Theorem \ref{thm:main-1} is the existence and uniqueness of the limit $(\bar u, \bar v)$, along with the rate of convergence as stated in \eqref{qaohoihqhq}. It is an interesting point that although axisymmetry is not assumed a priori for $(u,v)$, the tangent map must always admit the axial symmetry as is shown below. It should also be noted that we do not assume that the renormalized energy is bounded up to the origin in this theorem, that is $ e'(u+\ln (r\sin \theta), v)\in L^1 (B_{r})$ for some $r>0$ is not a hypothesis, however the conclusions imply that this is so. 
The next result classifies all the tangent harmonic maps in the above theorem, and proves their integrability in the sense of Allard-Almgren \cite{AA}.

\begin{thm}\label{thm:classification-non-degeneracy}  
Let $(\bar u, \bar v) \in H^1_{loc}(\mathbb{S}^2\setminus \{N,S\})$ be a harmonic map from $\mathbb{S}^2 \setminus \{N,S\} \to \mathbb{H}^2 $ satisfying 
\begin{equation}\label{oaihfoihnah}
\begin{split}
&|\bar u+\ln \sin\theta|\le   \Lda \quad \mbox{on }\mathbb{S}^2,\\&  |\nabla_{\mathbb{S}^2}( \bar u+\ln \sin\theta)|^2+ e^{4\bar u} |\nabla_{\mathbb{S}^2}  \bar v|^2 \in L^{1}(\mathbb{S}^2) ,\\&
\bar v(N)=a \quad\text{ and }\quad  \bar v(S)= -a \quad\text{ in the trace sense},
\end{split}
\end{equation}
for some positive constants $\Lda$ and $a$.  Then 
there exists a constant $b\in (-1,1)$ such that
\begin{equation}\label{eq:general-kerr-geodesic}
\bar{u}(s)=\frac{1}{2}\ln\left[\frac{1}{2a}\cosh(-2s+\tanh^{-1} b)\right],\quad\quad \bar{v}(s)= a \tanh (- 2s+\tanh^{-1} b), 
\end{equation}
where $s= \frac{1}{2} \ln \frac{1-\cos \theta}{1+\cos \theta}$ is arclength parameterization in $\mathbb{H}^2$. Moreover, the kernel of the linearized harmonic map at $(\bar u, \bar v)$ is 1-dimensional, and is generated by $(\pa_b\bar{u}, \pa_b \bar{v})$.
\end{thm}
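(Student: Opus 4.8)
The plan is to confine the image of the tangent map to a single geodesic of $\mathbb{H}^2$ by a maximum principle, and then to read off the formula by an elementary ODE argument. Throughout I would work on the flat cylinder: after recording that $(\bar u+\ln\sin\theta,\bar v)$ extends to an element of $C^{3,\alpha}(\mathbb{S}^2)$ — this is the local regularity near axis points of Li--Tian \cite{LT,LT93} and Weinstein \cite{Wei90,Wei92}, the same input that enters Theorem~\ref{thm:main-1} — the conformal invariance of the two-dimensional harmonic map system lets one regard $(\bar u,\bar v)$ as a harmonic map from $\mathbb{R}_s\times\mathbb{S}^1_\phi$, where $s=\tfrac12\ln\tfrac{1-\cos\theta}{1+\cos\theta}$. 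Then $\ln\sin\theta=-\ln\cosh s$, the poles $N,S$ correspond to $s=\mp\infty$, and the hypotheses translate into: $|\bar u-\ln\cosh s|\le\Lda$ on the cylinder, $\int_{\mathbb{R}\times\mathbb{S}^1}\bigl(|\nabla(\bar u-\ln\cosh s)|^2+e^{4\bar u}|\nabla\bar v|^2\bigr)<\infty$, and $\bar v\to\pm a$ at $s=\mp\infty$.

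The heart of the matter is the identity: for \emph{any} harmonic map into $\mathbb{H}^2$, the function $F:=\bar v^2+\tfrac14 e^{-4\bar u}-a^2$ satisfies the \emph{homogeneous} linear equation $\Delta F+4\nabla\bar u\cdot\nabla F=0$, equivalently $\operatorname{div}\!\bigl(e^{4\bar u}\nabla F\bigr)=0$. This is a short computation from $\Delta\bar u=2e^{4\bar u}|\nabla\bar v|^2$ and $\Delta\bar v=-4\nabla\bar u\cdot\nabla\bar v$; geometrically $F$ is, up to sign, a defining function of the geodesic $\{|z|=a\}$ in the half-plane model $z=\bar v+\tfrac i2 e^{-2\bar u}$ of $\mathbb{H}^2$. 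The crucial feature is that this equation carries \emph{no zeroth-order term}, so the maximum principle applies with no control on the (unbounded) coefficient $\nabla\bar u$. Now $|\bar u-\ln\cosh s|\le\Lda$ gives $e^{-4\bar u}\le e^{4\Lda}\cosh^{-4}s\to0$ at the ends, while the singular weight $e^{4\bar u}\ge e^{-4\Lda}\cosh^4 s$ in the renormalized energy forces $\bar v$ to converge \emph{pointwise} to its traces $\pm a$ as $s\to\pm\infty$ (Cauchy--Schwarz against $\int\cosh^4 s\,|\partial_s\bar v|^2<\infty$). Hence $F\to0$ at both ends, and the maximum principle yields $F\equiv0$: the image lies on the geodesic $\bar v^2+\tfrac14 e^{-4\bar u}=a^2$, so $|\bar v|<a$ in the interior and $\bar u=-\tfrac14\ln\!\bigl(4(a^2-\bar v^2)\bigr)$.

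Next I would set $w:=\tanh^{-1}(\bar v/a)$, legitimate since $|\bar v|<a$. Substituting $\bar v=a\tanh w$ and $\bar u=\tfrac12\ln\cosh w-\tfrac12\ln(2a)$ (the latter forced by $F\equiv0$) into the equation for $\bar v$, every nonlinear term cancels and one is left with $\Delta w=0$ on the cylinder. Moreover $F\equiv0$ and $|\bar u-\ln\cosh s|\le\Lda$ give $\cosh w=2a\,e^{2(\bar u-\ln\cosh s)}\cosh^2 s$, which is comparable to $\cosh^2 s$; thus $|w|=2|s|+O(1)$, and since $w\to\pm\infty$ as $s\to\mp\infty$ (because $\bar v\to\pm a$), the function $w+2s$ is a \emph{bounded} harmonic function on $\mathbb{R}_s\times\mathbb{S}^1_\phi$. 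A Fourier expansion in $\phi$ shows $w+2s$ is a constant $c_0$. Writing $c_0=\tanh^{-1}b$ with $b\in(-1,1)$ and substituting $w=\tanh^{-1}b-2s$ back produces exactly \eqref{eq:general-kerr-geodesic}, with $s$ parametrizing the image geodesic by arclength in $\mathbb{H}^2$ (a one-line verification).

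For the kernel, let $(\dot u,\dot v)$ be a Jacobi field at $(\bar u,\bar v)$ in the function space imposed by the hypotheses (so $\dot u$ bounded and $\dot v$ subject to the linearized trace and energy conditions). Linearizing the identity of the second paragraph and using $F\equiv0$ for the background shows that $\dot F:=2\bar v\dot v-e^{-4\bar u}\dot u$ again solves $\Delta\dot F+4\nabla\bar u\cdot\nabla\dot F=0$ with $\dot F\to0$ at $s=\pm\infty$, hence $\dot F\equiv0$; equivalently, the induced perturbation $\dot w$ of $w$ satisfies $\dot u=\tfrac12\tanh w\,\dot w$. Linearizing $\Delta w=0$ then collapses — again by complete cancellation — to $\Delta\dot w=0$, and writing $\dot w=\sum_n\dot w_n(s)e^{in\phi}$ gives $\dot w_n''=n^2\dot w_n$; since $\dot u\sim\mp\tfrac12\dot w_n$ as $s\to\pm\infty$, boundedness of $\dot u$ kills every mode $n\ne0$ and forces $\dot w_0$ to be constant. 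As the constant solution equals $(1-b^2)^{-1}\,\partial_b w$, the kernel is one-dimensional and generated by $(\partial_b\bar u,\partial_b\bar v)$. The algebraic identities above are elementary; the real work is the asymptotic analysis at the poles — promoting the trace hypothesis on $\bar v$ to genuine pointwise convergence to $\pm a$ (so that $F$, and later $\dot F$, truly vanish at the ends) and pinning down the rate $|w|=2|s|+O(1)$ that eliminates the linear-in-$s$ mode of the harmonic function $w$.
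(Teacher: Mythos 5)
Your proposal is correct, but it follows a genuinely different route from the paper's. The paper first establishes axisymmetry via a separate argument --- for the classification it uses the rotation invariance of the target together with the subharmonicity of $d_{\mathbb{H}^2}(\Psi,f^*\Psi)$ for a rotation $f$ (\cite[Lemma~2]{WeinsteinHadamard}), and for the Jacobi fields it uses the Fourier decomposition of the index form $\mathcal B[\varphi,\varphi]$ --- and only then reduces to the axisymmetric ODE system, integrating it by the explicit first integrals $u'-2e^{4u}vv'=c_1/\sin\theta$, $e^{4u}v'=c_2/\sin\theta$ and, for the kernel, by variation of parameters. You instead build the conserved quantity $F=\bar v^2+\tfrac14 e^{-4\bar u}-a^2$ directly at the level of the PDE, observe that it satisfies the zeroth-order-free divergence-form equation $\mathrm{div}(e^{4\bar u}\nabla F)=0$, and use the boundary behavior plus the strong maximum principle on $\mathbb S^2\setminus\{N,S\}$ to conclude $F\equiv0$; the substitution $w=\tanh^{-1}(\bar v/a)$ then linearizes the system to $\Delta w=0$, and the asymptotics $|w|=2|s|+O(1)$ with matching signs make $w+2s$ a bounded harmonic function on the cylinder, hence constant. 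Axisymmetry is thus an output rather than an input. The Jacobi-field part runs parallel: you verify that with $F\equiv 0$ in the background, the linearization $\dot F=2\bar v\dot v-e^{-4\bar u}\dot u$ obeys the same homogeneous equation (the extra term $-4\nabla\dot u\cdot\nabla F$ one would expect vanishes because $\nabla F\equiv0$), and boundedness of $\dot w$ kills all nonconstant modes. The paper's approach is more elementary and self-contained at the ODE level; yours is more geometric --- $F$ is a defining function of the geodesic $\{|z|=a\}$, so the heart of the argument is transparently ``the image is a single geodesic'' --- and it treats the classification and the nondegeneracy in a unified way. Both correctly identify the same crucial input: the regularity theory near the poles (Theorem~\ref{thm:reg} / Li--Tian, and its analogue for Jacobi fields via the argument of Proposition~\ref{prop:partial-t-linear}), which is what upgrades the trace conditions on $\bar v$ and $\dot v$ to the pointwise limits at $N,S$ that close the maximum-principle step.
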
 

Interpretations may be given for the parameters defining the tangent map. Namely, $a=2\mathcal{J}$ is twice the angular momentum of the black hole, and as is described in detail below, the parameter $b$ encodes the difference of angle defects associated with $\Gamma_+$ and $\Gamma_-$. Additionally, the problem is invariant under $v\mapsto\pm v+c$ for any constant $c$. Thus, the boundary conditions $a$ and $-a$ in \eqref{oaihfoihnah} can be replaced with any two numbers whose difference is $\pm 2a$.
For any choice of parameters $a$ and $b$, the tangent map expression \eqref{eq:general-kerr-geodesic} represents a geodesic in hyperbolic space parameterized by arclength, which is a translation of the tangent map of an extreme Kerr, and agrees with the extreme Kerr geodesic when $b=0$.

The proof of Theorem \ref{thm:main-1} may be described as follows. Once regularity is established for the renormalization
$(u+\ln \sin\theta, v)$, the theorem consists of studying an isolated singularity problem for the renormalized map, which shares some common features with the tangent map problem for harmonic maps, 
see for instance Adams-Simon \cite{AS}, Gulliver-White \cite{GW}, Hardt \cite{Ha}, Schoen-Uhlenbeck \cite{SU}, Simon \cite{Simon83, Simon85, simoneth}, White \cite{White}. 
In these works, a monotonicity formula with respect to the energy density integrated 
on balls centered at singular points is crucial. 
However,
such a monotonicity formula is absent for the renormalized energy density in our setting. 
Nevertheless, due to the negative curvature of the target space $\mathbb{H}^2$,  
we instead can adapt the method of Schoen-Uhlenbeck \cite{SU} and Li-Tian \cite{LT} 
to establish uniform bounds for derivatives of the renormalized map, away from the isolated singular points.
Then, after developing some important weighted estimates, we adapt the method of Simon \cite{Simon83, Simon85, simoneth} for establishing asymptotics   
of solutions to elliptic equations with gradient type structure. 
Due to the last conclusion of Theorem \ref{thm:classification-non-degeneracy},
the integrability hypothesis of Allard-Almgren \cite{AA} (see also \cite{simoneth}) is satisfied within the context of the current problem, which yields the rate $r^{\beta}$ instead of the slower one $|\ln r|^{-\gamma}$ for some $\gamma>0$. Here the constant $\beta$ can be chosen optimally, depending on the second eigenvalue of the linearized operator at $(\bar u,\bar v)$ for maps from $\mathbb{S}^2 \setminus \{N,S\} \to \mathbb{H}^2 $ with prescribed singularity.

Having studied the asymptotics of solutions at punctures, we now establish expansions at infinity. These will imply that the associated stationary vacuum spacetime is asymptotically flat.
The proof, however, is quite different from Theorem~\ref{thm:main-1}. More precisely, we show that the harmonic map system essentially becomes decoupled as $r \to \infty$, allowing for a separate analysis of each equation via eigenfunction expansions.
Some related results were obtained by Beig-Simon \cite{BeigSimon} (see also Weinstein \cite[Proposition 5]{Wei92}) for stationary vacuum solutions of the Einstein equations. In what follows, the complement of the closed ball in $\mathbb{R}^3$ will be denoted by $\overline{B}_1^c$. 

\begin{thm} \label{thm:main-2} 
Let $(u,v) \in H_{\mathrm{loc}}^{1}(\overline{B}_1^c \setminus \Gamma)$ be a harmonic map 
from $\overline{B}_1^c \setminus \Gamma$ to  $\mathbb{H}^2$. Suppose that 
\begin{align*}
|u+\ln \rho |+|v|&\le \Lda  \quad \text{in } \overline{B}_1^c \setminus \Gamma,\\  
e'(u+\ln \rho, v)&\in L^1_{\mathrm{loc}} (\overline{B}_1^c ), 
\end{align*}
with
$$v=a\text{ on }\overline{B}_1^c \cap \Gamma_+ \quad
\text{ and } \quad v=-a\text{ on }\overline{B}_1^c\cap\Gamma_- \quad\text{ in the trace sense},$$
for some positive constants $\Lda$ and $a$. 
Then $( u+\ln \rho, v) \in C^{3,\alpha}(\overline{B}_1^c)$ for all $\alpha\in(0,1)$ and 
\begin{equation*}
\big|u(r)+\ln \rho -c_0-c_1 r^{-1}-Y_1 r^{-2}-Y_2 r^{-3}\big|_{C^3(\mathbb{S}^2)} \leq Cr^{-3-\beta},
\end{equation*}
\begin{equation*}
\big|v(r)- \tfrac12a\cos \theta (3-\cos^2 \theta )-c_2 r^{-1} \sin^ 4 \theta \big |_{C^3(\mathbb{S}^2)}
\leq Cr^{-1-\beta},  
\end{equation*}
for some $\beta\in(0,1)$ where $c_0$, $c_1$, $c_2$, $C$ are constants and $Y_1$, $Y_2$ are degree 1 and 2 spherical harmonics, respectively. Moreover, corresponding asymptotics are
valid for the $r$-derivatives of $(u,v)$.
\end{thm}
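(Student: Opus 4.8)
The plan is to first establish the asserted $C^{3,\alpha}$-regularity of the renormalized map $(U,v) := (u+\ln\rho, v)$ on $\overline{B}_1^c$, and then to exploit the fact that, as $r\to\infty$, the coupling term $e^{4u}|\nabla v|^2$ in the first equation of \eqref{eq-HM} decays rapidly (because $e^{4u}\sim\rho^{-4}$ while $v$ and $\nabla v$ are bounded, and in fact we will bootstrap to show $\nabla v$ decays), so that the two equations of the harmonic map system decouple to leading order. Concretely, writing $\Delta U = \Delta u + \Delta\ln\rho = \Delta u$ away from $\Gamma$ (since $\ln\rho$ is harmonic off the axis) and $\Delta u = 2e^{4u}|\nabla v|^2$, the $U$-equation becomes $\Delta U = 2\rho^{-4}e^{4U}|\nabla v|^2$, whose right-hand side is an integrable, fast-decaying forcing term once $v$ is controlled; and the $v$-equation $\Delta v = -4\nabla u\cdot\nabla v$ has a coefficient $\nabla u = \nabla U - \rho^{-1}\nabla\rho$ whose leading part $-\rho^{-1}\nabla\rho$ is explicit. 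The strategy is then a standard but delicate asymptotic expansion: pass to the variable $t = \ln r$ on the cylinder $\mathbb{R}_+\times\mathbb{S}^2$, expand in eigenfunctions of the relevant linearized operators on $\mathbb{S}^2$ (spherical harmonics for $U$, and for $v$ the operator associated with $\operatorname{div}(\sin^{?}\theta\,\nabla\,\cdot)$ that annihilates $\cos\theta(3-\cos^2\theta)$ and has $\sin^4\theta$ as the relevant mode at the next order), and read off the terms $c_0 + c_1 r^{-1} + Y_1 r^{-2} + Y_2 r^{-3}$ for $U$ and $\tfrac12 a\cos\theta(3-\cos^2\theta) + c_2 r^{-1}\sin^4\theta$ for $v$.

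First I would establish the regularity claim. Away from $\Gamma$ this is interior elliptic regularity for the harmonic map system with a negatively curved target, using the hypothesis $|U|\le\Lambda$ and $e'(u+\ln(r\sin\theta),v)\in L^1_{\mathrm{loc}}$ to run a Moser/De Giorgi-type iteration for gradient bounds exactly as in Li-Tian \cite{LT} and Weinstein \cite{Wei92,Wei90}; near $\Gamma$ (but away from the origin, which is not in $\overline{B}_1^c$) the prescribed-singularity boundary regularity of Li-Tian \cite{LT,LT93} and Weinstein \cite{Wei92} applies verbatim with the trace conditions $v=\pm a$, yielding $(U,v)\in C^{3,\alpha}(\overline{B}_1^c)$. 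Second, I would derive decay: since $U$ is bounded, $\Delta U = 2\rho^{-4}e^{4U}|\nabla v|^2$ with bounded right-hand side away from $\Gamma$, and a barrier/maximum-principle argument (or the Kelvin-transformed interior estimate on annuli) gives $U\to c_0$ and $U - c_0 = O(r^{-1})$, then bootstrapping with the improved bound on $|\nabla v|$ and the explicit decay of the forcing gives the full expansion $U = c_0 + c_1 r^{-1} + Y_1 r^{-2} + Y_2 r^{-3} + O(r^{-3-\beta})$ by successively subtracting harmonic terms and estimating the remainder via the $L^2$-on-cylinder/eigenfunction-gap method (the gap between consecutive eigenvalues $\ell(\ell+1)$ of $-\Delta_{\mathbb{S}^2}$ produces the exponent $\beta$). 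Third, for $v$ I would first use the $U$-expansion to write the $v$-equation as a perturbation of $\Delta v + 4\nabla(\ln\rho)^{-1}\!\cdot\ldots$; more precisely, in the axisymmetric model the operator governing $v$ has $\cos\theta(3-\cos^2\theta)$ in its kernel at order $r^0$ and $\sin^4\theta\, r^{-1}$ as the next mode, so the expansion $v = \tfrac12 a\cos\theta(3-\cos^2\theta) + c_2 r^{-1}\sin^4\theta + O(r^{-1-\beta})$ follows from the same separation-of-variables/spectral-gap analysis, with the boundary data $v|_{\Gamma_\pm}=\pm a$ fixing the leading coefficient (note $\cos\theta(3-\cos^2\theta)$ equals $2$ at $\theta=0$ and $-2$ at $\theta=\pi$, matching $\pm a$ after the factor $\tfrac12 a$). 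The $r$-derivative asymptotics then come for free by differentiating the expansions, justified by the same elliptic estimates on dyadic annuli.

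The main obstacle I expect is handling the $v$-equation, because — unlike the $U$-equation, whose leading operator is the flat Laplacian with its clean spherical-harmonic spectrum — the natural operator for $v$ is the \emph{degenerate} elliptic operator with the weight $e^{4u}\sim\rho^{-4}$ (equivalently $\sin^{-4}\theta$ after separating the radial part), which is singular on the axis and whose spectral theory (eigenfunctions, the precise gap controlling $\beta$, the fact that $\cos\theta(3-\cos^2\theta)$ and $\sin^4\theta$ are the first two relevant modes) must be set up carefully, together with the matching of these weighted modes to the prescribed trace values $\pm a$ on $\Gamma_\pm$. A secondary difficulty is bootstrapping the \emph{a priori} decay of $\nabla v$: one needs $|\nabla v|=O(r^{-1})$ or better to make the coupling term $\rho^{-4}e^{4U}|\nabla v|^2$ decay fast enough for the $U$-expansion to close at order $r^{-3-\beta}$, and obtaining this requires combining the weighted energy bound with interior gradient estimates on annuli and the $v$-equation itself in a self-consistent iteration. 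Once the two equations are decoupled to the required order, the remaining work is the by-now-standard Simon-type / eigenfunction-expansion asymptotics, for which the spectral gap of the relevant linearized operators on $\mathbb{S}^2$ supplies the positive exponent $\beta$.
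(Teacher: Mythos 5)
Your plan coincides with the paper's proof: regularity via the Li--Tian/Weinstein small-energy theory, passage to the cylinder $t=-\ln r$, decoupling of the two equations at infinity, spherical-harmonic expansion for $\Phi=u+\ln\rho$, and expansion of $v$ in eigenfunctions of the degenerate operator $\bar\omega^4\operatorname{div}(\bar\omega^{-4}\nabla_{g_0}\cdot)$ on $\mathbb{S}^2$ (with $\sin^4\theta$ the lowest weighted mode and $\tfrac12 a\cos\theta(3-\cos^2\theta)$ the non-$L^2$ solution fixed by the trace data), together with the bootstrap on the decay of $\nabla v$. One small correction: for the $\Phi$ equation the eigenvalue gaps of $-\Delta_{\mathbb{S}^2}$ are integers, so the non-integer exponent $\beta$ is set by the decay rate of the nonlinear coupling term $e^{4u}|\nabla v|^2$ (which is $O(r^{-4+\epsilon})$ after the bootstrap), not by a spectral gap.
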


Theorems \ref{thm:main-1} and \ref{thm:main-2} address the asymptotics of harmonic maps at punctures
and in the asymptotically flat end, respectively. 
The question of regularity in the vicinity of axis points, and of poles, was previously studied by Li-Tian and Weinstein. In this situation, the origin of coordinates may be taken at the axis point or pole, with the nonnegative $z$-axis representing an axis rod, and the nonpositive $z$-axis representing either another portion of the axis or the horizon rod for the latter situation. Since blow-up of the harmonic map should then only occur on both the nonnegative and nonpositive $z$-axis, or just the nonnegative $z$-axis, the harmonic functions $\ln \rho$ or $\ln\sqrt{r-z}$ are used to renormalize the map, respectively. In contrast with the case of punctures, the tangent map at  axis points and poles is trivial. Li-Tian \cite[Theorem 1.1]{LT}, \cite[Theorem 2.2]{LT93} and Weinstein \cite[Theorem 5]{Wei90}, \cite[Corollary 1]{Wei92} were able to obtain $C^{3,\alpha}$ regularity in neighborhoods of axis points and $C^{1,\alpha}$ regularity in neighborhoods of poles for the renormalized maps, using the small energy regularity approach. Moreover, they assumed separate extra hypotheses, such as a minimizing property \cite{LT,LT93} or axisymmetry
and bounded renormalized energy up to the origin \cite{Wei90,Wei92}.
Theorem \ref{thm:main-1}, \ref{thm:classification-non-degeneracy}, and \ref{thm:main-2}, together with the cited results due to Li-Tian \cite{LT,LT93} and Weinstein \cite{Wei90,Wei92}, provide a complete regularity theory for harmonic maps from $\mathbb R^3\setminus\Gamma$ to $\mathbb  H^2$ with prescribed singularities.

We now discuss a geometric interpretation of the parameter $b$ appearing in Theorem \ref{thm:classification-non-degeneracy}, as well as an application of the asymptotic analysis at punctures. Consider the domain of outer communication $\mathcal{M}^4$
of a stationary axisymmetric 4-dimensional spacetime. Stationarity and axisymmetry imply that the isometry group admits $\mathbb{R}\times U(1)$ as a subgroup. Under reasonable hypotheses \cite{CC}, the orbit space 
$\mathcal{M}^4/[\mathbb{R}\times U(1)]$ is homeomorphic to the right half plane $\{(\rho,z)\mid \rho>0\}$, and the spacetime metric may be expressed in Weyl-Papapetrou coordinates as
\begin{equation}\label{mcg11}
\mathbf{g} = - e^{2U}d \tau^2 + \rho^2 e^{-2U}  (d \phi +wd\tau)^2 +  e^{-2U+2 \boldsymbol{\alpha}} (d \rho^2 + dz^2),
\end{equation}
where the $U(1)$ symmetry is generated by $\partial_{\phi}$ for $\phi\in[0,2\pi)$ and the time translation symmetry is generated by $\partial_{\tau}$. By setting $u=U-\ln \rho$, the vacuum Einstein equations \cite{Wei90} may be expressed as the harmonic map equations for $(u,v):\mathbb{R}^3 \setminus \Gamma \rightarrow\mathbb{H}^2$, 
and a set of quadrature equations all given by 
\begin{equation}\label{equationaklfahg1}
\Delta u-2e^{4u}|\nabla v|^2=0,\quad \Delta v+4\nabla u\cdot\nabla v=0,
\end{equation}
\begin{equation}\label{wequations1}
w_{\rho}=2\rho e^{4u}v_z,\quad w_{z}=-2\rho e^{4u}v_{\rho},
\end{equation}
\begin{equation}\label{alphaequation1}
\rho^{-1}(\boldsymbol{\alpha}_{\rho}-2u_{\rho}-\rho^{-1})=u_{\rho}^2-u_z^2+e^{4u}(v_{\rho}^2-v_z^2),
\quad \rho^{-1}(\boldsymbol{\alpha}_z -2u_z)=2u_{\rho}u_z +2e^{4u}v_{\rho}v_z.
\end{equation}
Observe that the integrability conditions for \eqref{wequations1} and \eqref{alphaequation1} are equivalent to the harmonic map equations \eqref{equationaklfahg1}. Moreover, the $z$-axis is decomposed into a sequence of intervals $\{\Gamma_l\}_{l\in \mathcal{I}}$ for some index set $\mathcal{I}$, called \textit{rods}. There are two types of rods, those on which $|\partial_{\phi}|$ vanishes are designated as \textit{axis rods}, while all others are referred to as \textit{horizon rods}. It should be noted that a horizon rod $\Gamma_h$ may also be identified by the vanishing in norm of a Killing field, namely $\partial_{\tau}+\Omega_h\partial_{\phi}$ where $\Omega_h=-w|_{\Gamma_h}$ is a constant indicating the angular velocity of the horizon. The intersection point of an axis rod with a horizon rod is called a \textit{pole}, while the intersection point of two axis rod closures is referred to as a \textit{puncture}. These latter points represent the horizons of extreme black holes. The rod structure for the extreme Kerr harmonic map given by \eqref{eq:kerr-solution} consists of two semi-infinite axis rods, and a single puncture at the origin indicating a degenerate horizon.

An important issue when constructing the spacetime metric $\mathbf{g}$ in \eqref{mcg11} from a harmonic map, is the possible presence of conical singularities along axis rods. A conical singularity at point $(0,z_0)$ on an axis rod $\Gamma_{l}$ has an associated \textit{angle defect} $\theta\in(-\infty,2\pi)$ arising from the 2-dimensional cone formed by the orbits of $\partial_{\phi}$ over the line $z=z_0$. More precisely
\begin{equation}\label{alphaequation111}
\frac{2\pi}{2\pi-\theta}=\lim_{\rho\rightarrow 0}\frac{2\pi\cdot\mathrm{Radius}}{\mathrm{Circumference}}
=\lim_{\rho\rightarrow 0}
\frac{\int_{0}^{\rho}e^{\boldsymbol{\alpha}-U}}{\rho e^{-U}}
=e^{\boldsymbol{\alpha}(0,z_0)}.
\end{equation}
The existence of this limit as well as the fact that this quantity is constant along each axis rod, follows from the interior axis
regularity established by Li-Tian \cite{LT,LT93} and Weinstein \cite{Wei90,Wei92}; specifically the constancy is a consequence of \eqref{alphaequation1}.  We will denote the \textit{logarithmic angle defect} $\ln\left(\frac{2\pi}{2\pi-\theta}\right)$, on an axis rod $\Gamma_l$, by $\mathbf{b}_l$.
Moreover, the absence of a conical singularity corresponds to a zero logarithmic angle defect. In this case, the metric is smoothly extendable across the axis, which may be checked via a change to Cartesian coordinates. The conical singularity on $\Gamma_l$ is said to have an \textit{angle deficit} if $\mathbf{b}_l>0$, and an \textit{angle surplus} if $\mathbf{b}_l<0$. Physically, the logarithmic angle defect's sign determines the character of the force associated with the axis rod, through the following formula computed in \cite[pg. 921]{Wei90}, namely
\begin{equation*}
\text{Force}=\frac14 \left(e^{-\mathbf{b}_l} -1\right).
\end{equation*}
We are able to give a precise relation between the tangent map parameter $b$, and the logarithmic angle defects in the spacetime metric. It should be noted that the conical singularity difference is independent of the angular momentum parameter in the tangent map.

\begin{thm}\label{conicalsing}
Let $(u,v)$ be as in Theorem \ref{thm:main-1} and axially symmetric, and consider the associated stationary vacuum spacetime $(\mathcal{M}^4,\mathbf{g})$. If the two axis rods $\Gamma_{l+1}$ and $\Gamma_{l}$ lying directly to the north and south of the puncture $p_l =0$ each have logarithmic angle defect $\mathbf{b}_{l+1}$ and $\mathbf{b}_{l}$, respectively, then 
\begin{equation*}
\mathbf{b}_{l+1}-\mathbf{b}_{l}=\ln\left(\frac{1+b}{1-b}\right).
\end{equation*}
\end{thm}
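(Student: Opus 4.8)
The plan is to compute the logarithmic angle defect $\mathbf{b}_l$ directly from its definition \eqref{alphaequation111} as the boundary value of $\boldsymbol{\alpha}$ on each axis rod, using the quadrature equations \eqref{alphaequation1} to relate $\boldsymbol{\alpha}$ to the harmonic map data $(u,v)$, and then to insert the tangent map expansion from Theorems \ref{thm:main-1} and \ref{thm:classification-non-degeneracy}. Concretely, fix the puncture at the origin $p_l = 0$, with $\Gamma_{l+1} = \Gamma_+$ to the north and $\Gamma_l = \Gamma_-$ to the south. Since $\boldsymbol{\alpha}$ is constant along each axis rod, the difference $\mathbf{b}_{l+1} - \mathbf{b}_l = \boldsymbol{\alpha}(0, z_+) - \boldsymbol{\alpha}(0, z_-)$ for any $z_+ > 0$, $z_- < 0$ small, and this difference can be recovered by integrating $d\boldsymbol{\alpha}$ along a path that encircles the puncture at a fixed small radius $r$. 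Parameterizing such a path by $\theta \in [\theta_0, \pi - \theta_0]$ on the sphere $|x| = r$ and using the $\theta$-component of \eqref{alphaequation1}, one obtains
\begin{equation*}
\boldsymbol{\alpha}(r,\theta) - \boldsymbol{\alpha}(r,\theta_0) = \int_{\theta_0}^{\theta} \left( 2 \partial_{\theta} u + 2 r^2 \left( u_{\rho} u_z + e^{4u} v_{\rho} v_z \right)\big|_{\text{expressed in }(r,\theta')} \right) d\theta',
\end{equation*}
after converting the $(\rho,z)$-derivatives to $(r,\theta)$-derivatives; then one lets $r \to 0$ and uses that $\theta_0$ may be taken near $0$ and $\theta$ near $\pi$, with the contributions near the endpoints handled by the known axis regularity of Li-Tian and Weinstein. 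The key point is that as $r \to 0$, the convergence estimate \eqref{qaohoihqhq} guarantees that $(u,v)$ approaches the tangent map $(\bar u, \bar v)$ of \eqref{eq:general-kerr-geodesic} with its $\theta$-derivatives, so the integral above converges to the corresponding integral computed from $(\bar u, \bar v)$.

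The heart of the computation is therefore the evaluation of this "angle defect jump" for the explicit tangent map. Here I would exploit two facts. First, the jump $\mathbf{b}_{l+1} - \mathbf{b}_l$ is, by the scaling/limiting argument above, precisely the analogous quantity for the $2$-dimensional axially symmetric harmonic map $(\bar u, \bar v) : \mathbb{S}^2 \setminus \{N,S\} \to \mathbb{H}^2$ together with its own associated "$\boldsymbol{\alpha}$"-type quadrature on the sphere — in other words, one reduces to a one-variable ODE computation in the arclength parameter $s = \tfrac12 \ln\frac{1-\cos\theta}{1+\cos\theta}$. Second, the problem is invariant under $b \mapsto -b$ composed with $s \mapsto -s$ (equivalently swapping $N \leftrightarrow S$), so the answer must be an odd function of $\tanh^{-1} b$; combined with the fact that for the genuine extreme Kerr map $b = 0$ there is no conical singularity on either rod (so the jump vanishes), this strongly constrains the form of the result. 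Carrying out the integral of $2\partial_s \bar u + (\text{quadratic terms in } \bar u', \bar v')$ against the tangent map \eqref{eq:general-kerr-geodesic}, in which $\bar u(s) = \tfrac12 \ln\left[\tfrac{1}{2a}\cosh(-2s + \tanh^{-1}b)\right]$ and $\bar v(s) = a\tanh(-2s + \tanh^{-1}b)$, the hyperbolic identities collapse the integrand to a total derivative, and the boundary terms at $s \to \pm\infty$ (corresponding to $\theta \to 0, \pi$) produce exactly $\pm \tanh^{-1} b$, yielding $\mathbf{b}_{l+1} - \mathbf{b}_l = 2\tanh^{-1} b = \ln\frac{1+b}{1-b}$.

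Two technical points need care. The first is justifying the exchange of the limit $r \to 0$ with the $\theta$-integration and controlling the endpoint contributions near $\theta = 0$ and $\theta = \pi$; this is where one must combine \eqref{qaohoihqhq} with the interior axis regularity of Li-Tian \cite{LT,LT93} and Weinstein \cite{Wei90,Wei92}, noting that near each axis rod the renormalized map $u + \ln(r\sin\theta)$ and its derivatives are bounded, so the singular factors $u_\rho, u_z \sim \rho^{-1}$ are exactly compensated in the combination appearing in \eqref{alphaequation1} (this is the same cancellation that makes $\boldsymbol{\alpha}$ well-defined and constant on axis rods in the first place). The second is bookkeeping the change of variables between $(\rho, z)$, polar $(r,\theta)$, and the arclength $s$, keeping track of where the curvature normalization (the $-2$ in $g_{\mathbb{H}^2}$, i.e. the factor $e^{4u}$ rather than $e^{2u}$) enters. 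I expect the \textbf{main obstacle} to be the first of these: showing that the $\boldsymbol{\alpha}$-integral along the circle $|x| = r$ genuinely converges, as $r \to 0$, to the tangent-map integral, uniformly enough to extract the boundary terms — in particular that no extra contribution is hidden in the region where $\theta$ is within $O(r)$ of $0$ or $\pi$. Once that limiting argument is in place, the remaining computation is a pleasant exercise in hyperbolic trigonometry driven by the explicit formula \eqref{eq:general-kerr-geodesic}.
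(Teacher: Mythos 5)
Your strategy is the one the paper follows: convert the quadrature system \eqref{alphaequation1} to polar coordinates, integrate $\partial_\theta\boldsymbol{\alpha}$ along the small sphere $|x|=r$, substitute the tangent-map expansion from Theorems \ref{thm:main-1} and \ref{thm:classification-non-degeneracy}, and take the difference of the limiting values at $\theta=0$ and $\theta=\pi$. The paper carries this out directly in the $\theta$-variable and lands on the closed form $\bar{\boldsymbol{\alpha}}(\theta)=\ln(1+\cos^2\theta+2b\cos\theta)+\boldsymbol{\alpha}_0$ (its \eqref{alphaexpansionskka}), from which $\mathbf{b}_{l+1}-\mathbf{b}_l=\ln\frac{1+b}{1-b}$ is immediate; your arclength substitution $s=\tfrac12\ln\frac{1-\cos\theta}{1+\cos\theta}$ is an equivalent way to finish that last step.

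However, the integrand you wrote for $\partial_\theta\boldsymbol{\alpha}$ is wrong, and not in a way you can safely wave off, because the omitted pieces hide exactly the singular cancellation you flag as the main obstacle. Using $\partial_\theta = r\cos\theta\,\partial_\rho - r\sin\theta\,\partial_z$ and substituting \emph{both} lines of \eqref{alphaequation1} gives
\begin{equation*}
\partial_\theta\boldsymbol{\alpha}
= 2\partial_\theta u + \cot\theta
+ r^2\sin\theta\cos\theta\bigl(u_\rho^2-u_z^2+e^{4u}(v_\rho^2-v_z^2)\bigr)
- 2r^2\sin^2\theta\bigl(u_\rho u_z + e^{4u}v_\rho v_z\bigr),
\end{equation*}
whereas your formula drops the $\cot\theta$ (from the $\rho^{-1}$ in $\boldsymbol{\alpha}_\rho$), drops the $u_\rho^2-u_z^2$ block entirely, and carries the weight $2r^2$ rather than $-2r^2\sin^2\theta$ on the $u_\rho u_z$ block. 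The missing $\cot\theta$ is decisive: with $u=\Phi-\ln\sin\theta$ one has $2\partial_\theta u+\cot\theta = 2\partial_\theta\Phi-\cot\theta$, and the residual $-\cot\theta$ must be absorbed by the $\cos^3\theta/\sin\theta$ produced when $\sin\theta\cos\theta(\bar\Phi_\theta-\cot\theta)^2$ is expanded (the two combine to the smooth $-\sin\theta\cos\theta$). With the integrand as you wrote it this cancellation does not happen, the limiting $\bar{\boldsymbol{\alpha}}$ would not extend to $\theta=0,\pi$, and the endpoint passage you worry about would genuinely fail. Once the correct integrand is in place, the tangent-map expansion yields the smooth $\bar{\boldsymbol{\alpha}}$ above, and the endpoint limits are routine given $r\partial_r\boldsymbol{\alpha}=O(r^\beta)$ and the constancy of $\boldsymbol{\alpha}$ along each axis rod.
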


Another concept associated with extreme black holes is that of the \textit{near horizon geometry} (NHG). These spacetimes arise through a limiting process in the vicinity of a degenerate black hole (puncture), which infinitely magnifies the spacetime geometry at the horizon, see \cite{KunduriLucietti} for a detailed review and \cite{AKKNHG} for the use of harmonic maps to construct these solutions. In the physics literature, a rigorous justification of the so called near horizon limit that gives rise to the NHG, is often not addressed. Here we show that this limit exists as a consequence of the asymptotic analysis studied in this work, and compute the limit. More precisely, utilizing polar coordinates centered at a puncture, the NHG metric is obtained from the scaling $r=\epsilon \bar{r}$, $\tau=\epsilon^{-1} \bar{\tau}$,  $\phi=\bar{\phi}+\Omega \epsilon^{-1}\bar{\tau}$ by letting $\epsilon\rightarrow 0$, where $\Omega$ is the angular velocity of the black hole. A relevant example is the extremal Kerr throat metric \cite{BHorowitz}, which is derived from the extreme Kerr solution.

\begin{thm}\label{NHG}
Let $(u,v)$ be as in Theorem \ref{thm:main-1} and axially symmetric, and consider the associated stationary vacuum spacetime $(\mathcal{M}^4,\mathbf{g})$. Then the near horizon limit exists, and gives rise to the near horizon metric
\begin{align*}
\mathbf{g}_{NH} 
=&-\bar{r}^2\left(\frac{1+\cos^2 \theta +2b\cos\theta}{2a\sqrt{1-b^2}}\right)d\bar{\tau}^2
+\frac{2a\sqrt{1-b^2}\sin^2 \theta}{1+\cos^2 \theta+2b\cos\theta}\left(d\bar{\phi}+\frac{\bar{r}}{a}d\bar{\tau}\right)^2\\
&+\frac{a\sqrt{1-b^2}}{2}(1+\cos^2 \theta+2b\cos\theta)\left(\frac{d\bar{r}^2}{\bar{r}^2}+d\theta^2 \right).
\end{align*}
In particular, this agrees with the extremal Kerr throat metric when $a=2\mathcal{J}$ and $b=0$, where $\mathcal{J}$ denotes angular momentum.
\end{thm}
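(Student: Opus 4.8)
The plan is to substitute the near‑horizon scaling $r=\epsilon\bar r$, $\tau=\epsilon^{-1}\bar\tau$, $\phi=\bar\phi+\Omega\epsilon^{-1}\bar\tau$ into the Weyl–Papapetrou form \eqref{mcg11}, pass to polar coordinates $(r,\theta)$ centered at the puncture, and evaluate the limit of each metric coefficient as $\epsilon\to0$. Writing $U=u+\ln\rho$ and inserting the scaling, the three terms of \eqref{mcg11} become, with $u$ and $\boldsymbol\alpha$ now evaluated at $r=\epsilon\bar r$,
\[
-e^{2U}d\tau^2=-e^{2u}\bar r^2\sin^2\theta\,d\bar\tau^2,\qquad
\rho^2e^{-2U}(d\phi+w\,d\tau)^2=e^{-2u}\bigl(d\bar\phi+(\Omega+w)\epsilon^{-1}d\bar\tau\bigr)^2,
\]
\[
e^{-2U+2\boldsymbol\alpha}(d\rho^2+dz^2)=\sin^{-2}\theta\,e^{-2u+2\boldsymbol\alpha}\bigl(\bar r^{-2}d\bar r^2+d\theta^2\bigr),
\]
so the explicit powers of $\epsilon$ cancel and everything reduces to computing $\lim_{r\to0}u(r,\theta)$, $\lim_{r\to0}(\Omega+w)(r,\theta)/r$, and $\lim_{r\to0}\boldsymbol\alpha(r,\theta)$ for fixed $\theta\in(0,\pi)$; the $C^{3,\alpha}$ bounds of Theorem~\ref{thm:main-1} then promote pointwise convergence of the coefficients to locally uniform convergence away from the axis, so the rescaled metrics converge. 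By Theorems~\ref{thm:main-1} and~\ref{thm:classification-non-degeneracy}, $u(r,\cdot)\to\bar u$ and $v(r,\cdot)\to\bar v$, and writing the arclength parameter through $e^{2s}=(1-\cos\theta)/(1+\cos\theta)$ one obtains $e^{2\bar u}\sin^2\theta=(1+\cos^2\theta+2b\cos\theta)/(2a\sqrt{1-b^2})$; this already yields the $d\bar\tau^2$ and $(d\bar\phi+\cdots)^2$ coefficients of $\mathbf{g}_{NH}$.

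For the rotation function $w$, I would rewrite the quadrature system \eqref{wequations1} in polar coordinates as $w_r=-2\sin\theta\,e^{4u}v_\theta$, $w_\theta=2r^2\sin\theta\,e^{4u}v_r$. Feeding the weighted derivative bounds of \eqref{qaohoihqhq} (for $r\partial_r v$ and $\nabla_{\mathbb S^2}v$) into these gives $w_\theta=O(r^{1+\beta})$ and $w_r=-2\sin\theta\,e^{4\bar u}\partial_\theta\bar v+O(r^\beta)$, uniformly on compact subsets of $(0,\pi)$; hence $w$ converges as $r\to0$ to a limit independent of $\theta$, which we write as $-\Omega$ (the horizon angular velocity, which is the $\Omega$ entering the scaling), and $w+\Omega=(\lim_{r\to0}w_r)\,r+O(r^{1+\beta})$. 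A short computation from the geodesic \eqref{eq:general-kerr-geodesic} produces the identity $-2\sin\theta\,e^{4\bar u}\partial_\theta\bar v\equiv 1/a$, so $(\Omega+w)\epsilon^{-1}\to\bar r/a$ and the cross term limits to $d\bar\phi+(\bar r/a)d\bar\tau$.

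The heart of the proof is the conformal factor $\boldsymbol\alpha$. Writing \eqref{alphaequation1} in polar coordinates in terms of $u$, $v$ and using the rates of Theorem~\ref{thm:main-1} to pass to the limit in the quadratic terms, one finds
\[
r\,\boldsymbol\alpha_r=1-\sin^2\theta\bigl(|\partial_\theta\bar u|^2+e^{4\bar u}|\partial_\theta\bar v|^2\bigr)+O(r^\beta),\qquad
\boldsymbol\alpha_\theta=2\partial_\theta\bar u+\cot\theta+\sin\theta\cos\theta\bigl(|\partial_\theta\bar u|^2+e^{4\bar u}|\partial_\theta\bar v|^2\bigr)+O(r^\beta).
\]
The key fact is that $(\bar u,\bar v)$ is a unit‑speed geodesic of $\mathbb H^2$ reparametrized by $s=s(\theta)$ with $|\nabla_{\mathbb S^2}s|^2=\csc^2\theta$, so its energy density satisfies $|\partial_\theta\bar u|^2+e^{4\bar u}|\partial_\theta\bar v|^2=\csc^2\theta$; this forces the first bracket to vanish identically, so $\boldsymbol\alpha_r=O(r^{\beta-1})$ is integrable, $\boldsymbol\alpha(r,\cdot)\to\bar{\boldsymbol\alpha}(\theta)$, and the second identity collapses to $\partial_\theta\bar{\boldsymbol\alpha}=2\partial_\theta(\bar u+\ln\sin\theta)$. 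Integrating, $\bar{\boldsymbol\alpha}=2(\bar u+\ln\sin\theta)+C$; the constant $C$ is determined by the value of $\boldsymbol\alpha$ on the two axis rods abutting the puncture, which is their logarithmic angle defect by \eqref{alphaequation111}. Matching the two rods reproduces $\mathbf{b}_{l+1}-\mathbf{b}_{l}=\ln\frac{1+b}{1-b}$ (Theorem~\ref{conicalsing}), and in the normalization reducing to extreme Kerr at $b=0$ (i.e.\ $\mathbf{b}_{l+1}=\ln(1+b)$, $\mathbf{b}_{l}=\ln(1-b)$) one gets $e^{\bar{\boldsymbol\alpha}}=\tfrac12(1+\cos^2\theta+2b\cos\theta)$. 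Substituting $\bar u,\bar v,\Omega+w,\bar{\boldsymbol\alpha}$ into the three coefficients above and simplifying produces $\mathbf{g}_{NH}$; specializing to $b=0$, $a=2\mathcal{J}$ recovers the extremal Kerr throat.

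I expect the main obstacle to be the $\boldsymbol\alpha$‑analysis: the quadratic terms in the polar form of \eqref{alphaequation1} are products of quantities that individually blow up like $r^{-1}$, so isolating the $O(r^{\beta-1})$ remainder relies precisely on the weighted derivative estimates of Theorem~\ref{thm:main-1} together with the arclength identity above, and fixing the additive constant by evaluating $\bar{\boldsymbol\alpha}$ on the axis demands uniformity of the convergence up to $\theta=0,\pi$, where the weights in \eqref{qaohoihqhq} degenerate. A secondary technical point is showing that $\lim_{r\to0}w$ is genuinely independent of $\theta$ (so that $\Omega$ is well defined), which is exactly what the decay $w_\theta=O(r^{1+\beta})$ --- coming from the weighted bound on $r\partial_r v$ in \eqref{qaohoihqhq} --- provides.
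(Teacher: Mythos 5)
Your proposal follows the paper's route: substitute the near-horizon scaling into the Weyl--Papapetrou metric in polar coordinates, invoke the tangent-map expansions from Theorems~\ref{thm:main-1} and \ref{thm:classification-non-degeneracy} together with the quadrature equations \eqref{wequations1}--\eqref{alphaequation1} to extract the leading behavior of $w$ and $\boldsymbol\alpha$, and pass to the limit $\epsilon\to0$, fixing the additive constant in $\boldsymbol\alpha$ by matching the extreme Kerr normalization. The one notable simplification is your use of the arclength identity $|\partial_\theta\bar u|^2+e^{4\bar u}|\partial_\theta\bar v|^2=\csc^2\theta$ (since $(\bar u,\bar v)$ is a unit-speed geodesic reparametrized by $s(\theta)$) to annihilate the leading term in $r\partial_r\boldsymbol\alpha$ and to collapse $\partial_\theta\boldsymbol\alpha$ to $2\partial_\theta(\bar u+\ln\sin\theta)$, in place of the paper's direct substitution of the explicit formulas for $\bar\Phi_\theta$ and $e^{4\bar u}\bar v_\theta^2$ --- conceptually cleaner, but encoding the same first integrals \eqref{eq:first-int}.
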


The paper is organized as follows. In Section \ref{s:regularity}, we prove regularity 
and uniform estimates for the renormalized maps on a portion of the cylinder $I\times \mathbb{S}^2$, where $I$ is an interval. This is inspired by Li-Tian \cite{LT} and Schoen-Uhlenbeck \cite{SU}, with the primary goal consisting of certain linear form estimates that are important for later applications. 
Section \ref{s:classification} is dedicated to the proof of Theorem \ref{thm:classification-non-degeneracy}.  
The proof of Theorem \ref{thm:main-1} is presented in Section \ref{s:convergence-1}, by adapting the method of Simon \cite{Simon83, Simon85}, while the proof of 
Theorem \ref{thm:main-2} is given in Section \ref{s:convergence-2}. Lastly, conical singularities and near horizon geometries are studied, and Theorems \ref{conicalsing} and \ref{NHG} are established, in Section \ref{sec7}.



\smallskip

\noindent \textbf{Acknowledgements.} The authors would like to thank YanYan Li for helpful discussions.

\section{Regularity and Uniform Estimates on the Cylinder}
\label{s:regularity}

In this section, we discuss the regularity of harmonic maps 
away from the origin and derive necessary estimates of 
the $L^\infty$-norms of harmonic maps and their derivatives in terms of the $L^2$-norm 
of a specific combination of derivatives. The analysis here will be applied to study both the convergence near 
black holes and in the asymptotically flat end of associated stationary vacuum spacetimes. It should be pointed out
that the results of this section do not require axisymmetry.

Let $(r, \phi, \theta)$ be spherical coordinate on $\mathbb R^3$
with $r>0$, $\phi\in [0,2\pi)$, $\theta\in [0,\pi]$, and set  
$$t=-\ln r.$$ 
Note that the flat metric may be expressed with respect to the two radial coordinates as
\[
dr^2+r^2 g_0=  e^{-2t} (dt^2+g_0), 
\]
where 
\[
 g_0=d\theta^2+ \sin^2 \theta d\phi^2 
\]
is the round metric on $\mathbb{S}^2$.  We will denote the product metric on $\mathbb{R}\times \mathbb{S}^2$ by
\begin{equation}\label{eq:product-metric} 
g= d  t^2+g_0. 
\end{equation} 
In order to write the harmonic map equations 
as a differential system 
on the cylinder, let $\nabla_g$ and $\Delta_g$ be the gradient operator and Laplace-Beltrami operator 
with respect to the metric $g$, that is
\[
\nabla_{g}=(\pa_t, \nabla_{g_0}), \quad\quad \Delta_g=\pa_t^2+\Delta_{g_0}.
\]
By a straightforward computation, the harmonic map system \eqref{equationaklfahg1}
reduces to the following set of equations on $\mathbb R\times \mathbb S^2$:
\begin{align}\label{eq:HM-cylinder}
\begin{split}
\Delta_gu-\partial_tu -2 e^{4u} |\nabla_g v|^2 &=0, \\
\Delta_gv-\partial_tv +4\nabla_{g}u\cdot   \nabla_{g} v &=0,
\end{split} 
\end{align}
where $\cdot$ represents the inner product given by $g$. Furthermore, by setting 
\begin{equation} \label{eq:L-LB}
L
= \Delta_g-\partial_t=\pa_t^2- \pa_t  +\Delta_{g_0},
\ee
the system of equations \eqref{eq:HM-cylinder} may be rewritten as
\begin{align*}
L u -2 e^{4u} |\nabla_{g} v|^2 &=0, \\
Lv +4\nabla_{g}u\cdot   \nabla_{g} v &=0. 
\end{align*}

Let $N$ be the north pole on $\mathbb S^2$ and $S=-N$ the south pole. 
We will consider the model function $\ln \sin \theta$  on $\mathbb S^2$.  
Note that $\ln \sin \theta$ is singular only at $N$ and $S$.  
A simple computation shows 
$$L(\ln \sin \theta)=-1.$$ 
Let $\w\in C^\infty([-2, 2] \times \mathbb{S}^2\setminus \{N,S\})$ be a positive function  satisfying 
\begin{equation} \label{eq:bds-omega-1}
\|\ln \w- \ln \sin \theta\|_{C^{10}([-2, 2] \times \mathbb{S}^2)}\le A_1,
\ee
for some constant $A_1\ge 1$. 
We can take a  larger $A_1$, if necessary, to have  
\begin{equation} \label{eq:bds-omega-3}
\|L\ln \w\|_{C^{8}([-2, 2] \times \mathbb{S}^2)}\le A_1. 
\end{equation} 
In the rest of the section, $\w$ is fixed to satisfy \eqref{eq:bds-omega-1}. The reason for carrying out the analysis with a general singular function instead of restricting to the model, is that different models will be needed when applying the results below to study expansions near horizons and in asymptotically flat ends of stationary vacuum spacetimes.

In what follows we introduce a renormalized function $\Phi$ given by
\begin{equation}\label{eq:definition-Phi}
u=\Phi-\ln \w,
\end{equation}
and consider weak solutions of the renormalized harmonic map system 
\begin{equation}\label{eq:main-unified-2} 
\begin{split}
L \Phi -2 e^{4u} |\nabla_{g} v|^2  &=L\ln\w, \\
Lv +4\nabla_{g}u\cdot   \nabla_{g} v&=0.
\end{split} 
\end{equation}
It will be assumed that $(\Phi,v)\in H^1((-2, 2)\times \mathbb{S}^2)$ satisfies
\begin{align} \label{eq:ubd-unified-1} 
&|\Phi|  \le \Lda \quad \text{ on }(-2,2) \times \mathbb{S}^2, \\
\label{eq:ea-unified-1}
\int_{-2}^{2} \int_{\mathbb{S}^2} &(|\nabla_{g} \Phi|^2 + \w^{-4}  |\nabla_{g} v|^2 )\, d vol_{g_0}  d  t <\infty, 
\end{align}
for some constant $\Lda\ge 1$, and in trace sense
\begin{equation} \label{eq:trace-unified-1}
\begin{split}
v=a  \text{ on  }\mathbb (-2,2) \times \{N\},\quad \quad
v=-a  \text{ on  }\mathbb (-2,2) \times \{S\},
\end{split}
\end{equation} 
for some positive constant $a$. 
Here $H^1$ represents the Sobolev space of $L^2$ functions having square integrable derivatives.
Moreover, by referring to $(\Phi,v)$ as a weak solution this should be understood in the distributional sense, see Li-Tian \cite{LT} for further details. 

\begin{rem}\label{rem:homlize}
Consider the Dirichlet problem 
\begin{align}\label{eq:homlize}\begin{split}
L\xi&= L\ln \w \quad \text{ on }(-2,2) \times \mathbb{S}^2, \\
\xi&=0\quad \text{ on }\{-2,2\}\times \mathbb{S}^2.
\end{split}\end{align} 
By \eqref{eq:bds-omega-3}, there exists a unique solution $\xi\in C^9([-2,2] \times \mathbb{S}^2)$ of \eqref{eq:homlize} satisfying 
$$\|\xi\|_{C^9([-2,2] \times \mathbb{S}^2)}\le CA_1,$$ 
for some universal constant $C>0$.  It may be checked that $(\Phi-\xi, v)$ then 
satisfies \eqref{eq:main-unified-2} with a homogeneous right-hand side, and with a different $\omega$ in \eqref{eq:definition-Phi}. Therefore, in studying the regularity of $(\Phi,v)$, it may be assumed without loss of generality that $L\ln \w=0$. 
\end{rem}

We we begin with an energy estimate for the density
$$ e_{g}(\Phi,v)= |\nabla_{g} \Phi|^2  + e^{4u}  |\nabla_{g}  v|^2. $$
Notice that in the proof below, only the first equation of \eqref{eq:main-unified-2} will be
utilized.

\begin{lem}\label{lem:local-bound} Let $(\Phi,v)\in H^{1}((-2, 2)\times \mathbb{S}^2)$ 
be a weak solution of \eqref{eq:main-unified-2} satisfying \eqref{eq:ubd-unified-1}, \eqref{eq:ea-unified-1}, 
and \eqref{eq:trace-unified-1} for some positive constants $\Lambda$ and $a$. 
Suppose that $L\ln\w=0$.  Then
\[
\int_{-R}^{R}\int_{\mathbb{S}^2}e_{g}(\Phi,v) \, d  vol_{g_0}    d  t \le \frac{5000\Lda^2}{ (2-R)^2}
\]
for any $0<R<2$.
\end{lem}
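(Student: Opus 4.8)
The plan is to prove this by a Caccioppoli- (reverse Poincar\'e-) type argument, testing only the first equation of \eqref{eq:main-unified-2} against cutoff multiples of $\Phi$, and then recovering the $v$-part of the energy a posteriori. Since $L\ln\w=0$, the first equation reads $L\Phi=2e^{4u}|\nabla_g v|^2$, and two structural features are decisive: (i) its right-hand side is pointwise nonnegative and equals, up to the factor $2$, the $v$-part of the density $e_g(\Phi,v)$; and (ii) on the product cylinder $L=\Delta_g-\partial_t$ is in divergence form, $L\Phi=\mathrm{div}_g(\nabla_g\Phi-\Phi\,\partial_t)$, because $\partial_t$ is a unit Killing field for $g=dt^2+g_0$. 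Writing $u=\Phi-\ln\w$ and using $|\Phi|\le\Lambda$ together with \eqref{eq:ea-unified-1} gives $2e^{4u}|\nabla_g v|^2=2e^{4\Phi}\w^{-4}|\nabla_g v|^2\le 2e^{4\Lambda}\w^{-4}|\nabla_g v|^2\in L^1$, so all integrals below converge and the weak equation may be tested against any bounded $H^1$ function with compact support in $t$.

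Fix a Lipschitz cutoff $\eta=\eta(t)$, equal to $1$ on $[-R,R]$, vanishing near $t=\pm2$, with $0\le\eta\le1$ and $|\eta'|\le(2-R)^{-1}$, so that $\int_{-2}^{2}|\eta'|^2\,dt\le 2(2-R)^{-1}$ and $\int_{-2}^{2}\eta|\eta'|\,dt\le 1$. The first step is a reverse Poincar\'e inequality for $\Phi$: test $L\Phi=2e^{4u}|\nabla_g v|^2$ against $\psi=\eta^2(\Phi+\Lambda)\ge0$. Because $2e^{4u}|\nabla_g v|^2\ge0$ and $\psi\ge0$, the resulting right-hand side term has a favorable sign and can simply be discarded; integrating by parts using the divergence form of $L$ and treating the first-order term through the Sobolev identity $\Phi\,\partial_t\Phi=\tfrac12\partial_t(\Phi^2)$ leads to
\[
\int\eta^2|\nabla_g\Phi|^2\le -2\int\eta(\Phi+\Lambda)\,\nabla_g\Phi\cdot\nabla_g\eta+2\int\eta\,(\partial_t\eta)\,\Phi(\Phi+\Lambda)-\int\eta\,(\partial_t\eta)\,\Phi^2,
\]
with all integrals over $(-2,2)\times\mathbb{S}^2$. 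Using $|\Phi+\Lambda|\le2\Lambda$, $|\Phi|\le\Lambda$, Young's inequality to absorb the gradient cross-term, and $\mathrm{vol}(\mathbb{S}^2)=4\pi$, one obtains $\int\eta^2|\nabla_g\Phi|^2\le C\Lambda^2\!\left(\int|\eta'|^2+\int\eta|\eta'|\right)d\mathrm{vol}_{g_0}dt$.

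For the $v$-part, since $2e^{4u}|\nabla_g v|^2=L\Phi=\mathrm{div}_g(\nabla_g\Phi-\Phi\,\partial_t)$, testing the first equation against $\eta^2$ gives $2\int\eta^2 e^{4u}|\nabla_g v|^2=-2\int\eta\,\nabla_g\Phi\cdot\nabla_g\eta+2\int\eta\,\Phi\,\partial_t\eta$, and Cauchy--Schwarz together with the previous step yields the same type of bound for $\int\eta^2 e^{4u}|\nabla_g v|^2$. Adding the two estimates, inserting $\int|\eta'|^2\,d\mathrm{vol}_{g_0}dt\le 8\pi(2-R)^{-1}$ and $\int\eta|\eta'|\,d\mathrm{vol}_{g_0}dt\le4\pi$, and finally using $\Lambda\ge1$ and $0<2-R<2$ to absorb $(2-R)^{-1}$ and the absolute constants into $(2-R)^{-2}$, we reach $\int_{-R}^{R}\int_{\mathbb{S}^2}e_g(\Phi,v)\,d\mathrm{vol}_{g_0}dt\le C_0\Lambda^2(2-R)^{-2}$; a careful accounting of the Young-inequality constants shows $C_0$ can be taken well below $5000$.

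The main obstacle is the apparent self-coupling: the right-hand side of the equation one tests is itself (twice) a piece of the quantity to be bounded. This is resolved precisely by feature (i)---its nonnegativity allows it to be dropped when pairing with a nonnegative multiple of $\Phi$, which decouples the gradient estimate for $\Phi$ from the estimate for the $v$-energy, the latter then being recovered directly from the equation. The only other point requiring care is making the manipulation of the non-divergence first-order term $-\partial_t\Phi$ rigorous for merely $H^1$ weak solutions, which is why feature (ii) and the chain rule $\partial_t(\Phi^2)=2\Phi\,\partial_t\Phi$ are used; securing the specific constant $5000$ is then routine bookkeeping.
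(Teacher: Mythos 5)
Your proof is correct, and it takes a variant route from the paper's that is worth noting. The paper's argument (Lemma~\ref{lem:local-bound}) proceeds in the opposite order: it first tests the $\Phi$-equation against $\eta^2$ alone, integrating by parts \emph{twice} in $t$ so that all derivatives land on the cutoff (this requires $\eta\in C^2$, whence the paper's hypothesis $|\eta'|+(2-R)|\eta''|\le 10/(2-R)$); since $|\Phi|\le\Lambda$, this yields a direct bound on $\int\eta^2 e^{4u}|\nabla_g v|^2$ with no reference to $\nabla_g\Phi$. It then tests against $\eta^2\Phi$, controls the resulting cross term $2\eta^2\Phi\,e^{4u}|\nabla_g v|^2$ by $2\Lambda$ times the $v$-energy just bounded, and closes. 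Your proof reverses this: you first obtain the $\Phi$-gradient bound and only afterwards the $v$-energy, breaking the circular dependence not by a second integration by parts but by the observation that $\eta^2(\Phi+\Lambda)\ge 0$ lets you simply drop the nonnegative right-hand side $2e^{4u}|\nabla_g v|^2\ge 0$. Your test function $\eta^2(\Phi+\Lambda)=\eta^2\Phi+\Lambda\eta^2$ is in fact a nonnegative linear combination of the paper's two test functions, which is what makes the positivity trick available. A small dividend of your ordering is that the cutoff only needs to be Lipschitz rather than $C^2$; a small cost is that your recovery of the $v$-energy goes through Cauchy--Schwarz and picks up a factor $(2-R)^{-3/2}$ rather than $(2-R)^{-2}$ at an intermediate stage, which is harmless since $0<2-R<2$. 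Your bookkeeping for the constant $5000$ is plausible: tracing your Young-inequality splittings with $\Lambda\ge1$, $\mathrm{vol}(\mathbb{S}^2)=4\pi$, $\int|\eta'|^2\,dt\le 2(2-R)^{-1}$, and $\int\eta|\eta'|\,dt\le 1$ gives roughly $\int\eta^2|\nabla_g\Phi|^2\lesssim 1800\,\Lambda^2(2-R)^{-2}$ and $\int\eta^2 e^{4u}|\nabla_g v|^2\lesssim 400\,\Lambda^2(2-R)^{-2}$, comfortably inside the stated bound.
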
 

\begin{proof}  Let $\eta=\eta(t)\in C_c^2(-2,2)$  be a smooth cut-off function satisfying $0\le \eta \le 1$,
\[
\eta= 1 \quad \text{on } (-R,R)\quad \text{and}\quad |\eta'|+(2-R)|\eta''| \le \frac{10}{2-R}. 
\]
Using $\eta^2$ as a test function in the first equation of \eqref{eq:main-unified-2}, we have 
\begin{align*}
\int_{-2}^{2} \int_{\mathbb{S}^2}\Big[\Phi\big(\frac{ d ^2}{ d  t^2}\eta^2+ \frac{ d }{ d  t}\eta^2\big) 
- 2 \eta^2 e^{4u} |\nabla_{g}  v|^2  \Big] \, dvol_{g_0}  d  t =0. 
\end{align*}
Since $|\Phi|\le \Lda$, it follows that
\begin{align} \label{eq:energy-1}
\int_{-2}^{2} \int_{\mathbb{S}^2} \eta^2 e^{4u} |\nabla_{g}  v|^2 \, d  vol_{g_0}  d  t  \le \frac{1000\Lda}{ (2-R)^2}. 
\end{align}
Next, using $\eta^2 \Phi$ as a test function  in the first equation of \eqref{eq:main-unified-2}, we have 
\begin{align*}
\int_{-2}^{2} \int_{\mathbb{S}^2}&\Big[\pa_t \Phi \pa_t (\Phi \eta^2 ) 
+ \eta^2 |\nabla_{g_0} \Phi|^2 - \frac12{\Phi^2} \frac{ d  }{ d  t} \eta^2  
+2 \eta^2 \Phi e^{4u} |\nabla_{g}  v|^2 \Big] \, d  vol_{g_0}  d  t =0,
\end{align*}
and by the Cauchy inequality
\[
\pa_t \Phi \pa_t (\Phi \eta^2 )  \ge \frac12 |\pa_t \Phi|^2 \eta^2 - 4 \Phi^2 |\pa_t \eta|^2. 
\]
Hence
\[
\int_{-2}^{2} \int_{\mathbb{S}^2} \eta^2 |\nabla_{g}  \Phi|^2 \, d  vol_{g_0}  d  t 
\le  \frac{1000\Lda^2}{ (2-R)^2} 
+4  \int_{-2}^{2} \int_{\mathbb{S}^2} \eta^2 e^{4u} |\nabla_{g}  v|^2 \, d  vol_{g_0}  d  t. 
\]
Combining the above inequality with \eqref{eq:energy-1} produces the desired result. 
\end{proof}
 

We now proceed to study the regularity and uniform estimates of $(\Phi, v)$.
Li-Tian \cite{LT} proved that $(\Phi,  v)\in C^{3,\alpha}$ for any $0<\alpha<1$, if it is a local minimizer.  
The minimality was used only in the proof of a monotonicity-type formula, 
which can be derived from a Caccioppoli-type (or reverse Poincar\'{e}) inequality in our setting as Nguyen \cite[pg. 424]{N} pointed out. 
For any $P_0\in \mathbb{S}^2$, we denote by $\B_{R}(P_0)$ the open geodesic ball centered at $P_0$ 
with radius $R>0$, and set 
\[
Q_R(t_0,P_0)= (t_0-R, t_0+R) \times \B_{R}(P_0). 
\]
The following Caccioppoli-type inequality may be viewed as corresponding to Lemma 4.1 of Li-Tian \cite{LT}.

\begin{lem}\label{lem:monotonicity} 
Let $(\Phi,v)\in H^{1}((-2, 2)\times \mathbb{S}^2)$ be a weak solution 
of \eqref{eq:main-unified-2} satisfying \eqref{eq:ubd-unified-1}, \eqref{eq:ea-unified-1}, 
and \eqref{eq:trace-unified-1} for some positive constants $\Lambda$ and $a$. Suppose that $L\ln\w=0$,
$c_1\in [-\Lda, \Lda]$ is an arbitrary constant, and $c_2=a$ if $N\in \B_{2\delta}(P_0) $ while
$c_2=-a$ if $S \in \B_{2\delta}(P_0)$ otherwise $c_2$ is an arbitrary constant. 
Then for any $Q_\delta(t_0,P_0) \subset (-2,2) \times \mathbb{S}^2$ with $0<\delta<1/2$ it holds that
\begin{align*}
\int_{Q_{\delta/2}(t_0,P_0)} e_{g}(  \Phi, v)\, d  vol_{g} 
\le  \frac{C}{\delta^2} \int_{Q_{\delta}(t_0,P_0)\setminus Q_{\delta/2}(t_0,P_0)} 
\big(  |  \Phi-c_1|^2+\w^{-4} | v-c_2|^2\big)\, d  vol_{g},
\end{align*}
where $C$ is a positive constant depending only on $\Lda$ and $A_1$.
\end{lem}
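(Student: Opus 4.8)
The plan is to establish a Caccioppoli-type (reverse Poincar\'e) inequality by testing the two renormalized harmonic map equations against cut-off multiples of $\Phi - c_1$ and $v - c_2$, using the negative curvature of the target (which manifests here through the good sign in the $u$-equation) to absorb the cross terms. First I would fix a smooth cut-off $\eta\in C_c^\infty(Q_\delta(t_0,P_0))$ with $\eta\equiv 1$ on $Q_{\delta/2}(t_0,P_0)$, $0\le\eta\le 1$, and $|\nabla_g\eta|\le C/\delta$, and note that since $L\ln\w=0$ the system \eqref{eq:main-unified-2} has homogeneous right-hand side in the first equation, so $L\Phi = 2e^{4u}|\nabla_g v|^2 \ge 0$.

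The main estimate for $\Phi$ comes from testing $L\Phi - 2e^{4u}|\nabla_g v|^2 = 0$ with $\eta^2(\Phi - c_1)$. Integration by parts produces $\int \eta^2|\nabla_g\Phi|^2$ plus terms controlled by $\int|\nabla_g\eta|^2(\Phi-c_1)^2$ and a first-order $\partial_t$ term (handled by Cauchy-Schwarz as in Lemma~\ref{lem:local-bound}), while the key point is that the term $-2\int\eta^2(\Phi-c_1)e^{4u}|\nabla_g v|^2$ has a \emph{favorable} structure: since $|\Phi|\le\Lda$ and $|c_1|\le\Lda$, we get $\int\eta^2 e^{4u}|\nabla_g v|^2$ appearing with a controllable constant, so this term may be absorbed once we also control the $v$-energy. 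To control $\int\eta^2 e^{4u}|\nabla_g v|^2$ I would test the second equation $Lv + 4\nabla_g u\cdot\nabla_g v = 0$ with $\eta^2 e^{4u}(v-c_2)$; because $\nabla_g(e^{4u}) = 4e^{4u}\nabla_g u$, the terms $4\eta^2 e^{4u}\nabla_g u\cdot\nabla_g v\,(v-c_2)$ combine with part of the integration-by-parts terms from $\int\nabla_g v\cdot\nabla_g(\eta^2 e^{4u}(v-c_2))$ to yield exactly $\int\eta^2 e^{4u}|\nabla_g v|^2$ plus terms bounded by $\int|\nabla_g\eta|^2 e^{4u}(v-c_2)^2$, using $\|\ln\w - \ln\sin\theta\|_{C^{10}}\le A_1$ and $|\Phi|\le\Lda$ to convert $e^{4u}=e^{4\Phi}\w^{-4}$ into a bounded multiple of $\w^{-4}$. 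The choice of $c_2$ as $\pm a$ when $N$ or $S$ lies in $\B_{2\delta}(P_0)$ is precisely what makes $v - c_2$ vanish on the singular axis locus (by the trace condition \eqref{eq:trace-unified-1}), so that the boundary/singular contributions in the integration by parts vanish and the right-hand side $\w^{-4}(v-c_2)^2$ is integrable up to $\{N,S\}$ — this is the step requiring care.

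Combining the two tested identities, one gets $\int_{Q_{\delta/2}} e_g(\Phi,v) \le \tfrac{C}{\delta^2}\int_{Q_\delta\setminus Q_{\delta/2}} (|\Phi-c_1|^2 + \w^{-4}|v-c_2|^2)$ after absorbing, where I would restrict the right-hand side integration to the annular region $Q_\delta\setminus Q_{\delta/2}$ by a standard trick: choose $\eta$ supported in $Q_\delta$ and apply the inequality with balls of radii in $[\delta/2,\delta]$, then use that $\nabla_g\eta$ is supported in the annulus and an iteration/hole-filling argument (or simply a second cut-off that is $1$ on $Q_{\delta/2}$ and supported in $Q_\delta$ with gradient supported in the annulus) transfers the full-ball right-hand side to the annular one. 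The constant $C$ depends only on $\Lda$ (from the $L^\infty$ bound and the absorption constants) and $A_1$ (from the conversion between $e^{4u}$ and $\w^{-4}$ and the $C^{10}$ control of $\ln\w$).

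The hard part will be handling the interaction with the axis singularity: justifying the integration by parts in the $v$-equation test when $N$ or $S\in\B_{2\delta}(P_0)$, since $e^{4u}\sim\mathrm{dist}(\cdot,\{N,S\})^{-4}$ blows up there. This is controlled by exploiting that $v-c_2$ has zero trace on the axis together with the finite renormalized energy hypothesis \eqref{eq:ea-unified-1}, which guarantees $\w^{-4}|\nabla_g v|^2\in L^1$; a cut-off near $\{N,S\}$ followed by a limiting argument (using that $\w^{-4}(v-c_2)^2$ is integrable, which one gets from a Hardy-type inequality on $\mathbb{S}^2$ near the poles with the weight $\w^{-4}$ and the vanishing trace) removes the singular locus without boundary contribution. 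Once this is in place, everything else is the routine Caccioppoli bookkeeping already modeled on Lemma~\ref{lem:local-bound} and Lemma 4.1 of Li-Tian \cite{LT}.
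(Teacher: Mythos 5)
Your proposal is correct and takes essentially the same approach as the paper: test the second equation against $\eta^2 e^{4u}(v-c_2)$ to control the weighted $v$-energy, then test the first against $\eta^2(\Phi-c_1)$ and use $|\Phi|,|c_1|\le\Lambda$ to absorb the coupling term. The only notable difference in presentation is that the paper first makes the conformal change $\bar g=e^{-2t}g$, under which $L$ becomes $\Delta_{\bar g}$; this avoids having to handle the first-order $-\partial_t$ term separately via Cauchy--Schwarz as you do, and the conversion back to $g$ at the end is harmless since $e^{-2t}$ is bounded above and below on $(-2,2)\times\mathbb{S}^2$. Also, your worry about needing a hole-filling/iteration argument is unnecessary: with a single cut-off $\eta$ equal to $1$ on $Q_{\delta/2}$ and supported in $Q_\delta$, the terms involving $\nabla_g\eta$ are automatically supported in the annulus, which is exactly how the paper gets the right-hand side restricted to $Q_\delta\setminus Q_{\delta/2}$ (you correctly mention this alternative in passing). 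Your extra care about admissibility of the test function near the axis (Hardy/weighted Poincar\'e plus a cut-off near $\{N,S\}$) is valid and addresses a point the paper treats implicitly; it does not change the argument.
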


\begin{proof} 
Set $\bar g=e^{-2t}g$. A simple computation shows that   
\[
|\nabla_{\bar g} f|^2= e^{2t}|\nabla_{g}f |^2, \quad\quad\quad \Delta_{\bar g}f=e^{2t}( \Delta_gf-\partial_tf)=e^{2t}Lf,
\] 
for any $f\in C^2(\mathbb R\times \mathbb S^2)$. Moreover, if $L\ln\w=0$ then \eqref{eq:main-unified-2} is equivalent to  
\begin{equation}\label{eq:main-unified-4} 
\begin{cases}
\Delta_{\bar g}  \Phi -2 e^{4 u}  |\nabla_{\bar g}  v|^2 =0, \\
\Delta_{\bar g}  v+4\nabla_{\bar g}  u \cdot \nabla_{\bar g}  v =0.
\end{cases} 
\end{equation} 
We now have that $(\Phi, v)$ is a weak solution of \eqref{eq:main-unified-4} on $(-2,2) \times \mathbb{S}^2$.

Let $\eta\in C_c^2(Q_{\delta}(t_0,P_0))$ be a cutoff function with $0\le \eta\le 1$, 
$\eta=1$ on $Q_{\delta/2}(t_0,P_0)$, and $|\pa_t^k \eta|+ |\nabla_{g_0} ^k\eta| \le 10 \delta^{-k}$ 
for $k=1,2$. Note that the second equation of \eqref{eq:main-unified-4} can be written 
in the divergence form $\mathrm{div}(e^{4u} \nabla_{\bar g} v)=0$.  
Then using $\eta^2e^{4 u} ( v-c_2)$ as a test function in this equation, 
and applying the Cauchy inequality produces
\begin{align*}
0&= \int_{Q_{\delta}(t_0, P_0)} 
e^{4 u}  \nabla_{\bar g}v \cdot  \nabla_{\bar g}(\eta^2( v-c_2)) \, d  vol_{\bar g}\\&=
\int_{Q_{\delta}(t_0, P_0)}\Big(  e^{4 u} | \nabla_{\bar g}  v|^2 \eta^2 
+ 2 \eta (v-c_2) e^{4 u}  \nabla_{\bar g} v \cdot  \nabla_{\bar g} \eta\Big) \, d  vol_{\bar g} \\&
\ge \frac{1}{2}\int_{Q_{\delta}(t_0, P_0)}  e^{4 u} | \nabla_{\bar g}  v|^2 \eta^2\, d  vol_{\bar g}  
-4\int_{Q_{\delta}(t_0, P_0) \setminus Q_{\delta/2}(t_0, P_0) }  
e^{4u}|\nabla_{\bar g}\eta|^2 ( v-c_2)^2 \, d  vol_{\bar g}. 
\end{align*}
It follows that 
\begin{equation} \label{eq:mono-v-1}
\int_{Q_{\delta}(t_0, P_0)}  e^{4 u} | \nabla_{\bar g}  v|^2 \eta^2\, d  vol_{\bar g} 
\le \frac{80}{\delta^2} \int_{Q_{\delta}(t_0, P_0) \setminus Q_{\delta/2}(t_0, P_0) } 
e^{4 u}   ( v-c_2)^2\, d  vol_{\bar g}. 
\ee
Using $\eta^2(  \Phi-c_1)$ as a test function in the first equation of \eqref{eq:main-unified-4} yields
\begin{align*}
&\int_{Q_{\delta}(t_0, P_0)}   \nabla_{\bar g} \Phi \cdot  \nabla_{\bar g}(\eta^2(  \Phi-c_1)) \, d  vol_{\bar g} \\
&\qquad= -2 \int_{Q_{\delta}(t_0, P_0)}  e^{4 u} | \nabla_{\bar g}  v|^2 \eta^2(  \Phi-c_1)\, d  vol_{\bar g}  \\
&
\qquad\le 4 \Lda  \int_{Q_{\delta}(t_0, P_0)}  e^{4 u} | \nabla_{\bar g}  v|^2 \eta^2\, d  vol_{\bar g}. 
\end{align*} 
By the  Cauchy  inequality again, and \eqref{eq:mono-v-1},  we conclude that
\[
\int_{Q_{\delta}(t_0, P_0)} |  \nabla_{\bar g}\Phi|^2 \eta^2\, d  vol_{\bar g} 
\le \frac{C}{\delta^2} \int_{Q_{\delta}(t_0, P_0) \setminus Q_{\delta/2}(t_0, P_0) } 
\big((  \Phi-c_1)^2+ e^{4 u}   ( v-c_2)^2\big)\, d  vol_{\bar g}.
\]
Now observe that $| u+\ln \w| \le \Lda$ implies
\begin{align*}&\int_{Q_{\delta}(t_0, P_0)} 
\big(|  \nabla_{\bar g}\Phi|^2+e^{4 u} | \nabla_{\bar g}  v|^2\big) \eta^2\, d  vol_{\bar g}\\
&\qquad \le \frac{C}{\delta^2} \int_{Q_{\delta}(t_0, P_0) \setminus Q_{\delta/2}(t_0, P_0) } 
\big((  \Phi-c_1)^2+ \omega^{-4}   ( v-c_2)^2\big)\, d  vol_{\bar g}.
\end{align*}
Since $e^{-2t}$ has positive upper and lower bounds on $(-2,2) \times \mathbb{S}^2$, 
the gradients and the volume forms can be changed from $\bar g=e^{-2t}g$ to $g$. 
This gives the desired inequality.
\end{proof}

\begin{thm}\label{thm:reg}
Let $(\Phi,v)\in H^{1}((-2, 2)\times \mathbb{S}^2)$ be a weak solution of \eqref{eq:main-unified-2}
satisfying \eqref{eq:ubd-unified-1},  \eqref{eq:ea-unified-1}, and \eqref{eq:trace-unified-1} for some positive constants $\Lambda$ and $a$.  
Then
$\pa_t^m\pa_\phi^n(\Phi,v)\in {C^{3,\alpha}((-2,2) \times \mathbb{S}^2 )}$ for any $\alpha\in (0,1)$ 
and $ m+n\le 3$.   
Moreover, for any $1<R<2$ the following estimates hold
\[
\sum_{m+n\le 3}\| \pa_t^m\pa_\phi^n(  \Phi,  v)\|_{C^{3,\alpha}((-R,R) \times \mathbb{S}^2)}  \le C, 
\]
and 
\begin{equation} \label{eq:tan-estimates}
\begin{split}
\sum_{l=0}^{3}\sum_{m,n\le 3}  \w^l  \left|\pa_\theta^l\pa_t^m\pa_\phi^n( v- a)\right| 
& \le C\w^{3+\alpha} \quad \mbox{ on }(-R,R) \times \mathbb{S}^2 _{+}, \\
\sum_{l=0}^{3}\sum_{m,n\le 3}  \w^l  \left|\pa_\theta^l\pa_t^m\pa_\phi^n( v+ a)\right| 
& \le C\w^{3+\alpha} \quad \mbox{ on }(-R,R) \times \mathbb{S}^2 _{-}, 
\end{split}
\ee
where $\mathbb{S}^2_+ =\{0\le \theta<\pi/2\}$, $\mathbb{S}^2_- =\{\pi/2<\theta\le \pi\}$, 
and $C$ is a positive constant depending only on $\Lda, A_1, a, \alpha, R$. 
\end{thm}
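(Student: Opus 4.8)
The plan is to split the argument into three parts: interior regularity away from the poles $N,S$, the passage to uniform $C^{3,\alpha}$-bounds on $(-R,R)\times\mathbb{S}^2$, and the weighted estimates near the poles, which carry the essential new content.

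By Remark~\ref{rem:homlize} we may and do assume $L\ln\w=0$, so that in the conformally rescaled metric $\bar g=e^{-2t}g$ the pair $(\Phi,v)$ is a weak solution of \eqref{eq:main-unified-4}. Fix an interior point $P_0\in\mathbb{S}^2\setminus\{N,S\}$ together with a small cylinder $Q_\delta(t_0,P_0)$ staying away from the poles, and apply Lemma~\ref{lem:monotonicity} with $c_1,c_2$ chosen to be the mean values of $\Phi,v$ over the annular region $Q_\delta(t_0,P_0)\setminus Q_{\delta/2}(t_0,P_0)$. Combined with the Poincar\'e inequality on the annulus, this converts the Caccioppoli estimate into a hole-filling inequality; iterating it in the manner of Schoen-Uhlenbeck \cite{SU} and Li-Tian \cite{LT} yields a Morrey-type decay $\int_{Q_r(t_0,P_0)}e_g(\Phi,v)\, dvol_g\le C r^{1+2\gamma}$ for some $\gamma\in(0,1)$. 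The essential structural input here, replacing the minimality or axisymmetry hypotheses of the earlier works, is the \emph{nonpositive curvature} of $\mathbb{H}^2$: it is exactly what makes the right-hand side $2e^{4u}|\nabla_g v|^2$ of the first equation nonnegative, which is used throughout the Caccioppoli bound. Campanato's theorem then gives $(\Phi,v)\in C^{0,\gamma}_{\mathrm{loc}}$ on $(-2,2)\times(\mathbb{S}^2\setminus\{N,S\})$; on such a set $e^{4u}=e^{4\Phi}\w^{-4}$ is bounded and H\"older continuous, $L$ is uniformly elliptic, and the quadratic-gradient nonlinearities in \eqref{eq:main-unified-2} can be bootstrapped through $W^{2,p}$ and Schauder estimates once the solution is known to be continuous (exactly as for harmonic maps with nonpositively curved targets), giving $C^{k,\alpha}_{\mathrm{loc}}$ bounds for every $k$ away from the poles. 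Since $g$, $L$, and---up to \eqref{eq:bds-omega-1} and \eqref{eq:bds-omega-3}---the data are invariant under translation in $t$, the resulting constants depend only on $\Lda,A_1,a,\alpha$ and the distances to $\{-2,2\}$ and $\{N,S\}$; covering $(-R,R)\times(\mathbb{S}^2\setminus\{N,S\})$ by finitely many such cylinders and using the global energy bound of Lemma~\ref{lem:local-bound} yields the uniform interior estimate.

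The core of the proof is the analysis near a pole, say $N$ (the case of $S$ being symmetric). There the second equation has the divergence form $\mathrm{div}_{\bar g}(e^{4u}\nabla_{\bar g}v)=0$ with the \emph{degenerate singular} weight $e^{4u}\sim\w^{-4}\sim\mathrm{dist}(\,\cdot\,,\{N\})^{-4}$, whose model operator $\mathrm{div}(\w^{-4}\nabla\,\cdot\,)$ has indicial roots $0$ and $4$ along the axis $\{N\}$; since the trace of $v$ at $N$ is $a$, the root $0$ is excluded and $v-a$ is forced to vanish at $N$ to the next order. I would make this quantitative by running \emph{weighted} Caccioppoli and De Giorgi--Moser iterations for this degenerate operator on the cylinders $Q_\delta(t_0,N)$: Lemma~\ref{lem:monotonicity} with $c_2=a$ produces an initial decay $\int_{Q_\delta(t_0,N)}\w^{-4}|\nabla_g v|^2\, dvol_g\le C\delta^{2+2\mu}$ for a small $\mu>0$, and a bootstrap---comparing $v-a$ to explicit barriers built from the model operator and re-inserting each improved decay rate---raises $\mu$ up to the indicial order, giving in particular $|v-a|\le C\w^{3+\alpha}$ and, after differentiating and iterating, the full family of weighted bounds \eqref{eq:tan-estimates} for $v\mp a$ and their $\partial_\theta$-derivatives. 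Feeding these back, the right-hand side $2e^{4u}|\nabla_g v|^2=2e^{4\Phi}\w^{-4}|\nabla_g v|^2$ of the first equation vanishes to positive order at $N$, hence is bounded and H\"older up to the pole; since $L$ is uniformly elliptic on all of $(-2,2)\times\mathbb{S}^2$, Schauder estimates give $\Phi\in C^{2,\alpha}$ up to $N$, and a further round of (weighted) Schauder applied to both equations upgrades this to $(\Phi,v)\in C^{3,\alpha}((-2,2)\times\mathbb{S}^2)$. Finally, the derivatives along the Killing directions $\partial_t$ and $\partial_\phi$, which preserve $g$, $L$ and the singular set $\{N,S\}$, are controlled by differentiating the system: $\partial_t^m\partial_\phi^n(\Phi,v)$ with $m+n\le 3$ solve linearized systems of the same structure whose coefficients have already been estimated, so the entire scheme applies to them verbatim, giving $\partial_t^m\partial_\phi^n(\Phi,v)\in C^{3,\alpha}$ and the stated weighted bounds.

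I expect the decisive obstacle to be precisely this weighted analysis at the poles. No monotonicity formula is available for the renormalized energy density, the operator controlling $v$ is genuinely degenerate, and one must carry the improving estimates for $v-a$ through the coupling into the $\Phi$-equation without losing the gains. The ingredients---weighted Caccioppoli inequalities, weighted Sobolev embeddings, De Giorgi--Moser iteration for $\mathrm{div}(\w^{-4}\nabla\,\cdot\,)$, and barriers from the explicit model---are individually standard, but organizing them into a bootstrap that delivers the $\w$-weighted $C^{3,\alpha}$-type estimates \eqref{eq:tan-estimates} with constants of the asserted dependence is where the genuine work lies.
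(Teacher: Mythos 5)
Your proposal is correct and takes essentially the same approach as the paper, which (after Remark~\ref{rem:homlize}, Lemma~\ref{lem:local-bound}, and Lemma~\ref{lem:monotonicity}) simply defers to the proofs of Theorems~1.1 and 5.1 of Li-Tian~\cite{LT}. Your sketch correctly reconstructs that argument: the Caccioppoli inequality in place of minimality, small-energy regularity away from the poles, the indicial-root/barrier estimate (quoted here as Lemma~\ref{lem:LT}) and De~Giorgi iteration near the poles, feedback into the $\Phi$-equation via Schauder, and differentiation along the Killing fields $\partial_t,\partial_\phi$.
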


\begin{proof} By Remark \ref{rem:homlize}, we can assume that $L\ln\w=0$.  
Given Lemmas \ref{lem:local-bound} and \ref{lem:monotonicity}, 
the proof of the desired result follows from the proof of Theorems 1.1 and 5.1 of Li-Tian \cite{LT}. 
Although not stated here, it should be noted that $(\Phi,v)$ is in fact smooth in the $t$ and $\phi$ variables. 
Away from the singular set of $\ln \w$, these results were established by Schoen-Uhlenbeck \cite{SU}. 
\end{proof}

Theorem \ref{thm:reg} gives the basic regularity and estimates of weak solutions $(\Phi, v)$ 
to the harmonic map equations \eqref{eq:main-unified-2}, up to a certain order. 
This, however, is not sufficient to adapt the method of Simon \cite{Simon83} concerning 
analysis of asymptotics. For this purpose we will establish, in Proposition \ref{prop:partial-t-linear} below, $L^\infty$-estimates for derivatives of $(\Phi, v)$ in terms of a linear form involving certain $L^2$-norms of derivatives, 
with constants depending on $\Lda $ and $A_1$ from \eqref{eq:bds-omega-1}.  If $\delta<\theta<\pi-\delta$ for some $\delta>0$, 
such linear estimates follow from standard elliptic theory for linear equations 
with smooth coefficients. Extra work is needed if $\theta $ is close to $0$ 
and $\pi$, due to the singular behavior of $u$. We will incorporate some ideas from Li-Tian \cite{LT}. 

Let us first make a simple but important observation. Note that the second equation of \eqref{eq:main-unified-2} 
may be regarded as a linear equation for $v$. Indeed, upon introducing the corresponding linear operator  
$\mathcal L=L+4\nabla_gu\cdot\nabla_g$, we have
$$\mathcal Lh=Lh-4\big(\nabla_g\ln\w-\nabla_g\Phi\big)\cdot\nabla_gh,$$ 
according to \eqref{eq:definition-Phi}. Notice that
$\Phi\in C^{3,\alpha}((-2,2) \times \mathbb{S}^2 )$ for any $\alpha\in (0,1)$ by Theorem \ref{thm:reg},
and $\nabla_g\ln\w$ is singular at the north and south poles $N$, $S$. We begin with two preperatory lemmas.
The first rephrases Lemma 4.4 of \cite{LT} in our specific setting\footnote{The $\Delta \rho_\infty$ in (4.38) of \cite{LT} should be $\nabla \rho_\infty$, see their proof on page 20.  In addition, Lemma 4.4 was misquoted as Lemma 4.3 at the start of its proof.}. 

\begin{lem}
\label{lem:LT} 
Let $(\Phi,v)\in H^{1}((-2, 2)\times \mathbb{S}^2)$ be a weak solution of \eqref{eq:main-unified-2} 
satisfying \eqref{eq:ubd-unified-1},  \eqref{eq:ea-unified-1}, and \eqref{eq:trace-unified-1}, and let 
$f$ be continuous on $(-2,2)\times \mathbb{S}^2_+ $ with 
$$|f(t,\phi,\theta)|\le K(\sin \theta)^{1+\alpha}\quad\text{ on }(-2 ,2)\times \mathbb{S}^2_+,$$ 
for some constants $\alpha\in (0,1)$ and $K>0$. Suppose that  $h\in C^2((-2,2) \times \mathbb{S}^2_+)$ 
is a solution of the equation
\[
Lh +4\nabla_gu\cdot  \nabla_{g}h =f \quad \text{ on }(-2,2)\times \mathbb{S}^2_+, 
\]
and satisfies $h=0$ on $(-2,2) \times \{N\}$. Then
\begin{equation}\label{akfniiqh}
|h(t,\phi,\theta)|\le C(\sin \theta)^{3+\alpha}
\big(K+\|h\|_{L^\infty((-2,2)\times \mathbb S^2_+)}\big)\quad \mbox{ on }(-1,1)\times \mathbb{S}^2_+, 
\end{equation}
where $C$ is a positive constant depending only on $\alpha$, $A_1$,
and the $C^1$-norm of $\Phi$ on $(-2,2) \times \mathbb{S}^2_+$.
\end{lem}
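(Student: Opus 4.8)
The statement is essentially a weighted maximum-principle/barrier estimate near the singular axis $\theta=0$, and the plan is to follow the argument of Lemma 4.4 in Li-Tian \cite{LT}, adapted to the operator $\mathcal{L}=L+4\nabla_g u\cdot\nabla_g$ on the cylinder. First I would record the structure of the coefficient: writing $u=\Phi-\ln\w$ with $\Phi\in C^{3,\alpha}$ and $\ln\w$ comparable to $\ln\sin\theta$ (by \eqref{eq:bds-omega-1}), the drift term splits as $4\nabla_g u\cdot\nabla_g h = 4\nabla_g\Phi\cdot\nabla_g h - 4\nabla_g\ln\w\cdot\nabla_g h$. The first piece has bounded coefficients; the second is the singular part, and its leading behavior near the north pole is $-4\,\partial_\theta(\ln\sin\theta)\,\partial_\theta h = -4\cot\theta\,\partial_\theta h$. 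Thus near $N$ the equation behaves like $\partial_t^2 h-\partial_t h+\Delta_{g_0}h-4\cot\theta\,\partial_\theta h = f$ with $|f|\lesssim(\sin\theta)^{1+\alpha}$, and the homogeneous Dirichlet condition $h=0$ at $N$ is imposed.

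The key step is to construct a supersolution (barrier) of the form $w = C_0(\sin\theta)^{3+\alpha}\big(K+\|h\|_{L^\infty}\big)$ on a neighborhood $(-3/2,3/2)\times\{\sin\theta<\delta_0\}$ of the axis, for suitable $C_0$, together with a cutoff to handle the region $\sin\theta\geq\delta_0$ where standard interior elliptic estimates apply. The point is that applying $\mathcal{L}$ to $(\sin\theta)^{3+\alpha}$ produces a leading term from $\Delta_{g_0}(\sin\theta)^{3+\alpha}-4\cot\theta\,\partial_\theta(\sin\theta)^{3+\alpha}$: one computes $\partial_\theta(\sin\theta)^\mu=\mu(\sin\theta)^{\mu-1}\cos\theta$ and $\Delta_{g_0}(\sin\theta)^\mu = \mu(\mu-1)(\sin\theta)^{\mu-2}\cos^2\theta + \mu(\sin\theta)^{\mu-2}(\cos^2\theta-\sin^2\theta)\cdots$, and the dangerous $(\sin\theta)^{\mu-2}$ terms combine with the $-4\cot\theta\cdot\mu(\sin\theta)^{\mu-1}\cos\theta = -4\mu(\sin\theta)^{\mu-2}\cos^2\theta$ contribution. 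With $\mu=3+\alpha$ the coefficient of $(\sin\theta)^{\mu-2}\cos^2\theta$ becomes $\mu(\mu-1)+\mu-4\mu=\mu(\mu-4)=(3+\alpha)(\alpha-1)<0$, so for $\alpha\in(0,1)$ the barrier function is a strict supersolution up to lower-order errors of size $(\sin\theta)^{\mu}$ and the $\nabla_g\Phi$-drift which is also lower order. Hence $\mathcal{L}w \leq -c(\sin\theta)^{1+\alpha}(K+\|h\|_{L^\infty})$ near the axis, which dominates $f$ and also dominates the error terms from the $C^1$ bound on $\Phi$ after shrinking $\delta_0$; the choice of exponent $3+\alpha$ (two units better than the source exponent $1+\alpha$) is exactly what makes this work, and it is the same gain of regularity as in Li-Tian.

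Then I would run the comparison principle: on the boundary of the collar $(-3/2,3/2)\times\{\sin\theta<\delta_0\}$, the portion $\{\sin\theta=\delta_0\}$ is controlled by $\|h\|_{L^\infty}\leq\delta_0^{-(3+\alpha)}(\sin\theta)^{3+\alpha}\|h\|_{L^\infty}$ so $w\geq |h|$ there after choosing $C_0$ large, the portion at $N$ has $h=0$, and on $\{t=\pm 3/2\}$ one uses $\|h\|_{L^\infty}$ again (this is why the conclusion is stated on $(-1,1)$ rather than $(-2,2)$ — a slight loss at the $t$-ends, absorbed by an intermediate cutoff/interior estimate in $t$). Since $\mathcal{L}(w\pm h)\leq 0$ in the collar, the maximum principle for $\mathcal{L}$ (valid because, although the drift is singular, it is of the integrable/Hardy type treated in \cite{LT} — or one can pass to the metric $\bar g = e^{-2t}g$ and work with the divergence-form operator $\mathrm{div}(e^{4u}\nabla_{\bar g}\,\cdot\,)$ and a weighted energy comparison as in Lemma \ref{lem:monotonicity}) gives $|h|\leq w$, which is \eqref{akfniiqh}. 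For $\sin\theta\geq\delta_0$ the bound is trivial from $\|h\|_{L^\infty}$. The main obstacle I anticipate is making the maximum principle rigorous with the singular first-order coefficient $\nabla_g\ln\w$: the cleanest route is to avoid a pointwise comparison principle entirely and instead test the equation for $h\mp w$ against $(h\mp w)_+$ (or its positive part times a cutoff), using the divergence structure $\mathrm{div}(\w^{-4}\nabla h)\sim 0$ after the substitution $u=\Phi-\ln\w$, exactly as the energy arguments in Lemmas \ref{lem:local-bound} and \ref{lem:monotonicity} already do; the weight $\w^{-4}$ degenerates at the axis but in our favor, and the barrier computation above guarantees the sign of the bulk term. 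Everything else — tracking the dependence of $C$ on $\alpha$, $A_1$, and $\|\Phi\|_{C^1}$ — is routine bookkeeping.
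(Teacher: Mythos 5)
Your barrier computation captures the essential content of the argument, and indeed matches the sketch the paper gives (the paper does not write out the proof, instead deferring to Lemma~4.4 of Li--Tian \cite{LT} and emphasizing that ``the coefficient $4$ in the gradient term is responsible for the constraint on the largest possible order of decay''): with $\mu=3+\alpha$ the combination $\mu(\mu-1)+\mu-4\mu=\mu(\mu-4)=(3+\alpha)(\alpha-1)<0$ produces the good sign, and the substitution $u=\Phi-\ln\w$ correctly isolates the singular drift $-4\cot\theta\,\partial_\theta$ to leading order while leaving a bounded $\nabla_g\Phi$-drift controlled by $\|\Phi\|_{C^1}$. Your suggestion of passing to divergence form, so the weak maximum principle can be run with the degenerate but favorable weight $\w^{-4}$, is also a sound observation.

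The one genuine gap is the treatment of the lateral boundary $\{t=\pm 3/2\}$. Your barrier $w=C_0(K+\|h\|_{L^\infty})(\sin\theta)^{3+\alpha}$ is independent of $t$, so for small $\sin\theta$ it is far below $\|h\|_{L^\infty}$ and cannot dominate $|h|$ on $\{t=\pm 3/2\}$ near the axis; the comparison principle as you set it up cannot be run on the collar $(-3/2,3/2)\times\{\sin\theta<\delta_0\}$. The proposed remedy (``intermediate cutoff/interior estimate in $t$'') does not fix this: cutting $h$ off in $t$ introduces commutator terms that are only $O(1)$ rather than $O((\sin\theta)^{1+\alpha})$ in the transition region, which destroys the source decay the barrier relies on. The standard device is to multiply the barrier by a $t$-dependent weight $\phi(t)$ blowing up as $|t|\to 3/2$, e.g.\ $\phi(t)=(1-(2t/3)^2)^{-\beta}$ with $\beta>3+\alpha$, so that $w\to\infty$ on the lateral boundary and dominates $|h|$ there trivially. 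One then checks $|\phi'|\le C\phi/(3/2-|t|)$ and $|\phi''|\le C\phi/(3/2-|t|)^2$, so the extra $t$-derivative contributions $(\phi''-\phi')(\sin\theta)^{3+\alpha}$ and the $\phi$-dependent drift terms are absorbed by the leading $-c\,\phi(\sin\theta)^{1+\alpha}$ term on the subregion $\{\sin\theta\le c(3/2-|t|)\}$; running the comparison there and using the trivial bound for $\sin\theta\ge c/2$ yields \eqref{akfniiqh} on $(-1,1)\times\mathbb{S}^2_+$.
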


The proof of Lemma \ref{lem:LT} is based on a barrier function argument. 
The coefficient 4 in the gradient term is responsible for the constraint on the largest possible order of decay near the north pole, as expressed in \eqref{akfniiqh}.  We refer to Lemma 4.4 of \cite{LT} for further details. 

\begin{lem}\label{lem:w2p-}  
Let $h\in C^2((-1,1)\times \mathbb{S}^2_+)$ be a  solution of 
\[
Lh +(\sin \theta)^{-1} b \cdot \nabla_{g} h=(\sin \theta)^{-2} f \quad \text{ on }(-1,1)\times \mathbb{S}^2_+.
\]

$\mathrm{(i)}$ Assume that $b\in L^\infty((-1,1)\times \mathbb{S}^2_+)$ 
and $f\in L^p((-1,1)\times \mathbb{S}^2_+)$ for some $p>3$. 
Then for any $-1/2<t<1/2$ and  $P=(\phi,\theta)\in \mathbb{S}^2_+$ with $0<\theta<\pi/4$ it holds that
\[
|\nabla_{g} h(t,P)|\le C (\sin \theta )^{-1}\big(\|h\|_{L^\infty(Q_{\delta}(t,P))}
+(\sin \theta )^{-\frac{3}{p}}\|f\|_{L^p(Q_{\delta}(t,P))}\big),
\]
where $\delta= \frac{1}{2}\min\{dist_{g_0}(P, N),1/2\}$  
and $C$ is a positive constant depending only on $p$ and 
the $L^\infty$-norm of $b$ on $(-1,1) \times \mathbb{S}^2_+$. 

$\mathrm{(ii)}$ Assume that $b, f\in C^{k,\beta}((-1,1)\times \mathbb{S}^2_+)$ for some $k\ge 0$ 
and $\beta \in (0,1)$. Then for any $-1/2<t<1/2$ and
$P=(\phi,\theta)\in \mathbb{S}^2_+$ with $0<\theta<\pi/4$ it holds that
\[
|\nabla_{g}^{l} h(t,P)|\le C (\sin \theta )^{-l}\big(\|h\|_{L^\infty(Q_{\delta}(t,P))}
+\|f\|^*_{C^{k,\beta}(Q_{\delta}(t,P))}\big),\quad l=1,\dots, k+2,
\]
where $\delta= \frac{1}{2}\min\{dist_{g_0}(P, N),1/2\}$, the notation $\|\cdot\|^*_{C^{k,\beta}(Q_{\delta}(t,P))}$ 
indicates scaled 
H\"older norms, and $C$ is a positive constant depending only on $k$, $\beta$,  
and the $C^{k,\beta}$-norm of $b$ on $(-1,1)\times \mathbb{S}^2_+$.  
\end{lem}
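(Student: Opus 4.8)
The plan is to prove both statements at once by rescaling the cube $Q_\delta(t,P)$ to a region of fixed size on which the singular weights $(\sin\theta)^{-1}$ and $(\sin\theta)^{-2}$ become comparable to constants, and then invoking the classical interior estimates for uniformly elliptic equations: the $W^{2,p}$ estimate together with the Sobolev embedding for part~(i), and the Schauder estimate for part~(ii). Write $P=(\phi_0,\theta_0)$ and $s=\sin\theta_0$. First I would record the elementary geometry that makes this work: since $0<\theta_0<\pi/4$ we have $dist_{g_0}(P,N)=\theta_0$, so $\delta=\tfrac12\min\{\theta_0,1/2\}$ and hence $\theta_0/\pi\le\delta\le\theta_0/2$; together with $\theta_0<\pi/4$ this gives $\delta\sim s$ with absolute constants, every point of $\B_\delta(P)$ has polar angle in $[\theta_0-\delta,\theta_0+\delta]\subset[\theta_0/2,3\theta_0/2]\subset(0,\pi/2)$ so that $\sin\theta\sim s$ throughout $Q_\delta(t,P)$, and since $|t|<1/2$ and $\delta\le1/4$ we have $Q_\delta(t,P)\subset(-1,1)\times\mathbb{S}^2_+$, so the equation is available on the entire cube.

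Next I would introduce rescaled coordinates $\hat t=(t-t_0)/\delta$, $\hat\theta=(\theta-\theta_0)/\delta$, $\hat\phi=s(\phi-\phi_0)/\delta$, under which $Q_\delta(t,P)$ becomes a domain $\hat Q_1$ comparable to the unit cube and $g=dt^2+g_0=\delta^2\hat g$ with $\hat g=d\hat t^2+d\hat\theta^2+(\sin^2\theta/s^2)\,d\hat\phi^2$. Because $\theta=\theta_0+\delta\hat\theta$ with $|\hat\theta|$ bounded and $\theta_0$ is comparable to $\delta$ yet bounded away from $\pi/2$, the coefficient $\sin^2\theta/s^2$ lies in a fixed interval $[c,C]$ and has all $\hat x$-derivatives bounded uniformly in $(t,P)$; hence $\hat g$ is uniformly comparable to the Euclidean metric on $\hat Q_1$, with uniformly bounded metric coefficients and Christoffel symbols. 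Likewise $(\sin\theta)^{-1}=s^{-1}q(\hat\theta)$ for a smooth $q\sim1$ with uniformly bounded derivatives. Setting $\hat h(\hat x)=h(x)$, $\hat f(\hat x)=f(x)$, using $\Delta_g=\delta^{-2}\Delta_{\hat g}$, $\partial_t=\delta^{-1}\partial_{\hat t}$, and that $g$-gradients pick up a factor $\delta^{-1}$, and multiplying the equation through by $\delta^2$, I would arrive at an equation $\hat L\hat h=\hat F$ on $\hat Q_1$ whose second-order part is $\Delta_{\hat g}$, whose lower-order part is a first-order operator with coefficients (in the $\hat x$-coordinates) bounded by $C\|b\|_{L^\infty}$ — and of $C^{k,\beta}(\hat Q_1)$-norm at most $C\|b\|_{C^{k,\beta}}$ under the hypotheses of part~(ii) — by virtue of the factor $\delta(\sin\theta)^{-1}\sim1$ and of $\sin\theta\sim s$ on the cube, and whose right-hand side is $\hat F=\delta^2(\sin\theta)^{-2}\hat f$ with $\delta^2(\sin\theta)^{-2}\sim1$. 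Together with the uniform bounded geometry of $\hat g$ and the bound $\delta\le1/4$, this shows that $\hat L$ is uniformly elliptic on $\hat Q_1$ with coefficients controlled as just stated, and that $|\hat F|\le C|\hat f|$, resp.\ $\|\hat F\|_{C^{k,\beta}(\hat Q_1)}\le C\|\hat f\|_{C^{k,\beta}(\hat Q_1)}$.

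The estimates then follow from the classical local theory. For part~(i), the interior $W^{2,p}$ estimate yields $\|\hat h\|_{W^{2,p}(\hat Q_{1/2})}\le C(\|\hat h\|_{L^p(\hat Q_1)}+\|\hat f\|_{L^p(\hat Q_1)})$, and since $p>3$ the Sobolev embedding $W^{2,p}\hookrightarrow C^1$ in dimension three gives $\|\nabla_{\hat g}\hat h\|_{L^\infty(\hat Q_{1/2})}\le C(\|\hat h\|_{L^\infty(\hat Q_1)}+\|\hat f\|_{L^p(\hat Q_1)})$. For part~(ii), the interior Schauder estimate gives $\hat h\in C^{k+2,\beta}(\hat Q_{1/2})$ with $\|\hat h\|_{C^{k+2,\beta}(\hat Q_{1/2})}\le C(\|\hat h\|_{L^\infty(\hat Q_1)}+\|\hat f\|_{C^{k,\beta}(\hat Q_1)})$. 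It remains only to undo the rescaling, using $\|\hat h\|_{L^\infty(\hat Q_1)}=\|h\|_{L^\infty(Q_\delta(t,P))}$, the three-dimensional change of variables $\|\hat f\|_{L^p(\hat Q_1)}=\delta^{-3/p}\|f\|_{L^p(Q_\delta(t,P))}$, the identity $\|\hat f\|_{C^{k,\beta}(\hat Q_1)}=\|f\|^*_{C^{k,\beta}(Q_\delta(t,P))}$ (which is precisely the definition, via rescaling, of the scaled Hölder norm), and $|\nabla_g^l h(t,P)|\le C\delta^{-l}\|\hat h\|_{C^l(\hat Q_{1/2})}$; together with $\delta\sim s=\sin\theta$ these produce exactly the claimed bounds with the asserted dependence of constants. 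The one genuinely delicate point — the part I would write out carefully — is the uniformity, over all admissible $(t,P)$ (equivalently, as $\delta\to0$), of the bounded geometry of $\hat g$ and of the $L^\infty$- and $C^{k,\beta}$-bounds on the rescaled first-order coefficients and right-hand side; this uniformity is exactly where the hypotheses $0<\theta_0<\pi/4$ and $\delta=\tfrac12\min\{dist_{g_0}(P,N),1/2\}$ are used, since they guarantee that $Q_\delta(t,P)$ has size $\sim\delta$ and sits at distance $\sim\delta$ from the pole $N$, so that $\sin\theta\sim\delta$ on it.
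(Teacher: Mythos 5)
Your argument is correct, and it is precisely the route the paper takes: the paper's own proof is the one-liner that part (i) follows from the scaled $W^{2,p}$ estimates and part (ii) from scaled Schauder estimates, and your write-up simply supplies the rescaling details (the choice of $\delta\sim\sin\theta_0$, the uniformity of the bounded geometry of $\hat g$, the passage to $\hat L$, and the bookkeeping of how $L^p$, Hölder, and derivative norms transform) that the paper leaves implicit.
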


\begin{proof} The first conclusion follows from the scaled $W^{2,p}$ estimates for linear elliptic equations, and the second one follows from scaled Schauder estimates. 
\end{proof}

The next result is a preliminary form of a portion of the estimates needed for the asymptotic analysis relying on Simon's method. Notice that the right-hand side of the estimates depend on all derivatives of $v$. Ultimately, estimates which effectively only depend on $t$-derivatives are needed for applications, and will be achieved subsequently.

\begin{prop}\label{prop:v-linear} 
Let $(\Phi,v)\in H^{1}((-2, 2)\times \mathbb{S}^2)$ be a weak solution of \eqref{eq:main-unified-2} 
satisfying \eqref{eq:ubd-unified-1},  \eqref{eq:ea-unified-1}, and \eqref{eq:trace-unified-1}
for some positive constants $\Lambda$ and $a$.  Then 
for any $0<\alpha<1$,  $1\le R_1<R_2<2$, and $l,m,n=0,1,2,3$
the following estimates hold
\begin{align*}
 \left|\pa_\theta^l\pa_t^m\pa_\phi^n(v- a)\right|  &\le C\w^{3+\alpha-l } 
 \|\w^{-2}  \nabla_{g} v \|_{L^2((-R_2,R_2) \times \mathbb{S}^2)}
\quad \mbox{ on }(-R_1,R_1) \times \mathbb{S}^2 _{+}, \\
 \left|\pa_\theta^l\pa_t^m\pa_\phi^n(v+ a)\right|  &\le C\w^{3+\alpha-l } 
 \|\w^{-2}  \nabla_{g} v \|_{L^2((-R_2,R_2) \times \mathbb{S}^2)}
\quad \mbox{ on }(-R_1,R_1) \times \mathbb{S}^2 _{-}, \end{align*}
where $C$ is a positive constant depending only on $a,\alpha,\Lda, A_1, R_1$, and $R_2$. 
\end{prop}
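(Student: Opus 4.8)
The plan is to combine the Caccioppoli-type inequality of Lemma~\ref{lem:monotonicity} with the barrier estimate of Lemma~\ref{lem:LT} and the scaled elliptic estimates of Lemma~\ref{lem:w2p-}, treating the second equation of \eqref{eq:main-unified-2} as a linear equation for $w:=v-a$ on $(-R_1,R_1)\times\mathbb{S}^2_+$ with vanishing trace on $\{N\}$ (the $\mathbb{S}^2_-$ case being symmetric). The key point is to bootstrap decay near the north pole: starting from a crude $L^2$-bound and using that $w$ solves $Lw+4\nabla_g u\cdot\nabla_g w=0$, one first extracts pointwise decay $|w|\le C(\sin\theta)^{3+\alpha}$ of the appropriate order, and then converts this into decay of the derivatives.

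First I would fix intermediate radii $R_1<R_1'<R_2'<R_2$ and, using Lemma~\ref{lem:monotonicity} with $c_2=a$ on balls $\B_\delta(P)$ meeting $N$ (so $\delta\sim \operatorname{dist}_{g_0}(P,N)=\sin\theta$ up to constants), together with a Sobolev/mean-value argument, obtain an initial $L^\infty$ control of $w$ in terms of $\|\w^{-2}\nabla_g v\|_{L^2((-R_2,R_2)\times\mathbb{S}^2)}$; here one uses $\w\sim\sin\theta$ from \eqref{eq:bds-omega-1} and the boundedness of $\Phi$ from Theorem~\ref{thm:reg}. Next, I would apply Lemma~\ref{lem:LT} with $f\equiv 0$, $K=0$ (or with a small $K$ absorbing lower-order curvature-type terms from writing $u=\Phi-\ln\w$), which upgrades this to $|w(t,\phi,\theta)|\le C(\sin\theta)^{3+\alpha}\|w\|_{L^\infty}$ on $(-R_1',R_1')\times\mathbb{S}^2_+$, i.e. the $l=0$, $m=n=0$ case. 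The derivative estimates for $l+m+n\le 3$ then follow from Lemma~\ref{lem:w2p-}(ii): rewriting the equation for $w$ in the form $Lw+(\sin\theta)^{-1}b\cdot\nabla_g w=0$ with $b$ bounded (since $\sin\theta\,\nabla_g\ln\w$ and $\sin\theta\,\nabla_g\Phi$ are bounded), applying the scaled Schauder estimates on cubes $Q_\delta(t,P)$ with $\delta\sim\sin\theta$, and plugging in the pointwise bound just obtained gives $|\nabla_g^l w(t,P)|\le C(\sin\theta)^{3+\alpha-l}\|w\|_{L^\infty(Q_\delta)}$; iterating in the $t$- and $\phi$-directions (which are tangential and harmless by Theorem~\ref{thm:reg}) and differentiating the equation covers all $\pa_\theta^l\pa_t^m\pa_\phi^n$ with $l\le 3$. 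For $m,n$ up to $3$ with $l\le 3$ one simply notes that $\pa_t$ and $\pa_\phi$ commute with $L$ and with $\nabla_g u\cdot\nabla_g$ up to terms of the already-controlled type, so the same chain of estimates applies to the differentiated equation.

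The main obstacle I anticipate is the very first step: producing the initial $L^\infty$-bound on $w$ near the poles in terms of the weighted $L^2$-norm of $\nabla_g v$ rather than of $v$ itself. The Caccioppoli inequality of Lemma~\ref{lem:monotonicity} controls $\int e_g(\Phi,v)$ by $\int(|\Phi-c_1|^2+\w^{-4}|v-c_2|^2)$ on an annular region, so one needs a Poincar\'e-type inequality adapted to the singular weight $\w^{-4}\sim(\sin\theta)^{-4}$ to trade $\w^{-2}(v-a)$ for $\w^{-2}\nabla_g v$; this is delicate precisely because the weight blows up at $N$, but the vanishing trace $v=a$ on $\{N\}$ together with the one-dimensional fundamental-theorem-of-calculus estimate along geodesics emanating from $N$ (weighted appropriately) should close it. Once the $L^2$-to-$L^\infty$ passage is in hand, the remaining arguments are routine applications of the quoted lemmas, with care only needed in tracking that all constants depend solely on $a,\alpha,\Lda,A_1,R_1,R_2$ and in covering the full range of mixed derivatives by repeatedly differentiating the linear equation for $v$ and re-applying Lemmas~\ref{lem:LT} and~\ref{lem:w2p-}.
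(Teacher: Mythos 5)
Your proposal takes a genuinely different route from the paper at the decisive step, and that step as written has a gap. The paper's proof is short: it quotes Corollary~4.1 of Li--Tian \cite{LT} for the weighted Poincar\'e inequality $\|\w^{-2}(v-a)\|_{L^2}\le C\|\w^{-2}\nabla_g v\|_{L^2}$ near the pole, then quotes Theorem~1 of Nguyen \cite{N} (together with Theorem~\ref{thm:reg}) to pass directly to the weighted sup estimate $\sup_\Omega \w^{-(3+\alpha)}|v-a|\le C\|\w^{-2}(v-a)\|_{L^2}$, and finishes with Lemma~\ref{lem:w2p-}. The paper stresses that this $L^2$-to-weighted-$L^\infty$ passage is obtained by a De~Giorgi iteration, and that the difficulty is precisely the singular drift $4\nabla_g u\cdot\nabla_g v\sim(\sin\theta)^{-1}\nabla_g v$ together with the degenerate weight $\w^{-4}\sim(\sin\theta)^{-4}$.

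You correctly flag the $L^2$-to-$L^\infty$ passage near $N$ as the main obstacle, and your Poincar\'e idea (FTC along geodesics from $N$, using the trace $v=a$) is essentially a rederivation of \cite[Cor.~4.1]{LT}, which is fine. But the bridge you propose from there to $L^\infty$ is not rigorous as stated: a single application of the Caccioppoli inequality of Lemma~\ref{lem:monotonicity} on balls $\B_\delta(P)$ followed by ``a Sobolev/mean-value argument'' gives at best an $L^2\to L^6$ gain, not $L^2\to L^\infty$; and near $N$ the degenerate weight and singular drift obstruct any direct Sobolev embedding. What is actually needed is a full Moser/De~Giorgi iteration for the divergence-form equation $\mathrm{div}_g(e^{4u}\nabla_g(v-a))=0$ adapted to the singular coefficient, which is exactly what \cite[Theorem~1]{N} supplies. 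Once that is in place, your subsequent appeal to Lemma~\ref{lem:LT} to upgrade $\|v-a\|_{L^\infty}$ to $\w^{3+\alpha}$-decay is essentially redundant, since Nguyen's theorem already produces the weighted sup bound directly. Your treatment of the higher-order mixed derivatives via Lemma~\ref{lem:w2p-}(ii) and differentiation in $t,\phi$ is in the right spirit and matches the paper's one-line conclusion, though note that $\partial_t$ and $\partial_\phi$ do not literally commute with $\nabla_g u\cdot\nabla_g$: the commutators produce lower-order terms that must be controlled by Theorem~\ref{thm:reg}.
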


\begin{proof} We give arguments only for $\mathbb S^2_+$, as the proof for $\mathbb{S}^2_-$ is analogous. 
By Corollary 4.1 of \cite{LT}, we have the weighted Poincar\'{e} inequality
\[
\|\w^{-2} ( v-a) \|_{L^2((-R_2,R_2) \times (\mathbb{S}^2\cap \{\theta<3/4\}))}   
\le C \|\w^{-2}  \nabla_{g} v \|_{L^2((-R_2,R_2) \times (\mathbb{S}^2\cap \{\theta<3/4\}))},  
\]
where $C$ is a positive constant depending only on $A_1$ and $R_2$. 
We now view the second equation of \eqref{eq:main-unified-2} as a linear equation of $v-a$. 
Let $\om \subset \subset  (-R_2,R_2)\times (\mathbb{S}^2\cap \{\theta<5/8\})$ and $0<\alpha<1$.
Then Theorem \ref{thm:reg} above and Theorem 1 of \cite{N} yield
\[
\sup_{\om} \w^{-(3+\alpha)} |(v-a)| \le C \|\w^{-2} ( v-a) \|_{L^2((-R_2,R_2) \times (\mathbb{S}^2\cap \{\theta<3/4\}))} ,
\]
where $C$ is a positive constant depending only on $a,\alpha,\Lda$, and  $\om$.  
We point out that this estimate is proved with a De Giorgi iteration. 
The difficulty arises from the singularity of the coefficient $\nabla_gu$ 
in the second equation of \eqref{eq:main-unified-2}. 
We refer to the proof of Theorem 1 in \cite{N} for further details.
The higher order estimates follow from Lemma \ref{lem:w2p-}.  
\end{proof}

We are ready to establish the main estimate in this section. These pointwise bounds for the radial derivatives will play a fundamental role in the asymptotic analysis near black holes.

\begin{prop}\label{prop:partial-t-linear} 
Let $(\Phi,v)\in H^{1}((-2, 2)\times \mathbb{S}^2)$ be a weak solution of \eqref{eq:main-unified-2} 
satisfying \eqref{eq:ubd-unified-1},  \eqref{eq:ea-unified-1}, and \eqref{eq:trace-unified-1}
for some positive constants $\Lambda$ and $a$.  
Suppose that $\nabla_{g_0} \pa_t \ln \w = 0$ and $\pa_t (L\ln \w)=0$. Then on $[-1,1]\times \mathbb{S}^2$ the following
estimate holds
\begin{align*}
& |\pa_t \Phi| +|\pa_t v|  \le C\Big(\int_{-2}^{ 2}\int_{\mathbb{S}^2} \big [|\pa_t \Phi|^2
+e^{4u} ( |\pa_t v|^2 +|\pa_t\ln \w|^2 |\nabla_{g_0} v|^2 )\big] \, d  vol_{g_0}  d  t \Big)^{1/2}, 
\end{align*}
and for any $\alpha\in (0,1)$, $k=1,2,3$, and $j=0,1,2,3$ the higher order derivatives satisfy
\begin{align*}
& |\pa_t^k \nabla_{g_0}^j \Phi| +\w^{j-3-\alpha}|\pa_t^k \nabla_{g_0}^j v| \\
&\qquad \le  C\Big( \int_{-2}^{2} \int_{\mathbb{S}^2}
\big( | \pa_t \Phi|^2 +e^{4u} |\pa_t v|^2\big) \, d  vol_{g_0}  d  t \Big)^{1/2}
 \\ 
&\qquad\qquad +C \|\pa_t\ln \w \|_{C^5 ((-2, 2)\times \mathbb{S}^2) } 
\Big(\int_{-2}^{ 2}\int_{\mathbb{S}^2} e^{4u}  |\nabla_{g_0} v|^2 \, d  vol_{g_0}  d  t \Big)^{1/2}, 
\end{align*}
where $C$ is a positive constant depending only on 
$a,\alpha,\Lda$, and $A_1$.
\end{prop}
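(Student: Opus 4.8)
The idea is to differentiate the renormalized system \eqref{eq:main-unified-2} once in $t$, regard $(\Phi_1,v_1):=(\pa_t\Phi,\pa_t v)$ as a solution of a \emph{linear} elliptic system whose coefficients are controlled by Theorem \ref{thm:reg}, and then upgrade from $L^2$ to pointwise bounds. Since $L=\pa_t^2-\pa_t+\Delta_{g_0}$ has $t$-independent coefficients, $\pa_t$ commutes with $L$; the hypothesis $\pa_t(L\ln\w)=0$ then makes the first equation homogeneous after differentiation (so no appeal to Remark \ref{rem:homlize} is needed), while $\nabla_{g_0}\pa_t\ln\w=0$ says $\psi:=\pa_t\ln\w$ depends only on $t$, hence $\nabla_{g_0}(\pa_t u)=\nabla_{g_0}\Phi_1$. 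Writing the second equation of \eqref{eq:main-unified-2} in divergence form $\nabla_g\cdot(e^{4u}\nabla_g v)=e^{4u}v_1$ and differentiating, one obtains
$$L\Phi_1=4\,\nabla_g\cdot(e^{4u}v_1\nabla_g v)+8(\Phi_1-\psi)e^{4u}|\nabla_g v|^2-4e^{4u}v_1^2,\qquad \mathcal L v_1=-4\,\nabla_g(\Phi_1-\psi)\cdot\nabla_g v ,$$
with $\mathcal L=L+4\nabla_g u\cdot\nabla_g$ as in the text. From Theorem \ref{thm:reg} and \eqref{eq:tan-estimates} I would record the a priori bounds, valid near the poles with constants depending only on $\Lda,A_1,a,\alpha$: $e^{2u}|\nabla_g v|\le C\w^{\alpha}$, $e^{2u}|v\mp a|\le C\w^{1+\alpha}$, $\Phi\in C^{3,\alpha}$, together with $v_1=0$ on the axes because $v\equiv\pm a$ there.

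For the first estimate I would argue separately on $\{\delta\le\theta\le\pi-\delta\}$ and near the poles. On the former, the coefficients of the $(\Phi_1,v_1)$-system are smooth and bounded, so De Giorgi iteration (or $W^{2,p}$ plus Sobolev embedding) gives the $L^\infty$ bound in terms of $\|(\Phi_1,v_1)\|_{L^2}$ and of the $L^p$-norms of the inhomogeneities; these reduce to the three quantities on the right-hand side, since $\int|\pa_t\Phi|^2$ and $\int e^{4u}|\pa_t v|^2$ are two of them and $\|v_1\|_{L^2(\{\delta\le\theta\le\pi-\delta\})}^2\le C\int e^{4u}|\pa_t v|^2$. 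Near, say, the north pole I would use the divergence form of the $\Phi_1$-equation so that a Caccioppoli/Moser iteration applies; each source term is, by the recorded a priori bounds, of the form (bounded weight)$\times$(energy density of $v_1$ or of $v$), and the $v$-energy density always carries a factor $\psi$, so it is absorbed into $\int e^{4u}|\pa_t\ln\w|^2|\nabla_{g_0}v|^2$. For $v_1$ I would run the weighted Poincar\'e inequality and the De Giorgi iteration of \cite[Theorem 1]{N} exactly as in the proof of Proposition \ref{prop:v-linear}, now tracking the dependence of the source $-4\nabla_g(\Phi_1-\psi)\cdot\nabla_g v$ on the energies, to obtain a bound of the form $\w^{-(3+\alpha)}|v_1|\le C\|\w^{-2}\nabla_g v_1\|_{L^2}$ plus terms involving $\psi$ and the $v$-energy; then the reverse Poincar\'e argument of Lemma \ref{lem:monotonicity} applied to the $v_1$-equation bounds $\|\w^{-2}\nabla_g v_1\|_{L^2}$ on an inner region by $\|\w^{-2}v_1\|_{L^2}$ on an outer annulus plus source contributions, and $\|\w^{-2}v_1\|_{L^2}^2\le C\int e^{4u}|\pa_t v|^2$ because $|\Phi|\le\Lda$.

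The higher-order estimates follow by bootstrapping: apply $\pa_t^k$ and $\nabla_{g_0}^j$ to the system with $k,j\le3$. Commuting $\pa_t$ through produces only lower-order terms, already controlled in the previous step; commuting $\nabla_{g_0}$ through the coefficient $\nabla_g u\sim(\sin\theta)^{-1}$ of the $v$-equation is handled near the poles by the scaled $W^{2,p}$ and Schauder estimates of Lemma \ref{lem:w2p-} applied to the $v$-component — this is exactly where the weights $\w^{j-3-\alpha}$ come from — together with Lemma \ref{lem:LT} to pin the decay order $3+\alpha$, while away from the poles standard Schauder theory suffices. The $t$-differentiations bring in derivatives of $\psi$ of order at most $\sim k+2\le 5$, which is why $\|\pa_t\ln\w\|_{C^5}$ appears, always multiplying a $v$-energy factor $(\int e^{4u}|\nabla_{g_0}v|^2)^{1/2}$. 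Finally one assembles the near-pole and away-from-pole bounds and absorbs the (at most first order in the new derivatives) terms into the left-hand side via interpolation.

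I expect the main obstacle to be twofold. First, the coefficient $\nabla_g u$ of the linear $v$-equation is singular of order $(\sin\theta)^{-1}$ at the poles, which precludes naive elliptic estimates and forces the use of the Li--Tian barrier argument (Lemma \ref{lem:LT}), the weighted Poincar\'e inequality, and the scaled estimates of Lemma \ref{lem:w2p-}, and which accounts for the sharp cutoff at decay order $3+\alpha$. Second, and more delicate, is the bookkeeping needed to keep \emph{every} inhomogeneous term strictly within the prescribed linear form — in particular arranging that the $v$-energy never appears alone but only coupled to $\psi=\pa_t\ln\w$ or its derivatives — which relies essentially on the divergence structure of the $\Phi$-equation and on the sharp weighted a priori bounds \eqref{eq:tan-estimates}.
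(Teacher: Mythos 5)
Your plan is correct and follows essentially the same route as the paper's proof: differentiate \eqref{eq:main-unified-2} in $t$, treat $(\pa_t\Phi,\pa_t v)$ as a solution of a linear system whose coefficients are bounded via Theorem~\ref{thm:reg}, get the $L^\infty$ bound from energy and $H^2$/Sobolev estimates, and bootstrap higher derivatives with the weighted estimates of Lemmas~\ref{lem:LT} and~\ref{lem:w2p-}, keeping the $|\nabla_{g_0}v|^2$ source always paired with $\pa_t\ln\w$. The only cosmetic differences are your divergence-form rewrite of the $\Phi_1$-source and the pole/non-pole splitting for the zeroth-order bound, where the paper simply runs a single global $H^2$-plus-Sobolev argument on the cylinder.
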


\begin{proof}  We will write $(\Phi_t,v_t)=(\pa_t \Phi, \pa_t v)$ for brevity. 
Since  $\pa_t (L\ln \w)=0$, differentiating equations \eqref{eq:main-unified-2} with respect to  $t$ produces
\begin{align}\label{eq:diff_t-pre}\begin{split}
L \Phi_t-4  e^{4u} \nabla_{g} v \cdot \nabla_{g} v_t -8 e^{4u} |\nabla_{g} v|^2 \Phi_t
+8 e^{4u} |\nabla_{g} v|^2\pa_t \ln \w &=0,\\
Lv_t +4\nabla_{g}  u \cdot  \nabla_{g} v_t+4 \nabla_{g}\Phi_t\cdot\nabla_{g} v
-4 \nabla_{g} \pa_t \ln \w \cdot\nabla_{g} v&=0.
\end{split}\end{align}
For the last term in the first equation, write $|\nabla_{g} v|^2=|\partial_tv|^2+|\nabla_{g_0} v|^2$ 
and move the portion with $|\nabla_{g_0} v|^2$ to the right-hand side. 
Moreover, note that in the last term of the second equation, 
the portion involving $\nabla_{g_0} v$ is absent since $\nabla_{g_0} \pa_t \ln \w=0$. 
Equations \eqref{eq:diff_t-pre} may then be rewritten as  
\begin{equation} \label{eq:diff_t}
\begin{split}
L \Phi_t  +b_{11} \cdot \nabla_{g} v_t + b_{12} \Phi_t +b_{13}v_t&=b_0,
 \\
 Lv_t +4\nabla_{g}  u \cdot  \nabla_{g} v_t +b_{21} \cdot \nabla_{g} \Phi_t  + b_{22} v_t &=0,
\end{split}
\ee
where \begin{align*}
&b_{11}= -4  e^{4u} \nabla_{g} v ,\qquad   b_{12}= -8 e^{4u} |\nabla_{g} v|^2, \\&  
b_{13}= 8 e^{4u}  \pa_t v \pa_t \ln \w   , \qquad b_0=  -8 e^{4u} |\nabla_{g_0} v|^2  \pa_t \ln \w, 
\end{align*}
and 
\[
b_{21}= 4 \nabla_{g} v , \qquad  b_{22}= -4\pa_t^2 \ln \w.
\]
By Theorem  \ref{thm:reg} and the conditions on $\w$, we have 
\begin{equation} \label{eq:partial--structure}
\begin{split}
&|b_{11}|\le C\w^{\alpha} e^{2u}, \qquad |b_{12}|\le  C \w^{2\alpha}, \qquad |b_{13}|\le  C\w^{\alpha} e^{2u},  \\
&|b_0|\le C\w^{\alpha}  |\pa_t \ln \w| e^{2u} |\nabla_{g_0} v| ,   \qquad |b_{21}|\le C\w^{2+\alpha}, 
\qquad |b_{22}|\le C. 
\end{split}
\end{equation}

\emph{Step 1.  Energy estimates.}  
Fix a constant $t_1\in (1,2)$ and a smooth cutoff function  $\eta(t)\in C^2_c(-2, 2)$  
with $\eta=1$ on $(-t_1,  t_1)$. Take $\eta^2 \Phi_t$ as a test function for the first equation of \eqref{eq:diff_t}. 
By the Cauchy inequality and \eqref{eq:partial--structure}, for any $\va>0$ we have
\begin{align*}
&\int_{-2}^{ 2}\int_{\mathbb{S}^2}  |\nabla_{g} \Phi_t|^2 \eta^2 \, d  vol_{g_0}  d  t  
\le   \va \int_{-2}^{ 2}\int_{\mathbb{S}^2} e^{4u} |\nabla_{g} v_t|^2 \eta^2 \, d  vol_{g_0}  d  t   \\
&\qquad \qquad + C(\va)  \int_{-2}^{ 2}\int_{\mathbb{S}^2}  \big( |\Phi_t|^2 +e^{4u}|v_t|^2
+e^{4u}|\pa_t\ln \w|^2  |\nabla_{g_0} v|^2\big)\, d  vol_{g_0}  d  t. 
\end{align*}
We may write the second equation of \eqref{eq:diff_t} as 
$$ e^{-4u} \mathrm{div}_g(e^{4u} \nabla_{g} v_t) -\partial_tv_t+b_{21} \cdot \nabla_{g} \Phi_t  + b_{22} v_t =0,$$
and take $\eta^2 e^{4u} v_t$ as a test function. Again, the Cauchy inequality and \eqref{eq:partial--structure} imply
\begin{align*}
\int_{-2}^{ 2}\int_{\mathbb{S}^2}  e^{4u}|\nabla_{g} v_t|^2 \eta^2 \, d  vol_{g_0}  d  t 
& \le   \va \int_{-2}^{ 2}\int_{\mathbb{S}^2}  |\nabla_{g} \Phi_t|^2 \eta^2 \, d  vol_{g_0}  d  t\\&\qquad  
+ C(\va)  \int_{-2}^{ 2}\int_{\mathbb{S}^2}  e^{4u}  |v_t|^2 \, d  vol_{g_0}  d  t.
\end{align*}
Choosing $\va=1/2$ and summing the above two inequalities produces
\begin{equation}\label{eq:tan-energy}
\int_{-t_1}^{t_1}\int_{\mathbb{S}^2}  (|\nabla_{g} \Phi_t|^2)
+ e^{4u} |\nabla_{g} v_t|^2)  \, d  vol_{g_0}  d  t\leq C\Xi^2,
\end{equation}
where for brevity  we have set
\[
\Xi= \Big(\int_{-2}^{ 2}\int_{\mathbb{S}^2} \big [|\Phi_t|^2
+e^{4u} |v_t|^2 +e^{4u} |\pa_t\ln \w|^2 |\nabla_{g_0} v|^2 \big] \, d  vol_{g_0}  d  t \Big)^{1/2}.
\]

\emph{Step 2. $H^{2}$-estimates and $C^0$-estimates.} 
Observe that \eqref{eq:diff_t} and \eqref{eq:partial--structure} yield
\begin{align*} 
|L \Phi_t|&\le C(e^{2u}|\nabla_{g} v_t| + |\Phi_t| +e^{2u}|v_t|+e^{2u}|\pa_t \ln \w|  |\nabla_{g_0} v|),
 \\
|Lv_t|&\le C(e^u|\nabla_{g} v_t| +|\nabla_{g} \Phi_t|  + |v_t|).
\end{align*}
and by \eqref{eq:tan-energy} it follows that
\begin{align*}
\int_{-t_1}^{t_1}\int_{\mathbb{S}^2} (|L \Phi_t|^2 + |L v_t|^2 ) \, d  vol_{g_0}  d  t 
\le  C \Xi^2. 
\end{align*}
Next, fix a constant $t_2\in (1,t_1)$ and note that the interior $H^2$-estimates then give
\begin{align*}
\|( \Phi_t,  v_t)\|_{H^{2}([-t_2, t_2]\times \mathbb{S}^2)} 
&\le C\big(
\|( \Phi_t,  v_t)\|_{L^2([-t_1, t_1]\times \mathbb{S}^2)}
+\|( L\Phi_t,  Lv_t)\|_{L^2([-t_1, t_1]\times \mathbb{S}^2)}\big)\\
&\le C\Xi. 
\end{align*}
Therefore, by the Sobolev embedding theorem we obtain 
\be \label{eq:L-infty}
\max_{[-t_2, t_2]\times \mathbb{S}^2} (|\Phi_t|+ | v_t|)  \le C\Xi. 
\ee
This implies the first inequality in the proposition. 

\medskip 

\emph{Step 3.}  By Step 2 and the Sobolev embedding theorem,  we also have 
\be \label{eq:L-six}
\|\nabla_{g} v_t\|_{L^6([-t_2, t_2]\times \mathbb{S}^2)} \le C\Xi. 
\ee   
Now write the first equation of \eqref{eq:diff_t} as 
$$L\Phi_t=(\sin \theta)^{-2} f,$$
where
\[
f= \sin^2\theta (-b_{11} \cdot \nabla_{g} v_t - b_{12} \Phi_t -b_{13}v_t +b_0).
\] 
Fix a constant $t_3\in (1,t_2)$, let $P=(\phi,\theta)\in \mathbb S^2_+\setminus\{N\}$, and take 
$\delta<t_2-t_3$ with $\delta<\text{dist}_{g_0}(P,N)/2$. Then for any $t\in [-t_3,t_3]$, we find by Lemma \ref{lem:w2p-} $(\mathrm{i})$ with $p=6$ that 
\[
|\nabla_{g} \Phi_t(t,P)|
\le C (\sin \theta )^{-1}\big(\|\Phi_t\|_{L^\infty(Q_{\delta}(t,P))}
+(\sin \theta )^{-\frac12}\|f\|_{L^6(Q_{\delta}(t,P))}\big).
\]
In addition, take any $\alpha\in (0,1)$ and observe that \eqref{eq:partial--structure} shows
\[
|f|\le C\w^\alpha (|\nabla_{g} v_t| +|\Phi_t| +|v_t| +|(\pa_t \ln \w)e^{2u}\nabla_{g_0} v|),
\] 
so that \eqref{eq:L-infty} and \eqref{eq:L-six} yield
$$\|f\|_{L^6(Q_{\delta}(t,P))}\le C\w^\alpha 
\big(\Xi+\|(\pa_t \ln \w)e^{2u}\nabla_{g_0} v\|_{L^\infty ([-t_2, t_2]\times \mathbb{S}^2) }\big).$$
Therefore, by choosing $\alpha=1/2$ we obtain
\begin{equation}\label{anhfpq}
|\nabla_{g} \Phi_t(t,P) |
\le C\w^{-1} \big(\Xi+\|(\pa_t \ln \w)e^{2u}\nabla_{g_0} v\|_{L^\infty ([-t_2, t_2]\times \mathbb{S}^2) }\big). 
\end{equation}

In the following, $\alpha\in (0,1)$ will again be arbitrary. 
By Proposition \ref{prop:v-linear}, for any $t\in  [-t_2, t_2]$ we have 
\begin{align*}
|v_t|&\le C\w^{3+\alpha}\|\w^{-2}\nabla_gv\|_{L^2([-t_1,t_1]\times \mathbb S^2)}\\
&\le C\w^{3+\alpha}\big(\|e^{2u}\partial_tv\|_{L^2([-t_1,t_1]\times \mathbb S^2)}
+\|e^{2u}\nabla_{g_0}v\|_{L^2([-t_1,t_1]\times \mathbb S^2)}\big).\end{align*}
Notice that the first term of the second line in this inequality is a part of $\Xi$, and therefore
\be\label{eq:estimate-v-t}
|v_t|
\le C\w^{3+\alpha}\big(\Xi+\|e^{2u}\nabla_{g_0}v\|_{L^2([-2,2]\times \mathbb S^2)}\big).\ee
Similarly, for any $t\in  [-t_2, t_2]$ it holds that
\be\label{eq:estimate-v-0}|\nabla_{g_0}v|
\le C\w^{2+\alpha}\big(\Xi+\|e^{2u}\nabla_{g_0}v\|_{L^2([-2,2]\times \mathbb S^2)}\big).\ee
Using this in \eqref{anhfpq} then shows that 
\be\label{eq:estimate-Phi-t}
|\nabla_{g} \Phi_t |
\le C\w^{-1} \big(\Xi+\|\pa_t \ln \w\|_{L^\infty ([-2, 2]\times \mathbb{S}^2) }
\|e^{2u}\nabla_{g_0}v\|_{L^2([-2,2]\times \mathbb S^2)}\big),
\ee
for any $t\in [-t_3,t_3]$. We point out that the decay order $3+\alpha$ in \eqref{eq:estimate-v-t} is optimal. 
However, \eqref{eq:estimate-v-t} is not the desired estimate due to the absence of $\partial_t \ln \omega$ with 
the $L^2$-norm of $e^{2u}\nabla_{g_0}v$. 

Next, write the second equation of \eqref{eq:diff_t} as 
$$Lv_t +4\nabla_{g}  u \cdot  \nabla_{g} v_t =-b_{21} \cdot \nabla_{g} \Phi_t  -b_{22} v_t.$$
By combining \eqref{eq:estimate-Phi-t} with the estimate of $b_{21}$ in \eqref{eq:partial--structure}, 
for any $t\in [-t_3,  t_3]$ we get
\begin{align}\label{eq:estimate-Phi-1-nabla}
|b_{21} \cdot \nabla_{g} \Phi_t  |
\le C \w^{1+\alpha}
\big(\Xi+\|\pa_t \ln \w\|_{L^\infty ([-2, 2]\times \mathbb{S}^2) }
\|e^{2u}\nabla_{g_0}v\|_{L^2([-2,2]\times \mathbb S^2)}\big). 
\end{align}
Also in this domain, \eqref{eq:estimate-v-t} implies that 
\begin{align*}
|b_{22}v_t| = 4 |\pa_t^2 \ln \w| |v_t| \le C \w^{3+\alpha}
\big(\Xi+\|\pa^2_t \ln \w\|_{L^\infty ([-2, 2]\times \mathbb{S}^2) }
\|e^{2u}\nabla_{g_0}v\|_{L^2([-2,2]\times \mathbb S^2)}\big), 
\end{align*}
where $C$ is a constant depending only on $\Lda, A_1 $, and $\alpha$. 
Moreover, let $t_4\in (1,t_3)$ and note that 
Lemma \ref{lem:LT} together with \eqref{eq:L-infty} produces
\be\label{eq:estimate-v-t-improved}
|v_t|\le  C \w^{3+\alpha} \big(\Xi+\|\pa_t \ln \w\|_{C^1 ([-2, 2]\times \mathbb{S}^2) }
\|e^{2u}\nabla_{g_0}v\|_{L^2([-2,2]\times \mathbb S^2)} \big)
\ee
for any $t\in [-t_4,  t_4]$, where again $C$ is a positive constant depending only on $\Lda, A_1$,  and $\alpha$. 
Observe that \eqref{eq:estimate-v-t-improved}, rather than \eqref{eq:estimate-v-t}, is the desired estimate for $v_t$. 

In order to estimate derivatives of $v_t$, write the second equation of \eqref{eq:diff_t} as 
$$Lv_t +4\nabla_{g}  u \cdot  \nabla_{g} v_t =-(\sin \theta)^{-2}(\sin \theta)^{2}(b_{21} 
\cdot \nabla_{g} \Phi_t  +b_{22} v_t).$$
Then \eqref{eq:estimate-Phi-1-nabla} and \eqref{eq:estimate-v-t-improved} show that the right-hand side is controlled for $t\in [-t_4,  t_4]$ by
$$(\sin \theta)^{2}|b_{21} \cdot \nabla_{g} \Phi_t  +b_{22} v_t|\le 
C \w^{3+\alpha} \big(\Xi+\|\pa_t \ln \w\|_{C^1 ([-2, 2]\times \mathbb{S}^2) }
\|e^{2u}\nabla_{g_0}v\|_{L^2([-2,2]\times \mathbb S^2)}\big).$$
We may now fix $t_5\in (1,t_4)$ and apply the first part of Lemma \ref{lem:w2p-} to obtain 
\be\label{eq:estimate-v-t-g}
 |\nabla_{g} v_t|\le C \w^{2+\alpha}  \big(\Xi+\|\pa_t \ln \w\|_{C^1 ([-2, 2]\times \mathbb{S}^2) }
 \|e^{2u}\nabla_{g_0}v\|_{L^2([-2,2]\times \mathbb S^2)}\big),
\ee
for any $t\in [-t_5,t_5]$. This gives the desired estimate for $\nabla_{g_0}v_t$, but not $\partial_tv_t$. 

In order to improve the derivative estimate for $\Phi_t$, write the first equation of \eqref{eq:diff_t} as 
$$L\Phi_t=\widetilde f, \quad\quad\quad \tilde  f= b_0- b_{11} \cdot \nabla_{g} v_t - b_{12} \Phi_t -b_{13}v_t.$$ 
Combining \eqref{eq:partial--structure}, \eqref{eq:estimate-v-0}, \eqref{eq:estimate-v-t-improved}, 
and \eqref{eq:estimate-v-t-g} yields 
\begin{align*}
|\tilde  f| \le C\big(\Xi+\|\pa_t \ln \w\|_{C^1 ([-2, 2]\times \mathbb{S}^2) }
\|e^{2u}\nabla_{g_0}v\|_{L^2([-2,2]\times \mathbb S^2)}\big),
\end{align*}
for any $t\in [-t_5,t_5]$. Fix $t_6\in (1,t_5)$, and observe that the interior $W^{2,p}$-estimates along with 
\eqref{eq:L-infty} produce
\begin{align}\label{eq:estimate-Phi-t-improved}\begin{split}
|\nabla_{g}\Phi_t  |& \le C\big( \|\Phi_t\|_{L^\infty([-t_5,t_5]\times\mathbb S^2)}
+\|\tilde f\|_{L^\infty([-t_5,t_5]\times\mathbb S^2)}\big)\\
&\le C\big(\Xi+\|\pa_t \ln \w\|_{C^1 ([-2, 2]\times \mathbb{S}^2) }
\|e^{2u}\nabla_{g_0}v\|_{L^2([-2,2]\times \mathbb S^2)}\big),
\end{split}\end{align} 
for any $t\in [-1/2,  1/2]$. This is the desired estimate for $\nabla_{g}\Phi_t$; compare \eqref{eq:estimate-Phi-t-improved} 
with \eqref{eq:estimate-Phi-t}.

In summary, we have derived the desired estimates for $\pa_t\Phi$, $\pa_tv$, $\pa_t\nabla_{g_0}v$, 
and $\pa_t\nabla_{g}\Phi$, 
as expressed in \eqref{eq:L-infty}, \eqref{eq:estimate-v-t-improved}, \eqref{eq:estimate-v-t-g}, 
and \eqref{eq:estimate-Phi-t-improved}, respectively.
We point out that \eqref{eq:estimate-v-t-g} does not yield the appropriate estimate for $\partial_t^2v$. 
By a bootstrap argument using the second part of Lemma \ref{lem:w2p-}, 
one can establish the necessary estimates of higher order derivatives. 
We refer to Section 5 of Li-Tian \cite{LT} for more details concerning the bootstrap procedure.   
\end{proof} 
 
In a similar manner, we are able to estimate difference of two solutions. 

\begin{prop}\label{prop:difference-linear}  
Let  $(\Phi,v), (\tilde  \Phi, \tilde v)\in H^{1}((-2, 2)\times \mathbb{S}^2)$ be weak solutions 
of \eqref{eq:main-unified-2} satisfying \eqref{eq:ubd-unified-1},  \eqref{eq:ea-unified-1}, 
and \eqref{eq:trace-unified-1} for some positive constants $\Lambda$ and $a$.  
Then on $[-1,1]\times \mathbb{S}^2$, for any $\alpha\in (0,1)$ and $j,k=0,1,2,3$, it holds that
\begin{align*}
|\pa_t^k \nabla_{g_0}^j (\Phi&-\tilde \Phi) | +\w^{j-3-\alpha}|\pa_t^k \nabla_{g_0}^j (v-\tilde v) |\\
& \le C\Big( \int_{-2}^{2} \int_{\mathbb{S}^2} |\Phi-\tilde\Phi |^2 
+\w^{-4} | v-\tilde v|^2 \, d  vol_{g_0}  d  s \Big)^{1/2},
\end{align*}
where $C$ is a positive constant depending only on 
$a,\alpha, \Lda$, and $A_1$.
\end{prop}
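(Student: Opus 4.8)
The plan is to follow, essentially verbatim, the three-step scheme used to prove Proposition~\ref{prop:partial-t-linear}, with the pair of radial derivatives $(\Phi_t,v_t)$ there replaced by the difference $(\Psi,\psi):=(\Phi-\tilde\Phi,\,v-\tilde v)$, and with all the terms involving $\pa_t\ln\w$ now absent. First I would subtract the two copies of \eqref{eq:main-unified-2}. Because both maps are constructed from the same model function $\w$, the inhomogeneity $L\ln\w$ cancels; writing $u-\tilde u=\Phi-\tilde\Phi=\Psi$, using $|\nabla_g v|^2-|\nabla_g\tilde v|^2=\nabla_g(v+\tilde v)\cdot\nabla_g\psi$ and $e^{4u}-e^{4\tilde u}=\Psi\int_0^1 4e^{4(\tilde u+\tau\Psi)}\,d\tau$, one is led to the linear system
\begin{align*}
L\Psi+b_{11}\cdot\nabla_g\psi+b_{12}\Psi&=0,\\
L\psi+4\nabla_gu\cdot\nabla_g\psi+b_{21}\cdot\nabla_g\Psi&=0,
\end{align*}
with $b_{11}=-2e^{4u}\nabla_g(v+\tilde v)$, $b_{12}=-2|\nabla_g\tilde v|^2\int_0^1 4e^{4(\tilde u+\tau\Psi)}\,d\tau$ and $b_{21}=4\nabla_g\tilde v$. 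This has exactly the form of \eqref{eq:diff_t} with the extra coefficients $b_0$, $b_{13}$, $b_{22}$ equal to zero. The first point to verify is that these coefficients satisfy the analogue of \eqref{eq:partial--structure}: the $C^{3,\alpha}$-bounds of Theorem~\ref{thm:reg} applied to both solutions (in particular $|\nabla_g v|+|\nabla_g\tilde v|\le C\w^{2+\alpha}$ on $\mathbb{S}^2_\pm$, together with $|u+\ln\w|,|\tilde u+\ln\w|\le\Lda$) give $|b_{11}|\le C\w^{\alpha}e^{2u}$, $|b_{12}|\le C\w^{2\alpha}$ and $|b_{21}|\le C\w^{2+\alpha}$ on $\mathbb{S}^2_\pm$.

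With these bounds in place, I would reproduce the argument of Proposition~\ref{prop:partial-t-linear}. In Step~1 (energy estimate) I would test the first equation with $\eta^2\Psi$ and the second, written as $e^{-4u}\mathrm{div}_g(e^{4u}\nabla_g\psi)-\pa_t\psi+b_{21}\cdot\nabla_g\Psi=0$, with $\eta^2e^{4u}\psi$; together with the Cauchy inequality and $e^{4u}\sim\w^{-4}$ this gives
\[
\int_{-t_1}^{t_1}\int_{\mathbb{S}^2}\big(|\nabla_g\Psi|^2+e^{4u}|\nabla_g\psi|^2\big)\,dvol_{g_0}\,dt\le C\Xi^2,\qquad \Xi^2:=\int_{-2}^{2}\int_{\mathbb{S}^2}\big(|\Psi|^2+\w^{-4}|\psi|^2\big)\,dvol_{g_0}\,dt.
\]
In Step~2 the equations yield $\|L\Psi\|_{L^2}+\|L\psi\|_{L^2}\le C\Xi$ on a slightly smaller slab, so interior $H^2$-estimates and the Sobolev embedding on $\mathbb{R}\times\mathbb{S}^2$ give $\|\Psi\|_{L^\infty}+\|\psi\|_{L^\infty}\le C\Xi$ and $\|\nabla_g\psi\|_{L^6}\le C\Xi$.

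Step~3 is the one requiring real care, because the coefficient $\nabla_gu$ in the $\psi$-equation is singular at $N$ and $S$, and it is here that I would import the weighted machinery of Li--Tian \cite{LT}. Writing the first equation as $L\Psi=(\sin\theta)^{-2}f$ with $f=-\sin^2\theta\,(b_{11}\cdot\nabla_g\psi+b_{12}\Psi)$, so that $|f|\le C\w^{1/2}(|\nabla_g\psi|+|\Psi|)$, Lemma~\ref{lem:w2p-}(i) with $p=6$ yields $|\nabla_g\Psi|\le C\w^{-1}\Xi$ near the poles; then the right-hand side $-b_{21}\cdot\nabla_g\Psi=-4\nabla_g\tilde v\cdot\nabla_g\Psi$ of the $\psi$-equation is $O(\w^{1+\alpha}\Xi)$, i.e.\ of the form $K(\sin\theta)^{1+\alpha}$ with $K=C\Xi$, and since $\psi=0$ on the axis in the trace sense, Lemma~\ref{lem:LT} delivers the sharp weighted bound $|\psi|\le C\w^{3+\alpha}\big(\Xi+\|\psi\|_{L^\infty}\big)\le C\w^{3+\alpha}\Xi$. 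The remaining estimates for $\nabla_{g_0}^j\psi$ carrying the weights $\w^{j-3-\alpha}$, and for $\pa_t^k\nabla_{g_0}^j\Psi$, would then follow by the same bootstrap used at the end of the proof of Proposition~\ref{prop:partial-t-linear}: one alternates the scaled Schauder and $W^{2,p}$ estimates of Lemma~\ref{lem:w2p-}(ii) for $\psi$ with interior Schauder estimates for $\Psi$, as in Section~5 of \cite{LT}. I expect the only genuine obstacle---inherited entirely from Proposition~\ref{prop:partial-t-linear}---to be extracting the optimal decay exponent $3+\alpha$ for $\psi$ uniformly up to the poles; once the coefficient bounds of the first paragraph are checked, the rest is a routine transcription.
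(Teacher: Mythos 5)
Your proof is correct and takes essentially the same approach as the paper: the paper subtracts the two systems, rewrites $e^{4u}|\nabla_g v|^2 - e^{4\tilde u}|\nabla_g\tilde v|^2 = e^{4u}\nabla_g(v+\tilde v)\cdot\nabla_g w_2 + e^{4\tilde u}|\nabla_g\tilde v|^2(e^{4w_1}-1)$ (your integral representation of $e^{4u}-e^{4\tilde u}$ is an equivalent way of expressing the $e^{4w_1}-1$ factor as a linear term in $w_1$ with a bounded coefficient), notes that the resulting system has the structure of \eqref{eq:diff_t} with $b_0=b_{13}=b_{22}=0$, and then explicitly says "We may proceed similarly as in the proof of Proposition \ref{prop:partial-t-linear}; details are omitted." You have simply filled in those omitted details faithfully, including the coefficient bounds analogous to \eqref{eq:partial--structure} and the three-step energy/Sobolev/Li--Tian iteration.
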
 

\begin{proof} Set $(w_1,w_2)= (\Phi-\tilde \Phi, v-\tilde v)$ and $\tilde u=\tilde \Phi -\ln \w$. 
By \eqref{eq:main-unified-2}, a simple subtraction yields 
\begin{align*}
L w_1 -2 e^{4u} |\nabla_{g} v|^2 +2 e^{4\tilde u} |\nabla_{g} \tilde v|^2  &=0, \\
Lw_2+4\nabla_{g}u\cdot   \nabla_{g} v-4\nabla_{g}\tilde u\cdot   \nabla_{g} \tilde v&=0.
\end{align*}
Furthermore, a straightforward computation produces
$$e^{4u} |\nabla_{g} v|^2 - e^{4\tilde u} |\nabla_{g} \tilde v|^2
=e^{4u}\nabla_g(v+\tilde v)\cdot\nabla_gw_2+e^{4\tilde u}|\nabla_g\tilde v|^2(e^{4w_1}-1),$$
and 
$$\nabla_{g}u\cdot   \nabla_{g} v-\nabla_{g}\tilde u\cdot   \nabla_{g} \tilde v
=\nabla_{g}u\cdot   \nabla_{g} w_2+\nabla_{g}\tilde v\cdot   \nabla_{g} w_1.$$
Therefore
\begin{align} \label{eq:substract}\begin{split}
L w_1 -2e^{4u}\nabla_g(v+\tilde v)\cdot\nabla_gw_2-2e^{4\tilde u}|\nabla_g\tilde v|^2(e^{4w_1}-1)&=0, \\
 Lw_2 +4\nabla_{g}u\cdot   \nabla_{g} w_2+4\nabla_{g}\tilde v\cdot   \nabla_{g} w_1&=0.
\end{split}
\end{align}
We note that the term involving $e^{4w_1}-1$ in the first equation can be viewed as a linear term in $w_1$. 
Hence, the equation \eqref{eq:substract} has a similar structure as \eqref{eq:diff_t-pre}, 
with the terms involving $\ln\w$ absent here.  
We may proceed similarly as in the proof of Proposition \ref{prop:partial-t-linear}; details are omitted. 
\end{proof}

\section{Singular Harmonic Maps on the Sphere}
\label{s:classification}

In this section, we will classify singular harmonic maps on the sphere as well as their Jacobi fields. 
The harmonic map equations from open sets of $\mathbb{S}^2 $ to $ \mathbb{H}^2$ can be written as 
\begin{align}\label{ai9tgjiqihq}
\begin{split}
\Delta_{g_0}   u -2 e^{4  u}  |\nabla_{g_0}    v|^2 &=0, \\
\Delta_{g_0}   v +4 \nabla_{g_0}   u \cdot \nabla_{g_0}   v &=0, 
\end{split}
\end{align} 
or equivalently, 
\begin{align*}
\mathrm{div}_{g_0}(\nabla_{g_0}   u -2 e^{4  u} v  \nabla_{g_0} v  ) &=0, \\
\mathrm{div}_{g_0}( e^{4  u}\nabla_{g_0}   v ) &=0. 
\end{align*} 
Let $a,\Lambda > 0$ be constants and set $\w=\sin \theta$. It will be assumed that $(u+\ln\omega,v)\in H^1(\mathbb{S}^2)$ satisfies
\begin{equation} \label{eq:main-s2-1b} 
|u+\ln\w|\leq\Lambda\quad\quad \text{ on}\quad\mathbb{S}^2,
\end{equation} 
\begin{equation}\label{qiiqihju}
\int_{\mathbb{S}^2}\left(|\nabla_{g_0}(u+\ln\w)|^2 +\w^{-4}|\nabla_{g_0} v|^2\right) d  vol_{g_0}<\infty,
\end{equation}
and
\begin{equation} \label{eq:main-s2-1c}
   v(N)=a \quad\text{ and }\quad   v(S)= -a \quad\text{ in the trace sense}.
\end{equation}
Under the additional hypothesis of axisymmetry, meaning that the map is independent of the coordinate $\phi$, by analyzing the system of ODE all such solutions are explicitly given and parameterized by two parameters $a$ and $b$. 
When $b=0$, this corresponds to the extreme Kerr near horizon geometry map. In the case that the harmonic map arises from the near horizon limit of a smooth axisymmetric stationary vacuum spacetime, this result is known to the physics community; see the survey article by Kunduri-Lucietti \cite[Theorem 4.3]{KunduriLucietti} and the references therein. Similarly, when the harmonic map is assumed to be axisymmetric, Chru\'sciel-Li-Weinstein \cite[Appendix B]{CLW} obtain the same conclusion. Below, we will show that even without the symmetry assumption, the same conclusion holds.

\begin{prop}\label{prop:classification}  
Consider a weak solution of \eqref{ai9tgjiqihq} with $(u+\ln\omega,v)\in H^1(\mathbb{S}^2)$
satisfying  \eqref{eq:main-s2-1b}, \eqref{qiiqihju}, and \eqref{eq:main-s2-1c}. Then, $(u,v)=(u_{a,b},v_{a,b})$ where
\begin{align*}
u_{a,b}(\theta)&= -\ln \sin \theta -\frac{1}{2} \ln \frac{2a \sqrt{1-b^2}}{1+\cos^2 \theta+2b \cos \theta}, \\
v_{a,b}(\theta)&= a\cdot \frac{b+b \cos ^2 \theta +2\cos \theta}{1+ \cos ^2 \theta +2b \cos \theta},
\end{align*}
for some constant $b\in (-1,1)$.  
\end{prop}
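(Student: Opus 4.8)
The plan is to reduce to the axisymmetric case and then integrate an ODE system; under axisymmetry the classification is the computation of Chru\'sciel--Li--Weinstein \cite[Appendix B]{CLW} (see also Kunduri--Lucietti \cite{KunduriLucietti}), so the real work is the reduction to axial symmetry, which I would obtain from a subharmonicity argument based on the nonpositive curvature of the target. I begin by recording the needed regularity: a solution of \eqref{ai9tgjiqihq} is a $t$-independent solution of \eqref{eq:HM-cylinder}, equivalently $(u+\ln\sin\theta,v)$ solves \eqref{eq:main-unified-2} with $\w=\sin\theta$, which satisfies \eqref{eq:bds-omega-1}, and the hypotheses \eqref{eq:ubd-unified-1}, \eqref{eq:ea-unified-1}, \eqref{eq:trace-unified-1} follow from \eqref{eq:main-s2-1b}, \eqref{qiiqihju}, \eqref{eq:main-s2-1c}. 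Hence Theorem \ref{thm:reg} (with Remark \ref{rem:homlize}) applies and yields $(u+\ln\sin\theta,v)\in C^{3,\alpha}(\mathbb{S}^2)$, together with the sharp decay $|v-a|\le C(\sin\theta)^{3+\alpha}$ near $N$ and $|v+a|\le C(\sin\theta)^{3+\alpha}$ near $S$ coming from \eqref{eq:tan-estimates}. In particular $(u,v)$ is a smooth harmonic map $\mathbb{S}^2\setminus\{N,S\}\to\mathbb{H}^2$ and $u+\ln\sin\theta$ is continuous up to the poles.

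Next, for $\psi\in\mathbb{R}/2\pi\mathbb{Z}$ let $R_\psi$ be the rotation of $\mathbb{S}^2$ about the $NS$-axis and put $(u_\psi,v_\psi)=(u,v)\circ R_\psi$. Since $R_\psi$ is an isometry of $(\mathbb{S}^2,g_0)$ that fixes $N$ and $S$ and preserves $\sin\theta$, the pair $(u_\psi,v_\psi)$ again satisfies \eqref{eq:main-s2-1b}, \eqref{qiiqihju}, \eqref{eq:main-s2-1c}, so the regularity and decay above hold for it as well. The squared hyperbolic distance $\rho_\psi^2:=d_{\mathbb{H}^2}\big((u,v),(u_\psi,v_\psi)\big)^2$ equals $\varphi\circ F$, where $F=\big((u,v),(u_\psi,v_\psi)\big):\mathbb{S}^2\setminus\{N,S\}\to\mathbb{H}^2\times\mathbb{H}^2$ is harmonic and $\varphi$ is the smooth convex squared-distance function on the Hadamard manifold $\mathbb{H}^2\times\mathbb{H}^2$; hence $\rho_\psi^2$ is subharmonic on $\mathbb{S}^2\setminus\{N,S\}$. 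Using $|u+\ln\sin\theta|,|u_\psi+\ln\sin\theta|\le\Lambda$, the continuity of $u+\ln\sin\theta$ at the poles (so that $u-u_\psi\to0$ there, as $R_\psi$ fixes $N,S$), and the decay of $v\mp a$, a short computation in the upper half-plane model shows $\rho_\psi\to0$ at $N$ and $S$; in particular $\rho_\psi$ is bounded. A bounded subharmonic function on $\mathbb{S}^2$ minus two points extends subharmonically across them, and a subharmonic function on the closed surface $\mathbb{S}^2$ is constant, so $\rho_\psi\equiv\rho_\psi(N)=0$. Thus $(u,v)=(u_\psi,v_\psi)$ for every $\psi$, i.e. $(u,v)$ depends only on $\theta$.

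Finally, with $u=u(\theta)$, $v=v(\theta)$ and in the arclength variable $s=\tfrac12\ln\tfrac{1-\cos\theta}{1+\cos\theta}$ (so $\sin\theta=1/\cosh s$, and on axisymmetric functions $\Delta_{g_0}=\sin^{-2}\theta\,\partial_s^2$, $|\nabla_{g_0}\cdot|^2=\sin^{-2}\theta\,(\partial_s\cdot)^2$), the system \eqref{ai9tgjiqihq} becomes the geodesic system $u_{ss}=2e^{4u}v_s^2$, $v_{ss}+4u_sv_s=0$ of the metric $du^2+e^{4u}dv^2$, with first integral $u_s^2+e^{4u}v_s^2\equiv c^2$. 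The decay estimates give $e^{4u}v_s^2\to0$ and $u_s=\tanh s+\partial_s(u+\ln\sin\theta)\to\mp1$ as $s\to\pm\infty$, forcing $c=1$; hence $s\mapsto(u(s),v(s))$ is a complete unit-speed geodesic of $\mathbb{H}^2$ whose two ideal endpoints lie at the boundary $v$-values $a$ (as $s\to-\infty$) and $-a$ (as $s\to+\infty$). Integrating this geodesic (a semicircle in the upper half-plane model) leaves only an arclength phase, which is pinned down by $u(s)-\ln\cosh s=u+\ln\sin\theta\in[-\Lambda,\Lambda]$, and produces exactly \eqref{eq:general-kerr-geodesic}, equivalently $(u,v)=(u_{a,b},v_{a,b})$ for a unique $b\in(-1,1)$; this is the axisymmetric computation of \cite[Appendix B]{CLW}.

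The main obstacle is Step~2, and within it the delicate point is the boundedness of $\rho_\psi$ near the poles: there both $u$ and $u_\psi$ tend to $+\infty$, so the two images escape to infinity in $\mathbb{H}^2$, and keeping their distance controlled requires precisely the rate $v\mp a=O(\sin^2\theta)$ furnished by \eqref{eq:tan-estimates}; one also has to verify that the rotated map still satisfies the hypotheses of Theorem \ref{thm:reg} so that this rate is available for it as well.
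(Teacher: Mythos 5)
Your proof is correct and takes essentially the same route as the paper's: regularity from the Section~\ref{s:regularity} cylinder estimates, axisymmetry from subharmonicity of the hyperbolic distance between the map and its rotations together with the decay $v\mp a=O(\sin^{3+\alpha}\theta)$ forcing the distance to vanish at $N,S$ (the paper runs this with the unsquared distance and the strong maximum principle or a cutoff argument, while you use the squared distance and a removable-singularity argument for subharmonic functions --- both are fine), and then explicit integration of the axisymmetric geodesic ODE, which you present via the unit-speed first integral in the arclength variable $s$ where the paper integrates $u'-2e^{4u}vv'=c_1/\sin\theta$ and $e^{4u}v'=c_2/\sin\theta$ in $\theta$. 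One small slip: $\tanh s\to\pm1$ as $s\to\pm\infty$, so $u_s\to\pm1$ rather than $\mp1$, but this still forces the first-integral constant $c=1$, and also note that the boundedness of $u+\ln\sin\theta$ does not by itself \emph{pin down} the arclength phase --- it is satisfied for every finite shift --- rather that shift is precisely the free parameter $b\in(-1,1)$ in the classification.
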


\begin{proof}  Set $U=u+\ln\w$, and observe that the map $(U,v)$ may be viewed as $t$-independent on 
$(-2,2)\times\mathbb{S}^2$, so that it is a weak solution of 
\eqref{eq:main-unified-2} satisfying \eqref{eq:ubd-unified-1}, \eqref{eq:ea-unified-1}, and \eqref{eq:trace-unified-1}. Therefore, Theorem \ref{thm:reg} implies that $(U,v)\in C^3(\mathbb{S}^2)$.

We first work under the additional hypothesis of axisymmetry. The harmonic map $( u, v)$ is now regular in 
$\mathbb{S}^2 \setminus \{N, S\}$, and by assumption only depends on $\theta$.  For brevity, write $'=\frac{ d }{ d  \theta}$ and note that the harmonic map equations become
\begin{equation*}
[\sin \theta (u'-2e^{4u} vv') ]'=0,\quad\quad
[\sin \theta e^{4u} v' ]' =0,
\end{equation*} 
for $0< \theta < \pi$.  This implies that 
\begin{equation}\label{eq:first-int}
u'-2e^{4u} vv'=\frac{c_1}{\sin \theta}, \quad\quad e^{4u} v'  =\frac{c_2}{\sin \theta},
\ee
for some constants $c_1$ and $c_2$. Inserting the second equation of \eqref{eq:first-int} into the first produces 
\[
U'=\frac{1}{\sin \theta} (\cos \theta +2 c_2 v +c_1). 
\]
Since $U\in C^3(\mathbb{S}^2)$ we must have $U'(0)=U'(\pi)=0$, so that
\begin{equation*}\label{eq:algebra}
1+2c_2a +c_1=0, \quad\quad -1-2c_2 a+c_1=0,
\end{equation*}
or rather $c_1=0$ and $c_2=-1/(2a)$.
The first equation of \eqref{eq:first-int} then yields
\begin{equation} \label{eq:u-v-nonlinear}
\frac{1}{4} e^{-4u} +v^2 = c_3^2,
\ee
for some constant $c_3>0$. 
Then solving for $e^{4u}$, and substituting the result into the second equation of \eqref{eq:first-int} shows that
\[
 \frac{1}{4 (c_3^2- v^2)} v'+\frac{1}{2a\sin \theta}= 0. 
\]
We may now integrate the above first order ODE to find
\[
\frac{1}{8c_3} \ln \frac{c_3+v}{c_3-v} +\frac{1}{4a} \ln \frac{1-\cos \theta}{1+\cos \theta}= \tilde{c}_4,\quad
\text{ which implies }\text{ } \quad \frac{c_3+v}{c_3-v} \Big( \frac{1-\cos \theta}{1+\cos \theta}\Big)^{\frac{2c_3}{a}}=c_4^2,
\]
for some constants $\tilde{c}_4$ and $c_4>0$. 
Next, notice that by sending $\theta \to0, \pi$ we conclude that $c_3=a$, and hence solving for $v$ gives rise to
\begin{equation} \label{eq:geodesic-2}
v=  a \frac{c_4^2 (1+\cos \theta )^2- (1-\cos \theta)^2}{c_4^2 (1+\cos \theta )^2+(1-\cos \theta)^2}.
\end{equation}
Now use \eqref{eq:u-v-nonlinear} to compute
\begin{align}\label{alirhioqhw}
\begin{split}
u&=-\frac{1}{4} \ln [4(a^2-v^2)]\\&
=-\frac12 \ln \frac{4ac_4(1+\cos \theta)(1-\cos \theta)}{c_4^2 (1+\cos \theta)^2+(1-\cos \theta)^2} \\&
=-\ln \sin \theta -\frac12 \ln\frac{4ac_4}{c_4^2 (1+\cos \theta)^2+(1-\cos \theta)^2}.
\end{split}
\end{align}
Lastly, by setting  $b= \frac{c_4^2-1}{c_4^2+1}$ the desired result follows, that is $(u,v)=(u_{a,b},v_{a,b})$.

In order to treat the general case, we will show that any harmonic map satisfying the hypotheses of this proposition must in fact be axisymmetric.  Denote $\Psi=(u,v)$, and let $f:\mathbb{S}^2 \rightarrow\mathbb{S}^2$ be a rotation of the $\phi$ angular variable. Then since $f$ is an isometry of the sphere, we have that $f^* \Psi:\mathbb{S}^2 \setminus\{N,S\}\rightarrow\mathbb{H}^2$ is also harmonic. It follows that the distance function $w=d_{\mathbb{H}^2}(\Psi,f^* \Psi)$ is subharmonic \cite[Lemma 2]{WeinsteinHadamard},
\begin{equation}\label{aoiohioqohy}
\Delta_{g_0}w\geq 0 \quad\text{ on }\mathbb{S}^2 \setminus\{N,S\}.
\end{equation}

We now claim that the distance function vanishes at the north and south poles. For brevity write $f^* \Psi=(u_*,v_*)$ and $U_*=u_* +\ln\w$, then using a basic formula \cite[page 1192]{Wei92} for the distance between points in the hyperbolic plane yields
\begin{align*}
\begin{split}
\cosh\left(2w\right)=&\cosh(2(u-u_*))
+2e^{2(u+u_*)}\left(v-v_*\right)^2\\
=&\cosh(2(U-U_*))
+\frac{2e^{2(U+U_*)}}{\sin^4 \theta}\left(v-v_*\right)^2.
\end{split}
\end{align*}
According the Theorem \ref{thm:reg}, and the fact that the north and south poles are fixed points for the rotation $f$, we have the expansions
\begin{equation*}
U, U_* =c_{\pm} +O(\sin\theta),\quad\quad v, v_* = \pm a+O(\sin^{3+\alpha} \theta)\quad \text{ as } \theta\rightarrow 0,\pi,
\end{equation*}
for some constants $\alpha\in(0,1)$ and $c_{\pm}$. The claim now follows, and since $w$ is a continuous function on a compact manifold it is uniformly bounded, that is
\begin{equation}\label{q9ijh9jq9h9qw}
w\leq C \quad\text{ on }\mathbb{S}^2,\quad\quad w(N)=w(S)=0.
\end{equation}

Properties \eqref{aoiohioqohy} and \eqref{q9ijh9jq9h9qw} combine to show that $w\equiv 0$. This may be verified by either using
the expansions for $v$, $v_*$ above and the fact that $(U,v)$ and $(U_*,v_*)$ are $C^3(\mathbb{S}^2)$ to conclude that $w\in C^2(\mathbb{S}^2)$ which allows for an application of the strong maximum principle, or with an adaptation of the argument in \cite[Lemma 8]{WeiDuke}.
It follows that $\Psi=f^* \Psi$ on $\mathbb{S}^2 \setminus\{N,S\}$, and since $f$ was an arbitrary rotation the harmonic map must be axisymmetric.
\end{proof}

\begin{rem}\label{rem:geodesic} 
If we let $s= \frac{1}{2} \ln \frac{1-\cos \theta}{1+\cos \theta}$, 
then the axisymmetric harmonic map equations (or equivalently the geodesic equation in the hyperbolic plane) becomes
\[
u'' -2e^{4u} (v')^2=0, \quad\quad  v'' +4 u' v'=0,
\]
where $'=\frac{ d }{ d  s}$ here. In the new arclength variable $s$, the expressions \eqref{eq:geodesic-2} and \eqref{alirhioqhw} may be rewritten as 
\[
u=\frac{1}{2}\ln\left[\frac{1}{2a}\cosh(-2s+\lambda)\right],\quad\quad v= a \tanh (- 2s+\lda), 
\]
where $\lda=\ln c_4 $. Moreover, the constant $b=\tanh \lda$ is associated with translations in the $s$-parameter. 
\end{rem}





We will now characterize Jacobi fields, or rather solutions of the homogeneous linearized harmonic map 
equations from $\mathbb{S}^2 \setminus \{N,S\}$ to $\mathbb{H}^2$. 
In particular, it will be shown that any solution admitting appropriate homogeneous Dirichlet boundary 
conditions at the north and south poles, must arise as the first variation of a 1-parameter family of
singular harmonic maps. This fact is important, as it implies that the integrability condition of 
Allard-Almgren \cite{AA} is satisfied, allowing for an improved rate of convergence to the tangent
map.

Let $( u_{a,b}, v_{a,b})$ be a solution of \eqref{ai9tgjiqihq} 
as in Proposition \ref{prop:classification}, for some constants $a>0$ and $b\in (0,1)$. 
Consider, for any $\varphi=(\varphi_1, \varphi_2)$  with $\varphi_1, \varphi_2\in C^2(\mathbb{S}^2 \setminus \{N,S\})$, 
the equations
\begin{equation} \label{eq:linearized-HM-operator}
\begin{split}
\mathcal{T}_{1} \varphi &= \Delta_{g_0}  \varphi_1 -8 e^{4 u_{a,b}} |\nabla_{g_0}    v_{a,b} |^2 \varphi_1 -4 e^{4 u_{a,b}} \nabla_{g_0}   v_{a,b} \cdot\nabla_{g_0}  \varphi_2,\\
\mathcal{T}_{2} \varphi&= e^{-4u_{a,b}}  \mathrm{div}_{g_0} (e^{4 u_{a,b}} \nabla_{g_0}  \varphi_2))   +4  \nabla_{g_0}  v_{a,b} \cdot\nabla_{g_0}  \varphi_1.
\end{split}  
\end{equation}
The operator $\mathcal{T}= (\mathcal{T}_1, \mathcal{T}_2)$ arises from the linearization of the 
harmonic map equation \eqref{ai9tgjiqihq} at the solution $( u_{a,b}, v_{a,b})$. 
Since $( u_{a,b}, v_{a,b})$ is independent of $\phi$, we have 
\begin{equation}\label{eq:general-linearized}
\mathcal{T}_1 = \frac{1}{\sin ^2\theta} \pa_\phi^2 +L_{a,b,1}, \qquad \mathcal{T}_2 = \frac{1}{\sin ^2\theta} \pa_\phi^2 + L_{a,b,2},
\end{equation}
where $L_{a,b}= (L_{a,b,1}, L_{a,b,2})$ is the axisymmetric linearized harmonic map operator at $(u_{a,b}, v_{a,b})$, given by
\begin{align*}
L_{a,b,1} \varphi&=\frac{1}{\sin \theta}(\sin \theta \varphi_1')' 
-8 e^{4u_{a, b}}| v_{a,b}'|^2 \varphi_1-4e^{4u_{a, b}} v_{a,b}'  \varphi_2', \\
L_{a,b,2} \varphi &= \frac{1}{\sin \theta}(\sin \theta \varphi_2')' 
+4 u_{a,b}'  \varphi_2' +4  v_{a,b}'  \varphi_1', 
\end{align*}
with $'=\frac{\partial }{\partial  \theta}$.  In \eqref{eq:linearized-HM-operator}, the coefficients of $\varphi_1$ are regular, but the coefficients of $\varphi_2$ have singularities at $N$ and $S$. Thus, additional requirements are to be imposed on $\varphi_2$ at $N$ and $S$.  In the following, we will study 
\begin{equation} \label{eq:linearized-HM-1}
\begin{split}
\mathcal{T}\varphi=0\quad\text{on }\mathbb{S}^2 \setminus \{N,S\},
\end{split}  
\end{equation}
with 
\begin{equation}\label{eq:linearized-HM-2}
\varphi_2(N)=   \varphi_2(S)= 0.
\end{equation}

We now demonstrate that the operator $\mathcal{T}$ is self-adjoint in appropriate $L^2$-spaces. Denote by $L^2(\mathbb{S}^2, e^{4u_{a,b}})$ the subspace of $L^2(\mathbb{S}^2)$ 
consisting of functions $f$ with the bounded norm 
\[
\|f\|_{L^{2}(\mathbb{S}^2, e^{4u_{a,b}} )}=\Big(\int_{\mathbb{S}^2} e^{4u_{a,b}}  f^2\,  d  vol_{g_0} \Big)^{1/2},
\] 
and by $ H_0^{1}(\mathbb{S}^2, e^{4u_{a,b}} )$ the closure of 
$C_c^\infty(\mathbb{S}^2\setminus \{N,S\})$ under the norm 
\[
\|f\|_{H^{1}(\mathbb{S}^2, e^{4u_{a,b}} )} 
=\Big(\int_{\mathbb{S}^2}e^{4u_{a,b}}  (|\nabla_{g_0} f|^2 +f^2)\,  d  vol_{g_0} \Big)^{1/2}. 
\]
The inner products associated to these norms will be denoted with the braces $\langle\cdot, \cdot\rangle$. Introduce the bilinear form 
\begin{align*}
\mathcal{B}[\varphi, \psi]&= \int_{\mathbb{S}^2} \big(\nabla_{g_0} \varphi_1\cdot   \nabla_{g_0} \psi_1
+e^{4u_{a,b}} \nabla_{g_0} \varphi_2 \cdot  \nabla_{g_0} \psi_2
+8 e^{4 u_{a,b}} |\nabla_{g_0}    v_{a,b}|^2 \varphi_1 \psi_1\\
&\qquad\qquad  +4 e^{4u_{a,b}}\psi_1 \nabla_{g_0}  v_{a,b}\cdot \nabla_{g_0} \varphi_2   -4 e^{4u_{a,b}} \psi_2\nabla_{g_0}  v_{a,b}\cdot\nabla_{g_0}  \varphi_1 \big)\, d vol_{g_0},  
\end{align*}
for any $\varphi=(\varphi_1, \varphi_2), \psi=(\psi_1, \psi_2)\in H^1(\mathbb{S}^2)\times H_0^{1}(\mathbb{S}^2, e^{4u_{a,b}})$. If $\varphi_1\in C^2(\mathbb{S}^2)$ and $\varphi_2\in C^2_c(\mathbb{S}^2\setminus\{N,S\})$, then
\[
\mathcal{B}[\varphi, \psi]= -\langle\mathcal{T} _1 \varphi, \psi_1\rangle_{L^2(\mathbb{S}^2)} - \langle \mathcal{T}_2 \varphi, \psi_2\rangle_{L^2(\mathbb{S}^2, e^{4u_{a,b}} )}.
\]

\begin{lem}\label{lemma:sym-bilinear}$\mathcal{B}[\cdot, \cdot]$ is symmetric and nonnegative on $H^1(\mathbb{S}^2)\times H_0^{1}(\mathbb{S}^2, e^{4u_{a,b}})$.
\end{lem}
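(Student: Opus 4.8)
The plan is to prove symmetry and nonnegativity of $\mathcal{B}$ by direct computation, treating the two properties separately.

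\textbf{Symmetry.} First I would examine the six terms in the integrand defining $\mathcal{B}[\varphi,\psi]$. The first three are manifestly symmetric under $\varphi \leftrightarrow \psi$: the terms $\nabla_{g_0}\varphi_1\cdot\nabla_{g_0}\psi_1$, $e^{4u_{a,b}}\nabla_{g_0}\varphi_2\cdot\nabla_{g_0}\psi_2$, and $8e^{4u_{a,b}}|\nabla_{g_0}v_{a,b}|^2\varphi_1\psi_1$ are each visibly invariant. The only asymmetric-looking piece is the combination of the last two terms, namely $4e^{4u_{a,b}}\psi_1\nabla_{g_0}v_{a,b}\cdot\nabla_{g_0}\varphi_2 - 4e^{4u_{a,b}}\psi_2\nabla_{g_0}v_{a,b}\cdot\nabla_{g_0}\varphi_1$. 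To show the full form is symmetric, I would integrate by parts on one of these terms. Using that $\mathrm{div}_{g_0}(e^{4u_{a,b}}\nabla_{g_0}v_{a,b}) = 0$ (the second harmonic map equation in divergence form from Proposition~\ref{prop:classification}), one obtains $\int e^{4u_{a,b}}\psi_2\nabla_{g_0}v_{a,b}\cdot\nabla_{g_0}\varphi_1\, dvol_{g_0} = -\int \varphi_1\,\mathrm{div}_{g_0}(e^{4u_{a,b}}\psi_2\nabla_{g_0}v_{a,b})\, dvol_{g_0} = -\int \varphi_1 e^{4u_{a,b}}\nabla_{g_0}\psi_2\cdot\nabla_{g_0}v_{a,b}\, dvol_{g_0}$, which exhibits the cross terms in a symmetric fashion. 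Here I must be careful about boundary terms at $N$ and $S$; these vanish because $\varphi_2,\psi_2 \in H_0^1(\mathbb{S}^2, e^{4u_{a,b}})$, so the integration by parts is justified by density of $C_c^\infty(\mathbb{S}^2\setminus\{N,S\})$ combined with the decay $e^{4u_{a,b}} \sim \sin^{-4}\theta$ against which $\varphi_2,\psi_2$ are square-integrable.

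\textbf{Nonnegativity.} Setting $\psi = \varphi$, I would show $\mathcal{B}[\varphi,\varphi] \geq 0$. After the integration by parts above the cross terms cancel or combine, and one is left with
\[
\mathcal{B}[\varphi,\varphi] = \int_{\mathbb{S}^2}\Big(|\nabla_{g_0}\varphi_1|^2 + e^{4u_{a,b}}|\nabla_{g_0}\varphi_2|^2 + 8e^{4u_{a,b}}|\nabla_{g_0}v_{a,b}|^2\varphi_1^2 - 8e^{4u_{a,b}}\varphi_1\nabla_{g_0}v_{a,b}\cdot\nabla_{g_0}\varphi_2\Big)\,dvol_{g_0}.
\]
The last three terms I would recognize as a perfect square: $8e^{4u_{a,b}}|\nabla_{g_0}v_{a,b}|^2\varphi_1^2 - 8e^{4u_{a,b}}\varphi_1\nabla_{g_0}v_{a,b}\cdot\nabla_{g_0}\varphi_2$, when completed against a portion of $e^{4u_{a,b}}|\nabla_{g_0}\varphi_2|^2$, gives $e^{4u_{a,b}}|2\sqrt{2}\,\varphi_1\nabla_{g_0}v_{a,b} - \tfrac{1}{\sqrt{2}}\nabla_{g_0}\varphi_2|^2 \geq 0$... though the numerology needs checking. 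Alternatively — and this is the cleaner route — nonnegativity follows from the second variation formula for harmonic maps into a target of nonpositive curvature: since $\mathbb{H}^2$ has curvature $-2 < 0$, the index form of any harmonic map into $\mathbb{H}^2$ is nonnegative, and $\mathcal{B}$ is precisely this index form (the curvature term enters with a favorable sign). I would state this as the conceptual reason and back it with the explicit completion-of-squares computation.

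\textbf{Main obstacle.} The delicate point is justifying all integrations by parts near the punctures $N$ and $S$, where $e^{4u_{a,b}}$ blows up like $\sin^{-4}\theta$ and $\nabla_{g_0}v_{a,b}$ has a singularity. The resolution is that membership of $\varphi_2$ (and $\psi_2$) in $H_0^1(\mathbb{S}^2, e^{4u_{a,b}})$ — the closure of compactly supported functions away from the poles — is exactly the hypothesis that kills the boundary contributions; one proves the identities first for $\varphi_2 \in C_c^\infty(\mathbb{S}^2\setminus\{N,S\})$ and then passes to the limit using continuity of $\mathcal{B}$ in the $H^1 \times H_0^1(\mathbb{S}^2,e^{4u_{a,b}})$ norms, which itself requires verifying that each term is controlled by these norms (the cross term via Cauchy–Schwarz, using that $e^{2u_{a,b}}|\nabla_{g_0}v_{a,b}|$ is bounded, which is visible from the explicit formulas for $u_{a,b}, v_{a,b}$).
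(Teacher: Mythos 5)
Your symmetry argument matches the paper's: both integrate by parts on the cross term using $\mathrm{div}_{g_0}(e^{4u_{a,b}}\nabla_{g_0}v_{a,b})=0$, with the density argument handling the boundary contributions. That part is fine.

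The nonnegativity argument, however, has a genuine gap. First, a sign check: integrating $-4e^{4u}\varphi_2\nabla v\cdot\nabla\varphi_1$ by parts as you describe gives $+4e^{4u}\varphi_1\nabla\varphi_2\cdot\nabla v$, so with $\psi=\varphi$ the cross terms combine to $+8e^{4u}\varphi_1\nabla v\cdot\nabla\varphi_2$, not $-8$. More importantly, the completion of squares you propose cannot work with either sign. To absorb a cross term $\pm 8e^{4u}\varphi_1\nabla v\cdot\nabla\varphi_2$ into a square $e^{4u}\lvert a\varphi_1\nabla v + b\nabla\varphi_2\rvert^2$ while also using up $8e^{4u}\lvert\nabla v\rvert^2\varphi_1^2$, you need $a^2=8$ and $2\lvert ab\rvert =8$, forcing $b^2=2$; but only one copy of $e^{4u}\lvert\nabla\varphi_2\rvert^2$ is available, so a term $-e^{4u}\lvert\nabla\varphi_2\rvert^2$ is left over with the wrong sign. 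The paper resolves this by \emph{not} eliminating the cross terms through integration by parts; instead it writes the pointwise integrand $J[\varphi]$ and decomposes it as $J_1[\varphi]+J_2[\varphi]$, where $J_1$ is a sum of three squares that crucially involve $\nabla_{g_0}u_{a,b}$ as well as $\nabla_{g_0}v_{a,b}$ (e.g.\ $\lvert\nabla\varphi_1 - 2e^{4u}\nabla v\,\varphi_2\rvert^2$ and $e^{4u}\lvert\nabla\varphi_2 + 2\nabla v\,\varphi_1 + 2\nabla u\,\varphi_2\rvert^2$), and $J_2$ is shown to equal $-2\,\mathrm{div}_{g_0}(e^{4u}\nabla u\,\varphi_2^2)$ \emph{using the first harmonic map equation}, so it integrates to zero. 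Without bringing in $\nabla u$ and the $u$-equation, the pointwise integrand really is not a sum of squares, and your argument stalls.

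Your fallback appeal to the second variation of harmonic maps into a nonpositively curved target is the right conceptual explanation — the paper makes exactly this remark after the proof — but in this singular setting it serves as motivation, not a substitute for the explicit computation: one still needs the decomposition above to identify the index form and to check that the divergence term drops out given the admissibility class $H^1(\mathbb{S}^2)\times H^1_0(\mathbb{S}^2,e^{4u_{a,b}})$.
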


\begin{proof} For brevity, we write $\nabla=\nabla_{g_0}$ and $(u,v)=( u_{a,b}, v_{a,b})$. Then
for any $\varphi=(\varphi_1, \varphi_2)$ and $\psi=(\psi_1, \psi_2)\in H^1(\mathbb{S}^2)\times H_0^{1}(\mathbb{S}^2, e^{4u})$, the quadratic form becomes
\begin{align*}
\mathcal{B}[\varphi, \psi]&= \int_{\mathbb{S}^2} \big(\nabla \varphi_1\cdot   \nabla \psi_1
+e^{4u} \nabla \varphi_2 \cdot  \nabla \psi_2
+8 e^{4 u} |\nabla    v|^2 \varphi_1 \psi_1\\
&\qquad\qquad  +4 e^{4u} \psi_1\nabla  v\cdot \nabla  \varphi_2   -4 e^{4u} \psi_2\nabla  v \cdot\nabla  \varphi_1 \big)\, d vol_{g_0}.  
\end{align*}
There are five terms in the integrand, and the first three are symmetric in $\varphi$ and $\psi$, so
consider the last two terms. Set 
$$I[\varphi, \psi]=e^{4u}\psi_1  \nabla  v \cdot\nabla  \varphi_2  - e^{4u}\psi_2 \nabla  v\cdot \nabla  \varphi_1 ,$$
and observe that a simple computation yields 
$$I[\varphi, \psi]-I[\psi, \varphi]=e^{4u} \nabla  v\cdot \nabla(\varphi_2 \psi_1)  - e^{4u} \nabla  v\cdot \nabla  (\varphi_1 \psi_2).$$
Thus, integrating by parts and using $\mathrm{div}(e^{4u}  \nabla v)=0$ produces
\begin{align*}
&\int_{\mathbb{S}^2} I[\varphi, \psi]\, dvol_{g_0}
=\int_{\mathbb{S}^2} I[\psi, \varphi]\, dvol_{g_0},
\end{align*}
which yields $\mathcal{B}[\varphi, \psi]=\mathcal{B}[\psi, \varphi]$. 

Next, take a $\varphi=(\varphi_1, \varphi_2)\in H^1(\mathbb{S}^2)\times H_0^{1}(\mathbb{S}^2, e^{4u})$ 
and set 
\begin{align*}
J[\varphi]= |\nabla \varphi_1|^2
+e^{4u} |\nabla \varphi_2 |^2
+8 e^{4 u} |\nabla    v|^2 \varphi_1^2 
+4 e^{4u}\varphi_1 \nabla  v\cdot \nabla  \varphi_2   -4 e^{4u}\varphi_2 \nabla  v\cdot \nabla  \varphi_1 .  
\end{align*}
A straightforward computation yields 
\begin{align*}
J[\varphi]=J_1[\varphi]+J_2[\varphi],\end{align*}
where 
\begin{align*}
J_1[\varphi]= |\nabla \varphi_1-2 e^{4u} \nabla  v  \varphi_2|^2
+e^{4u} |\nabla \varphi_2+2 \nabla  v  \varphi_1+2  \nabla  u  \varphi_2|^2
+4 e^{4 u} |\nabla    v\varphi_1-\nabla    u\varphi_2|^2,\end{align*}
and 
\begin{align*}
J_2[\varphi]
=-  4 e^{4u} \varphi_2\nabla  u\cdot \nabla  \varphi_2 -8 e^{4u} |\nabla  u|^2 \varphi_2^2  -4 e^{8u} |\nabla  v|^2 \varphi_2^2.  
\end{align*}
Note that $J_1[\varphi]\ge 0$. 
By using the first equation in \eqref{ai9tgjiqihq}, we have
\begin{align*}J_2[\varphi]
=-  2 e^{4u} \nabla  u\cdot \nabla  \varphi^2_2-8 e^{4u} |\nabla  u|^2 \varphi_2^2  -4 e^{8u} |\nabla  v|^2 \varphi_2^2
=  -  2 \mathrm{div}(e^{4u} \nabla  u  \varphi_2^2),
\end{align*}
and hence 
\begin{align*}\int_{\mathbb{S}^2}J_2[\varphi]\, dvol_{g_0}=0.\end{align*}
As a consequence, we have $\mathcal{B}[\varphi , \varphi] \ge 0$. 
\end{proof}

For the nonnegativity of $\mathcal{B}$, we proved that its integrand can be decomposed 
as a sum of squares modulo a divergence term. This actually follows from the fact that 
the target space $\mathbb{H}^2$ is of negative curvature. Indeed, the proof above simply utilizes an 
explicit form of the standard index form obtained from the Jacobi equation. 
We will say that $\varphi\in H^1(\mathbb{S}^2)\times H_0^{1}(\mathbb{S}^2, e^{4u_{a,b}})$ is a weak solution of \eqref{eq:linearized-HM-1}-\eqref{eq:linearized-HM-2} if 
\[
\mathcal{B}[\varphi, \psi]=0, 
\]
for all $\psi\in H^1(\mathbb{S}^2)\times H_0^{1}(\mathbb{S}^2, e^{4u_{a,b}})$.

\begin{prop} \label{prop:kerr-kernel}
Let $(u_{a,b}, v_{a,b})$ be as in Proposition \ref{prop:classification}, for some $a>0$ and $b\in (0,1)$. Suppose that  $\varphi\in H^1(\mathbb{S}^2)\times H_0^{1}(\mathbb{S}^2, e^{4u_{a,b}})$ is a weak solution of \eqref{eq:linearized-HM-1}-\eqref{eq:linearized-HM-2}. Then $\varphi= \lda (\pa_b u_{a,b}, \pa_b v_{a,b})$ for some constant $\lda$.
\end{prop}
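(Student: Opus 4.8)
The plan is to reduce the problem to an ODE analysis via Fourier decomposition in $\phi$, exploiting the self-adjointness and nonnegativity of $\mathcal B$ established in Lemma~\ref{lemma:sym-bilinear}. First I would expand $\varphi=(\varphi_1,\varphi_2)$ in a Fourier series in the $\phi$-variable, writing $\varphi_j=\sum_{n\in\mathbb Z}\varphi_j^{(n)}(\theta)e^{in\phi}$. Since the coefficients of $\mathcal T$ are $\phi$-independent, by \eqref{eq:general-linearized} each mode satisfies $L_{a,b}\varphi^{(n)}=\tfrac{n^2}{\sin^2\theta}\varphi^{(n)}$, and the boundary condition \eqref{eq:linearized-HM-2} together with membership in $H^1(\mathbb S^2)\times H^1_0(\mathbb S^2,e^{4u_{a,b}})$ survives mode-by-mode. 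The nonnegativity of $\mathcal B$ shows that for $n\neq0$ the extra nonnegative term $\int \tfrac{n^2}{\sin^2\theta}(|\varphi_1^{(n)}|^2+e^{4u_{a,b}}|\varphi_2^{(n)}|^2)$ forces $\varphi^{(n)}=0$ (one must be slightly careful that this term is finite, which follows from the $H^1$-regularity once one checks the weight; alternatively test the weak formulation of the $n$-th mode against itself). Hence $\varphi$ is axisymmetric, and the problem reduces to: classify the kernel of the axisymmetric operator $L_{a,b}$ with $\varphi_2(N)=\varphi_2(S)=0$.

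Next I would solve the axisymmetric linearized system explicitly. Since $(u_{a,b},v_{a,b})$ is a geodesic in $\mathbb H^2$, the linearized equation is the Jacobi equation along this geodesic with the spherical Laplacian as the base operator; in the arclength variable $s=\tfrac12\ln\tfrac{1-\cos\theta}{1+\cos\theta}$ of Remark~\ref{rem:geodesic} the equations \eqref{eq:linearized-HM-operator} become a constant-coefficient-type second-order system in $s$ (after absorbing the $\sin\theta$ factors), because the negatively curved target contributes a constant curvature term. Concretely, writing the Jacobi field in the parallel frame along the geodesic, one component solves $\ddot\psi=0$ and the other solves $\ddot\psi=4\psi$ (curvature $-2$ contributing the factor), so the general solution of the ODE system is spanned by four explicit functions of $s$; in the original $\theta$-variable two of these are $s$, $1$, and the other two are $e^{\pm2s}=\big(\tfrac{1-\cos\theta}{1+\cos\theta}\big)^{\pm1}$-type expressions. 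Then I would impose the two boundary conditions $\varphi_2(N)=\varphi_2(S)=0$ together with the integrability requirements at $\theta=0,\pi$ coming from $\varphi\in H^1(\mathbb S^2)\times H^1_0(\mathbb S^2,e^{4u_{a,b}})$; these kill three of the four constants, leaving a one-dimensional space.

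Finally I would identify the surviving solution with $(\partial_b u_{a,b},\partial_b v_{a,b})$. This is immediate once one notes that differentiating the one-parameter family $(u_{a,b},v_{a,b})$ of solutions of \eqref{ai9tgjiqihq} in $b$ produces a solution of $\mathcal T\varphi=0$, which is axisymmetric, satisfies $\partial_b v_{a,b}(N)=\partial_b v_{a,b}(S)=0$ (since $v_{a,b}(N)\equiv a$, $v_{a,b}(S)\equiv -a$ independently of $b$), and lies in the right function space by the regularity of the explicit formulas. Since the kernel is at most one-dimensional and $(\partial_b u_{a,b},\partial_b v_{a,b})$ is a nonzero element of it, every weak solution is a scalar multiple, proving the proposition.

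\textbf{Main obstacle.} I expect the delicate point to be the boundary/regularity analysis at $N$ and $S$: ruling out the non-$H^1$ (or non-$H^1_0$-weighted) solutions of the ODE system and verifying that the nonaxisymmetric modes genuinely have finite weighted energy so that the nonnegativity argument applies. The singular weight $e^{4u_{a,b}}\sim(\sin\theta)^{-4}$ interacts subtly with the boundary behavior of $\varphi_2$, and one must match the admissible decay rates of the explicit ODE solutions against the $H^1_0(\mathbb S^2,e^{4u_{a,b}})$ condition carefully — much as in the indicial-root analysis used to pin down $c_1,c_2$ in the proof of Proposition~\ref{prop:classification}.
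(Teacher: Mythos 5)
Your axisymmetry reduction (Fourier decomposition in $\phi$ and using the nonnegativity of $\mathcal B$ from Lemma~\ref{lemma:sym-bilinear} to kill the nonzero modes) is exactly the paper's first step. Where you genuinely diverge is in the ODE analysis of the axisymmetric kernel. The paper integrates the system \eqref{eq:jacobi} directly: it uses the first integral $e^{4u_{a,b}}\sin\theta\,v_{a,b}'=-\tfrac{1}{2a}$ to reduce $L_{a,b,2}\varphi=0$ to the algebraic relation $e^{4u_{a,b}}\sin\theta\,\varphi_2'=\tfrac{2}{a}\varphi_1+c_0$, substitutes into $L_{a,b,1}\varphi=0$ to get a single scalar second-order ODE \eqref{eq:Jacobi-eq-1} for a shifted $\varphi_1$, exhibits $\varphi_{*1}=\partial_b u_{a,b}$ as one solution, produces the second by variation of parameters and shows it blows up at the pole, and then back-substitutes and uses the boundary conditions $\varphi_2(0)=\varphi_2(\pi)=0$ to force $c_0=c_2=0$. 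You instead pass to the arclength parameter $s$, recognize the axisymmetric linearized operator as the Jacobi operator along the geodesic $(u_{a,b},v_{a,b})$, split into tangential and normal components in the parallel frame, and read off the four-dimensional solution space ($1$, $s$, $e^{2s}$, $e^{-2s}$). Your approach is more conceptual and makes the role of the geodesic structure transparent; the paper's is more self-contained and does the boundary analysis and the linear algebra in one pass. What your route buys in geometric clarity it pays back in the step you flag as the main obstacle: one must rotate the parallel-frame basis back to $(\varphi_1,\varphi_2)$-coordinates and verify carefully that $J=e^{2s}N$ and $J=e^{-2s}N$ each violate the trace condition $\varphi_2(N)=\varphi_2(S)=0$ at exactly one of the two poles, while $J=s\,T$ passes that test but fails $\varphi_1\in H^1(\mathbb S^2)$ because $|\nabla_{g_0}\varphi_1|\sim(\sin\theta)^{-1}$. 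So all four admissibility constraints (two boundary values and two Sobolev conditions) are genuinely used, not just the two boundary conditions; you gesture at this correctly but it needs to be written out. One small numerical slip: the Gauss curvature of $du^2+e^{4u}dv^2$ is $-4$, not $-2$ (the paper's stated normalization appears off by a factor), and it is the $-4$ that produces $\ddot\psi=4\psi$ — your ODE is right, the attribution is not.
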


\begin{proof} 
In a manner similar to the proof of Proposition \ref{prop:partial-t-linear}, we can prove that $\varphi \in C^{3,\alpha} (\mathbb{S}^2)$ for any $\alpha \in (0,1)$, and  
$$\|(\sin \theta)^{j-3-\alpha}\nabla_{g_0}^j \varphi_2\|_{L^\infty(\mathbb{S}^2)}\le C$$ 
for $j=0,1,2,3$ where $C$ is a positive constant depending on $\varphi$. In particular, this implies that 
$\varphi_2(N)=\varphi_2(S)=0$. 

Next we claim that $\varphi$ is axisymmetric, that is, independent of the variable $\phi$. 
To see this, expand in a Fourier series 
$$\varphi(\theta, \phi)=  \xi_0(\theta)+\sum_{m=1}^\infty\big[\xi_m (\theta) \cos m \phi+\eta_m (\theta) \sin m \phi\big],$$
for some functions $\xi_m(\theta)=(\xi_{m,1} (\theta), \xi_{m,2} (\theta))$ with $m\ge 0$ and 
$\eta_m(\theta)=(\eta_{m,1} (\theta), \eta_{m,2} (\theta))$ 
with $m\ge 1$. A straightforward computation produces
\begin{align*}
\mathcal{B}[\varphi,  \varphi] &=\sum_{m=1}^\infty m^2 \pi \int_{0}^\pi  \big((\xi_{m,1} ^2+\eta_{m,1} ^2) + e^{4u_{a,b}}(\xi_{m,2} ^2 +\eta_{m,2} ^2) \big)\,\frac{d \theta}{\sin \theta}\\
&\qquad +\mathcal{B}[\xi_0, \xi_0]+\frac12\sum_{m=1}^\infty\big(\mathcal{B}[\xi_m, \xi_m]+\mathcal{B}[\eta_m, \eta_m]\big),
\end{align*}
and thus Lemma \ref{lemma:sym-bilinear} implies
\begin{align*}
\mathcal{B}[\varphi,  \varphi] \ge \sum_{m=1}^\infty m^2 \pi \int_{0}^\pi  \big((\xi_{m,1} ^2+\eta_{m,1} ^2) + e^{4u_{a,b}} (\xi_{m,2} ^2 +\eta_{m,2} ^2)\big)\,\frac{d \theta}{\sin \theta}\ge 0.\end{align*}
Furthermore, since $\varphi$ is a weak solution we have that $\mathcal{B}[\varphi,  \varphi]=0$. 
It follows that all $\xi_m$ and $\eta_m$ are identically zero for $m\ge 1$. 
Therefore $\varphi(\theta, \phi)=  \xi_0(\theta)$ is a function independent of $\phi$.

Now, $\varphi\in C^2 ([0,\pi])$ is a solution of the following boundary value problem:
\begin{equation} \label{eq:jacobi}\begin{split}
L_{a,b,1}\varphi=L_{a,b,2}\varphi&=0 \quad \text{on }[0,\pi],\\
\varphi_2(0)=\varphi_2(\pi)&=0. 
\end{split}\end{equation}
Set $\varphi_*=(\varphi_{*1}, \varphi_{*2})= (\pa_bu_{a,b}, \pa_bv_{a,b})$ and observe that 
$L_{a,b}\varphi_*=0$. Moreover, a straightforward computation yields 
\begin{align*}
\varphi_*=  \Big(\frac{b}{2(1-b^2)} +\frac{\cos \theta }{1+\cos^2 \theta +2b \cos \theta},  
\frac{a\sin^4 \theta }{(1+ \cos ^2 \theta +2b  \cos \theta)^2}\Big), 
\end{align*} 
so that $\varphi_{*2}(0)=\varphi_{*2}(\pi)=0$. Hence, $\varphi_*$ satisfies the boundary value problem \eqref{eq:jacobi}.

Let us examine \eqref{eq:jacobi}. 
Notice that the second equation $L_{a,b,2}\varphi=0$ gives 
\[
(e^{4 u_{a,b}} \sin \theta \varphi_2')'+4 e^{4 u_{a,b}} \sin \theta v_{a,b}'  \varphi_1'=0,
\]
and a calculation produces 
\begin{align*}
e^{4 u_{a,b}} \sin  \theta  v_{a,b}'
= - \frac{1}{2a}. 
\end{align*} 
Therefore 
\begin{align} 
\label{eq:ODE-varphi-2}
e^{4 u_{a,b}} \sin \theta \varphi_2' =\frac{2}{a} \varphi_1 +c_0,
\end{align}
for some constant $c_0$. Substituting this into the first  equation $L_{a,b,1} \varphi =0$ shows that
\begin{align*}
0&=(\sin \theta \varphi_1')' -8\sin\theta e^{4u_{a, b}}| v_{a,b}'|^2 
\varphi_1-4\left(\frac{2}{a} \varphi_1 +c_0 \right) v_{a,b}' \\&
= (\sin \theta \varphi_1')' -8v_{a,b}' \left( \sin\theta e^{4u_{a, b}} v_{a,b}'  +\frac{1}{a}\right)\varphi_1 -4 c_0v_{a,b}'\\&
=(\sin \theta \varphi_1')' -\frac{4}{a} v_{a,b}' \varphi_1 -4 c_0v_{a,b}'.
\end{align*} 
In particular, we conclude that $\varphi_1=-ac_0 $ is a solution. 

Now write $\psi=\varphi_1+ac_0$, and observe that this satisfies the homogenous equation
\be\label{eq:Jacobi-eq-1}
(\sin \theta \psi')' -\frac{4}{a} v_{a,b}' \psi=0.
\ee
We claim that $\varphi_{*1}$ is a solution of \eqref{eq:Jacobi-eq-1}. 
To see this, one may simply compute the left-hand side of \eqref{eq:Jacobi-eq-1} 
for $\psi=\varphi_{*1}$ to find it is zero.  However, a quicker method is to compute 
the constant $c_0$ given by \eqref{eq:ODE-varphi-2} 
for $(\varphi_{*1}, \varphi_{*2})$. In fact we have 
\begin{align*}
c_0&= e^{4 u_{a,b}} \sin \theta \varphi_{*2}'  - \frac{2}{a} \varphi_{*1} = 
\lim_{\theta \to 0}\left(e^{4 u_{a,b}} \sin \theta \varphi_{*2}'  - \frac{2}{a} \varphi_{*1} \right)\\&
=\frac{1}{a(1-b^2)}- \frac{b}{a(1-b^2)} -\frac{1}{a(1+b)} =0,
\end{align*}
and the claim follows.

To find another linearly independent solution of \eqref{eq:Jacobi-eq-1}, change variables to $s(\theta)=\frac12\ln \frac{1-\cos \theta}{1+\cos \theta}$, and note that $s'= \frac{1}{\sin \theta}$. Then \eqref{eq:Jacobi-eq-1} becomes
\[
 \frac{ d  ^2}{ d  s^2} \psi -\Big(\frac{4}{a} v_{a,b}'\sin \theta \Big) \psi=0 \quad\text{for }  s\in (-\infty,\infty).
\]
By the standard variation of parameters, another linearly independent solution is given by $C(s) \varphi_{*1} $ where $ \frac{ d  }{ d  s}  C= |\varphi_{*1} |^{-2}$, or rather 
\[
C(s(\theta))= \int_{\frac{\pi}{2}}^{\theta} |\varphi_{*1}(\tau) |^{-2} \frac{1}{\sin\tau}\, d  \tau
\]
up to an additive constant. Since $|\varphi_{*1} (0)|^{-1} \neq 0$, this solution blows-up as $\theta \to 0$. 
Thus, any $C^2$ solution of \eqref{eq:Jacobi-eq-1} on $[0,\pi]$ is given by $\psi=c_1\varphi_{*1}$, 
for some constant $c_1$. As a consequence, we have 
\[
\varphi_1= -c_0 a + c_1 \varphi_{*1}.
\]
Substituting the above expression into \eqref{eq:ODE-varphi-2} yields
\begin{align*}
e^{4 u_{a,b}} \sin \theta \varphi_2' &=\frac{2}{a} (-c_0 a + c_1 \varphi_{*1} ) +c_0 
=\frac{2c_1}{a} \varphi_{*1}  -c_0\\&
= c_1e^{4 u_{a,b}} \sin \theta \varphi_{*2}' -c_0,
\end{align*}
and hence
\[
 \varphi_2- c_1\varphi_{*2} =-c_0 \int_0^{\theta} e^{-4 u_{a,b}} \sin^{-1} \tau\, d  \tau +c_2,
\] 
for some constant $c_2$. By \eqref{eq:jacobi}, $\varphi_2- c_1\varphi_{*2}$ is zero at $0$ 
and $\pi$ simultaneously, which implies that $c_0=c_2=0$. Therefore, $\varphi_1=  c_1 \varphi_{*1}  $ and $\varphi_2=  c_1 \varphi_{*2}$. 
\end{proof}

Proposition \ref{prop:kerr-kernel} asserts that zero is the least eigenvalue of $\mathcal{T}$ and that it is simple, under the condition \eqref{eq:linearized-HM-2}. In fact, it can be shown that there exists an orthonormal basis of $L^2(\mathbb S^2)\times L^2(\mathbb S^2, e^{4u_{a,b}})$ formed by the eigenfunctions of $\mathcal{T}$.

\begin{rem}\label{rem:jacobi-field}
We note that $ (\pa_au_{a,b}, \pa_av_{a,b})$  satisfies the equations  in \eqref{eq:jacobi}. 
A straightforward computation produces
\[
(\pa_a u_{a,b}, \pa_a v_{a,b}) =\Big( -\frac1{2a}, \frac{b +b  \cos ^2 \theta +2\cos \theta}{1+ \cos ^2 \theta 
+2b  \cos \theta}\Big),
\] 
showing that $\pa_a v_{a,b}(0)=1$ and $\pa_a v_{a,b}(\pi) =-1$.
Thus, $(\pa_a u_{a,b}, \pa_a v_{a,b})$ does not satisfy the boundary conditions in \eqref{eq:jacobi}. 
\end{rem}

\begin{proof}[Proof of Theorem \ref{thm:classification-non-degeneracy}] 
This result follows directly from Propositions \ref{prop:classification} and \ref{prop:kerr-kernel}.
\end{proof}

\section{Convergence: Translation Invariant Renormalization}
\label{s:convergence-1}

In this section we will study the asymptotic behaviors of harmonic maps near the
prescribed singularity. Throughout this section it will always be assumed that $\w$ is a positive smooth function on $\mathbb{S}^2\setminus \{N,S\}$, in particular independent of $t$, that satisfies
\begin{equation} \label{eq:NS-singular}
\ln \w-\ln \sin\theta \in C^{10} ( \mathbb{S}^2).
\end{equation} 
Consider weak solutions 
$(\Phi,v)\in H^{1}_{\mathrm{loc}}( \mathbb R_+ \times \mathbb{S}^2)$  
of the renormalized  harmonic map system 
\begin{align}\label{eq:main-2} \begin{split}
L \Phi -2 e^{4u} |\nabla_{g} v|^2  &=L\ln\w, \\
Lv +4\nabla_{g}u\cdot   \nabla_{g} v&=0.
\end{split}\end{align}
We are interested in the beahavior as $t\to\infty$, which in the realm of applications to
stationary vacuum spacetimes corresponds to the approach towards the degenerate black hole
horizon. As before we set $u=\Phi-\ln \w$. In addition, it is assumed that there are positive constants
$\Lambda$ and $a$ such that
\begin{equation} \label{eq:ubd-1} 
|\Phi| \le \Lda \quad \text{ on }\mathbb R_+ \times \mathbb{S}^2,
\end{equation}
for any finite interval $I\subset \mathbb R_+$ we have
\begin{equation}\label{eq:ea-1}
\int_{I} \int_{\mathbb{S}^2} (| \nabla_{g}\Phi|^2 + \w^{-4} | \nabla_{g}v|^2 )\, d  vol_{g_0}  d  t <\infty,
\end{equation}
and in the trace sense
\begin{equation} \label{eq:trace-1}
v= a  \text{ on  }\mathbb R_+ \times \{N\}, \quad \quad
v=-a  \text{ on  }\mathbb R_+ \times \{S\}.
\end{equation} 
 
Observe that since $\w$ is independent of $t$, the system \eqref{eq:main-2} is translation invariant in $t$, 
and Theorem \ref{thm:reg} applies to yield regularity and uniform bounds for $(\Phi(t),v(t))$ on the time interval $[2,\infty)$. 
Our first goal is to establish a crucial identity involving the renormalized harmonic map energy on the 2-sphere
\begin{align}\label{eq:def-E}
\mathcal{E}(\Phi,v)=\frac12\int_{\mathbb{S}^2}  (|\nabla_{g_0} \Phi|^2 
+e^{4u}|\nabla _{g_0} v|^2+2(L\ln\w) \Phi  ) \, d  vol_{g_0}. 
\end{align}
It should be noted that  $L\ln\w=\Delta_{g_0}\w$, as $\w$ is independent of $t$.  The following result may be
viewed as yielding a type of monotonicity formula.

\begin{prop}\label{prop:MF}  
Let $\omega$ satisfy \eqref{eq:NS-singular} 
and $(\Phi,v)$ be a weak solution of \eqref{eq:main-2} on $\mathbb R_+ \times \mathbb{S}^2$,  
satisfying \eqref{eq:ubd-1}, \eqref{eq:ea-1}, and \eqref{eq:trace-1} 
for some positive constants $\Lambda$ and $a$.  Then
\begin{align}\label{eq:fund-iden}
\begin{split}
&\frac{ d  }{ d  t} \Big[\frac12\int_{\mathbb{S}^2} ( |\pa_t \Phi|^2  
+e^{4u} |\pa_t v| ^2)(t)\, d  vol_{g_0}
- \mathcal{E}(\Phi(t),v(t))\Big]\\&
\qquad= \int_{\mathbb{S}^2} ( |\pa_t \Phi|^2  +e^{4u} |\pa_t v| ^2)(t)\, d  vol_{g_0}
\end{split}
\end{align}
 for all $t\in \mathbb R_+$.
\end{prop}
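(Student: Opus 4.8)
The plan is to prove \eqref{eq:fund-iden} by differentiating directly the two functionals
\[
K(t)=\frac12\int_{\mathbb{S}^2}\big(|\pa_t\Phi|^2+e^{4u}|\pa_t v|^2\big)(t)\,d vol_{g_0},\qquad \mathcal{E}(t)=\mathcal{E}(\Phi(t),v(t)),
\]
integrating by parts over the closed surface $\mathbb{S}^2$, and eliminating the resulting second order terms through the renormalized harmonic map system \eqref{eq:main-2}; the assertion is precisely that $\frac{d}{dt}(K-\mathcal{E})=2K$.

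First I would pass from the weak to the classical formulation. Since $\w$ is independent of $t$, Theorem \ref{thm:reg} applies on each slab and, together with the $t$-smoothness recorded there, shows that $(\Phi,v)$ is a genuine $C^{3,\alpha}$ solution of \eqref{eq:main-2}, smooth in the $t$ variable. The weighted bounds \eqref{eq:tan-estimates} of Theorem \ref{thm:reg}, and their $\pa_t$-analogues supplied by Proposition \ref{prop:partial-t-linear}, give $|v\mp a|+|\pa_t v|\le C\w^{3+\alpha}$ together with $|\nabla_{g_0}v|+|\nabla_{g_0}\pa_t v|\le C\w^{2+\alpha}$ near $N$ and $S$, while $e^{4u}\le C\w^{-4}$ and $|\nabla_{g_0}u|\le C\w^{-1}$; hence $K(t)$, $\mathcal{E}(t)$, and all integrands occurring below are absolutely integrable, and differentiation under the integral sign is legitimate locally uniformly in $t$. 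It is also worth noting that $L\ln\w=\Delta_{g_0}\ln\w$ is bounded on $\mathbb{S}^2\setminus\{N,S\}$, being $\Delta_{g_0}\ln\sin\theta=-1$ plus a $C^8$ term by \eqref{eq:NS-singular}, so the linear term of $\mathcal{E}$ in \eqref{eq:def-E} is well defined. Using $\pa_t u=\pa_t\Phi$ (again since $\pa_t\w=0$) I then compute
\begin{align*}
K'(t)&=\int_{\mathbb{S}^2}\big(\pa_t\Phi\,\pa_t^2\Phi+e^{4u}\pa_t v\,\pa_t^2 v+2e^{4u}(\pa_t\Phi)|\pa_t v|^2\big)\,d vol_{g_0},\\
\mathcal{E}'(t)&=\int_{\mathbb{S}^2}\big(\nabla_{g_0}\Phi\cdot\nabla_{g_0}\pa_t\Phi+e^{4u}\nabla_{g_0}v\cdot\nabla_{g_0}\pa_t v+2e^{4u}(\pa_t\Phi)|\nabla_{g_0}v|^2+(L\ln\w)\pa_t\Phi\big)\,d vol_{g_0}.
\end{align*}

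Next I would integrate by parts in the two gradient terms of $\mathcal{E}'(t)$. The first is immediate, $\int_{\mathbb{S}^2}\nabla_{g_0}\Phi\cdot\nabla_{g_0}\pa_t\Phi=-\int_{\mathbb{S}^2}(\Delta_{g_0}\Phi)\pa_t\Phi$; for the second, $\int_{\mathbb{S}^2}e^{4u}\nabla_{g_0}v\cdot\nabla_{g_0}\pa_t v=-\int_{\mathbb{S}^2}\mathrm{div}_{g_0}(e^{4u}\nabla_{g_0}v)\,\pa_t v$, I would excise small geodesic disks about $N$ and $S$ and let their radii go to zero; the boundary circles carry a factor $e^{4u}|\pa_t v||\nabla_{g_0}v|$ decaying like a positive power of $\w$, against a circumference of size $O(\w)$, and so contribute nothing. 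Using $\mathrm{div}_{g_0}(e^{4u}\nabla_{g_0}v)=e^{4u}\Delta_{g_0}v+4e^{4u}\nabla_{g_0}u\cdot\nabla_{g_0}v$ and the identity $\pa_t^2 f+\Delta_{g_0}f=Lf+\pa_t f$ (from $L=\pa_t^2-\pa_t+\Delta_{g_0}$), the difference reduces to
\begin{align*}
K'(t)-\mathcal{E}'(t)=\int_{\mathbb{S}^2}\Big[&\pa_t\Phi\big(L\Phi-L\ln\w\big)+e^{4u}\pa_t v\,Lv+|\pa_t\Phi|^2+e^{4u}|\pa_t v|^2\\
&+4e^{4u}(\nabla_{g_0}u\cdot\nabla_{g_0}v)\pa_t v+2e^{4u}(\pa_t\Phi)\big(|\pa_t v|^2-|\nabla_{g_0}v|^2\big)\Big]\,d vol_{g_0}.
\end{align*}
Finally I would substitute the two equations of \eqref{eq:main-2} in the forms $L\Phi-L\ln\w=2e^{4u}(|\pa_t v|^2+|\nabla_{g_0}v|^2)$ and $Lv=-4\pa_t\Phi\,\pa_t v-4\nabla_{g_0}u\cdot\nabla_{g_0}v$: the terms in $e^{4u}(\pa_t\Phi)|\pa_t v|^2$, in $e^{4u}(\pa_t\Phi)|\nabla_{g_0}v|^2$, and in $e^{4u}(\nabla_{g_0}u\cdot\nabla_{g_0}v)\pa_t v$ cancel in pairs, leaving exactly $\int_{\mathbb{S}^2}(|\pa_t\Phi|^2+e^{4u}|\pa_t v|^2)\,d vol_{g_0}$, which is \eqref{eq:fund-iden}.

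Because the algebra collapses cleanly, I expect the real work to lie in the analytic bookkeeping near the punctures: verifying that the singular weight $e^{4u}\sim\w^{-4}$ is genuinely controlled by the decay of $v\mp a$ and its derivatives from Section \ref{s:regularity}, so that $\mathcal{E}(t)$ and $K(t)$ are finite, the differentiation under the integral sign is legitimate, and the boundary circles about $N$ and $S$ drop out in the integration by parts. The only other point needing care is the weak-to-classical upgrade, which is already covered by Theorem \ref{thm:reg}.
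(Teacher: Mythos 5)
Your proof is correct and takes essentially the same route as the paper: both amount to pairing the two equations of \eqref{eq:main-2} with the test functions $\pa_t\Phi$ and $e^{4u}\pa_t v$ and integrating over $\mathbb{S}^2$. The paper organizes this as two pointwise divergence-form identities added and then integrated, while you differentiate $K$ and $\mathcal{E}$ first and substitute the equations afterward; the algebra is identical, and your extra remarks on justifying the integration by parts near $N,S$ and differentiation under the integral (using Theorem \ref{thm:reg} and Proposition \ref{prop:partial-t-linear}) are harmless additions the paper leaves implicit.
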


\begin{proof}  
Multiply the first equation of \eqref{eq:main-2} by $\pa_t \Phi$ to find
\begin{align*} 
|\pa_t \Phi|^2-\frac12\pa_t|\pa_t \Phi|^2+\frac12\pa_t|\nabla_{g_0} \Phi|^2+(L\ln\w)\pa_t \Phi& \\
-\ \mathrm{div}_{g_0}(\pa_t \Phi\nabla_{g_0} \Phi)+2e^{4u}\pa_t \Phi |\nabla_gv|^2&=0.
\end{align*}
Next, multiply the second equation of \eqref{eq:main-2} by $e^{4u}\pa_t v$ and use that $\pa_t u=\pa_t \Phi$ to produce 
\begin{align*} 
e^{4u}|\pa_t v|^2-\frac12\pa_t(e^{4u}|\pa_t v|^2)+\frac12\pa_t(e^{4u}|\nabla_{g_0}v|^2)& \\
-\ \mathrm{div}_{g_0}(e^{4u}\pa_t v\nabla_{g_0} v)-2e^{4u}\pa_t \Phi |\nabla_gv|^2&=0.
\end{align*}
Adding the two previous equations then yields
\begin{align*} 
|\pa_t \Phi|^2+e^{4u}|\pa_t v|^2-\frac12\pa_t(|\pa_t \Phi|^2+e^{4u}|\pa_t v|^2)&\\
+\frac12\pa_t\big(|\nabla_{g_0} \Phi|^2+e^{4u}|\nabla_{g_0} v|^2+2(L\ln\w)\Phi\big)& \\
-\ \mathrm{div}_{g_0}(\pa_t \Phi\nabla_{g_0} \Phi+e^{4u}\pa_t v\nabla_{g_0} v)&=0.
\end{align*}
Finally, integrating this expression over $\mathbb S^2$ produces the desired formula \eqref{eq:fund-iden}.
\end{proof} 

As a consequence we find that the $t$-portion of the renormalized energy is globally finite, and obtain
an initial decay statement for the derivatives of the renormalized map.

\begin{cor}\label{cor:L2} 
Let $\omega$ satisfy \eqref{eq:NS-singular} 
and $(\Phi,v)$ be a weak solution of \eqref{eq:main-2} on $\mathbb R_+ \times \mathbb{S}^2$,  
satisfying \eqref{eq:ubd-1}, \eqref{eq:ea-1}, and \eqref{eq:trace-1} 
for some positive constants $\Lambda$ and $a$.   Then
\[
\int_{2}^\infty\int_{\mathbb{S}^2} \big(| \pa_t \Phi|^2 
+e^{4u} |\pa_t v|^2\big)\, d  vol_{g_0}  d  t <\infty,
\]
and 
\[
\lim_{t\to \infty} \max_{\mathbb{S}^2} \left(| \nabla_{g_0}^l \pa_t^k\big (\Phi(t,\cdot), v(t,\cdot)\big)| 
+e^{2u(t,\cdot)} | \pa_t^k v(t,\cdot)|\right)=0
\]
for all $k=1,2,3$ and $l=0,1,2,3$.
\end{cor}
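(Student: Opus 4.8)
The plan is to read off both assertions from the monotonicity identity \eqref{eq:fund-iden}, using Theorem \ref{thm:reg} to supply an a priori upper bound and Proposition \ref{prop:partial-t-linear} to convert the integral control into pointwise decay. Throughout I will write
\[
D(t):=\int_{\mathbb{S}^2}\big(|\partial_t\Phi|^2+e^{4u}|\partial_t v|^2\big)(t)\,dvol_{g_0}.
\]

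First I would establish uniform-in-$t$ bounds. Since $\w$ is independent of $t$, the system \eqref{eq:main-2} is translation invariant in $t$, so for every $t_0\ge 2$ the shifted map $(\Phi(t_0+\cdot),v(t_0+\cdot))$ is a weak solution of the renormalized system on $(-2,2)\times\mathbb{S}^2$ satisfying the hypotheses of Theorem \ref{thm:reg} with the same constants $\Lambda$ and $a$. That theorem, together with the weighted bounds \eqref{eq:tan-estimates}, yields $\|(\Phi(t),v(t))\|_{C^{3,\alpha}(\mathbb{S}^2)}\le C$ and $|\partial_\theta^l\partial_t^m\partial_\phi^n(v\mp a)|\le C\w^{3+\alpha-l}$ near the poles, with $C$ independent of $t\ge 2$. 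Because $e^{4u}=e^{4\Phi}\w^{-4}$ and $L\ln\w=\Delta_{g_0}\ln\w$ is bounded on $\mathbb{S}^2$ by \eqref{eq:NS-singular}, these estimates show that both $D(t)$ and the renormalized energy $\mathcal{E}(\Phi(t),v(t))$ from \eqref{eq:def-E} are bounded uniformly for $t\ge 2$.

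Next I set $G(t)=\tfrac12 D(t)-\mathcal{E}(\Phi(t),v(t))$. Proposition \ref{prop:MF} states exactly that $G'(t)=D(t)\ge 0$, so $G$ is nondecreasing; by the previous step it is also bounded above, hence $\lim_{t\to\infty}G(t)$ exists and is finite, and $\int_2^\infty D(t)\,dt=\lim_{T\to\infty}(G(T)-G(2))<\infty$, which is the first assertion. For the second assertion I would fix $t\ge 4$ and apply Proposition \ref{prop:partial-t-linear} to the translate of $(\Phi,v)$ centered at $t$; the hypotheses $\nabla_{g_0}\partial_t\ln\w=0$ and $\partial_t(L\ln\w)=0$ hold automatically, and the terms carrying $\|\partial_t\ln\w\|$ vanish, because $\w$ does not depend on $t$. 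This gives, for $k=1,2,3$ and $j=0,1,2,3$,
\[
\max_{\mathbb{S}^2}\Big(|\partial_t^k\nabla_{g_0}^j\Phi|+\w^{j-3-\alpha}|\partial_t^k\nabla_{g_0}^j v|\Big)(t)\le C\Big(\int_{t-2}^{t+2}D(s)\,ds\Big)^{1/2}.
\]
Since $\w^{j-3-\alpha}$ (for $j\le 3$) and $\w^{1+\alpha}$ are bounded on $\mathbb{S}^2$, the same right-hand side controls $|\partial_t^k\nabla_{g_0}^j v|$ and $e^{2u}|\partial_t^k v|=e^{2\Phi}\w^{-2}|\partial_t^k v|$; and as $t\to\infty$ the window integral $\int_{t-2}^{t+2}D(s)\,ds\to 0$ by the first assertion, which produces the claimed limit.

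The only genuinely delicate point is the uniform boundedness in the second paragraph: one must verify that the singular factor $e^{4u}\sim\w^{-4}$ is exactly offset by the vanishing of $v\mp a$ and its derivatives at the poles recorded in \eqref{eq:tan-estimates}, so that $D(t)$ and $\mathcal{E}(\Phi(t),v(t))$ are finite, and that translation invariance makes these bounds independent of $t$. Once that is in place, the two conclusions follow directly from Propositions \ref{prop:MF} and \ref{prop:partial-t-linear}.
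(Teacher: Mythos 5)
Your proof is correct and follows the same approach as the paper: integrate the identity \eqref{eq:fund-iden} together with the uniform bounds from Theorem \ref{thm:reg} for the first assertion, then apply Proposition \ref{prop:partial-t-linear} to translates of the solution (with the $\partial_t\ln\w$ terms vanishing since $\w$ is $t$-independent) and use that the windowed integral $\int_{t-2}^{t+2}D(s)\,ds\to0$ for the second. The one slip---you assert that $\w^{j-3-\alpha}$ is bounded on $\mathbb{S}^2$ for $j\le 3$, when in fact that power blows up at the poles; what you actually need, and implicitly use, is that $\w$ is bounded \emph{above}, so $\w^{j-3-\alpha}$ is bounded \emph{below} by a positive constant and the weighted inequality $\w^{j-3-\alpha}|\pa_t^k\nabla_{g_0}^j v|\le C(\cdots)^{1/2}$ can be divided through---does not affect the validity of the argument.
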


\begin{proof} By integrating \eqref{eq:fund-iden} over $(2,\sigma)$ for any $\sigma>2$, 
using  Theorem  \ref{thm:reg}, and letting $\sigma\to \infty$ produces the first conclusion. 
Since $\w$ is independent of $t$ and \eqref{eq:main-2} is translation invariant in $t$, 
the second conclusion then follows from Proposition \ref{prop:partial-t-linear} 
and the first conclusion. 
\end{proof}

The next proposition shows a sequential convergence of energy to that of a renormalized tangent map.
 
\begin{prop}\label{prop:sub-conv} 
Let $\omega$ satisfy \eqref{eq:NS-singular} 
and $(\Phi,v)$ be a weak solution of \eqref{eq:main-2} 
on $\mathbb R_+ \times \mathbb{S}^2$, satisfying \eqref{eq:ubd-1}, \eqref{eq:ea-1}, 
and \eqref{eq:trace-1} for some positive constants $\Lambda$ and $a$.    
Then there is a sequence $t_i\to \infty$ as $i\to \infty$, such that 
$ (  \Phi(t_{i}), v(t_{i})) $  converges to some  $(\bar \Phi, \bar v)$ in $C^3(\mathbb{S}^2)$, and 
\[
\lim_{i\to \infty} \mathcal{E}(\Phi(t_i),v(t_i))= \mathcal{E}(\bar \Phi,\bar v).
\] 
Moreover,  $(\bar u, \bar v):= (\bar\Phi-\ln \w , \bar v) $ satisfies 
\begin{align}\label{eq:main-s2-1}\begin{split} 
\Delta_{g_0} \bar \Phi -2 e^{4\bar u}  |\nabla_{g_0} \bar  v|^2 &=L\ln\w, \\
\Delta_{g_0} \bar v +4 \nabla_{g_0} \bar u \cdot \nabla_{g_0}\bar  v &=0,
\end{split}\end{align}
on $\mathbb{S}^2\setminus \{N,S\}$, and 
\begin{align}\label{eq:main-s2-1-poles}\bar v(N)=a,\quad\quad \bar v(S)=-a.
\end{align}
\end{prop}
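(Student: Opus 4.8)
Here is a plan for proving Proposition~\ref{prop:sub-conv}.

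The plan is to obtain $(\bar\Phi,\bar v)$ as a subsequential limit of the time slices $(\Phi(t),v(t))$ and then pass to the limit in the system \eqref{eq:main-2}. Since $\w$ is independent of $t$, the equations are translation invariant in $t$, so Theorem~\ref{thm:reg} provides a uniform bound $\|(\Phi(t,\cdot),v(t,\cdot))\|_{C^{3,\alpha}(\mathbb{S}^2)}\le C$ for all $t\ge 2$. Fixing any sequence $t_i\to\infty$, the Arzel\`{a}--Ascoli theorem extracts a subsequence (still written $t_i$) with $(\Phi(t_i),v(t_i))\to(\bar\Phi,\bar v)$ in $C^3(\mathbb{S}^2)$, where $(\bar\Phi,\bar v)\in C^{3,\alpha}(\mathbb{S}^2)$; setting $\bar u=\bar\Phi-\ln\w$, the maps $u(t_i)=\Phi(t_i)-\ln\w$ and their derivatives up to order three converge to $\bar u$ locally uniformly on $\mathbb{S}^2\setminus\{N,S\}$.

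Next I would evaluate \eqref{eq:main-2} at $t=t_i$, using $L=\pa_t^2-\pa_t+\Delta_{g_0}$, $\nabla_g=(\pa_t,\nabla_{g_0})$, $\pa_t u=\pa_t\Phi$, and the fact that $L\ln\w$ reduces to $\Delta_{g_0}\ln\w$ since $\w$ does not depend on $t$. By Corollary~\ref{cor:L2}, $\pa_t^k\Phi(t_i)\to 0$ and $\pa_t^k v(t_i)\to 0$ uniformly on $\mathbb{S}^2$ for $k=1,2$, and $e^{2u(t_i)}\pa_t v(t_i)\to 0$ uniformly. Hence the purely $t$-derivative contributions vanish in the limit: $e^{4u(t_i)}|\pa_t v(t_i)|^2=(e^{2u(t_i)}\pa_t v(t_i))^2\to 0$ uniformly, and $\pa_t\Phi(t_i)\,\pa_t v(t_i)\to 0$ uniformly because $\pa_t\Phi$ is bounded. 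The remaining terms converge by the $C^3$ convergence: $\Delta_{g_0}\Phi(t_i)\to\Delta_{g_0}\bar\Phi$ and $\Delta_{g_0}v(t_i)\to\Delta_{g_0}\bar v$ uniformly on $\mathbb{S}^2$, while $e^{4u(t_i)}|\nabla_{g_0}v(t_i)|^2\to e^{4\bar u}|\nabla_{g_0}\bar v|^2$ and $\nabla_{g_0}u(t_i)\cdot\nabla_{g_0}v(t_i)\to\nabla_{g_0}\bar u\cdot\nabla_{g_0}\bar v$ locally uniformly on $\mathbb{S}^2\setminus\{N,S\}$. Letting $i\to\infty$ then yields \eqref{eq:main-s2-1} on $\mathbb{S}^2\setminus\{N,S\}$. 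The pole conditions \eqref{eq:main-s2-1-poles} come from passing to the $C^0$ limit in the weighted bound $|v(t,\cdot)\mp a|\le C\w^{3+\alpha}$ on $\mathbb{S}^2_{\pm}$, valid uniformly in $t$ by \eqref{eq:tan-estimates}.

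It remains to establish $\mathcal{E}(\Phi(t_i),v(t_i))\to\mathcal{E}(\bar\Phi,\bar v)$. The contributions of $|\nabla_{g_0}\Phi(t_i)|^2$ and $(L\ln\w)\Phi(t_i)$ are harmless, being uniformly bounded on $\mathbb{S}^2$ and convergent uniformly. The delicate term is $\int_{\mathbb{S}^2}e^{4u}|\nabla_{g_0}v|^2\,dvol_{g_0}$, since $e^{4u}\sim\sin^{-4}\theta$ is singular at the poles. Here I would combine \eqref{eq:tan-estimates} with $|\Phi|\le\Lambda$ to obtain, near $N$ and uniformly in $t\ge 2$ (and for the limit map as well),
\[
e^{4u}|\nabla_{g_0}v|^2=e^{4\Phi}\w^{-4}\big(|\pa_\theta v|^2+\sin^{-2}\theta\,|\pa_\phi v|^2\big)\le C\,\w^{-4}\w^{4+2\alpha}=C\,\w^{2\alpha},
\]
and the analogous bound near $S$. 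Since $\int_{\mathcal{B}_\delta(N)\cup\mathcal{B}_\delta(S)}\w^{2\alpha}\,dvol_{g_0}\to 0$ as $\delta\to 0$, an $\epsilon/3$ argument --- a small integral over a neighborhood of the poles, together with uniform convergence of the integrand on the complement --- gives the desired energy convergence.

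I expect this last step to be the principal obstacle. In contrast to the classical tangent map theory, no monotonicity formula is available for the renormalized energy density here, so the singular weight near the poles must be controlled directly; it is precisely the optimal weighted decay $|\nabla_{g_0}v|\le C\w^{2+\alpha}$ supplied by Theorem~\ref{thm:reg} that renders $e^{4u}|\nabla_{g_0}v|^2$ uniformly integrable and legitimizes passing to the limit under the integral sign.
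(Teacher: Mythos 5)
Your proposal is correct and follows essentially the same route as the paper's (terser) proof: uniform $C^{3,\alpha}$ bounds from Theorem~\ref{thm:reg} plus Arzel\`a--Ascoli for the subsequential limit and pole conditions, Corollary~\ref{cor:L2} to kill the $t$-derivative terms when passing to the limit in \eqref{eq:main-2}, and the weighted decay $|\nabla_{g_0}v|\le C\w^{2+\alpha}$ from \eqref{eq:tan-estimates} to dominate the singular weight $e^{4u}\sim\w^{-4}$ in $\mathcal E$. The paper compresses the energy-convergence step into the phrase ``the Arzel\`a--Ascoli theorem then implies the first conclusion,'' whereas you correctly identify that this step genuinely requires the uniform-in-$t$ weighted bounds near the poles to justify dominated convergence.
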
 

\begin{proof} Take an arbitrary $\alpha\in (0,1)$. 
By Theorem  \ref{thm:reg}, for any $t\in (2,\infty)$ we have
$$\|(\Phi(t,\cdot), v(t,\cdot))\|_{C^{3,\alpha}(\mathbb{S}^2)} \le C,$$ 
for some constant $C$ independent of $t$. 
The Arzela-Ascoli theorem then implies the first conclusion of the proposition, as well
as \eqref{eq:main-s2-1-poles}. 
By Corollary \ref{cor:L2} and equation \eqref{eq:main-2}, it follows that $(\bar\Phi-\ln \w, \bar v)$ 
satisfies \eqref{eq:main-s2-1}.
\end{proof}

A priori, different sequences of times may lead to different limit energies. However, we show that this
in fact cannot happen with the following result.

\begin{prop}\label{prop:funct-conv} Let 
$\w$ satisfy \eqref{eq:NS-singular} and  
$(\Phi,v)$ be a weak solution of \eqref{eq:main-2} on $\mathbb R_+ \times \mathbb{S}^2$, 
satisfying \eqref{eq:ubd-1}, \eqref{eq:ea-1},  and \eqref{eq:trace-1} 
for some positive constants $\Lambda$ and $a$, 
and $(\bar \Phi, \bar v)$ be as in Proposition \ref{prop:sub-conv}. 
Then   
\[
\mathcal{E}(\Phi(t),v(t)) \to \mathcal{E}(\bar \Phi,\bar v) \quad \mbox{as }t\to \infty.
\]  
Moreover, for any $t>2$ the following energy identity holds
\begin{align}\label{eq:energy-identity-infty}
\begin{split}
\mathcal{E}(\Phi(t),v(t))- \mathcal{E}(\bar \Phi,\bar v)
&= \int_{t}^{\infty}\int_{\mathbb{S}^2}\big( | \pa_t \Phi|^2 
+e^{4u} |\pa_t v|^2\big)(s)\, d  vol_{g_0}  d  s\\
&\qquad+ \frac12\int_{\mathbb{S}^2}\big( | \pa_t \Phi|^2 
+e^{4u} |\pa_t v|^2\big)(t)\, d  vol_{g_0}.  
\end{split}
\end{align}
\end{prop}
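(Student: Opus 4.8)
The plan is to first upgrade the sequential convergence of Proposition \ref{prop:sub-conv} to full convergence of the energy, and then derive the energy identity \eqref{eq:energy-identity-infty} by integrating the fundamental identity \eqref{eq:fund-iden} all the way to $t=\infty$. To set up, introduce the quantity
\[
G(t)=\frac12\int_{\mathbb{S}^2}\big(|\pa_t \Phi|^2+e^{4u}|\pa_t v|^2\big)(t)\, d  vol_{g_0}-\mathcal{E}(\Phi(t),v(t)),
\]
so that Proposition \ref{prop:MF} reads $G'(t)=2\big(G(t)+\mathcal{E}(\Phi(t),v(t))\big)\ge 0$ once we note that the right-hand side of \eqref{eq:fund-iden} equals $2\int_{\mathbb{S}^2}(|\pa_t\Phi|^2+e^{4u}|\pa_t v|^2)$... more precisely $G'(t)=\int_{\mathbb{S}^2}(|\pa_t\Phi|^2+e^{4u}|\pa_t v|^2)(t)\,dvol_{g_0}\ge 0$, so $G$ is nondecreasing. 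By Corollary \ref{cor:L2} the function $t\mapsto\int_{\mathbb{S}^2}(|\pa_t\Phi|^2+e^{4u}|\pa_t v|^2)(t)\,dvol_{g_0}$ is integrable on $(2,\infty)$, hence $G(t)$ has a finite limit $G_\infty$ as $t\to\infty$; moreover Corollary \ref{cor:L2} also gives that the kinetic term $\frac12\int_{\mathbb{S}^2}(|\pa_t\Phi|^2+e^{4u}|\pa_t v|^2)(t)\,dvol_{g_0}\to 0$, so $\mathcal{E}(\Phi(t),v(t))=\frac12\int_{\mathbb{S}^2}(\cdots)(t)-G(t)\to -G_\infty$ as $t\to\infty$, i.e. the full limit of the energy exists along every sequence.

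Next I would identify this limit with $\mathcal{E}(\bar\Phi,\bar v)$. Along the particular sequence $t_i\to\infty$ from Proposition \ref{prop:sub-conv} we already know $\mathcal{E}(\Phi(t_i),v(t_i))\to\mathcal{E}(\bar\Phi,\bar v)$; combined with the existence of the full limit just established, this forces $\lim_{t\to\infty}\mathcal{E}(\Phi(t),v(t))=\mathcal{E}(\bar\Phi,\bar v)$, which is the first assertion. For the energy identity, fix $t>2$ and integrate $G'(s)=\int_{\mathbb{S}^2}(|\pa_s\Phi|^2+e^{4u}|\pa_s v|^2)(s)\,dvol_{g_0}$ over $s\in(t,\infty)$:
\[
G_\infty-G(t)=\int_t^\infty\int_{\mathbb{S}^2}\big(|\pa_s\Phi|^2+e^{4u}|\pa_s v|^2\big)(s)\, d  vol_{g_0}\,  d  s.
\]
Since $G_\infty=-\mathcal{E}(\bar\Phi,\bar v)$ (the kinetic term vanishes in the limit) and $G(t)=\frac12\int_{\mathbb{S}^2}(|\pa_t\Phi|^2+e^{4u}|\pa_t v|^2)(t)\,dvol_{g_0}-\mathcal{E}(\Phi(t),v(t))$, rearranging yields exactly \eqref{eq:energy-identity-infty}.

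The only genuinely nontrivial point — and the step I would be most careful about — is the justification that $\frac12\int_{\mathbb{S}^2}(|\pa_t\Phi|^2+e^{4u}|\pa_t v|^2)(t)\,dvol_{g_0}\to 0$, and more broadly that all limits exist as $t\to\infty$ rather than merely along a subsequence. The pointwise decay statement in Corollary \ref{cor:L2} handles $|\pa_t\Phi|$ and $e^{2u}|\pa_t v|$ uniformly on $\mathbb{S}^2$, which controls the integrand $|\pa_t\Phi|^2+e^{4u}|\pa_t v|^2=|\pa_t\Phi|^2+(e^{2u}|\pa_t v|)^2$ uniformly, so the kinetic integral indeed tends to $0$; one should double-check that the uniform bound in Theorem \ref{thm:reg} for $e^{4u}|\nabla_{g_0}v|^2$ (via $e^{2u}\le C\w^{-2}$ and the weighted bounds \eqref{eq:tan-estimates}) keeps everything integrable on $\mathbb{S}^2$ so that dominated convergence applies. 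Everything else is bookkeeping with the monotonicity of $G$ and the integrability from Corollary \ref{cor:L2}.
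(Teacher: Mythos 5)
Your proposal is correct and takes essentially the same approach as the paper: both integrate the identity \eqref{eq:fund-iden} from Proposition \ref{prop:MF}, invoke Corollary \ref{cor:L2} for integrability of the kinetic term over $(2,\infty)$ and its uniform decay on $\mathbb{S}^2$, and identify the limit via Proposition \ref{prop:sub-conv}. The paper phrases the convergence as a Cauchy estimate on $|\mathcal{E}(t_1)-\mathcal{E}(t_2)|$, whereas you equivalently package it through the monotone auxiliary function $G$; this is a cosmetic difference, and your concern about dominated convergence is moot since the decay in Corollary \ref{cor:L2} is already uniform on the compact sphere.
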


\begin{proof} 
By Proposition \ref{prop:MF}, for any $5<t_1<t_2$ we have
\begin{align*}
&| \mathcal{E}(\Phi,v)(t_1)-  \mathcal{E}(\Phi,v)(t_2)| \\
&\qquad\le \int_{t_1}^{t_2} \int_{\mathbb{S}^2}\big(  | \pa_t \Phi|^2 
+e^{4u} |\pa_t v|^2)(s)\, d  vol_{g_0}  d  s \\
& \qquad\quad + \frac12\int_{\mathbb{S}^2} \big( | \pa_t \Phi|^2 
+e^{4u} |\pa_t v|^2\big)(t_1)\, d  vol_{g_0} 
+ \frac12\int_{\mathbb{S}^2}\big(  | \pa_t \Phi|^2 +e^{4u} |\pa_t v|^2\big)(t_2)\, d  vol_{g_0}. 
\end{align*}
Then Corollary \ref{cor:L2} implies that $\mathcal{E}(\Phi(t),v(t)) $ converges as $t\to\infty$, and
Proposition  \ref{prop:sub-conv} ensures that the limit is $\mathcal{E}(\bar \Phi,\bar v)$. 
Furthermore, identity \eqref{eq:energy-identity-infty} may be obtained by integrating \eqref{eq:fund-iden} over $(t,\sigma)$, using Corollary \ref{cor:L2}, and letting $\sigma\to\infty$.
\end{proof}

Having shown that the renormalized energies converge upon approach to the singularity, our next task will be to
improve this by estimating the convergence rate; the fundamental estimate needed for this purpose is referred to as the {\L}ojasiewicz-Simon inequality. 
In order to achieve this goal, we first introduce certain weighted Banach spaces.
For any integer $k\ge 0$ and $\alpha\in(0,1)$, denote by $C^{k}_*(\mathbb{S}^2)$ 
the space of vector-valued functions $\varphi=(\varphi_1,\varphi_2)\in C^k(\mathbb{S}^2)$ 
with the bounded norm 
\begin{equation*}
|\varphi|_{k}^{*}= \sum_{0\le j\le k} \left\{\max_{\mathbb{S}^2}
|\nabla_{g_0}^j \varphi_1|+\max_{\mathbb{S}^2}\left(\w^{j-3-\frac12}| \nabla_{g_0}^j \varphi_2|\right) \right\},
\end{equation*}
and denote by $C^{k,\alpha}_*(\mathbb{S}^2)$ the space of functions  
$\varphi\in C^{k}_*(\mathbb{S}^2)\cap C^{k,\alpha}(\mathbb{S}^2)$ with the bounded norm 
\[
|\varphi|_{k,\alpha}^{*}= |\varphi|_{k}^{*} +\|\nabla_{g_0}^k\varphi\|_{C^{\alpha}(\mathbb{S}^2)}. 
\]
Furthermore, let $L^2(\mathbb{S}^2, \w^{-4})$ be the subspace of $L^2(\mathbb{S}^2)$ 
which consists of functions with the bounded norm 
\[
\|f\|_{L^{2}(\mathbb{S}^2, \w^{-4})}=\Big(\int_{\mathbb{S}^2} \w^{-4} f^2\,  d  vol_{g_0} \Big)^{1/2},
\] 
and let $ H_0^{1}(\mathbb{S}^2, \w^{-4})$ be the closure of 
$C_c^\infty(\mathbb{S}^2\setminus \{N,S\})$ under the norm 
\[
\|f\|_{H^{1}(\mathbb{S}^2, \w^{-4})} 
=\Big(\int_{\mathbb{S}^2} \w^{-4} (|\nabla_{g_0} f|^2 +f^2)\,  d  vol_{g_0} \Big)^{1/2}. 
\]

Consider a weak solution $(\Phi,v)$ of \eqref{eq:main-2},
and $(\bar \Phi, \bar v)$ be a solution of \eqref{eq:main-s2-1}. 
For convenience, we will write 
$$w=(w_1, w_2)=(\Phi,v),  \quad \bar w=(\bar w_1, \bar w_2)=(\bar\Phi, \bar v).$$
Then equation \eqref{eq:main-2} may be rewritten as
\begin{align}\label{eq:main-w} 
\begin{split}
\pa_t^2 w_1- \pa_t w_1 -2e^{4u}|\pa_t w_2|^2 +\mathcal{M}_1(w)&=0,\\
\pa_t^2 w_2- \pa_t w_2  +4 \pa_t u \pa_t w_2 +\mathcal{M}_2(w)&=0, 
\end{split} 
\end{align}
where 
\begin{align}\label{eq:main-mathcal-M}\begin{split}
\mathcal{M}_1(w)&= \Delta_{g_0} w_1 -2 e^{4u} |\nabla_{g_0}  w_2 |^2-L\ln\w, \\
\mathcal{M}_2(w)&=\Delta_{g_0} w_2 +4\nabla_{g_0}  u\cdot \nabla_{g_0}  w_2. 
\end{split} \end{align}
Observe that the energy $\mathcal E$ of \eqref{eq:def-E} becomes
\begin{equation} \label{eq:F-defn}
\mathcal{E}(w)=\frac12\int_{\mathbb{S}^2}  (|\nabla_{g_0} w_1|^2 
+e^{4u}|\nabla _{g_0} w_2|^2+2(L\ln\w) w_1 ) \, d  vol_{g_0},
\end{equation} 
and since $\bar w=(\bar\Phi, \bar v)$ is a solution of \eqref{eq:main-s2-1} we have that
$\mathcal M(\bar w)=(\mathcal{M}_1(\bar w),\mathcal{M}_2(\bar w))=0$. 

We will make use of two related weighted inner products, and for this reason we introduce 
the following notation. Let $h$ be a given positive function on $\mathbb S^2\setminus\{N, S\}$. 
For any $\varphi=(\varphi_1, \varphi_2), \zeta=(\zeta_1, \zeta_2)
\in L^2(\mathbb{S}^2) \times L^2(\mathbb{S}^2, \w^{-4})$
define 
\begin{align*}
\langle\varphi, \zeta\rangle_{L^2(\mathbb{S}^2, 1\times h)} 
=\int_{\mathbb{S}^2}\big(\varphi_1\zeta_1+ h \varphi_2\zeta_2)\,  d  vol_{g_0}, 
\end{align*}
and 
\begin{align*}
\|\varphi\|_{L^2(\mathbb{S}^2, 1\times h)} 
=\Big(\int_{\mathbb{S}^2}\big(\varphi_1^2+ h \varphi_2^2)\,  d  vol_{g_0} \Big)^{1/2}. 
\end{align*}
Here $1\times h$ means that 1 is the weight for the first component, and 
$h$ is the weight for the second component. Below we will take either $h=\w^{-4}$ or $h=e^{4u}$. 
Notice that with $|u+\ln \w|\le \Lambda$ as in \eqref{eq:ubd-1}, 
the $L^2(\mathbb{S}^2, 1\times \w^{-4})$-norms are equivalent to 
the $L^2(\mathbb{S}^2, 1\times e^{4u})$-norms, that is 
\begin{equation}\label{aoirhoihnfghgh}
e^{-2\Lambda}\|\varphi\|_{L^2(\mathbb{S}^2, 1\times \w^{-4})} 
\le \|\varphi\|_{L^2(\mathbb{S}^2, 1\times e^{4u})} 
\le e^{2\Lambda}\|\varphi\|_{L^2(\mathbb{S}^2, 1\times \w^{-4})}.
\end{equation}
Although the weight $h=\w^{-4}$ is independent of $t$ and seems simpler, 
in certain situations it is more advantageous to use the weight 
$h=e^{4u}$, since it is directly related to equation \eqref{eq:main-w}. 
For example, we may write \eqref{eq:energy-identity-infty} as 
\begin{align}\label{eq:energy-identity-infty-v2}
\mathcal{E}(w(t))- \mathcal{E}(\bar w)
= \int_{t}^{\infty}\|\partial_t w(s)\|^2_{L^2(\mathbb{S}^2, 1\times e^{4u})}\, d  s
+ \frac12\|\partial_t w(t)\|^2_{L^2(\mathbb{S}^2, 1\times e^{4u})}.  
\end{align}
This fundamental energy identity, as well as the next result concerning the {\L}ojasiewicz-Simon inequality, play  essential roles in our study of the uniqueness of tangent maps. 

\begin{prop} \label{prop:functional-F} Let $\alpha\in (0,1)$ and 
$(\bar \Phi, \bar v)\in C^{3}(\mathbb S^2)$ be a solution of 
\eqref{eq:main-s2-1}-\eqref{eq:main-s2-1-poles}. 
Then there exist constants $\eta>0$ and $\bar C>0$,
depending only on $\alpha$ and $(\bar \Phi,\bar v)$, such that  
\begin{equation} \label{eq:LS-ineq}
|\mathcal{E} (w)- \mathcal{E} (\bar w)|^{1/2} 
\le  \bar C \|\mathcal{M}(w)\|_{L^2(\mathbb S^2, 1\times \w^{-4})}
\ee
for all $w-\bar w\in C^{2,\alpha}_*(\mathbb{S}^2)$ with $|w-\bar w|_{2,\alpha}^*<\eta$.
\end{prop}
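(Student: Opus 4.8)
The plan is to recognize \eqref{eq:LS-ineq} as a {\L}ojasiewicz--Simon gradient inequality for the renormalized energy $\mathcal{E}$ at the critical point $\bar w$, with the optimal exponent $\tfrac12$, and to prove it by the reduction scheme of Simon \cite{Simon83} and Adams--Simon \cite{AS}, carried out in the weighted function spaces forced on us by the singularity of $u$ at $N$ and $S$. The variational structure is the starting point: exactly as in the computation behind Lemma \ref{lemma:sym-bilinear}, integrating by parts (there is no boundary contribution at $N$ and $S$ because $\nabla_{g_0}v$ decays there by Theorem \ref{thm:reg}) shows that the first variation of $\mathcal{E}$ at $w$ in a direction $\zeta$ equals $-\langle \mathcal{M}(w),\zeta\rangle_{L^2(\mathbb{S}^2,\,1\times e^{4u})}$, so that $\mathcal{M}$ plays the role of (minus) the gradient of $\mathcal{E}$; by \eqref{aoirhoihnfghgh} the $L^2(\mathbb{S}^2,1\times e^{4u})$-norm of this gradient is comparable to $\|\mathcal{M}(w)\|_{L^2(\mathbb{S}^2,1\times\w^{-4})}$, the quantity appearing in \eqref{eq:LS-ineq}, and I will regard $\mathcal{E}$ as a functional on the fixed Hilbert space $\mathcal{H}=L^2(\mathbb{S}^2,1\times\w^{-4})$ with form domain $H^1(\mathbb{S}^2)\times H^1_0(\mathbb{S}^2,\w^{-4})$.

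Next I would pin down $\bar w$. Since $(\bar\Phi,\bar v)\in C^3(\mathbb{S}^2)$ solves \eqref{eq:main-s2-1}-\eqref{eq:main-s2-1-poles} and $L\ln\w=\Delta_{g_0}\ln\w$ (as $\w$ is independent of $t$), subtracting $\Delta_{g_0}\ln\w$ turns the first equation into $\Delta_{g_0}\bar u-2e^{4\bar u}|\nabla_{g_0}\bar v|^2=0$; thus $(\bar u,\bar v)$ is a genuine singular harmonic map of the type treated in Section \ref{s:classification} (the boundedness and finite-energy hypotheses of Proposition \ref{prop:classification} follow from \eqref{eq:NS-singular}, Theorem \ref{thm:reg}, and \eqref{eq:main-s2-1-poles}), so Proposition \ref{prop:classification} yields $(\bar u,\bar v)=(u_{a,b},v_{a,b})$ for some $b\in(-1,1)$. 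Consequently the linearization $\mathcal{L}:=D\mathcal{M}(\bar w)$ is exactly the operator $\mathcal{T}$ of \eqref{eq:linearized-HM-operator}, and by Proposition \ref{prop:kerr-kernel} — using the $v\mapsto-v$, $\theta\mapsto\pi-\theta$ symmetry to dispose of $b\le0$ — its kernel $K$ is one dimensional, spanned by $\varphi^{*}=(\pa_b u_{a,b},\pa_b v_{a,b})$, with a positive spectral gap above the eigenvalue $0$ (the discreteness of the spectrum of $\mathcal{T}$ noted after Proposition \ref{prop:kerr-kernel}). Moreover the critical set of $\mathcal{E}$ through $\bar w$ is the explicit curve $\mathcal{C}=\{\bar w_{b'}:=(u_{a,b'}+\ln\w,v_{a,b'})\}$, with $T_{\bar w}\mathcal{C}=K$, and differentiating $\mathcal{M}(\bar w_{b'})=0$ against $\pa_{b'}\bar w_{b'}$ gives $\tfrac{d}{db'}\mathcal{E}(\bar w_{b'})=-\langle\mathcal{M}(\bar w_{b'}),\pa_{b'}\bar w_{b'}\rangle=0$, so $\mathcal{E}$ is constant along $\mathcal{C}$. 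This constancy is the integrability hypothesis of Allard--Almgren \cite{AA}, and it is what forces the exponent $\tfrac12$ rather than a worse one.

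With these ingredients in hand the reduction is routine. On the weighted Banach scale $C^{k,\alpha}_{*}(\mathbb{S}^2)$ the map $w\mapsto\mathcal{M}(w)$ is real analytic near $\bar w$, and $\mathcal{L}=D\mathcal{M}(\bar w)$ is Fredholm of index $0$, formally self-adjoint on $\mathcal{H}$, with kernel $K$ and non-negative quadratic form (Lemma \ref{lemma:sym-bilinear}); combining the sum-of-squares decomposition in Lemma \ref{lemma:sym-bilinear} with the spectral gap shows that $\mathcal{L}$ is coercive on the $\mathcal{H}$-orthogonal complement $K^{\perp}$, i.e.\ $|\langle\mathcal{L}\psi,\psi\rangle_{\mathcal{H}}|\ge c\|\psi\|^{2}_{H^{1}(\mathbb{S}^2)\times H^1_0(\mathbb{S}^2,\w^{-4})}$ for $\psi\in K^{\perp}$. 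For $w$ close to $\bar w$ let $\hat w$ be the nearest point of $\mathcal{C}$ to $w$ in $\mathcal{H}$ — equivalently, the output of the Lyapunov--Schmidt implicit function theorem applied to $(I-\Pi)\mathcal{M}$, where $\Pi$ is the $\mathcal{H}$-projection onto $K$ — so that $\psi:=w-\hat w\in(T_{\hat w}\mathcal{C})^{\perp}$, $\mathcal{M}(\hat w)=0$, and $\mathcal{E}(\hat w)=\mathcal{E}(\bar w)$. Taylor-expanding $\mathcal{E}$ about $\hat w$ and using that the first-order term $-\langle\mathcal{M}(\hat w),\psi\rangle$ vanishes, one gets $|\mathcal{E}(w)-\mathcal{E}(\bar w)|=|\mathcal{E}(w)-\mathcal{E}(\hat w)|\le C\|\psi\|^{2}_{H^{1}}$; on the other hand $\mathcal{M}(w)=\mathcal{M}(w)-\mathcal{M}(\hat w)=\mathcal{L}_{\hat w}\psi+O(\|\psi\|^{2}_{H^{1}})$, where $\mathcal{L}_{\hat w}:=D\mathcal{M}(\hat w)$ is coercive on $(T_{\hat w}\mathcal{C})^{\perp}$ uniformly for $\hat w$ near $\bar w$, so for $\eta$ small $\|\psi\|_{H^{1}}\le C\|\mathcal{M}(w)\|_{\mathcal{H}}$. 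Combining, $|\mathcal{E}(w)-\mathcal{E}(\bar w)|\le C\|\mathcal{M}(w)\|^{2}_{L^{2}(\mathbb{S}^2,1\times\w^{-4})}$, which is \eqref{eq:LS-ineq} — in fact a stronger statement — after taking square roots.

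The main obstacle is not the concluding estimate, which given integrability is essentially the coercivity computation above, but the functional-analytic framework in the weighted spaces: verifying that $\mathcal{M}$ is analytic $C^{2,\alpha}_{*}\to C^{0,\alpha}_{*}$, that $\mathcal{L}$ is Fredholm of index $0$ with kernel precisely the one-dimensional $K$ of Proposition \ref{prop:kerr-kernel}, and that the Lyapunov--Schmidt solvability, the elliptic estimates, and the coercivity all hold in the presence of the singular coefficient $\nabla_{g_0}u\sim\cot\theta$ and the weight $\w^{-4}$ at $N$ and $S$. These are exactly the matters handled by the weighted estimates of Section \ref{s:regularity} — the barrier bound of Lemma \ref{lem:LT}, the scaled $W^{2,p}$ and Schauder estimates of Lemma \ref{lem:w2p-}, and Theorem \ref{thm:reg} — together with the index-form analysis of Section \ref{s:classification}; in particular Proposition \ref{prop:difference-linear} is the $t$-dependent analogue of the control of higher weighted norms by the $L^{2}(\mathbb{S}^2,1\times\w^{-4})$-norm that the present argument requires.
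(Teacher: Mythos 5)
Your proposal is correct and follows the same route the paper takes: compute the first variation to identify $\mathcal{M}$ as the Euler--Lagrange operator of $\mathcal{E}$ in the weighted $L^2$ pairing, invoke analyticity of $\mathcal{E}$ on $C^{2,\alpha}_*(\mathbb S^2)$ so that a {\L}ojasiewicz--Simon inequality holds with some exponent, and then use Theorem \ref{thm:classification-non-degeneracy} (one-dimensional kernel plus the explicit critical curve $b'\mapsto(u_{a,b'},v_{a,b'})$, i.e., the Allard--Almgren integrability condition) to pin the exponent at $1/2$. The paper compresses this last step to a citation of Simon's Theorem 3.1 and Chill's Theorem 3.10, whereas you unroll it via the standard Lyapunov--Schmidt/Taylor/coercivity argument; that is the same mechanism spelled out rather than a different one. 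Two small remarks: the ``nearest point of $\mathcal{C}$'' and ``Lyapunov--Schmidt'' parameterizations are not literally the same map, though either serves; and your final bound $|\mathcal{E}(w)-\mathcal{E}(\bar w)|\le C\|\mathcal{M}(w)\|^2$ is exactly \eqref{eq:LS-ineq} after squaring, not a strictly stronger statement.
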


\begin{proof} We first show that $\mathcal M=(\mathcal M_1, \mathcal M_2)$
is the Euler-Lagrange operator of $\mathcal E$. 
To see this, take any $w-\bar w\in C^{2,\alpha}_*(\mathbb{S}^2)$ and 
$\zeta=(\zeta_1, \zeta_2)\in H^{1}(\mathbb{S}^2) \times  H^{1}(\mathbb{S}^2, \w^{-4})$,
then the Fr\'echet derivative of $\mathcal{E}$ at $w$ and applied to $\zeta$ is given by 
\begin{align*}
\mathcal{E}'(w)[\zeta]&= \frac{ d \ }{ d  s}\Big|_{s=0} \mathcal{E}(w+s\zeta) \\
&=  \int_{\mathbb{S}^2}  \left[\nabla_{g_0}w_1 \cdot\nabla_{g_0} \zeta_1 
+(L\ln\w) \zeta_1 +2e^{4u}|\nabla _{g_0} w_2|^2\zeta_1 \right] \, d  vol_{g_0} 
\\& \qquad +   \int_{\mathbb{S}^2}  e^{4u} \nabla _{g_0} w_2 \cdot\nabla _{g_0} \zeta_2 \, d  vol_{g_0}\\
&
= - \int_{\mathbb{S}^2} \big[ \mathcal{M}_1(w) \zeta_1 
+e^{4u} \mathcal{M}_2(w) \zeta_2\big] \, d  vol_{g_0}.
\end{align*}
Therefore
\begin{align*}
\mathcal{E}'(w)[\zeta]=-\langle \mathcal{M}(w), \zeta\rangle_{L^2(\mathbb{S}^2, 1\times e^{4u})},
\end{align*} 
and by the Cauchy-Schwarz inequality we obtain
\begin{align*}
\big|\mathcal{E}'(w)[\zeta]\big|\le 
C\|\mathcal{M}(w)\|_{L^2(\mathbb{S}^2, 1\times \w^{-4})} 
\|\zeta\|_{L^2(\mathbb{S}^2, 1\times \w^{-4})}.
\end{align*}
Note that the constant $C$ depends on $w_1$, since $u=w_1-\ln\omega$ and the weight $1\times e^{4u}$ is replaced by $1\times \omega^4$ analogously to \eqref{aoirhoihnfghgh}, however this dependence is removed with uniform control on $w-\bar{w}$ as expressed in the hypotheses.
Furthermore, observe that $\mathcal{E}$ is an analytic functional on $C^{2,\alpha}_*(\mathbb{S}^2)$ 
and that $\bar w$ is a critical point, namely $\mathcal M(\bar w)=0$.  
Hence, by the {\L}ojasiewicz-Simon gradient inequality there exist constants 
$\eta>0, \vartheta\in (0,1/2]$, and $\bar C>0$ such that   
$$|\mathcal{E} (w)- \mathcal{E} (\bar w)|^{1-\vartheta} \le  
\bar C \|\mathcal{M}(w)\|_{L^2(\mathbb S^2, 1\times \w^{-4})}
$$
for all $w-\bar w\in C^{2,\alpha}_*(\mathbb{S}^2)$ with $|w-\bar w|_{2,\alpha}^*<\eta$. 
By Theorem \ref{thm:classification-non-degeneracy} we must have $\vartheta=1/2$.
For details we refer to Theorem 3.1 of Simon \cite{Simon83}, and also Theorem 3.10 of Chill \cite{Chill}.
\end{proof}

We are now in a position to apply the method of Simon \cite{Simon83} 
to obtain convergence to the tangent map.  
In the proof below we will adapt the arguments of \cite{J}. 

\begin{lem}\label{lem:step1} Let 
$\w$ satisfy \eqref{eq:NS-singular},  
$w=(\Phi,v)$ be a weak solution of \eqref{eq:main-2} 
in $\mathbb R_+ \times \mathbb{S}^2$ 
satisfying \eqref{eq:ubd-1}, \eqref{eq:ea-1},  
and \eqref{eq:trace-1} for some positive constants $\Lambda$ and $a$, 
let $\bar w=(\bar \Phi, \bar v)\in C^{3}(\mathbb{S}^2)$ be a solution of 
\eqref{eq:main-s2-1}-\eqref{eq:main-s2-1-poles} as in Proposition \ref{prop:sub-conv}.
Then there exist positive constants $\gamma$, $\beta$, and $ C_*$
depending only on $\bar w$ and $\Lda$, such that 
for any $t_*\ge 5$ and $\sigma>t_*+5$ the following holds.
If for all $t \in(t_*, \sigma]$ the estimate 
\begin{equation}\label{eq10-PDE-order2}
\| w(t) - \bar w \|_{L^2(\mathbb S^2, 1\times \w^{-4})}< \gamma
\end{equation}
is valid, then  
\begin{equation}\label{eq11-PDE-order2}
\|w(t)-w(t')\|_{L^2(\mathbb S^2, 1\times \w^{-4})}\le 
C_*e^{-\beta(t-t_*)} 
\end{equation} 
for all $t, t'\in(t_*+1,\sigma-1]$ with $t\leq t'$.
\end{lem}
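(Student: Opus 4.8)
The plan is to run the now-classical {\L}ojasiewicz--Simon iteration, with the a priori estimates of Section~\ref{s:regularity} taking the place of interior regularity theory. Abbreviate $\|\cdot\|=\|\cdot\|_{L^2(\mathbb{S}^2, 1\times \w^{-4})}$ and set
\[
H(t)=\mathcal{E}(w(t))-\mathcal{E}(\bar w),\qquad F(t)=\int_t^\infty \|\pa_s w(s)\|^2_{L^2(\mathbb{S}^2, 1\times e^{4u})}\,ds .
\]
By the energy identity \eqref{eq:energy-identity-infty-v2} one has $H(t)=F(t)+\tfrac12\|\pa_t w(t)\|^2_{L^2(\mathbb{S}^2, 1\times e^{4u})}$, so $0\le F(t)\le H(t)$, and $F'(t)=-\|\pa_t w(t)\|^2_{L^2(\mathbb{S}^2, 1\times e^{4u})}\le 0$; in particular $\int_{t-2}^{t+2}\|\pa_s w(s)\|^2_{L^2(\mathbb{S}^2, 1\times e^{4u})}\,ds=F(t-2)-F(t+2)$. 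Theorem~\ref{thm:reg} provides a uniform bound $\sup_{t\ge 5}H(t)\le C_0$ depending only on $\bar w$ and $\Lambda$, while \eqref{aoirhoihnfghgh} lets one pass freely between the $e^{4u}$- and $\w^{-4}$-weighted $L^2$-norms.

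First I would upgrade the $L^2$-smallness \eqref{eq10-PDE-order2} to $C^{2,\alpha}_*$-smallness. Regarding $\bar w$ as a ($t$-independent) solution of \eqref{eq:main-2} on the cylinder, Proposition~\ref{prop:difference-linear} applied on $(t-2,t+2)\times\mathbb{S}^2$ and translated in $t$ gives
\[
|w(t)-\bar w|^*_{2,\alpha}\le C\Big(\int_{t-2}^{t+2}\|w(s)-\bar w\|^2\,ds\Big)^{1/2}\le C\sup_{[t-2,t+2]}\|w(s)-\bar w\|<C\gamma ,
\]
so that, choosing $\gamma$ small (depending only on $\bar w$, $\Lambda$, and the constant $\eta$ of Proposition~\ref{prop:functional-F}), one has $|w(t)-\bar w|^*_{2,\alpha}<\eta$ throughout, and the {\L}ojasiewicz--Simon inequality \eqref{eq:LS-ineq} applies to give $H(t)^{1/2}\le \bar C\,\|\mathcal{M}(w(t))\|$.

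Next I would bound $\|\mathcal{M}(w(t))\|$ purely in terms of $\pa_t$-derivatives of $w$. Rewriting \eqref{eq:main-w} as $\mathcal{M}_1(w)=-\pa_t^2\Phi+\pa_t\Phi+2e^{4u}|\pa_t v|^2$ and $\mathcal{M}_2(w)=-\pa_t^2 v+\pa_t v-4\pa_t u\,\pa_t v$, and using that $\w$ is independent of $t$ (so the hypotheses $\nabla_{g_0}\pa_t\ln\w=0$ and $\pa_t(L\ln\w)=0$ of Proposition~\ref{prop:partial-t-linear} hold, with every $\pa_t\ln\w$-term there absent), the estimates of that proposition, translated in $t$ and combined with $e^{4u}\le e^{4\Lambda}\w^{-4}$, control the linear part of $\mathcal{M}(w(t))$ in $\|\cdot\|$ by $(F(t-2)-F(t+2))^{1/2}$; the quadratic terms $e^{4u}|\pa_t v|^2$ and $\pa_t u\,\pa_t v$ are bounded by $C(F(t-2)-F(t+2))\le C\sqrt{C_0}\,(F(t-2)-F(t+2))^{1/2}$, hence absorbed. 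Thus $H(t)^{1/2}\le C(F(t-2)-F(t+2))^{1/2}$, which together with $F\le H$ yields the discrete inequality
\[
F(t)\le C_1\big(F(t-2)-F(t+2)\big)\qquad\text{for }t\in(t_*+2,\sigma-2).
\]
Since $F$ is nonincreasing, this forces $F(t+2)\le\theta\,F(t-2)$ with $\theta=C_1/(C_1+1)\in(0,1)$; iterating over steps of length four and using $F\le C_0$ gives $F(t)\le C_*\,e^{-\beta(t-t_*)}$ on $(t_*+2,\sigma-2)$ for a suitable $\beta>0$. Finally, for $t\le t'$ in $(t_*+1,\sigma-1]$,
\[
\|w(t)-w(t')\|\le\int_t^{t'}\|\pa_s w(s)\|\,ds\le\sum_{j\ge 0}\Big(\int_{t+j}^{t+j+1}\|\pa_s w\|^2\,ds\Big)^{1/2}\le\sum_{j\ge 0}F(t+j-2)^{1/2}\le C_*\,e^{-\beta(t-t_*)},
\]
the geometric series and the harmless discrepancy between the intervals $(t_*+2,\sigma-2)$ and $(t_*+1,\sigma-1]$ (dealt with by applying Propositions~\ref{prop:partial-t-linear} and \ref{prop:difference-linear} with cutoffs of half-length between $1$ and $2$) being absorbed into $C_*$ and a smaller $\beta$.

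The main obstacle is the discrete inequality $F(t)\lesssim F(t-2)-F(t+2)$, which plays the role of a missing monotonicity formula for the renormalized energy density. Establishing it requires chaining three ingredients correctly: (i) the passage from $L^2$-smallness to the $C^{2,\alpha}_*$-smallness needed to invoke \eqref{eq:LS-ineq}; (ii) using the harmonic map system to express $\mathcal{M}(w)$ through $\pa_t$-derivatives of $w$; and (iii) controlling those $\pa_t$-derivatives \emph{pointwise in the singular weighted norms} by the purely local flux $\int_{t-2}^{t+2}\|\pa_s w\|^2$, which is exactly the content of Proposition~\ref{prop:partial-t-linear} and crucially relies on $\w$ being independent of $t$. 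Once this inequality is available, the exponential decay of the flux and of $\|w(t)-w(t')\|$ follows by the routine iteration sketched above.
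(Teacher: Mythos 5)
Your proof is correct in substance, but it runs the {\L}ojasiewicz--Simon iteration along a genuinely different route from the paper's. You work with the forward flux $F(t)=\int_t^\infty\|\partial_s w\|^2_*\,ds$, and establish the discrete flux inequality $F(t)\le C_1\bigl(F(t-2)-F(t+2)\bigr)$ by controlling $\|\mathcal M(w(t))\|_*$ pointwise in $t$ via Proposition~\ref{prop:partial-t-linear}, then iterate over windows of length four. The paper instead introduces a Jendoubi-type entropy function
\[
H(t)=\int_t^\infty\|w'(s)\|^2_*\,ds+\varepsilon\Bigl[\mathcal E(w(t))-\mathcal E(\bar w)+\langle\mathcal M(w(t)),w'(t)\rangle_*\Bigr],
\]
shows $H'(t)\le -c_0\bigl(\|w'\|_*+\|\mathcal M(w)\|_*\bigr)^2$ and $H(t)\le C\bigl(\|w'(t)\|_*+\|\mathcal M(w(t))\|_*\bigr)^2$ (the latter via {\L}ojasiewicz--Simon), combines these into the first-order differential inequality $CH'(t)+H(t)\le 0$, and integrates; the $L^1$-in-time control of $\|w'\|_*$ then follows from $-[H^{1/2}]'\gtrsim\|w'\|_*$. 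Both approaches require the same main ingredients: the energy identity \eqref{eq:energy-identity-infty-v2}, the {\L}ojasiewicz--Simon inequality of Proposition~\ref{prop:functional-F} (after upgrading $L^2$-smallness to $C^{2,\alpha}_*$-smallness through Proposition~\ref{prop:difference-linear}), and the weighted pointwise $\partial_t$-estimates of Section~\ref{s:regularity}. The paper's entropy-function argument is local in $t$ (no sliding window) and so sidesteps the endpoint bookkeeping between $(t_*+2,\sigma-2)$ and $(t_*+1,\sigma-1]$ that you have to clean up at the end, while in your version the crucial bound $\|\mathcal M(w(t))\|_*\lesssim\bigl(F(t-2)-F(t+2)\bigr)^{1/2}$ makes very explicit how Proposition~\ref{prop:partial-t-linear} substitutes for a missing monotonicity formula; that is a worthwhile perspective, and both routes yield the stated conclusion with constants depending only on $\bar w$ and $\Lambda$.
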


\begin{proof} The proof consists of two steps. In the first step \eqref{eq11-PDE-order2}
will be established under a stronger assumption than \eqref{eq10-PDE-order2}, and in the second step
it will be shown that the stronger assumption may be relaxed to the desired hypothesis. 
Moreover, the weight $1\times e^{4u}$ will be used in the first step, while
the weight $1\times \w^{-4}$ will be used in the second step. 
In what follows we will write $f'$ for the derivative of $f$ with respect to $t$. 
As a preliminary observation, note that since $\w$ is independent of $t$ we find for any $t_*<t\leq t'\leq\sigma-1$ that
\begin{align*}
\|w(t)-w(t')\|_{L^2(\mathbb S^2, 1\times \w^{-4})}
&=\Big\|\int_t^{t'} w'(s)ds\Big\|_{L^2(\mathbb S^2, 1\times \w^{-4})}\\
&\le \int_t^{\sigma-1} \|w'(s)\|_{L^2(\mathbb S^2, 1\times \w^{-4})}ds.
\end{align*}
Therefore it suffices to prove 
\begin{equation}\label{eq11-L1-bound-PDE-order2}
\int_t^{\sigma-1}\|w'(s)\|_{L^2(\mathbb S^2, 1\times \w^{-4})}ds\le 
C_*e^{-\beta(t-t_*)} 
\end{equation} 
for all $t \in(t_*+1,\sigma-1]$, in order to obtain \eqref{eq11-PDE-order2}. 

{\it Step 1.} Fix a constant $\alpha\in (0,1)$, and let
$\eta>0$ be as in Proposition \ref{prop:functional-F}.
We will assume that for all $t \in(t_*, \sigma]$ the estimate
\begin{equation}\label{eq10-C2alpha-PDE-order2}
|w(t) - \bar w |^*_{2,\alpha}< \eta
\end{equation}
is valid, and then show that
\begin{equation}\label{eq11-L1-bound-PDE-order2-v2}
\int_t^{\sigma}\|w'(s)\|_{L^2(\mathbb S^2, 1\times e^{4u})}ds\le 
C_*e^{-\beta(t-t_*)} 
\end{equation} 
for all $t \in(t_*,\sigma]$. It should be pointed out that this inequality is 
slightly different from \eqref{eq11-L1-bound-PDE-order2}, in particular due to the use of weight $e^{4u}$ instead of $\w^{-4}$. 
For brevity, we will write 
$$\langle\cdot, \cdot\rangle_{*}
=\langle\cdot, \cdot\rangle_{L^2(\mathbb S^2, 1\times e^{4u})}, \quad \quad
\|\cdot\|_{*}
=\|\cdot \|_{L^2(\mathbb S^2, 1\times e^{4u})}.$$

The process to establish \eqref{eq11-L1-bound-PDE-order2-v2} begins with the definition of an entropy-type function  
\begin{align}\label{eq-definition-H-PDE}
H(t)=\int_t^\infty \|w'(s)\|^2_{*}ds
+\varepsilon\big[\mathcal E(w(t))-\mathcal E(\bar w)
+\langle\mathcal M(w(t)), w'(t)\rangle_{*}\big],
\end{align}
for $t> 0$ and a positive constant $\varepsilon$ to be fixed.
Observe that \eqref{eq:energy-identity-infty-v2} implies
\begin{align}\label{eq-definition-H-PDE-v2}
H(t)&=-\frac12\|w'(t)\|^2_{*}
+(1+\varepsilon)\big[\mathcal E(w(t))-\mathcal E(\bar w)\big]
+\varepsilon\langle\mathcal M(w(t)), w'(t)\rangle_{*}.
\end{align}
Furthermore, recall that $\Phi'(t)\to 0$ and $e^{2u(t)}v'(t)\to 0$ 
uniformly on $\mathbb S^2$ as $t\to\infty$ by Corollary \ref{cor:L2},
and that $\mathcal E(w(t))\to \mathcal E(\bar w)$ 
as $t\to\infty$ by Proposition \ref{prop:funct-conv}. 
Thus, $H(t)\to 0$  as $t\to\infty$. 
For simplicity of notation, we will suppress the dependence on $t$ below. 
Then a simple differentiation of \eqref{eq-definition-H-PDE} produces
\begin{align*}
H'&=-\|w'\|^2_{*}
+\varepsilon\big[-\langle\mathcal M(w), w'\rangle_{*}
+\langle\mathcal M(w), w''\rangle_{*}\\
&\qquad\qquad+\langle\mathcal M'(w)w', w'\rangle_{*}
+4(e^{4u}u'\mathcal M_2(w), w_2')_{L^2(\mathbb S^2)}\big],
\end{align*}
where $\mathcal M'(w)$ is the linearized operator of $\mathcal M$ at $w$, 
and the last term is given by the usual $L^2$-inner product on $\mathbb S^2$ 
and arises from differentiating the weight $e^{4u}$. We now write 
\begin{align}\label{eq-expression-H-der}
H'&=-\|w'\|^2_{*}
-\varepsilon \|\mathcal M(w)\|^2_{*} +\varepsilon[I_1+I_2+I_3],
\end{align}
where 
\begin{equation*}
I_1 =\langle\mathcal M(w), w''-w'+\mathcal M(w)\rangle_{*},\quad
I_2 =\langle\mathcal M'(w)w', w'\rangle_{*},\quad
I_3 =4(e^{4u}u'\mathcal M_2(w), w_2')_{L^2(\mathbb S^2)}. 
\end{equation*}
To estimate $I_3$, observe that since $\omega$ is independent of $t$ we have 
$u'=\Phi'$ and hence $|u'|\le C$ by Theorem  \ref{thm:reg}, so that
\begin{align}\label{eq-estimate-I3}\begin{split}
I_3&\le C(e^{4u}|\mathcal M_2(w)|, |w_2'|)_{L^2(\mathbb S^2)}
\le C\|e^{2u}w_2'\|_{L^2(\mathbb S^2)}
\|e^{2u}\mathcal M_2(w)\|_{L^2(\mathbb S^2)}
\\
&\le C\|w'\|_{*}
\|\mathcal M(w)\|_{*}.
\end{split}\end{align}
Next, by \eqref{eq:main-w} and the definition of the 
$L^2(\mathbb S^2, 1\times e^{4u})$-inner product, we find
$$I_1=\int_{\mathbb{S}^2}\big(2e^{4u}|v'|^2\mathcal M_1(w)
- 4e^{4u} u'v'\mathcal M_2(w)\big)\,  d  vol_{g_0}.$$
Furthermore, using Theorem \ref{thm:reg} again yields
$e^{2u}|v'|\le C$ and $|u'|\le C$, and hence 
\begin{align}\label{eq-estimate-I1}
I_1\le C\int_{\mathbb{S}^2}\big(e^{2u}|v'|\mathcal M_1(w)
+ e^{4u} |v'|\mathcal M_2(w)\big)\,  d  vol_{g_0}
\le C\|w'\|_{*}
\|\mathcal M(w)\|_{*}.
\end{align}
Lastly, in order to estimate $I_2$ note that \eqref{eq:main-mathcal-M} and $\w'=0$ produce
\begin{align*}
\mathcal{M}'_1(w)w'&= \Delta_{g_0} w'_1 -4 e^{4u} \nabla_{g_0}  w_2\cdot \nabla_{g_0}  w'_2 
-8 e^{4u} u'|\nabla_{g_0}  w_2 |^2, \\
\mathcal{M}'_2(w)w'&=\Delta_{g_0} w'_2 +4\nabla_{g_0}  u'\cdot \nabla_{g_0}  w_2
+4\nabla_{g_0}  u\cdot \nabla_{g_0}  w'_2,
\end{align*}
and write
\begin{align*}
I_2=\int_{\mathbb{S}^2}J\,  d  vol_{g_0}
\end{align*}
where 
\begin{align*}
J&= \big(\Delta_{g_0} w'_1 -4 e^{4u} \nabla_{g_0}  w_2\cdot \nabla_{g_0}  w'_2 
-8 e^{4u} u'|\nabla_{g_0}  w_2 |^2\big)w_1' \\
&\qquad+e^{4u}\big(\Delta_{g_0} w'_2 +4\nabla_{g_0}  u'\cdot \nabla_{g_0}  w_2
+4\nabla_{g_0}  u\cdot \nabla_{g_0}  w'_2\big)w_2'. 
\end{align*}
A straightforward computation gives
\begin{align*}
J&= \mathrm{div}_{g_0}\big(w_1'\nabla_{g_0} w'_1+e^{4u}w_2'\nabla_{g_0} w'_2\big) 
- |\nabla_{g_0}  w'_1 |^2-e^{4u} |\nabla_{g_0}  w'_2 |^2 \\
&\qquad-4 e^{4u} w_1'\nabla_{g_0}  w_2\cdot \nabla_{g_0}  w'_2 
-8 e^{4u} u'w_1'|\nabla_{g_0}  w_2 |^2
+4e^{4u}w_2'\nabla_{g_0}  u'\cdot \nabla_{g_0}  w_2. 
\end{align*}
Moreover since $u'=w_1'$, and $e^{2u} |\nabla_{g_0}  w_2|\le C$ by Theorem  \ref{thm:reg}, we have 
\begin{align*}
J&\le \mathrm{div}_{g_0}\big(w_1'\nabla_{g_0} w'_1+e^{4u}w_2'\nabla_{g_0} w'_2\big) 
- |\nabla_{g_0}  w'_1 |^2-e^{4u} |\nabla_{g_0}  w'_2 |^2 \\
&\qquad +C\big( e^{2u} |w_1'||\nabla_{g_0}  w'_2 |
+|w_1'|^2
+e^{2u}|w_2'||\nabla_{g_0}  w_1'|\big). 
\end{align*}
In addition, with the help of Young's inequality
\begin{align*}
J&\le \mathrm{div}_{g_0}\big(w_1'\nabla_{g_0} w'_1+e^{4u}w_2'\nabla_{g_0} w'_2\big) 
- \frac12\big(|\nabla_{g_0}  w'_1 |^2+e^{4u} |\nabla_{g_0}  w'_2 |^2\big) \\
&\qquad +C\big( |w_1'|^2+e^{4u}|w_2'|^2\big), 
\end{align*}
and hence
\begin{align}\label{eq-estimate-I2}
I_2\le C\int_{\mathbb{S}^2}\big( |w_1'|^2+e^{4u}|w_2'|^2\big)\,  d  vol_{g_0}
\le C\|w'\|_{*}^2.
\end{align}
By substituting \eqref{eq-estimate-I3}, \eqref{eq-estimate-I1}, 
and \eqref{eq-estimate-I2} into \eqref{eq-expression-H-der}, we obtain 
\begin{align*}
H'\le-\|w'\|^2_{*}
-\varepsilon \|\mathcal M(w)\|^2_{*} 
+\varepsilon C\big(\|w'\|_{*}
\|\mathcal M(w)\|_{*}
+\|w'\|_{*}^2\big).
\end{align*}
Furthermore, by fixing $\varepsilon$ sufficiently small it follows that
\begin{align}\label{eq-estimate-H-derivative-PDE}
H'\le -\frac12\|w'\|^2_{*}
-\frac\varepsilon2 \|\mathcal M(w)\|^2_{*}
\le -c_0\big(\|w'\|_{*}
+ \|\mathcal M(w)\|_{*}\big)^2,
\end{align}
where $c_0$ is a positive constant. 
In particular $H'\le 0$, and consequently
$H\ge 0$ on $(0,\infty)$ since $H(t)\to 0$ as $t\to\infty$. 

We now seek a differential inequality for $H$, that will allow us to determine its
rate of decay. Observe that \eqref{eq-definition-H-PDE-v2} implies
\begin{align*}
H&\le -\frac12\|w'\|^2_{*}
+2|\mathcal E(w)-\mathcal E(\bar w)|
+\|\mathcal M(w)\|_{*}\|w'\|_{*}\\&
\le 2|\mathcal E(w)-\mathcal E(\bar w)| + \|\mathcal M(w)\|_{*}^2,
\end{align*}
where we used Cauchy's inequality.  
We now restore $t$, and recall that $\eta>0$ is as in Proposition \ref{prop:functional-F}. 
The assumption \eqref{eq10-C2alpha-PDE-order2} then yields, via Proposition \ref{prop:functional-F},
that
$$|\mathcal E(w(t))-\mathcal E(\bar w)|^{1/2}
\le C\|\mathcal M(w(t))\|_{*}$$ 
for all $t\in(t_*,\sigma]$, and hence 
\begin{align}\label{eq-estimate-H-upper-PDE}
H(t)\le C\|\mathcal M(w(t))\|_{*}^2 \le C\big(\|w'(t)\|_{*}^2
+\|\mathcal M(w(t))\|_{*}^2 \big). 
\end{align}
Combining with \eqref{eq-estimate-H-derivative-PDE} gives rise to
\begin{align*}
H(t)\le C\big(\|w'(t)\|_{*}
+\|\mathcal M(w(t))\|_{*}\big)^2\le -CH'(t),\end{align*}
and thus 
\begin{align}\label{eq-differential-ineq-PDE-order2}
CH'(t)+H(t)\le 0.
\end{align}
Integrating this first-order differential inequality produces
\begin{equation}\label{eq-estimate-H-PDE-order2}H(t)\le 
C_*e^{-2\beta(t-t_*)} \quad \mbox{for all }t \in(t_* ,\sigma], 
\end{equation} 
where $\beta$ and $C_*$ are positive constants.
 
This decay for $H$ may be translated into decay for $w'$ as follows. Observe that an alternate combination 
of \eqref{eq-estimate-H-derivative-PDE} and \eqref{eq-estimate-H-upper-PDE}, for $t\in (t_*,\sigma]$, produces
\begin{align*}
-[H^{1/2}(t)]'=-\frac12 H^{-1/2}(t) H'(t)
\ge c\big(\|w'(t)\|_{*}
+\|\mathcal M(w(t))\|_{*}\big)
\ge c\|w'(t)\|_{*}.
\end{align*}
Furthermore, integrating from $t$ to $\sigma$ then yields
$$\int_{t}^\sigma \|w'(t)\|_{*}ds
\le C\big(H^{1/2}(t)-H^{1/2}(\sigma)\big)\le CH^{1/2}(t).$$
Therefore, \eqref{eq-estimate-H-PDE-order2} may be applied to obtain
\begin{equation*}
\int_{t}^\sigma \|w'(t)\|_{*}ds\le 
C_*e^{-\beta(t-t_*)}  \quad \mbox{for all }t\in (t_*,\sigma]. 
\end{equation*} 
This finishes the proof of \eqref{eq11-L1-bound-PDE-order2-v2}.

{\it Step 2.}  We now assume \eqref{eq10-PDE-order2}, and show how to choose $\gamma$ in order to achieve
\eqref{eq10-C2alpha-PDE-order2}.
Note that both $w=(\Phi,v)$ and $\bar w=(\bar \Phi,\bar v)$ satisfy equation \eqref{eq:main-2}.  
Therefore, by Proposition \ref{prop:difference-linear} and 
\eqref{eq10-PDE-order2}
we have 
\begin{equation*}| w(t) - \bar w |^*_{2,\alpha}\le 
C\Big(\int_{t-1}^{t+1}\|w(s)-\bar w\|_{L^2(\mathbb S^2, 1\times \w^{-4})}^2ds\Big)^{1/2}
< C\gamma,
\end{equation*}
for all $t\in [t_*+1, \sigma-1]$.
We may then choose $\gamma$ such that $C\gamma<\eta$, to find that \eqref{eq10-C2alpha-PDE-order2} holds
for $t\in (t_*+1, \sigma-1]$. The desired estimate \eqref{eq11-L1-bound-PDE-order2} now follows from Step 1. 
\end{proof}

The hypotheses of the previous result can be weakened. More precisely, the assumption \eqref{eq10-PDE-order2} requires closeness to the tangent map on a time interval, however this may be reduced to closeness at a single time.

\begin{lem}\label{lem:step2} 
Let $\w$ satisfy \eqref{eq:NS-singular},  
$w=(\Phi,v)$ be a weak solution of \eqref{eq:main-2} 
in $\mathbb R_+ \times \mathbb{S}^2$ 
satisfying \eqref{eq:ubd-1}, \eqref{eq:ea-1},  
and \eqref{eq:trace-1} for some positive constants $\Lambda$ and $a$, 
let $\bar w=(\bar \Phi, \bar v)\in C^{3}(\mathbb{S}^2)$ be a solution of 
\eqref{eq:main-s2-1}-\eqref{eq:main-s2-1-poles} as in Proposition \ref{prop:sub-conv}.
Then there exist positive constants $\delta$, $\beta$, and $ C$
depending only on $\bar w$ and $\Lda$, such that the following holds.
If for some $t_*\ge 5$ the estimate
\begin{equation}\label{eq-closeness-assumption-PDE-order2}
\| w(t_*) - \bar w \|_{L^2(\mathbb S^2, 1\times \w^{-4})}  +
| \mathcal E(w(t_*)) - \mathcal E(\bar w) |^{1/2}  < \delta
\end{equation}
is valid, then
\begin{equation}\label{eq-closeness-conclusion-PDE-order2}
\|w(\sigma)- w(t) \|_{L^2(\mathbb S^2, 1\times \w^{-4})}  \le 
Ce^{-\beta(t-t_*)}
\end{equation}
for all $\sigma\ge t> t_*+1$.
\end{lem}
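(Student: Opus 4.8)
The plan is to bootstrap Lemma \ref{lem:step1} by a continuity argument. The key point is that Lemma \ref{lem:step1} gives exponential decay of $\|w(t)-w(t')\|_{L^2(\mathbb S^2,1\times\w^{-4})}$ on any interval $(t_*,\sigma]$ on which the crude smallness hypothesis \eqref{eq10-PDE-order2} holds, and we must upgrade ``closeness on an interval'' to ``closeness at one time plus closeness of the energy.'' First I would let $\gamma$, $\beta$, $C_*$ be the constants from Lemma \ref{lem:step1}, and define
\[
T=\sup\{\sigma>t_*+5 : \|w(t)-\bar w\|_{L^2(\mathbb S^2,1\times\w^{-4})}<\gamma \text{ for all } t\in(t_*,\sigma]\}.
\]
Arguing by contradiction, suppose $T<\infty$. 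On $(t_*,T]$ the hypothesis \eqref{eq10-PDE-order2} holds, so Lemma \ref{lem:step1} applies and gives \eqref{eq11-PDE-order2} on $(t_*+1,T-1]$.

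The heart of the matter is to control $\|w(t)-\bar w\|_{L^2(\mathbb S^2,1\times\w^{-4})}$ in terms of the data at $t_*$. For $t\in(t_*+1,T-1]$ I would split
\[
\|w(t)-\bar w\|_{L^2} \le \|w(t)-w(t_*+1)\|_{L^2} + \|w(t_*+1)-w(t_*)\|_{L^2} + \|w(t_*)-\bar w\|_{L^2}.
\]
The last term is $<\delta$ by hypothesis. The middle term is bounded using the energy identity \eqref{eq:energy-identity-infty-v2} (together with \eqref{aoirhoihnfghgh} to pass between the $e^{4u}$ and $\w^{-4}$ weights): indeed $\int_{t_*}^{\infty}\|\partial_t w\|_*^2\,ds = \mathcal E(w(t_*))-\mathcal E(\bar w) - \tfrac12\|\partial_t w(t_*)\|_*^2 \le |\mathcal E(w(t_*))-\mathcal E(\bar w)| < \delta^2$, so $\|w(t_*+1)-w(t_*)\|_{L^2}\le C(\int_{t_*}^{t_*+1}\|w'\|^2)^{1/2}\le C\delta$ by Cauchy--Schwarz. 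For the first term, one cannot yet quote \eqref{eq11-PDE-order2} directly at the left endpoint since Step 1 was only run on $(t_*+1,T-1]$; instead I would apply Lemma \ref{lem:step1} with its base time shifted to $t_*+1$ (which is legitimate since \eqref{eq10-PDE-order2} still holds on the whole interval), obtaining $\|w(t)-w(t_*+1)\|_{L^2}\le C_*e^{-\beta(t-t_*-1)}\le C_*$ — but this is not small. The correct route is rather to observe that by \eqref{eq11-L1-bound-PDE-order2-v2}/\eqref{eq11-L1-bound-PDE-order2} applied from base $t_*$, $\int_{t_*}^{T-1}\|w'(s)\|_{L^2}\,ds \le C H^{1/2}(t_*)$, and $H(t_*)\le C\|\mathcal M(w(t_*))\|_*^2 + \|w'(t_*)\|_*^2$; both of these are $\le C\delta^2$ after using Proposition \ref{prop:functional-F} (so $\|\mathcal M(w(t_*))\|_*^2 \ge c|\mathcal E(w(t_*))-\mathcal E(\bar w)|$ is not the inequality we want — we need the reverse, which comes from $\|\mathcal M(w)\|_*\le C|w-\bar w|^*_{2,\alpha}\le C\|w(t_*)-\bar w\|_{L^2}$ via Proposition \ref{prop:difference-linear} since $\mathcal M(\bar w)=0$) and Corollary \ref{cor:L2}/Proposition \ref{prop:difference-linear} for $\|w'(t_*)\|_*$. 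Hence $\|w(t)-w(t_*)\|_{L^2}\le \int_{t_*}^t\|w'\|_{L^2}\,ds\le C\delta$ for all $t\in(t_*,T-1]$.

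Combining, $\|w(t)-\bar w\|_{L^2}\le C_1\delta$ for all $t\in(t_*,T-1]$, where $C_1$ depends only on $\bar w$ and $\Lda$. Choosing $\delta$ small enough that $C_1\delta<\gamma/2$, we get $\|w(t)-\bar w\|_{L^2}<\gamma/2$ on $(t_*,T-1]$, and then by continuity of $t\mapsto w(t)$ in $L^2(\mathbb S^2,1\times\w^{-4})$ (which follows from the regularity in Theorem \ref{thm:reg}) the strict inequality $<\gamma$ persists past $T$, contradicting the definition of $T$. Therefore $T=\infty$, i.e. \eqref{eq10-PDE-order2} holds on all of $(t_*,\infty)$, and Lemma \ref{lem:step1} yields \eqref{eq11-PDE-order2} on $(t_*+1,\infty)$. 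Finally, for $\sigma\ge t>t_*+1$ we write
\[
\|w(\sigma)-w(t)\|_{L^2(\mathbb S^2,1\times\w^{-4})}\le \int_t^\infty\|w'(s)\|_{L^2(\mathbb S^2,1\times\w^{-4})}\,ds \le Ce^{-\beta(t-t_*)},
\]
using the integrated bound \eqref{eq11-L1-bound-PDE-order2} from Step 1 (now valid on the infinite interval), which is exactly \eqref{eq-closeness-conclusion-PDE-order2}. The main obstacle is the bookkeeping in the previous paragraph: one must carefully track that all the ``$\delta$-small'' quantities ($\|w(t_*)-\bar w\|_{L^2}$, $\|w'(t_*)\|_*$, $\|\mathcal M(w(t_*))\|_*$, $\mathcal E(w(t_*))-\mathcal E(\bar w)$, and hence $H(t_*)$) are mutually comparable up to constants depending only on $\bar w$ and $\Lda$, so that the continuity/bootstrap closes; this is where Propositions \ref{prop:difference-linear} and \ref{prop:functional-F}, the energy identity, and Corollary \ref{cor:L2} all get used together.
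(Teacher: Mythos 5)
Your continuity-bootstrap framework is the right idea and matches the paper's outline, and you correctly isolate the crucial observation that the energy identity \eqref{eq:energy-identity-infty-v2} bounds $\int_{t_*}^\infty\|w'(s)\|^2\,ds$ by $|\mathcal E(w(t_*))-\mathcal E(\bar w)|<\delta^2$. However, there is a genuine gap in how you propagate smallness, and the detour through $H(t_*)$ does not close.

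The step $\int_{t_*}^{T-1}\|w'(s)\|_*\,ds\le CH^{1/2}(t_*)$ is not available from base $t_*$: in Step~1 of Lemma~\ref{lem:step1} that integral estimate is derived under the $C^{2,\alpha}$-closeness hypothesis \eqref{eq10-C2alpha-PDE-order2} on the whole interval, and Step~2 of that proof only upgrades the $L^2$-closeness \eqref{eq10-PDE-order2} on $(t_*,\sigma]$ to $C^{2,\alpha}$-closeness on the strictly smaller interval $(t_*+1,\sigma-1]$, via Proposition~\ref{prop:difference-linear}. Worse, your proposed bound on $H(t_*)$ fails for the same reason: $H(t_*)\le C(\|\mathcal M(w(t_*))\|_*^2+\|w'(t_*)\|_*^2)$ uses the {\L}ojasiewicz inequality (Proposition~\ref{prop:functional-F}) at $t=t_*$, which requires $|w(t_*)-\bar w|_{2,\alpha}^*<\eta$; you then invoke Proposition~\ref{prop:difference-linear} to claim $|w(t_*)-\bar w|_{2,\alpha}^*\le C\|w(t_*)-\bar w\|_{L^2}$, but that proposition controls the weighted pointwise norms by the $L^2$-norm over the $4$-unit time interval $(t_*-2,t_*+2)\times\mathbb S^2$, not by the $L^2$-norm on a single time slice, and the hypothesis \eqref{eq-closeness-assumption-PDE-order2} gives smallness at the single time $t_*$ only. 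Likewise $\|w'(t_*)\|_*$ is not $\delta$-small from the hypothesis: Corollary~\ref{cor:L2} gives only asymptotic decay, not a bound in terms of $\delta$.

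The paper avoids all of this. Its key estimate, from the energy identity and Cauchy--Schwarz, is $\|w(\sigma)-w(t)\|_*^2\le(\sigma-t)\int_{t_*}^\infty\|w'(s)\|_*^2\,ds\le e^{4\Lambda}(\sigma-t)\,|\mathcal E(w(t_*))-\mathcal E(\bar w)|$, so $w$ moves by at most $e^{2\Lambda}\sqrt T\,\delta$ over any time window of length $\le T$ (this is \eqref{eq13-PDE-order2}). The bootstrap then fixes $T$ large enough that $C_*e^{-\beta T}<\gamma/4$, uses the bounded-window estimate to get $\|w(t)-\bar w\|_*<\gamma/2$ on $[t_*,t_*+T]$, and runs a sup-time contradiction in which closeness on $[\bar t,\bar t+T]$ is controlled by the bounded-window estimate while the exponential decay \eqref{eq11-PDE-order2} evaluated at time $t_*+T$ brings the tail below $3\gamma/4$. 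No bound on $H(t_*)$ or on $\int_{t_*}\|w'\|_*\,ds$ is needed. You already had the right tool in hand when bounding the ``middle term''; extend that same bounded-window argument to all windows of length $\le T$, choose $T$ so that the exponential decay from Lemma~\ref{lem:step1} is $<\gamma/4$ at $t_*+T$, and replace the $H(t_*)$ reasoning with the paper's $\bar t$-contradiction.
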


\begin{proof} 
We will make use of the weight $1\times \w^{-4}$, and in particular
will write $\|\cdot\|_{*} =\|\cdot \|_{L^2(\mathbb S^2, 1\times \w^{-4})}$ for brevity.
It should be emphasized that here, $\|\cdot\|_{*}$ represents a different norm from that in the proof of 
Lemma \ref{lem:step1}, due to the choice of weight. First observe that if $\sigma\ge t\ge t_*$ then 
\begin{align*}
\|w(\sigma)-w(t)\|_{*}^2&=\Big\|\int_t^\sigma w'(s)ds\Big\|^2_{*}
\le (\sigma-t)\int_t^\sigma\|w'(s)\|^2_{*}ds\\
&\le (\sigma-t)\int_{t_*}^\infty\|w'(s)\|^2_{*}ds\\
&\le e^{4\Lda}(\sigma-t)\big(\mathcal E(w(t_*))-\mathcal E(\bar w)\big),
\end{align*}
where in the last inequality we replaced the weight 
$\w^{-4}$ by $e^{4u}$ and used \eqref{eq:energy-identity-infty-v2}. 
Furthermore, if in addition $\sigma-t\le T$ then 
\begin{align}\label{eq13-PDE-order2}
\|w(\sigma)-w(t)\|_{*}
\le e^{2\Lda}\sqrt{T} | \mathcal E(w(t_*))-\mathcal E(\bar w) |^{1/2}.
\end{align}

Let $\gamma$, $\beta$, and $C_*$ be as in Lemma \ref{lem:step1}. 
Choose $T>5$ such that
\begin{equation}\label{eq14-PDE-order2}
C_*e^{-\beta T}  < \frac14\gamma,
\end{equation}
and then choose $\delta$ (depending on $T$) sufficiently small so that \eqref{eq-closeness-assumption-PDE-order2} implies 
\begin{equation}\label{eq15-PDE-order2}
e^{2\Lda}\sqrt{T}|  \mathcal E(w(t_*)) - \mathcal E(\bar w) |^{1/2}<\frac14\gamma,
\end{equation}
and
\begin{equation}\label{eq16-PDE-order2}
\| w(t_*) - \bar w\|_{*}  < \frac14\gamma.
\end{equation} 
By \eqref{eq13-PDE-order2} and \eqref{eq15-PDE-order2} we then have
\begin{equation}\label{eq17-PDE-order2}
\| w(\sigma) - w(t)\|_{*} < \frac14\gamma,
\end{equation} 
for all $\sigma\geq t\ge t_*$ with $\sigma-t\le T$.
In particular, if $t  \in [t_*, t_*+T]$ then
\begin{equation*}
\| w(t) - w(t_*)\|_{*} < \frac14\gamma,
\end{equation*} 
and hence with the help of \eqref{eq16-PDE-order2} we obtain
\begin{align}\label{eq18-PDE-order2}
\| w(t) - \bar w \|_{*} \le \| w(t ) - w(t_*)\|_{*}  + \|w(t_*) - \bar w\|_{*} 
< \frac14\gamma  + \frac14\gamma   = \frac12\gamma.
\end{align}
We now claim that
\begin{equation}\label{eq19-PDE-order2}
\| w(t ) - \bar w\|_{*}< \frac34\gamma\quad\text{ for any }t\ge t_*.
\end{equation}
Notice that this shows that \eqref{eq10-PDE-order2} holds for any $t> t_*$, and therefore by \eqref{eq11-PDE-order2} of Lemma \ref{lem:step1} the desired estimate 
$$\| w(\sigma ) - w(t )\|_{*}
\le C_*e^{-\beta( t-t_*)},$$ 
is valid for all $\sigma\geq t> t_*+1$.

It remains to establish \eqref{eq19-PDE-order2}, which will be accomplished by a contradiction argument. Set 
$$\overline{t}=\sup\{t\ge t_* \mid \, \|w(s)-\bar w\|_{*}<3\gamma/4\ \text{ for all }s\in [t_*,t]\},$$
and note that according to \eqref{eq18-PDE-order2} we have that $\overline{t}$ is well-defined with $\overline{t}>t_*+T$. 
If $\overline{t}<\infty$ then 
$$\|w(t)-\bar w\|_{*}\le \frac34\gamma$$
for all $t\in [t_*,\overline{t}]$, and by
\eqref{eq17-PDE-order2} we find
\begin{equation*}
\| w(t) - w(\overline{t})\|_{*} < \frac14\gamma
\end{equation*} 
for all $t\in [\overline{t}, \overline{t}+T]$. For $t$ in this same range it follows that
\begin{equation*}
\|w(t)-\bar w\|_{*}
\le \| w(t) - w(\overline{t})\|_{*} +\|w(\overline{t})-\bar w\|_{*}
< \frac14\gamma+\frac34\gamma=\gamma.
\end{equation*} 
As a consequence, \eqref{eq10-PDE-order2} holds for $t \in [t_*, \overline{t}+T]$, and therefore we may apply
\eqref{eq11-PDE-order2} of Lemma \ref{lem:step1} with $t=t_*+T$ and an arbitrary 
$t'\in [\overline{t}, \overline{t}+T-1]$, together with \eqref{eq14-PDE-order2}, to obtain
\begin{equation}\label{eq-estimate-t-bar-PDE}
\| w(t') - w(t_*+T)\|_{*} \le
C_*e^{-\beta T} < \frac14\gamma.
\end{equation} 
Using this and \eqref{eq18-PDE-order2} with $t=t_*+T$ produces
\begin{align*}\|w(t') - \bar w\|_{*} 
&\le \| w(t') -w(t_*+T)\|_{*} +\| w(t_*+T) - \bar w\|_{*}\\
& < \frac14\gamma+\frac12\gamma=\frac34\gamma,
\end{align*}
for any $t' \in [t_*, \overline{t}+T-1]$. It follows that a contradiction is achieved with
the definition of $\overline{t}$. We conclude that $\overline{t}=\infty$, and hence
\eqref{eq19-PDE-order2} is established. 
\end{proof}

With all of the preparatory work now completed, we are ready 
establish the uniqueness of limit points for the harmonic map upon approach
to the prescribed singularity. 

\begin{thm}\label{thm:main-1a} 
Let $\w$ satisfy \eqref{eq:NS-singular}, and let
$w=(\Phi,v)$ be a weak solution of \eqref{eq:main-2} 
in $\mathbb R_+ \times \mathbb{S}^2$ 
satisfying \eqref{eq:ubd-1}, \eqref{eq:ea-1},  
and \eqref{eq:trace-1} for some positive constants $\Lambda$ and $a$.
Then $w(t)\to \bar w$ in $C^3(\mathbb S^2)$ as $t\to\infty$, 
for some solution $\bar w=(\bar \Phi,\bar v)\in C^{3}(\mathbb{S}^2)$ of 
\eqref{eq:main-s2-1}-\eqref{eq:main-s2-1-poles},
and there exist positive constants $\beta$, $C$
depending only on $\Lda$, $\bar w$
such that  
\[
|\big(\Phi(t), v(t)\big)-(\bar \Phi,\bar v)|_{2}^* \le 
 C e^{-\beta t} 
\]
for all $t\in (0,\infty)$.
\end{thm}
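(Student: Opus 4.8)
The plan is to combine the sequential convergence from Proposition~\ref{prop:sub-conv}, the energy convergence from Proposition~\ref{prop:funct-conv}, and the {\L}ojasiewicz--Simon-type estimate encoded in Lemmas~\ref{lem:step1} and~\ref{lem:step2} into a genuine (non-sequential) limit with exponential rate. First I would invoke Proposition~\ref{prop:sub-conv} to obtain a sequence $t_i\to\infty$ along which $w(t_i)\to\bar w$ in $C^3(\mathbb S^2)$, and Proposition~\ref{prop:funct-conv} to conclude that $\mathcal E(w(t))\to\mathcal E(\bar w)$ as $t\to\infty$ (not merely along $t_i$). In particular $|\mathcal E(w(t))-\mathcal E(\bar w)|^{1/2}\to 0$. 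Then, fixing the constants $\delta,\beta,C$ furnished by Lemma~\ref{lem:step2} (which depend only on $\bar w$ and $\Lda$), I would use the $C^3$-convergence along $t_i$ together with the continuous embedding $C^3(\mathbb S^2)\hookrightarrow$ the relevant weighted $L^2$-space to guarantee that $\|w(t_i)-\bar w\|_{L^2(\mathbb S^2,1\times\w^{-4})}\to 0$; choosing $i$ large enough that both this quantity and $|\mathcal E(w(t_i))-\mathcal E(\bar w)|^{1/2}$ are $<\delta$, set $t_*=t_i$.

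With this choice of $t_*$, Lemma~\ref{lem:step2} yields
\[
\|w(\sigma)-w(t)\|_{L^2(\mathbb S^2,1\times\w^{-4})}\le Ce^{-\beta(t-t_*)}
\]
for all $\sigma\ge t>t_*+1$. Letting $\sigma\to\infty$ (using that $w(t)$ has $\bar w$ as a limit point and that the right-hand side is uniformly controlled) shows that $w(t)$ is Cauchy in $L^2(\mathbb S^2,1\times\w^{-4})$ as $t\to\infty$, so it converges in that norm to a unique limit, which by the sequential convergence along $t_i$ must be $\bar w$; and moreover
\[
\|w(t)-\bar w\|_{L^2(\mathbb S^2,1\times\w^{-4})}\le Ce^{-\beta(t-t_*)}\le C'e^{-\beta t}
\]
for all $t$ large, absorbing $e^{\beta t_*}$ into the constant and adjusting for small $t$ using the uniform bound from Theorem~\ref{thm:reg}.

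Finally, to upgrade the $L^2$-decay to decay in the $|\cdot|_2^*$ norm (and in $C^3(\mathbb S^2)$), I would apply Proposition~\ref{prop:difference-linear} to the pair $(\Phi,v)$ and $(\bar\Phi,\bar v)$ — both solving \eqref{eq:main-2}/\eqref{eq:main-s2-1} — on the time slab $(t-1,t+1)$, which gives
\[
|w(t)-\bar w|_{2,\alpha}^*\le C\Big(\int_{t-1}^{t+1}\|w(s)-\bar w\|_{L^2(\mathbb S^2,1\times\w^{-4})}^2\,ds\Big)^{1/2}\le Ce^{-\beta t},
\]
and in particular $w(t)\to\bar w$ in $C^3(\mathbb S^2)$. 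The resulting constants depend only on $\Lda$ and $\bar w$, as required. The main obstacle is the transition in Lemma~\ref{lem:step2} from single-time closeness to a quantitative exponential rate valid for all $t$; but since that work has already been carried out in Lemmas~\ref{lem:step1} and~\ref{lem:step2}, the proof here is essentially bookkeeping, the only genuine subtlety being to verify that $C^3$-convergence along $t_i$ actually delivers smallness in the weighted $L^2$- and energy-norms so that the hypothesis \eqref{eq-closeness-assumption-PDE-order2} can be triggered.
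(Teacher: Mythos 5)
Your proposal is correct and follows essentially the same route as the paper: produce a good time $t_*$ from the sequential convergence (Propositions~\ref{prop:sub-conv} and~\ref{prop:funct-conv}), trigger Lemma~\ref{lem:step2} to obtain the exponential Cauchy estimate, send $\sigma\to\infty$ to identify $\bar w$ as the genuine limit, and upgrade from weighted $L^2$ to $|\cdot|_{2}^{*}$ via Proposition~\ref{prop:difference-linear} applied to $w$ and the $t$-independent solution $\bar w$ on a unit slab.

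One small imprecision worth fixing: the phrase ``the continuous embedding $C^3(\mathbb S^2)\hookrightarrow L^2(\mathbb S^2,1\times\w^{-4})$'' is not literally true, since the weight $\w^{-4}$ is not integrable and a generic $C^3$ function need not lie in the weighted space. What actually justifies $\|w(t_i)-\bar w\|_{L^2(\mathbb S^2,1\times\w^{-4})}\to 0$ is that both $w(t_i)$ and $\bar w$ satisfy the boundary conditions and the uniform weighted estimates of Theorem~\ref{thm:reg} (cf.\ \eqref{eq:tan-estimates}), so that the $v$-component of the difference is $O(\w^{3+\alpha})$ uniformly in $i$; combining this with pointwise $C^3$ convergence and dominated convergence yields the weighted $L^2$ convergence. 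Equivalently, the convergence in Proposition~\ref{prop:sub-conv} should be read in the $C_*^3$ sense, as the paper's own proof of this theorem does. This is not a gap in the argument—only a phrasing issue—and the remainder of your proof, including the absorption of $e^{\beta t_*}$ into the constant and the treatment of small $t$ via the uniform bound, is correct.
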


\begin{proof} 
By Theorem \ref{thm:reg} and Proposition \ref{prop:sub-conv}, 
there exists a sequence of positive times $t_i\to\infty$ such that 
$w(t_i)\to \bar w$ in $C_*^3(\mathbb S^2)$ and 
$\mathcal E(w(t_i))\to \mathcal E(\bar w)$ as $i\to\infty$, 
for some solution $\bar w=(\bar \Phi,\bar v)\in C^{3}(\mathbb{S}^2)$ of 
\eqref{eq:main-s2-1}, \eqref{eq:main-s2-1-poles}. 
Take $t_*=t_i$ for a sufficiently large $i$ such that 
\eqref{eq-closeness-assumption-PDE-order2} holds. 
It follows that \eqref{eq-closeness-conclusion-PDE-order2} is valid for all $\sigma\ge t> t_*+1$.
Therefore $w(t)$ converges to $\bar w$ in $L^2(\mathbb S^2, 1\times \w^{-4})$ as $t\to\infty$. 
Moreover, by taking $\sigma\to\infty$ in \eqref{eq-closeness-conclusion-PDE-order2} we find that
$$\|w(t)- \bar w\|_{L^2(\mathbb S^2, 1\times \w^{-4})}  \le 
Ce^{-\beta (t-t_*)} \quad \mbox{for all }t> t_*+1.
$$
As in Step 2 of the proof for Lemma \ref{lem:step1}, the weighted H\"{o}lder norm may be estimated by
\begin{equation*}| w(t) - \bar w|_{2}^*\le 
C\Big(\int_{t-1}^{t+1}\|w(s)-\bar w\|_{L^2(\mathbb S^2, 1\times \w^{-4})}^2ds\Big)^{1/2}
\end{equation*}
for all $t>t_*+2$, and hence for $t$ in the same range we have
$$| w(t) - \bar w|_{2}^*\le 
Ce^{-\beta (t-t_*)},
$$
yielding the desired result.
\end{proof}

\begin{proof}[Proof of Theorem \ref{thm:main-1}]  
Since $e'(u+\ln (r\sin \theta), v)\in L^1_{loc} (B_1\setminus \{0\})$,  
it may be verified that $(\Phi,v)=(u+\ln \sin\theta, v)$ is a weak solution 
of \eqref{eq:main-2} satisfying \eqref{eq:ubd-1}, \eqref{eq:ea-1}, and \eqref{eq:trace-1}. 
Then Theorem \ref{thm:main-1} follows from 
Proposition \ref{prop:difference-linear} and Theorem \ref{thm:main-1a}, by taking $\w=\sin \theta$ and setting $(\tilde{\Phi},\tilde{v})=(\bar{\Phi},\bar{v})$. 
\end{proof}

\section{Convergence: Linear Growth Renormalization}
\label{s:convergence-2}

The purpose of this section is to study asymptotics of the harmonic map at spatial infinity,
which corresponds to the asymptotically flat end of the associated stationary vacuum spacetime.
In terms of the notation established at the beginning of this work, the radial coordinate $t=-\ln r$ will then
be restricted to the range $\mathbb R_-=(-\infty,0)$, and we will use the renormalization function
\be\label{eq:t-gth-omega}
\w=e^{-t} \sin \theta,
\end{equation}
which in cylindrical coordinates is simply $\rho$. Notice that this renormalization function is not independent
of $t$, in contrast to the translation invariant renormalization used in the previous section. Moreover, this dependence on $t$
will require additional care when applying the results of Section \ref{s:regularity}. When the invariant renormalization
function is needed below it will be denoted by $\bar \w= \sin \theta$.

We will study the asymptotic behavior of solutions 
$(\Phi,v)\in H^{1}_{\mathrm{loc}}( \mathbb R_- \times \mathbb{S}^2)$  
to the renormalized harmonic map system 
\begin{align}\label{eq:main-2z} \begin{split}
L \Phi -2 e^{4u} |\nabla_{g} v|^2  &=L\ln\w , \\
Lv +4\nabla_{g}u\cdot   \nabla_{g} v&=0,
\end{split}\end{align}
as $t\to-\infty$. 
Observe that with this renormalization we have $L\ln\w =0$.
Here and hereafter, the unrenormalized and renormalized harmonic map component functions $u$ and $\Phi$ will be related by 
$u=\Phi-\ln \w$. Furthermore, it will be assumed that there are positive constants $\Lambda$ and $a$ such that
\begin{equation} \label{eq:ubd-1z} 
|\Phi|+|v| \le \Lda \quad \text{ in }\mathbb R_- \times \mathbb{S}^2,
\end{equation}
for any finite interval $I\subset \mathbb R_-$ we have
\begin{equation}
\label{eq:ea-1z}
\int_{I} \int_{\mathbb{S}^2} (| \nabla_{g}\Phi|^2 + \w^{-4} | \nabla_{g}v|^2 )\, d  vol_{g_0}  d  t <\infty,
\end{equation}
and in the trace sense 
\begin{equation} \label{eq:trace-1z}
v= a  \text{ on  }\mathbb R_- \times \{N\}, \quad \quad
v=-a  \text{ on  }\mathbb R_- \times \{S\},
\end{equation} 
where $N$ and $S$ represent the north and south poles of $\mathbb S^2$. 
The next result may be viewed as a compilation of Theorem \ref{thm:reg}, Proposition \ref{prop:v-linear},
and Proposition \ref{prop:partial-t-linear} tailored to the current setting.

\begin{prop}\label{prop:reg-exterior} 
Let $(\Phi,v)\in H^{1}_{\mathrm{loc}}(\mathbb R_-\times\mathbb S^2)$ be a weak solution of \eqref{eq:main-2z} 
satisfying \eqref{eq:ubd-1z}, \eqref{eq:ea-1z}, and \eqref{eq:trace-1z}
for some positive constants $\Lambda$ and $a$.  Then for any $T<-2$, integers $j,m=0,1,2,3$, 
$k=1,2,3$, and parameter $\alpha\in (0,1)$, the following estimates hold
\[
| \pa_t^m (\Phi, e^{2T} v)|_{C^{3}((T-1,T+1) \times \mathbb{S}^2)}  \le C, 
\]
\begin{align*}
\left|\nabla_{g_0}^j \pa_t^m ( v-a)\right|  
&\le C\bar\w^{3+\alpha-j} \|\bar\w^{-2}  \nabla_{g}  v \|_{L^2((T-2,T+2) \times \mathbb{S}^2)}
\quad\text{ on }[T-1,T+1] \times \mathbb{S}^2 _+,\\  
\left|\nabla_{g_0}^j \pa_t^m ( v+a)\right|  
&\le C\bar\w^{3+\alpha-j} \|\bar\w^{-2}  \nabla_{g}  v \|_{L^2((T-2,T+2) \times \mathbb{S}^2)}
\quad\text{on }[T-1,T+1] \times \mathbb{S}^2 _-,
\end{align*}
and 
\begin{align*}
& |\pa_t^k \nabla_{g_0}^j   \Phi| +e^{2T}\bar\w^{j-3-\alpha}|\pa_t^k \nabla_{g_0}^j  v|\\
&\qquad \le  C\Big( \int_{T-2}^{T+2} \int_{\mathbb{S}^2}\big( | \pa_t   \Phi|^2 
+e^{4u} |\nabla_{g}   v|^2\big) \, d  vol_{g_0}  d  t \Big)^{1/2}
\quad\text{ on }[T-1,T+1] \times \mathbb{S}^2,
\end{align*} 
for some positive constant $C$ depending only on $\alpha$, $a$, and $\Lda$.  
\end{prop}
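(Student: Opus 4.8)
The plan is to derive Proposition~\ref{prop:reg-exterior} by reducing it to the three cylinder results already established in Section~\ref{s:regularity}, namely Theorem~\ref{thm:reg}, Proposition~\ref{prop:v-linear}, and Proposition~\ref{prop:partial-t-linear}, after first arranging the hypotheses so that these results apply. The essential point is that those results were stated on a fixed unit-size time window $(-2,2)$ with a $t$-\emph{independent} singular weight, whereas here $\w=e^{-t}\sin\theta$ depends on $t$. The remedy is translation: fix $T<-2$ and introduce $\hat t=t-T$, so that $(\hat\Phi(\hat t,\cdot),\hat v(\hat t,\cdot))=(\Phi(T+\hat t,\cdot),v(T+\hat t,\cdot))$ is defined for $\hat t\in(-2,2)$. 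Under this translation, $\w=e^{-T}e^{-\hat t}\sin\theta$; writing $\hat\w=e^{-\hat t}\sin\theta$ we have $\w=e^{-T}\hat\w$, and since a multiplicative constant shifts $\ln\w$ by a constant, the renormalized system \eqref{eq:main-2z} for $(\Phi,v)$ in the $t$-variable becomes the same renormalized system for $(\hat\Phi,\hat v)$ in the $\hat t$-variable with weight $\hat\w$. One checks that $\hat\w$ satisfies \eqref{eq:bds-omega-1} and \eqref{eq:bds-omega-3} with a constant $A_1$ that is \emph{uniform in} $T$, because on the compact window $\hat t\in[-2,2]$ the factor $e^{-\hat t}$ and its derivatives are bounded independently of $T$. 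Similarly, $\nabla_{g_0}\pa_{\hat t}\ln\hat\w=0$ and $\pa_{\hat t}(L\ln\hat\w)=0$, so the extra hypotheses of Proposition~\ref{prop:partial-t-linear} hold for $\hat\w$.

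Next I would verify the structural hypotheses transfer. The bound $|\Phi|\le\Lda$ from \eqref{eq:ubd-1z} gives $|\hat\Phi|\le\Lda$ on $(-2,2)\times\mathbb S^2$; note that $\hat u:=\hat\Phi-\ln\hat\w=u+\ln e^{-T}=u-T$... more carefully, $u=\Phi-\ln\w=\Phi-\ln(e^{-T}\hat\w)=\hat\Phi-\ln\hat\w+T=\hat u+T$, so $e^{4u}=e^{4T}e^{4\hat u}$ and $|\hat u+\ln\hat\w|=|\hat\Phi|\le\Lda$, which is exactly \eqref{eq:ubd-unified-1}. The local finiteness \eqref{eq:ea-1z} on finite intervals gives \eqref{eq:ea-unified-1} for $(\hat\Phi,\hat v)$ (using $\w^{-4}=e^{4T}\hat\w^{-4}$, the constant being harmless on a fixed window), and the trace condition \eqref{eq:trace-1z} gives \eqref{eq:trace-unified-1}. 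Therefore Theorem~\ref{thm:reg}, Proposition~\ref{prop:v-linear}, and Proposition~\ref{prop:partial-t-linear} all apply to $(\hat\Phi,\hat v)$ with constants depending only on $\alpha,a,\Lda,A_1$, and the latter is now a \emph{universal} constant (since $\hat\w$ obeys \eqref{eq:bds-omega-1} uniformly), so all constants depend only on $\alpha,a,\Lda$ as claimed. Translating back, $\pa_{\hat t}=\pa_t$, $\nabla_{g_0}$ is unchanged, and $\bar\w=\sin\theta$ appears once we absorb the bounded factor $e^{-\hat t}$ against $\hat\w$; the weights $\hat\w^{\,p}$ on the fixed window $[-1,1]$ in $\hat t$, i.e.\ $[T-1,T+1]$ in $t$, are comparable to $\bar\w^{\,p}=(\sin\theta)^p$ up to constants absorbed into $C$. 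Finally, the factor $e^{4u}=e^{4T}e^{4\hat u}$ produces the explicit $e^{2T}$ weights attached to $v$ in the second and third displays: for instance $\|\hat\w^{-2}\nabla_g\hat v\|_{L^2}$ over $(-2,2)$ equals $e^{2T}\|\bar\w^{-2}\nabla_g v\|_{L^2}$ over $(T-2,T+2)$ up to the bounded $e^{-\hat t}$ factors, and the Proposition~\ref{prop:v-linear} conclusion $|\nabla_{g_0}^j\pa_t^m(\hat v\mp a)|\le C\hat\w^{3+\alpha-j}\|\hat\w^{-2}\nabla_g\hat v\|_{L^2}$ becomes the stated estimate after this bookkeeping (here the $e^{2T}$ factors on the two sides cancel, which is why the second display has \emph{no} $e^{2T}$ on the left—consistent with the statement, which indeed writes $e^{2T}v$ only in the first and third displays).

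For the first display, $|\pa_t^m(\Phi,e^{2T}v)|_{C^3((T-1,T+1)\times\mathbb S^2)}\le C$, I would apply the $C^{3,\alpha}$ bound of Theorem~\ref{thm:reg} to $(\hat\Phi,\hat v)$ on $(-R,R)\times\mathbb S^2$ with, say, $R=3/2$; this gives $\|\pa_{\hat t}^m\pa_\phi^n(\hat\Phi,\hat v)\|_{C^{3,\alpha}}\le C$, but to control $v=\hat v$ itself (not just $\w^{-4}$-weighted quantities) one uses the $v$-estimates \eqref{eq:tan-estimates} of Theorem~\ref{thm:reg} together with $|\hat v\mp a|\le C\hat\w^{3+\alpha}\le C$; multiplying by $e^{2T}$ is then innocuous and in fact the $e^{2T}$ is the natural normalization that makes $e^{2T}v=e^{2T}(\hat v)$ of size $e^{2T}\cdot O(1)$. (Strictly, $e^{2T}$ here is a bounded-for-fixed-$T$ constant and could be dropped; it is kept to foreshadow the decay $e^{2T}=r^{-2}\to 0$ exploited in Section~\ref{s:convergence-2}.) The third display is the direct translate of Proposition~\ref{prop:partial-t-linear}: its first conclusion gives $|\pa_t\hat\Phi|+|\pa_t\hat v|\le C(\int_{-2}^2\int_{\mathbb S^2}(|\pa_t\hat\Phi|^2+e^{4\hat u}(|\pa_t\hat v|^2+|\pa_t\ln\hat\w|^2|\nabla_{g_0}\hat v|^2))\,dvol_{g_0}dt)^{1/2}$, and since $|\pa_t\ln\hat\w|=|-1|=1$ is bounded, the term $e^{4\hat u}|\nabla_{g_0}\hat v|^2$ is absorbed into $e^{4\hat u}|\nabla_g\hat v|^2$; the higher-order conclusion similarly collapses because $\|\pa_t\ln\hat\w\|_{C^5}$ is a universal constant. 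Translating $e^{4\hat u}=e^{-4T}e^{4u}$ and pulling the $e^{2T}$ through produces exactly the third display.

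The main obstacle—though more bookkeeping than genuine difficulty—is ensuring \emph{$T$-uniformity of all constants}. The subtlety is that the Section~\ref{s:regularity} results have constants depending on $A_1$ from \eqref{eq:bds-omega-1}/\eqref{eq:bds-omega-3}, and one must confirm that the translated weight $\hat\w=e^{-\hat t}\sin\theta$ satisfies those bounds with $A_1$ independent of $T$. This is true precisely because, after translation, $\hat t$ ranges over the \emph{fixed} compact interval $[-2,2]$ regardless of $T$, so $\|\ln\hat\w-\ln\sin\theta\|_{C^{10}}=\|-\hat t\|_{C^{10}([-2,2]\times\mathbb S^2)}$ is an absolute constant and $L\ln\hat\w=\pa_{\hat t}^2(-\hat t)-\pa_{\hat t}(-\hat t)+\Delta_{g_0}(-\hat t)=1$, likewise absolute. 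A secondary point requiring care is that the $L^2$ energy quantities on the right-hand sides must be expressed in terms of the \emph{original} $u$ and $v$ over the window $(T-2,T+2)$, which forces the explicit appearance of $e^{4u}$ (rather than $e^{4\hat u}$) and hence the $e^{2T}$ factors on the left; tracking which displays acquire an $e^{2T}$ and which do not (it cancels in the middle display, survives in the outer two) is the one place a careless reader could slip. Once these normalizations are pinned down, the proof is a one-paragraph translation argument, and I would present it as such with a remark that the details are routine given Section~\ref{s:regularity}.
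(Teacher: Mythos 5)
Your overall strategy---translate to the fixed window $(-2,2)$ and invoke Theorem~\ref{thm:reg}, Proposition~\ref{prop:v-linear}, and Proposition~\ref{prop:partial-t-linear}---is the correct one and is exactly the paper's. However, there is a genuine gap in your reduction: the translated pair $(\hat\Phi,\hat v)=(\Phi(T+\hat t),v(T+\hat t))$ with weight $\hat\w=e^{-\hat t}\sin\theta$ does \emph{not} satisfy \eqref{eq:main-unified-2}. You correctly compute $\hat u:=\hat\Phi-\ln\hat\w=u(T+\hat t)-T$, hence $e^{4u(T+\hat t)}=e^{4T}e^{4\hat u}$, but this is exactly the problem: translating the first equation of \eqref{eq:main-2z} yields $L\hat\Phi-2e^{4T}e^{4\hat u}|\nabla_g\hat v|^2=0$, which carries a spurious $e^{4T}$ in front of the nonlinear term and is therefore not of the form \eqref{eq:main-unified-2}. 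So the Section~\ref{s:regularity} results cannot be directly applied to $(\hat\Phi,\hat v)$. Nor can you absorb $e^{4T}$ by reverting to the weight $e^{-T}\hat\w=\w(T+\hat t,\cdot)$, since then $\|\ln\w(T+\hat t)-\ln\sin\theta\|_{C^{10}([-2,2]\times\mathbb S^2)}\ge |T|$ and $A_1$ in \eqref{eq:bds-omega-1} is no longer $T$-uniform. And if you instead re-run the Section~\ref{s:regularity} proofs with the coefficient $2e^{4T}$, the constants are not $T$-independent: e.g.\ the energy estimate \eqref{eq:energy-1} in Lemma~\ref{lem:local-bound} would give $\int \eta^2 e^{4\hat u}|\nabla_g\hat v|^2\le Ce^{-4T}/(2-R)^2$, which blows up as $T\to-\infty$.

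The missing step---supplied by the paper in \eqref{eq:change-varibles-ex}---is to rescale the twist potential: set $\tilde v(t,\cdot)=e^{2T}v(T+t,\cdot)$. Then $e^{4\tilde u}|\nabla_g\tilde v|^2=e^{-4T}e^{4u(T+t)}\cdot e^{4T}|\nabla_g v(T+t)|^2=e^{4u(T+t)}|\nabla_g v(T+t)|^2$, so $(\tilde\Phi,\tilde v)$ satisfies \eqref{eq:main-unified-2} exactly, with weight $\w=e^{-t}\sin\theta$ (which obeys \eqref{eq:bds-omega-1}--\eqref{eq:bds-omega-3} uniformly), with the same $\Lambda$ as in \eqref{eq:ubd-unified-1}, and with trace values $\pm e^{2T}a$ in \eqref{eq:trace-unified-1}. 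Your subsequent accounting of $e^{2T}$ factors---cancelling in the middle display, surviving in the first and third---is correct, but it is the consequence of applying the cylinder estimates to $(\hat\Phi,e^{2T}\hat v)$ and then dividing through; without first performing this rescaling, the cited theorems simply do not apply to the pair you name, so the argument as written is incomplete.
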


\begin{proof}
For $t\in (-2,2)$ consider the functions
\be\label{eq:change-varibles-ex}
\tilde \Phi(t,\cdot)=\Phi(T+t,\cdot), \quad  \tilde v(t,\cdot)=e^{2T} v(T+t,\cdot), 
\quad \tilde u(t,\cdot)=\tilde \Phi(t,\cdot)-\ln \w(t,\cdot),
\end{equation} 
and observe that by \eqref{eq:main-2z} and $L\ln\w=0$ they satisfy the equations
\begin{align*}
L\tilde \Phi -2 e^{4\tilde  u} |\nabla_g \tilde v|^2 &=0, \\
L\tilde v +4\nabla_g  \tilde  u  \cdot \nabla_g  \tilde  v &=0,
\end{align*}
on $(-2,2) \times \mathbb{S}^2$. Furthermore, boundary conditions \eqref{eq:trace-1z} are satisfied 
with $a$ replaced by $e^{2T} a$. 
We may then apply Theorem \ref{thm:reg}, Proposition \ref{prop:v-linear}, 
and Proposition \ref{prop:partial-t-linear} to obtain 
\[
|\pa_t^m( \tilde \Phi, \tilde v) |_{C^{3}((-1,1) \times \mathbb{S}^2)}  \le C, 
\]
\begin{align*}
\left|\nabla_{g_0}^j \pa_t^m(\tilde v-e^{2T}a)\right| 
& \le C\bar\w^{3+\alpha-j} \|\bar\w^{-2}  \nabla_g \tilde v \|_{L^2((-2,2) \times \mathbb{S}^2)} 
\quad \text{ on }[-1,1] \times \mathbb{S}^2 _+, \\
\left|\nabla_{g_0}^j \pa_t^m(\tilde v+e^{2T}a)\right| 
&  \le C\bar\w^{3+\alpha-j} \| \bar\w^{-2}  \nabla_g \tilde v\|_{L^2((-2,2) \times \mathbb{S}^2)} 
\quad \text{ on }[-1,1]  \times \mathbb{S}^2 _-, 
\end{align*}
and
\begin{align*}
|\pa_t^k \nabla_{g_0}^j \tilde  \Phi| +\bar\w^{j-3-\alpha}|\pa_t^k \nabla_{g_0}^j \tilde v| 
\le  C\Big( \int_{-2}^{2} \int_{\mathbb{S}^2}\big( | \pa_t  \tilde \Phi|^2 
+e^{4 \tilde  u} |\nabla_g  \tilde v|^2\big) \, d  vol_{g_0}  d  t \Big)^{1/2}
\end{align*}
on $[-1,1]\times \mathbb{S}^2$, for $j,m=0,1,2,3$, $k=1,2,3$, and $\alpha\in (0,1)$. 
Note that the positive constant $C$ depends only on $\alpha$, $a$, and $\Lda$, and in order to satisfy the hypotheses of
Proposition \ref{prop:partial-t-linear} we have used the relations $\partial_t \ln \w =-1$ and $L\ln\w=0$. The desired result now follows by translating back to the original map $(\Phi,v)$.
\end{proof}

It is possible to use standard energy estimates to bound the weighted gradient of $v$, and thereby 
improve some of the estimates for the twist potential.

\begin{prop}\label{prop:reg-exterior-improved} 
Let $(\Phi,v)\in H^1_{\mathrm{loc}}(\mathbb{R}_-\times\mathbb S^2)$ be a weak solution of \eqref{eq:main-2z} 
satisfying \eqref{eq:ubd-1z}, \eqref{eq:ea-1z}, and \eqref{eq:trace-1z}
for some positive constants $\Lambda$ and $a$.  Then for any $T<-3$, integers
$j,m=0,1,2,3$, and parameter $\alpha\in (0,1)$,  the following estimates hold
\begin{equation} \label{eq:hat-v-est}
\begin{split}
\left|\nabla_{g_0}^j \pa_t^m ( v-a)\right| & \le C \bar\w^{3+\alpha-j} 
\quad \text{ on }[T-1,T+1] \times \mathbb{S}^2 _+, \\
\left|\nabla_{g_0}^j \pa_t^m ( v+a)\right|  &\le C\bar\w^{3+\alpha-j} 
\quad \text{ on }[T-1,T+1] \times \mathbb{S}^2 _-, 
\end{split}
\end{equation}
for some positive constant $C$ depending only on $\alpha$, $a$, and $\Lda$.  
\end{prop}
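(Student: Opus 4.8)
The plan is to reduce everything to a single uniform bound and then quote Proposition \ref{prop:reg-exterior}. Precisely, set
$$\mathcal{G}(T):=\|\bar\w^{-2}\nabla_g v\|_{L^2((T-2,T+2)\times\mathbb{S}^2)}.$$
The second group of estimates in Proposition \ref{prop:reg-exterior} already gives $|\nabla_{g_0}^j\pa_t^m(v-a)|\le C\bar\w^{3+\alpha-j}\,\mathcal{G}(T)$ on $[T-1,T+1]\times\mathbb{S}^2_+$ (and the analogue with $v+a$ on $\mathbb{S}^2_-$), with $C=C(\alpha,a,\Lda)$. Hence \eqref{eq:hat-v-est} follows as soon as one proves $\mathcal{G}(T)\le C(\alpha,a,\Lda)$ for all $T<-3$, and the whole proposition is this one energy bound.

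For the bound I would exploit the extra hypothesis $|v|\le\Lda$ in \eqref{eq:ubd-1z}, which is absent in the puncture setting. As in the proof of Proposition \ref{prop:reg-exterior}, for $T'\in\{T-1,T,T+1\}$ pass to the rescaled pair $\tilde\Phi(t,\cdot)=\Phi(T'+t,\cdot)$, $\tilde v(t,\cdot)=e^{2T'}v(T'+t,\cdot)$ on $(-2,2)\times\mathbb{S}^2$ (the windows $(T'-1,T'+1)$ cover $(T-2,T+2)$, and $T'+t$ stays in $\mathbb{R}_-$ since $T<-3$). These solve equations of the form \eqref{eq:main-unified-2} with $\w=e^{-t}\sin\theta$, which satisfies \eqref{eq:bds-omega-1} with an absolute $A_1$ and has $L\ln\w=0$; the trace values are $\pm e^{2T'}a$, and the decisive point is that $\|\tilde v\|_{L^\infty((-2,2)\times\mathbb{S}^2)}\le e^{2T'}\Lda$ is small.

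I would then estimate $\nabla_g\tilde v$ region by region, uniformly in $T$. On $\{\pi/4<\theta<3\pi/4\}$ the function $\tilde v$ satisfies a uniformly elliptic equation with coefficients bounded by $C(\Lda)$ (Theorem \ref{thm:reg}), so interior estimates give $|\nabla_g\tilde v|\le Ce^{2T'}$ there. Near $N$, put $h=\tilde v-e^{2T'}a$; then $h=0$ on $(-2,2)\times\{N\}$ and $Lh+4\nabla_g\tilde u\cdot\nabla_g h=0$, so Lemma \ref{lem:LT} with $K=0$ (constant depending only on $\alpha$, $A_1$, and $\|\tilde\Phi\|_{C^1}\le C(\Lda)$) yields $|h|\le C(\sin\theta)^{3+\alpha}\|h\|_{L^\infty}\le Ce^{2T'}(\sin\theta)^{3+\alpha}$, and a scaled interior gradient estimate on balls $Q_\delta$ with $\delta\sim\sin\theta$ (in the spirit of Lemma \ref{lem:w2p-}) upgrades this to $|\nabla_g\tilde v|\le Ce^{2T'}(\sin\theta)^{2+\alpha}$; the region near $S$ is symmetric with $h=\tilde v+e^{2T'}a$. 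Integrating $\bar\w^{-4}|\nabla_g\tilde v|^2=(\sin\theta)^{-4}|\nabla_g\tilde v|^2$ over $(-1,1)\times\mathbb{S}^2$ and using that $(\sin\theta)^{2\alpha}$ is integrable on $\mathbb{S}^2$ gives $\|\bar\w^{-2}\nabla_g\tilde v\|_{L^2((-1,1)\times\mathbb{S}^2)}\le Ce^{2T'}$ with $C=C(\alpha,a,\Lda)$. Since $\|\bar\w^{-2}\nabla_g\tilde v\|_{L^2((-1,1)\times\mathbb{S}^2)}=e^{2T'}\|\bar\w^{-2}\nabla_g v\|_{L^2((T'-1,T'+1)\times\mathbb{S}^2)}$, the factors $e^{2T'}$ cancel, and summing over $T'\in\{T-1,T,T+1\}$ gives $\mathcal{G}(T)\le C(\alpha,a,\Lda)$; substituting into Proposition \ref{prop:reg-exterior} finishes the proof.

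The main obstacle is the combination of the strong $z$-axis singularity with uniformity in $T$: the weight $\bar\w^{-4}$ is not integrable against $\bar\w$, so the crude energy bound coming from Lemma \ref{lem:local-bound} only produces $\mathcal{G}(T)\le Ce^{-2T}$, which blows up as $T\to-\infty$. The resolution hinges on the fact that the constant in the barrier estimate of Lemma \ref{lem:LT} is independent of the trace value, forcing $|\tilde v\mp e^{2T'}a|=O\big(e^{2T'}(\sin\theta)^{3+\alpha}\big)$ near the poles with a $T'$-independent constant; it is precisely this $e^{2T'}$ gain, coming from $|v|\le\Lda$, that cancels the troublesome factor after undoing the rescaling.
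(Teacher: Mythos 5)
Your proposal is correct, and the overall strategy coincides with the paper's: reduce the proposition to a $T$-uniform bound on $\|\bar\w^{-2}\nabla_g v\|_{L^2((T-2,T+2)\times\mathbb{S}^2)}$, and obtain this by feeding the $T$-uniform input $|v|\le\Lda$ into the barrier estimate of Lemma~\ref{lem:LT} to get $|v\mp a|\le C\bar\w^{3+\alpha}$ with $C=C(\alpha,a,\Lda)$ independent of $T$. Where the two arguments diverge is the final passage from that pointwise bound to the weighted $L^2$ gradient bound. The paper puts the equation for $\hat v\mp a$ (with $\hat v(t,\cdot)=v(T+t,\cdot)$, no $e^{2T}$ rescaling) in divergence form with weight $e^{4\tilde u}\sim\bar\w^{-4}$, multiplies by a cut-off times $\hat v\mp a$, and integrates by parts — a one-step Caccioppoli inequality giving $\|\bar\w^{-2}\nabla_g\hat v\|_{L^2}\le C\|\bar\w^{-2}(\hat v\mp a)\|_{L^2}\le C$. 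You instead derive the pointwise gradient bound $|\nabla_g\tilde v|\le Ce^{2T'}\bar\w^{2+\alpha}$ from the scaled interior estimate of Lemma~\ref{lem:w2p-} (with $f=0$ and $b=4\sin\theta\,\nabla_g\tilde u$ bounded) and then integrate it against $\bar\w^{-4}$. Both routes work; the Caccioppoli version is a bit more economical and avoids a domain-management gap in your write-up: starting from input on $(-2,2)$, Lemma~\ref{lem:LT} gives its output only on $(-1,1)\times\mathbb{S}^2_\pm$, and Lemma~\ref{lem:w2p-} shrinks further to roughly $(-1/2,1/2)$, so the three translates $T'\in\{T-1,T,T+1\}$ only cover $(T-3/2,T+3/2)$; to cover $(T-2,T+2)$ one should either start on a wider window such as $(-3,3)$ (as the paper does, which is why it assumes $T<-3$) or use more translates. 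Your remark that the crude energy bound from Lemma~\ref{lem:local-bound} only gives $\mathcal{G}(T)\lesssim e^{-2T}$ is accurate and correctly identifies why the extra uniform $L^\infty$ hypothesis on $v$ is essential here.
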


\begin{proof}   
Let $\tilde  \Phi(t,\cdot)$ and  $\tilde u(t,\cdot)$ be given as in \eqref{eq:change-varibles-ex} for $T<-3$ and $t\in (-3,3)$, 
and define $\hat v(t,\cdot)= v(T+t,\cdot)$. Then
\begin{equation} \label{eq:exter-final-1}
\mathrm{div}_g( e^{4\tilde u} \nabla_g \hat v)(t)
=  e^{-4T}\mathrm{div}_g( e^{4 u(T+t) } \nabla_g  v(T+t))= 0 \quad \mbox{ on }(-3,3)\times \mathbb{S}^2,
\ee
with $\hat v(t,N)=a$ and $\hat v(t,S) =-a$. By applying Lemma \ref{lem:LT} to $\hat v$, while using \eqref{eq:ubd-1z}
as well as the estimates for $\Phi$ in Proposition \ref{prop:reg-exterior}, we obtain
\begin{align*}
|(\hat v-a)| &\le C \bar \w^{3+\alpha} \quad \mbox{ on }\left({\scriptstyle -\frac{5}{2}},{\scriptstyle \frac{5}{2}}\right) \times \mathbb{S}^2_+,\\ 
|(\hat v+a)| &\le C \bar \w ^{3+\alpha} \quad \mbox{ on }\left({\scriptstyle -\frac{5}{2}},{\scriptstyle \frac{5}{2}}\right) \times \mathbb{S}^2_-, 
\end{align*}
where $\alpha\in (0,1)$ is arbitrary and $C$ is a positive constant depending only on $\alpha$, $a$, and $\Lda$.  
We may now use local energy estimates for \eqref{eq:exter-final-1}, viewed as a linear equation of $\hat v\pm a$, to obtain
bounds for weighted derivatives of $\hat{v}$. More precisely, multiplying the equation by $\hat v \pm a$ with an appropriate 
cut-off function and integrating by parts produces 
\[
\|\bar \w ^{-2} \nabla_g \hat v\|_{L^2((-2,2) \times \hat{\mathbb{S}}^2_{\pm})} 
\le C \|\bar \w^{-2} (\hat v \pm a)\|_{L^2\left(({\scriptscriptstyle -\frac{5}{2}},{\scriptscriptstyle \frac{5}{2}}) \times \mathbb{S}^2_{\pm}\right)}\le C, 
\]
where $\hat{\mathbb{S}}^2_{\pm}$ is a domain in the sphere that is slightly smaller than $\mathbb{S}^2_{\pm}$.
It follows that
\[
\|\bar \w ^{-2} \nabla_g  v\|_{L^2((T-2,T+2) \times \mathbb{S}^2)}\le C,
\]
and the desired result is then a consequence of Proposition \ref{prop:reg-exterior}.  
\end{proof}

The asymptotic analysis of harmonic maps near spatial infinity will reduce to the asymptotic
behavior of two associated linear equations, which are studied in the next two results. The fact that the analysis reduces to the study of two linear equations, indicates that the system effectively decouples in this regime.

\begin{lem}\label{lem:phi-linear-pde} 
Let $\Phi\in C^{2}(\mathbb R_-\times \mathbb{S}^2)$ be a bounded solution of 
\[
L\Phi=f \quad \mbox{ on }\mathbb R_-\times \mathbb{S}^2,
\]
where $f\in C^{0}(\mathbb R_- \times\mathbb{S}^2)$ satisfies $| f|\le Ke^{\beta t}$ for some positive constants $\beta$ and $K$. 
If $\beta$ is not an integer, then there exist a constant $c_0$ and degree $l$ spherical harmonics $Y_l$, 
for $l=0, \cdots, [\beta]-1$, such that for any $\alpha\in (0,1)$ the following expansion holds
\[
|\Phi(t) - c_0-Y_0 e^{t}-\cdots-Y_{[\beta]-1}e^{[\beta]t}|_{C^{1,\alpha}(\mathbb{S}^2)} \le Ce^{\beta t},
\]
where $C$ is a positive constant depending only on $\alpha$, $\beta$, $K$, 
and the $L^\infty$-norm of $\Phi$ on $\mathbb R_-\times \mathbb{S}^2$. 
\end{lem}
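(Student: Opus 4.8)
The plan is to treat this as a standard separation-of-variables and eigenfunction-expansion argument for the operator $L = \partial_t^2 - \partial_t + \Delta_{g_0}$ on the cylinder, where the dangerous thing is controlling the resonances that occur when $\beta$ hits an integer (which is why that case is excluded). First I would expand both $\Phi(t,\cdot)$ and $f(t,\cdot)$ in spherical harmonics on $\mathbb{S}^2$: writing $\Phi = \sum_{l\geq 0}\sum_{|k|\leq l} \Phi_{l,k}(t) Y_{l,k}$ and similarly for $f$, the PDE decouples into the family of ODEs
\[
\Phi_{l,k}'' - \Phi_{l,k}' - l(l+1)\Phi_{l,k} = f_{l,k}(t),
\]
where $|f_{l,k}(t)| \le C_l Ke^{\beta t}$ with constants $C_l$ having at most polynomial growth in $l$ (since $f$ is only $C^0$ in $\theta$, one gets these constants from $L^2$-based Sobolev/elliptic bounds: $f\in C^0$ implies $\|f(t,\cdot)\|_{L^2(\mathbb{S}^2)}\le CKe^{\beta t}$, which already gives decay of each coefficient, and the pointwise bound on $\Phi$ lets me control the homogeneous part).

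The characteristic roots of $\lambda^2 - \lambda - l(l+1) = 0$ are $\lambda = -l$ and $\lambda = l+1$. For $l=0$ these are $0$ and $1$; for $l\ge 1$ both roots are nonzero, with $-l < 0 < l+1$. Since $\Phi_{l,k}$ is bounded on $\mathbb{R}_- = (-\infty,0)$ and we push $t\to-\infty$, the growing-as-$t\to-\infty$ mode is the one with negative exponent ($e^{-lt}$ blows up as $t\to-\infty$), so boundedness forces the coefficient of that mode to vanish; the surviving homogeneous solution is $c_{l,k} e^{(l+1)t}$ for $l\ge 1$, and $c_{0,k} + c_{0,k}' e^{t}$ for $l=0$ — but again boundedness on $\mathbb{R}_-$ and the exponential terminology means the $e^{t}$ piece is allowed (it decays as $t\to-\infty$) so we keep $c_0$ plus a possible $e^t$ term at $l=0$. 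Then for the particular solution I'd use the variation-of-parameters / Duhamel formula with kernel built from $e^{-ls}$ and $e^{(l+1)s}$, integrating the forcing $f_{l,k}$; because $\beta \notin \mathbb{Z}$, $\beta$ is never equal to any root $-l$ or $l+1$, so there is no resonance and the particular solution satisfies $|\Phi_{l,k}^{\mathrm{part}}(t)| \le C_l K e^{\beta t}$ with the same decay rate, no logarithmic loss. Reassembling, the genuinely $t$-dependent pieces that decay slower than $e^{\beta t}$ are exactly the homogeneous modes $e^{(l+1)t}$ with $l+1 < \beta$, i.e. $l = 0,1,\dots,[\beta]-1$, which I'd repackage as $Y_l e^{(l+1)t}$ after reindexing ($Y_0 e^t$ through $Y_{[\beta]-1}e^{[\beta]t}$), together with the constant $c_0$; everything else is $O(e^{\beta t})$.

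The remaining work is to upgrade the coefficient-wise estimate to the claimed $C^{1,\alpha}(\mathbb{S}^2)$ bound on the remainder $R(t) := \Phi(t) - c_0 - \sum_{l=0}^{[\beta]-1} Y_l e^{(l+1)t}$. I would not try to sum the Fourier series by hand; instead note that $R$ itself solves $LR = f - L(\text{finite harmonic terms}) = f$ (the subtracted terms are exact homogeneous solutions so contribute nothing) with $|f|\le Ke^{\beta t}$, $R$ still bounded, and now the $L^2(\mathbb{S}^2)$-norm of $R(t)$ decays like $e^{\beta t}$ by the orthogonality argument above. Then I would run a standard interior elliptic bootstrap for the uniformly elliptic operator $L$ on the cylinder: on each unit window $(t-1,t+1)\times\mathbb{S}^2$, $L^2$ control of $R$ and of $f$ gives $H^2$ control of $R$ by interior estimates, and scaled Schauder estimates then promote this to $C^{1,\alpha}(\{t\}\times\mathbb{S}^2)$ with the same exponential weight $e^{\beta t}$, the constant depending on $\alpha,\beta,K$ and $\|\Phi\|_{L^\infty}$. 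The main obstacle, and the only place real care is needed, is the no-resonance bookkeeping together with making the Duhamel estimates uniform in $l$: one must check that the Duhamel kernel for $\Phi_{l,k}'' - \Phi_{l,k}' - l(l+1)\Phi_{l,k}=f_{l,k}$ produces a constant bounded independently of $l$ (or growing slowly enough to be absorbed by the decay of $f_{l,k}$'s harmonic coefficients), since $\beta$ fixed means that for large $l$ the gap between $\beta$ and the nearest root $l+1$ is large and the estimate is in fact easy — the delicate cases are the finitely many small $l$ where $l+1$ is close to $\beta$, handled by the hypothesis $\beta\notin\mathbb{Z}$. This is precisely the mechanism that the borderline case $\beta\in\mathbb{Z}$ would break, forcing a $t\,e^{\beta t}$ term, which is why the statement excludes it.
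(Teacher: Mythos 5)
Your proposal is correct and follows essentially the same route as the paper: expand in spherical harmonics, reduce to the family of ODEs with characteristic roots $-l$ and $l+1$, use boundedness on $\mathbb{R}_-$ to discard the growing modes, invoke $\beta\notin\mathbb{Z}$ to rule out resonance in the Duhamel estimate, and then upgrade the $L^2$-decay of the remainder to $C^{1,\alpha}$ via scaled interior Schauder estimates on unit cylindrical windows. The paper's proof is briefer (it calls the ODE step "standard" and does not spell out the Duhamel bookkeeping or the observation that the remainder still solves $LR=f$), but the substance is identical.
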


\begin{proof} 
This is based on a well-known argument, and thus we only outline the main points which are included for completeness. Let $\{X_m\}_{m=0}^\infty$ be an orthonormal basis of eigenfunctions for $L^2(\mathbb{S}^2)$, more precisely $-\Delta_{g_0} X_m=\lda_m X_m$ with 
$$\lda_0=0<\lda_1=\lda_2=\lda_3=2<\lda_4\le \cdots \rightarrow\infty.$$ 
We may expand $\Phi$ and $f$ according to these spherical harmonics as 
\[
\Phi(t)= \sum_{m=0}^{\infty} \Phi_{m}(t) X_{m},\quad\quad  f(t)=e^{\beta t}  \sum_{m=0}^{\infty} f_{m}(t) X_{m}, 
\]
where the $f_m(t)$ are uniformly bounded in $t$.  Then, the equation $L\Phi=f$ becomes
\[
\sum_{m=0}^\infty\big(\Phi_{m}''-\Phi_{m}'-\lda_m \Phi_{m} -e^{-\beta t}f_m\big)X_m=0,
\]
and hence 
\[
\Phi_{m}''-\Phi_{m}'-\lda_m \Phi_{m} -e^{-\beta t}f_m=0 \quad \mbox{ on }\mathbb R_-. 
\]
The two characteristic roots are given by
\[
\frac12\left(1\pm \sqrt{1+4\lda_m}\right). 
\]
Moreover, since $\lambda_m=i(i+1)$ for some integer $i\ge 0$, 
the corresponding characteristic roots are in fact $i+1$ and $-i$. 
We now discard the negative characteristic root, since the solution $\Phi$ is assumed to be bounded. 
In the homogeneous case when $f=0$, we then have $\Phi_m(t)=c_ie^{(i+1)t}$ for some constant $c_i$. In conclusion, 
any bounded solution $\Phi$ of $L\Phi=0$ can be expressed as 
$$\Phi(t)=c_0+\sum_{i=0}^{\infty} Y_{i}e^{(i+1)t},$$
where $c_0$ is a constant and $Y_i$ are spherical harmonics of degree $i$. The $L^\infty$ version of the expansion stated in this lemma now follows from standard ODE analysis. Finally, the $C^{1,\alpha}$ expansion may be obtained by scaled interior Schauder estimates.
\end{proof}

We now treat the asymptotic linear equation associated with the twist potential function $v$, 
from the harmonic map system. 

\begin{lem}\label{lem:v-linear-pde} 
Let $\xi\in C^{3,\alpha}(\mathbb R_-\times \mathbb{S}^2)$, $\alpha\in(0,1)$ be a bounded solution of 
\begin{align*}
\pa_t^2 \xi+3 \pa_t \xi  +\bar \w^4\mathrm{div}(\bar \w^{-4} \nabla_{g_0}\xi) =f 
&\quad \text{ on }\mathbb R_-\times \mathbb{S}^2,\\
\xi(t,N)=\xi(t,S)=0&\quad\text{ for }t\in \mathbb R_-,\end{align*} 
where for some positive constants $\beta$ and $K$ the function $f\in C^{1,\alpha}(\mathbb{R}_- \times \mathbb{S}^2)$ satisfies 
\begin{align}\label{eq-assumption-decay-f}
| f|\le Ke^{\beta t} \sin^2 \theta\quad\text{ on }\mathbb R_-\times\mathbb S^2.
\end{align} 
If $\beta\neq 1$ then 
\begin{align}\label{eq-conclusion-decay-xi}
\begin{split}
|\xi|&\le Ce^{\beta t}\sin^{3+\alpha} \theta \quad \text{ for }\beta< 1,\\
|\xi-c_1 e^{t} \bar \w^4 |&\le Ce^{\gamma t}\sin^{3+\alpha} \theta \quad \text{ for }\beta>1,
\end{split}
\end{align}
where $c_1$, $C>0$, and $\gamma>1$ are constants
depending only on $\alpha$, $\beta$, $K$, 
and the $L^\infty$-norm of $\xi$ on $\mathbb R_-\times \mathbb{S}^2$. 
If additionally $\partial_tf$ satisfies \eqref{eq-assumption-decay-f}, 
then $\partial_t\xi$ satisfies \eqref{eq-conclusion-decay-xi}.
Moreover if $\beta=1$, then $\xi$ may have a factor of $t$ in the expansion at order $e^t$.
\end{lem}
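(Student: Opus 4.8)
The plan is to treat this as a linear problem by combining a weighted eigenfunction expansion on $\mathbb{S}^2$ with the pole-decay estimates of Section~\ref{s:regularity}; the strategy parallels that of Lemma~\ref{lem:phi-linear-pde}, now with a singular weight. Set
\[
\mathcal{L}_0\zeta:=\bar\w^{4}\,\mathrm{div}_{g_0}(\bar\w^{-4}\nabla_{g_0}\zeta)=\Delta_{g_0}\zeta-4\cot\theta\,\pa_\theta\zeta ,
\]
so that the equation for $\xi$ reads $\pa_t^{2}\xi+3\pa_t\xi+\mathcal{L}_0\xi=f$. First I would record the identity $\pa_t^{2}\xi+3\pa_t\xi+\mathcal{L}_0\xi=L\xi+4\nabla_g u\cdot\nabla_g\xi$ with $u=t-\ln\sin\theta$, i.e., with $\Phi\equiv0$ in the renormalization \eqref{eq:t-gth-omega}; the coefficient $\nabla_g u$ is bounded uniformly, so after translating the $t$-variable into a bounded window as in \eqref{eq:change-varibles-ex} our equation is of exactly the type to which the barrier argument of Lemma~\ref{lem:LT} applies (that argument uses only the coefficient structure). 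Since $|f|\le Ke^{\beta t}\sin^{2}\theta\le Ke^{\beta t}(\sin\theta)^{1+\alpha}$ and $\xi$ vanishes at $N,S$, this yields $|\xi(t,\cdot)|\le C(\sin\theta)^{3+\alpha}$ on $\mathbb{R}_-\times\mathbb{S}^2$, and Lemma~\ref{lem:w2p-} then gives $|\nabla_{g_0}\xi(t,\cdot)|\le C(\sin\theta)^{2+\alpha}$; in particular $\xi(t,\cdot)\in H_0^{1}(\mathbb{S}^2,\bar\w^{-4})$ with norm bounded uniformly in $t$.

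Next I would set up the spectral theory of $\mathcal{L}_0$. On $H_0^{1}(\mathbb{S}^2,\bar\w^{-4})$ the operator $-\mathcal{L}_0$ is self-adjoint and positive, with quadratic form $\int_{\mathbb{S}^2}\bar\w^{-4}|\nabla_{g_0}\zeta|^{2}\,dvol_{g_0}$, so it has discrete spectrum $0<\mu_1<\mu_2\le\mu_3\le\cdots\to\infty$ with an orthonormal basis $\{Z_m\}$ of eigenfunctions in $L^{2}(\mathbb{S}^2,\bar\w^{-4})$. A direct computation gives $-\mathcal{L}_0(\sin^{4}\theta)=4\sin^{4}\theta$, and since $\sin^{4}\theta=\bar\w^{4}$ is positive on $\mathbb{S}^2\setminus\{N,S\}$ it must be the ground state; hence $\mu_1=4$ is simple, $Z_1=c_{*}\bar\w^{4}$, and $\mu_2>4$. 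Using the first paragraph and the bound $\|f(t,\cdot)\|_{L^{2}(\mathbb{S}^2,\bar\w^{-4})}\le Ce^{\beta t}$ (which holds because $\bar\w^{-4}\sin^{4}\theta=1$), the expansions $\xi=\sum_m\xi_m(t)Z_m$ and $f=\sum_m f_m(t)Z_m$ are legitimate, and projecting the equation gives
\[
\xi_m''+3\xi_m'-\mu_m\xi_m=f_m\quad\text{on }\mathbb{R}_- ,
\]
whose characteristic roots $\lda_m^{\pm}=\tfrac{1}{2}\bigl(-3\pm\sqrt{9+4\mu_m}\,\bigr)$ satisfy $\lda_1^{+}=1$, $\lda_m^{+}>1$ for $m\ge2$, and $\lda_m^{-}<0$ for all $m$.

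Then I would solve each modal ODE for its unique solution bounded as $t\to-\infty$ via the Green's function built from $e^{\lda_m^{\pm}t}$, and sum. If $\beta<1$, then (since $\lda_m^{+}\ge1>\beta$ and the forcing decays like $e^{\beta t}$) each mode obeys $|\xi_m(t)|\le Ce^{\beta t}$, and a Cauchy-Schwarz estimate on the Green's function together with $\mu_m\to\infty$ controls the tail of the sum, yielding $\|\xi(t,\cdot)\|_{L^{2}(\mathbb{S}^2,\bar\w^{-4})}\le Ce^{\beta t}$. If $\beta>1$, fix $\gamma\in(1,\min\{\beta,\lda_2^{+}\})$, nonempty since $\beta,\lda_2^{+}>1$: the ground mode gives $\xi_1(t)=c_1 e^{t}+O(e^{\beta t})$ for a constant $c_1$, the gap $\mu_2>4$ forces $\sum_{m\ge2}\xi_m(t)Z_m=O(e^{\gamma t})$ in $L^{2}(\mathbb{S}^2,\bar\w^{-4})$, and altogether $\|\xi(t,\cdot)-c_1 e^{t}\bar\w^{4}\|_{L^{2}(\mathbb{S}^2,\bar\w^{-4})}\le Ce^{\gamma t}$; this is the standard argument, in the spirit of the proof of Lemma~\ref{lem:phi-linear-pde} and Simon~\cite{Simon83}. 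Finally these $L^{2}(\mathbb{S}^2,\bar\w^{-4})$ rates are upgraded to the pointwise bounds \eqref{eq-conclusion-decay-xi}: the remainder $\eta:=\xi-c_1 e^{t}\bar\w^{4}$ (respectively $\xi$ itself when $\beta<1$) again solves $L\eta+4\nabla_g u\cdot\nabla_g\eta=f$, so scaled interior elliptic estimates on $\{\delta<\theta<\pi-\delta\}$ turn the $L^{2}$-decay into the $L^{\infty}$-decay $Ce^{\gamma t}$ there, while Lemma~\ref{lem:LT} near each pole --- invoked with $K$ replaced by the sharper $Ce^{\gamma t}$ --- gives $|\eta(t,\cdot)|\le Ce^{\gamma t}(\sin\theta)^{3+\alpha}$. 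The claim for $\pa_t\xi$ follows by differentiating the equation in $t$ (legitimate since the coefficients of $\pa_t^{2}+3\pa_t+\mathcal{L}_0$ are $t$-independent) and rerunning the argument with forcing $\pa_t f$; and in the borderline case $\beta=1=\lda_1^{+}$, the ground modal ODE resonates with its homogeneous solution $e^{t}$, so its bounded solution takes the form $c_1 e^{t}+c_1' t\,e^{t}+O(e^{\gamma t})$, accounting for the possible factor of $t$ at order $e^{t}$.

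The main obstacle is twofold. First, pinning down the weighted spectral picture: verifying that $\bar\w^{4}$ is the simple ground state of $-\mathcal{L}_0$ on $L^{2}(\mathbb{S}^2,\bar\w^{-4})$ with eigenvalue exactly $4$ --- this is precisely what makes $\lda_1^{+}=1$ coincide with the announced $e^{t}\bar\w^{4}$ term, and simplicity is what produces the spectral gap $\mu_2>4$ responsible for the faster decay of the remainder. Second, the careful bookkeeping needed to pass between the modal ($L^{2}(\mathbb{S}^2,\bar\w^{-4})$) decay, the interior $L^{\infty}$ decay away from the poles, and the pole-weighted pointwise decay, keeping the constants in Lemma~\ref{lem:LT} uniform in $t$ by systematically using the rescaling \eqref{eq:change-varibles-ex}.
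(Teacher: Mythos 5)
Your strategy is the same as the paper's at every structural level: reformulate the operator as a renormalized map (your identity $L\xi+4\nabla_g u\cdot\nabla_g\xi=\pa_t^2\xi+3\pa_t\xi+\mathcal L_0\xi$ with $u=t-\ln\sin\theta$ is correct and the paper uses the same structure implicitly through the rescaling \eqref{eq:change-varibles-ex}); set up spectral theory on $L^2(\mathbb S^2,\bar\w^{-4})$; observe $-\mathcal L_0\bar\w^4=4\bar\w^4$ to pin $\mu_1=4$ with positive ground state $\bar\w^4$; project to modal ODEs with characteristic roots $\tfrac12(-3\pm\sqrt{9+4\mu_m})$; read off the $L^2$ decay and the $c_1e^t\bar\w^4$ leading term when $\beta>1$; then upgrade to pointwise bounds. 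The borderline-$\beta=1$ resonance and the $\pa_t$-derivative remark are handled the same way.

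The one genuine gap is in the last step, the passage from $L^2(\mathbb S^2,\bar\w^{-4})$ decay to the weighted pointwise bound \eqref{eq-conclusion-decay-xi}. You propose interior elliptic estimates on $\{\delta<\theta<\pi-\delta\}$ to get $L^\infty$ decay there, and then to feed the sharper constant into Lemma~\ref{lem:LT}. But the conclusion \eqref{akfniiqh} of Lemma~\ref{lem:LT} bounds $|h|$ by $C(\sin\theta)^{3+\alpha}\big(K+\|h\|_{L^\infty((-2,2)\times\mathbb S^2_+)}\big)$, and that $L^\infty$-norm ranges over all of $\mathbb S^2_+$, up to the pole. Interior estimates only control $\{\theta\ge\delta\}$, with constants blowing up as $\delta\to0$, so they do not by themselves yield decay of the full $\|h\|_{L^\infty}$: from the non-sharpened first pass you only have $|h|\le C(\sin\theta)^{3+\alpha}$ near the pole with a $t$-independent $C$, so $\|h\|_{L^\infty(\mathbb S^2_+)}\le\max\{C e^{\gamma t},C(\sin\delta)^{3+\alpha}\}$, which does not decay for fixed $\delta$. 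Trying to shrink $\delta=\delta(t)$ breaks the uniformity of the interior constants. The paper closes this gap with a \emph{weighted} De Giorgi iteration adapted to the degenerate divergence-form equation, following~\cite[Theorem~1]{N}: after rescaling, it gives $\sup|\widetilde\xi|\le C\big(\|\widetilde\xi/\bar\w^2\|_{L^2}+\|\widetilde f/\bar\w^2\|_{L^\infty}\big)$ uniformly up to the poles, and only then is Lemma~4.4 of \cite{LT} (our Lemma~\ref{lem:LT}) applied to obtain the $(\sin\theta)^{3+\alpha}$ weight. You should insert this De Giorgi step; without it, the claim that Lemma~\ref{lem:LT} can be invoked ``with $K$ replaced by the sharper $Ce^{\gamma t}$'' is not justified.

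A lesser omission: you assert the discrete spectrum of $-\mathcal L_0$ on $H_0^1(\mathbb S^2,\bar\w^{-4})$ but do not justify the compact embedding $H_0^1(\mathbb S^2,\bar\w^{-4})\hookrightarrow L^2(\mathbb S^2,\bar\w^{-4})$; the paper obtains it from the weighted Poincar\'e inequality \eqref{eq:li-tian-cor} (Corollary~4.1 of \cite{LT}) together with H\"older and Sobolev. This is easy to supply but should be stated.
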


\begin{proof}  By Corollary 4.1 of \cite{LT} there exists a constant $C$ such that 
\begin{equation} \label{eq:li-tian-cor}
\int_{\mathbb{S}^2} w^2 \bar\w^{-6} \,  d  vol_{g_0} 
\le C \int_{\mathbb{S}^2} |\nabla_{g_0}w|^2 \bar\w^{-4} \,  d  vol_{g_0},
\end{equation}
for all $w\in H_0^{1}( \mathbb{S}^2, \bar\w^{-4})$. Then, for any $2<p<\infty$,
H\"older's inequality and the Sobolev inequality on $\mathbb{S}^2$ produce
\begin{align*}
\int_{\mathbb{S}^2} |w|^p \bar\w^{-4} \,  d  vol_{g_0}& \le 
\Big(\int_{\mathbb{S}^2} w^2 \bar\w^{-6} \,  d  vol_{g_0}\Big)^{2/3} 
\Big(\int_{\mathbb{S}^2} w^{3p-4} \,  d  vol_{g_0}\Big)^{1/3} \\
&
\le C \Big( \int_{\mathbb{S}^2} |\nabla_{g_0}w|^2 \bar\w^{-4} \,  d  vol_{g_0} \Big)^{p/2},
\end{align*}
where the constant $C$ depends only on $p$. This implies that $H_0^{1}( \mathbb{S}^2, \bar\w^{-4}) $ is compactly embedded 
in $L^2( \mathbb{S}^2, \bar\w^{-4})$, where this latter space consists of
$L^2$-functions on $\mathbb{S}^2$ with respect to the measure $\bar\w^{-4}  d  vol_{g_0}$. These observations may 
be used to show that the standard $L^2$-theory for elliptic equations in divergence form applies to the degenerate equation
\begin{equation*}
\bar\w^{4} \mathrm{div}_{g_0}(\bar\w^{-4} \nabla_{g_0}w)=h \quad \text{ on }\mathbb{S}^2.
\end{equation*}
In particular, for any $h\in L^2( \mathbb{S}^2, \bar\w^{-4})$ there exists a unique solution 
$w\in H_0^{1}( \mathbb{S}^2, \bar\w^{-4})$, and 
\begin{equation*}
\|\nabla_{g_0} w\|_{ L^2( \mathbb{S}^2, \bar\w^{-4})} \le C \|h\|_{ L^2( \mathbb{S}^2, \bar\w^{-4})}
\end{equation*}
for some universal constant $C$. Moreover, for the eigenvalue problem 
\begin{equation*}
\bar\w^{4} \mathrm{div}_{g_0}(\bar\w^{-4} \nabla_{g_0} w ) = -\mu w\quad \mbox{on }\mathbb{S}^2,
\end{equation*}
there exists a sequence of real eigenvalues $\mu_1<\mu_2\le \cdots $ converging to infinity, along with  
an associated sequence of eigenfunctions $\{\varphi_i\}_{i=1}^{\infty}$ that form an orthonormal basis of
$L^2( \mathbb{S}^2, \bar\w^{-4})$.  

Consider an eigenpair $(\mu, w)$. We may apply a De Giorgi iteration as in the proof of
\cite[Theorem 1]{N} to obtain
\begin{equation*}
\sup_{\mathbb{S}^2} |w| 
\le  C \|w\|_{ L^2( \mathbb{S}^2, \bar\w^{-4})}.
\end{equation*}
It then follows from Lemma \ref{lem:LT} that $w\in C^{3,\alpha}(\mathbb S^2)$, for any $\alpha\in (0,1)$, 
and 
\begin{equation*}
|w|\le C\bar\w^{3+\alpha}\quad\text{on }\mathbb S^2
\end{equation*}
where $C$ is a constant depending only on $\alpha$, $\mu$, and the $L^\infty$-norm of $w$. 
Note that $\bar\w^{4}\in H_0^{1}( \mathbb{S}^2, \bar\w^{-4})$ and 
\begin{equation*}
\bar\w^{4} \mathrm{div}_{g_0}(\bar\w^{-4} \nabla_{g_0} \bar\w^{4} ) = -4\bar\w^{4}.
\end{equation*}
Hence, 4 is an eigenvalue and $\bar\w^4$ is a corresponding eigenfunction. 
Since $\bar\w^4>0$ on $\mathbb S^2\setminus\{N, S\}$, we
conclude that 4 is the smallest eigenvalue, that is $\mu_1=4$.  

Next, we will establish an $L^2$-decay based on expansions in terms of the orthonormal basis 
$\{\varphi_i\}_{i=1}^{\infty}$. 
Write
\begin{equation*}
\xi(t)= \sum_{i=1}^\infty \xi_{i}(t) \varphi_i, \quad\quad f(t)= \sum_{i=1}^\infty f_{i}(t) \varphi_i,
\end{equation*}
and observe that
\begin{equation*}
\sum_{i=1}^\infty (\xi_{i}''(t)+3\xi_i'(t)-\mu_i \xi_i(t)+f_i(t))  \varphi_i =0, 
\end{equation*}
so that
\begin{equation*}
\xi_{i}''(t)+3\xi_i'(t)-\mu_i \xi_i(t)+f_i(t)=0. 
\end{equation*}
The two characteristic roots of this equation are 
\begin{equation*}
\frac12\big(-3\pm \sqrt{9+4\mu_i}\big). 
\end{equation*}
In particular, these become $1$ and $-4$ when $i=1$. 
By standard ODE analysis, we then have 
\begin{align*}
\|\xi(t)\|_{  L^2( \mathbb{S}^2, \bar\w^{-4})}&\le C e^{\beta t} \quad \mbox{ if }\beta< 1,\\
\|\xi(t)-c_1 e^{t} \bar \w^4 \|_{ L^2( \mathbb{S}^2, \bar\w^{-4})}&\le Ce^{\gamma t} \quad \mbox{ if }\beta>1,
\end{align*}
for some constants $c_1$, $C$, and $\gamma>1$.

The $L^2$-decay may be transformed into pointwise decay in the following way. 
For any $t_0<-1$ and $\tau\in (-1,1)$ set 
\begin{equation*}
\widetilde \xi(\tau,\cdot)= \begin{cases}
e^{-\beta t_0} \xi(t_0+\tau,\cdot) \quad \text{if }\beta<1,\\
e^{-\gamma t_0}  (\xi(t_0+\tau,\cdot)-c_3 e^{t_0+\tau} \bar \w^4)  \quad \text{if }\beta>1,
\end{cases}
\end{equation*}
and 
\begin{equation*}
\widetilde f(\tau,\cdot)= \begin{cases}
e^{-\beta t_0} f(t_0+\tau,\cdot) \quad \text{if }\beta<1,\\
e^{-\gamma t_0}  f(t_0+\tau,\cdot)  \quad \text{if }\beta>1.
\end{cases}
\end{equation*}
Then $\|\widetilde \xi(\tau)\| _{L^2( \mathbb{S}^2, \bar\w^{-4})} \le C$, $|\widetilde f|\le C\bar \w^2$, and
\begin{equation*}
\partial _\tau (\bar \w^{-4} e^{3\tau} \partial_\tau \widetilde \xi)
+ \mathrm{div}_{g_0}(\bar \w^{-4}e^{3\tau} \nabla_{g_0}\widetilde \xi)
= \bar \w^{-4} e^{3\tau} \widetilde  f.
\end{equation*}
Thus, a De Giorgi iteration as in \cite[Theorem 1]{N} produces
\begin{equation*}
\sup_{(-1/2,1/2) \times \mathbb{S}^2} |\widetilde \xi| 
\le  C\big(\|\widetilde \xi/\bar \w^2\|_{L^2 ((-1,1) \times \mathbb{S}^2)}
+\|\widetilde f/\bar \w^2 \|_{L^\infty((-1,1) \times \mathbb{S}^2)} \big)\le C.
\end{equation*}
We may then employ \cite[Lemma 4.4]{LT}, for any $\alpha\in (0,1)$, to obtain
\begin{equation*}
\sup_{(-1/4,1/4) \times \mathbb{S}^2} (|\widetilde \xi| \bar \w^{-3-\alpha})\le C.
\end{equation*}
This yields the desired result \eqref{eq-conclusion-decay-xi}. 

Finally, assume that $\partial_tf$ satisfies \eqref{eq-assumption-decay-f}, and observe that
\begin{equation*}
\partial _\tau (\bar \w^{-4} e^{3\tau} \partial_\tau \widetilde  \xi_\tau)
+ \mathrm{div}_{g_0}(\bar \w^{-4}e^{3\tau} \nabla_{g_0}\widetilde  \xi_\tau )
= \bar \w^{-4} e^{3\tau} \widetilde   f_\tau,
\end{equation*}
where $\widetilde  \xi_\tau=\pa_\tau \widetilde  \xi$ and $\widetilde  f_\tau=\pa_\tau \widetilde  f$. 
By the energy estimates for the equation of $\widetilde  \xi$, it is straightforward to see that 
\begin{equation*}
\|\widetilde  \xi_\tau/\bar \w^2\|_{L^2 ((-3/4,3/4) \times \mathbb{S}^2)}\le C.
\end{equation*}
Similarly, we have 
\begin{equation*}
\sup_{(-1/4,1/4) \times \mathbb{S}^2} (|\widetilde  \xi_\tau| \bar \w^{-3-\alpha})\le C.
\end{equation*}
Therefore, $\partial_t\xi$ satisfies \eqref{eq-conclusion-decay-xi}. 
\end{proof}

\begin{rem}\label{rem:v-linear-pde} It should be noted that for any constant $a\in\mathbb{R}$, the function
\begin{align}\label{eq-definition-v0}
v_0(\theta)=\frac12
a\cos \theta (3-\cos^2 \theta )
\end{align}
appearing in the extreme Kerr harmonic map, satisfies the zero eigenvalue equation from the proof of 
Lemma \ref{lem:v-linear-pde}. However, since $v_0$ does not vanish at the poles it does not lie in the space $L^2( \mathbb{S}^2, \bar\w^{-4})$, 
and is thus not considered as a member of the orthonormal basis $\{\varphi_i\}_{i=1}^{\infty}$.
\end{rem}

\begin{rem}
We shall now compute some eigenvalues and eigenfunctions that are releveant to the previous lemma. Consider
\begin{equation} \label{efunction-eq}
\bar\w^{4} \mathrm{div}_{g_0}(\bar\w^{-4} \nabla_{g_0} w)=-\lambda w \quad \text{ on }\mathbb{S}^2\setminus\{N,S\}, \quad w\in L^2( \mathbb{S}^2, \bar\w^{-4}),
\end{equation}
and use the separation of variables ansatz $w(\theta,\phi)=\eta(\theta)\cos(m(\phi-\phi_0))$ for $m=0,1,\ldots$ where $\phi_0$ is a constant. Under this assumption equation~\eqref{efunction-eq} may rewritten as
\begin{equation*}
\sin^5\theta \frac{d}{d\theta}\left(\frac1{\sin^3\theta}\,\frac{d\eta}{d\theta}\right) + \lambda\sin^2\theta\, \eta - m^2 \eta= 0, \quad\quad \eta \in {\mathcal H}=L^2([0,\pi],\bar\w^{-3}).
\end{equation*}
Making the substitutions $x=\cos\theta$ and $\lambda=\ell(\ell+3)$ produces
\begin{equation} \label{axisymm-efunction-eq}
    (1-x^2)^2\eta'' + 2x(1-x^2)\,\eta' + \big(\ell(\ell+3)(1-x^2)-m^2\bigr)\eta = 0,
\end{equation}
where $\eta'=\tfrac{d\eta}{dx}$. It is straightforward to check that $\eta(x)=(1-x^2)\,y(x)$ satisfies~\eqref{axisymm-efunction-eq} if and only if $y(x)$ satisfies the general Legendre equation of degree $\ell+1$ and order $n=\sqrt{m^2+4}$, that is
\begin{equation*}
(1-x^2)^2y'' - 2x(1-x^2)\, y' + \bigl((\ell+1)(\ell+2)(1-x^2) - n^2\bigr) \, y = 0.
\end{equation*}
Thus we conclude that $\eta(x)=(1-x^2) \bigl( \mathrm{a}\,P_{\ell+1}^n(x) + \mathrm{b}\,Q_{\ell+1}^n(x) \bigr)$, where $P_{\ell+1}^n$ and $Q_{\ell+1}^n$ are the associated Legendre functions of the first and second kind respectively, and $\mathrm{a}$, $\mathrm{b}$ are constants. For applications to stationary vacuum black holes, the relevant harmonic maps are axisymmetric, in which case we may restrict attention to $m=0$. In this situation, $n=2$ and $Q^2_{\ell+1}$ has a simple pole at the endpoint $x=1$. It follows that $(1-x^2)Q_{\ell+1}^2(1)\ne0$, and hence cannot be in $\mathcal H$. Similarly, if $\ell\not\in \mathbb N$ then $P^2_{\ell+1}$ has a simple pole at the endpoint $x=-1$. We conclude that $\lambda_\ell=\ell(\ell+3)$ are eigenvalues for each for $\ell\in\mathbb N$, with eigenfunctions 
$w_\ell(\theta)=\sin^2\theta P^2_{\ell+1}(\cos\theta)$. Moreover, by Rodrigues' formula 
\begin{equation*}
P^2_{\ell+1}(x) = \frac{1-x^2}{2^{\ell+1}(\ell+1)!}\,\frac{d^{\ell+3}}{dx^{\ell+3}}(x^2-1)^{\ell+1},
\end{equation*}
and thus the eigenfunctions may be rewritten as 
\begin{equation*}
w_\ell(\theta) = \frac{\sin^4\theta}{2^{\ell+1}(\ell+1)!}\, p_\ell(\cos\theta),
\end{equation*}
where $p_\ell$ is a polynomial of degree $\ell-1$. Lastly, note that since $\mu_2\leq \lambda_2 =10$, the decay constant $\gamma$ of Lemma \ref{lem:v-linear-pde} is not greater than 2.
\end{rem}

We are ready to prove the main result in this section, which offers an asymptotic expansion at infinity
for the renoramlized harmonic map.

\begin{thm}\label{thm:main-2a} Let $(\Phi,v)\in H^1_{\mathrm{loc}}(\mathbb{R}_-\times\mathbb S^2)$ be a weak solution of \eqref{eq:main-2z} 
satisfying \eqref{eq:ubd-1z}, \eqref{eq:ea-1z}, and \eqref{eq:trace-1z} for some positive constants $\Lambda$ and $a$. 
Then there exist constants $c_0$, $c_1$, $C$ 
and degree 0, 1, and 2 spherical harmonics $Y_0$, $Y_1$, and $Y_2$ respectively, 
such that for all $t\le -1$ the following expansions hold
\begin{equation} \label{eq:exter-main-conv-2}
|\Phi(t) - c_0-e^{t} Y_0 -e^{2t} Y_1-e^{3t} Y_2|_{C^{3}(\mathbb{S}^2)} \le Ce^{(3+\beta)t}, 
\end{equation}
and 
\begin{equation} \label{eq:exter-main-conv-3}
|v(t) - v_0(\theta)-c_1 e^{t} \bar \w^4 |_{C^3(\mathbb{S}^2)}\le C e^{(1+\beta) t},
\end{equation}
for some $\beta\in (0,1)$ and where $v_0$ is given by \eqref{eq-definition-v0}. 
Moreover, corresponding asymptotics are valid for the $t$-derivatives of $(\Phi,v)$.
\end{thm}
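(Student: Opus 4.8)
The plan is to follow the decoupling strategy already signaled: near $t\to-\infty$ the renormalized system \eqref{eq:main-2z} becomes two essentially linear scalar equations with exponentially decaying source terms, and the expansions then come from Lemmas \ref{lem:phi-linear-pde} and \ref{lem:v-linear-pde}. First I would record the bootstrap starting point. By Proposition \ref{prop:reg-exterior} together with Proposition \ref{prop:reg-exterior-improved}, the quantity $e^{4u}|\nabla_g v|^2 = \bar\w^{-4}|\nabla_g v|^2 e^{4\Phi}$ is uniformly bounded and, more importantly, satisfies $e^{4u}|\nabla_g v|^2 \le C\bar\w^{2\alpha}$, so the right-hand side $f_1 := 2e^{4u}|\nabla_g v|^2 + L\ln\w = 2e^{4u}|\nabla_g v|^2$ of the first equation is bounded on $\mathbb R_-\times\mathbb S^2$. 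Thus $\Phi$ is a bounded solution of $L\Phi = f_1$ with $|f_1|\le C$; applying Lemma \ref{lem:phi-linear-pde} with any exponent $\beta_0<1$ gives, as a first pass, $|\Phi(t)-c_0|_{C^{1,\alpha}(\mathbb S^2)}\le Ce^{\beta_0 t}$, and more precisely the existence of the full bounded-mode expansion. The point is that once we know a crude decay rate for $\nabla_g v$, we feed it back.

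Second, I would set up the iteration between the two equations. Write $\hat v = v-v_0(\theta)$, where $v_0$ is the extreme-Kerr profile of \eqref{eq-definition-v0}; a direct computation (using that $v_0$ solves the zero-eigenvalue equation noted in Remark \ref{rem:v-linear-pde}) shows $\hat v$ satisfies an equation of the form $\partial_t^2\hat v + 3\partial_t\hat v + \bar\w^4\,\mathrm{div}_{g_0}(\bar\w^{-4}\nabla_{g_0}\hat v) = f_2$, where $f_2$ collects the terms $4\nabla_{g_0}u\cdot\nabla_{g_0}v$ beyond the linear part plus the error from replacing $u$ by its model, and where $f_2$ is quadratic/higher in the decaying quantities $\nabla_{g_0}\Phi$, $\partial_t\Phi$, $\hat v$. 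The key structural fact — this is the ``decoupling'' — is that $f_2$ is controlled by $e^{\beta t}\sin^2\theta$ once $\Phi-c_0$ and $\hat v$ are known to decay like $e^{\beta t}$ (with the $\sin^2\theta$ coming from the explicit vanishing of $\nabla_{g_0}v_0$ and the weighted estimates of Proposition \ref{prop:reg-exterior-improved}). Lemma \ref{lem:v-linear-pde} then upgrades this to $|\hat v - c_1 e^t\bar\w^4|\le Ce^{\gamma t}\sin^{3+\alpha}\theta$ with $\gamma>1$ (using $\mu_1=4$, $\mu_2\le 10$ so the second root is $\ge$ something giving $\gamma>1$), which in turn sharpens the decay of $\nabla_{g_0}v$, hence sharpens $f_1$, hence (via Lemma \ref{lem:phi-linear-pde}) produces the three-term expansion $\Phi \sim c_0 + e^t Y_0 + e^{2t}Y_1 + e^{3t}Y_2$ with remainder $O(e^{(3+\beta)t})$. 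One runs this loop finitely many times: the $v$-equation gains a bounded fixed rate past $1$, which is enough, and the $\Phi$-equation needs the source to decay faster than $e^{3t}$, which requires knowing $|\nabla_g v|^2 = O(e^{(3+\beta)t})$ — achievable since $\hat v$ and $\partial_t\hat v$ decay like $e^{\gamma t}$ with $\gamma$ close to (but possibly less than) $2$, and $|\nabla_g v|^2$ involves two derivatives and two factors, giving roughly $e^{2\gamma t}$ decay after accounting for the $\sin\theta$ weights; choosing $\beta$ small enough relative to $2\gamma - 3$ closes it. The $C^3(\mathbb S^2)$ statement (rather than $C^{1,\alpha}$) follows by applying the derivative parts of the two lemmas — both Lemma \ref{lem:phi-linear-pde} and Lemma \ref{lem:v-linear-pde} include conclusions for $t$-derivatives, and the $C^3$-in-$\theta,\phi$ regularity comes from scaled Schauder estimates as in Proposition \ref{prop:reg-exterior}. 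The ``corresponding asymptotics for the $t$-derivatives'' is precisely the statement that $\partial_t f_i$ again satisfies the decay hypothesis, so the $\partial_t$-versions of the lemmas apply and differentiating the expansions term by term is legitimate.

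Concretely the steps are: (1) from Propositions \ref{prop:reg-exterior} and \ref{prop:reg-exterior-improved}, extract $|\nabla_g v|\le C\bar\w^{3+\alpha}$ on $\mathbb S^2_\pm$ and hence $e^{4u}|\nabla_g v|^2\le C\bar\w^{2+2\alpha}\le C\sin^2\theta\cdot\bar\w^{2\alpha}$, uniformly in $t$; (2) apply Lemma \ref{lem:phi-linear-pde} to get the preliminary constant limit $c_0$ and crude rate for $\Phi-c_0$; (3) derive the $\hat v$-equation, check that $v_0$ kills the would-be order-1 obstruction, identify $f_2$ and bound it by $Ke^{\beta t}\sin^2\theta$ using step (1) and the crude rate; (4) apply Lemma \ref{lem:v-linear-pde} to obtain \eqref{eq:exter-main-conv-3} with remainder $e^{(1+\beta)t}$ (absorbing the $\sin^{3+\alpha}\theta$ into the $C^3(\mathbb S^2)$ norm after multiplying through, or more honestly stating the pointwise weighted bound and noting the unweighted $C^3$ bound follows since the profile terms are smooth); (5) feed the improved $v$-decay back into $f_1$, showing $|f_1|\le Ke^{(3+\beta)t}$, and apply Lemma \ref{lem:phi-linear-pde} with exponent $3+\beta$ to get \eqref{eq:exter-main-conv-2}; (6) repeat steps (3)–(5) with $\partial_t$ applied throughout, using the $t$-derivative clauses of both lemmas, to get the derivative asymptotics. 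The main obstacle I anticipate is step (3): verifying that the model subtraction $v-v_0$ exactly removes the slowly-decaying ($e^t$) component of the source so that what remains genuinely has the $\sin^2\theta$ spatial weight required by Lemma \ref{lem:v-linear-pde}'s hypothesis \eqref{eq-assumption-decay-f} — this requires a careful expansion of $\nabla_{g_0}u\cdot\nabla_{g_0}v$ near the poles, tracking how the singular factor $\ln\w$ in $u$ interacts with the $\bar\w^{3+\alpha}$ decay of $\nabla_{g_0}v$, and confirming the leading order matches that of $v_0$. Once that algebraic/asymptotic bookkeeping is done, the rest is a finite bootstrap driven by the two lemmas.
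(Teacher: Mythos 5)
There is a genuine gap, and it is the first computation. In Section~\ref{s:convergence-2} the renormalization is $\w = e^{-t}\sin\theta$, so
\[
e^{4u} = e^{4(\Phi-\ln\w)} = e^{4\Phi}\,\w^{-4} = e^{4\Phi}\,e^{4t}\,\bar\w^{-4},
\]
not $e^{4\Phi}\bar\w^{-4}$ as you wrote. Dropping the $e^{4t}$ is fatal: with your identification the source $f_1 = 2e^{4u}|\nabla_g v|^2$ is merely bounded as $t\to-\infty$ (indeed $|\nabla_{g_0}v|$ converges to the nonvanishing $|\nabla_{g_0}v_0|$, so $f_1$ would not tend to zero at all), and Lemma~\ref{lem:phi-linear-pde} cannot be invoked even with a tiny exponent $\beta_0$ — its hypothesis $|f|\le Ke^{\beta t}$ requires $f\to 0$ as $t\to-\infty$. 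Your bootstrap therefore never gets off the ground, and no amount of iterating between the two equations repairs it, because the decay of $\Phi$ does not cause $|\nabla_g v|^2$ to decay (again, $v\to v_0(\theta)$ with $\nabla_{g_0}v_0\ne 0$). The later claim that $|\nabla_g v|^2 = O(e^{(3+\beta)t})$ is ``achievable'' by the loop is false for the same reason.

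Once the $e^{4t}$ is restored, the paper's proof shows that no bootstrap between the two equations is needed for $\Phi$: with the weighted estimates $|\nabla_{g_0}v|\le C\bar\w^{2+\alpha}$ and $|\partial_t v|\le C\bar\w^{3+\alpha}$ from Proposition~\ref{prop:reg-exterior-improved}, one gets $|f_1| \le C e^{4t}\bar\w^{-4}\bar\w^{4+2\alpha} \le Ce^{4t}$ in one stroke. Lemma~\ref{lem:phi-linear-pde} then yields the three-term expansion of $\Phi$ directly (with remainder $O(e^{(3+\beta)t})$ for any $\beta<1$), and similarly for $\nabla_{g_0}f_1$ and the $t$-derivatives to upgrade to $C^3$. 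Only the $v$-equation requires a short two-pass improvement: one first applies Lemma~\ref{lem:v-linear-pde} to get $|\partial_t\xi|\le Ce^{\delta t}\bar\w^{3+\alpha}$ for $\delta<1$, then feeds that back into $h=-4\nabla_g\Phi\cdot\nabla_g v$ to push the rate past $1$ and extract the $c_1 e^t\bar\w^4$ mode. Your overall architecture (decoupling, subtracting $v_0$, invoking the two linear lemmas, differentiating in $t$) matches the paper, but you are missing the one structural fact — the $e^{4t}$ factor produced by the linear-growth renormalization — that makes the source decay and lets the argument close without the speculative multi-round loop.
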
 

\begin{proof}  Write the first equation of \eqref{eq:main-2z} as $L\Phi= f$, where 
\begin{equation*}
f= 2e^{4u} |\nabla_g v|^2= e^{4t} \bar \w^{-4} e^{4\Phi} \big(|\partial_tv|^2+|\nabla_{g_0}v|^2).   
\end{equation*}
By the estimates for $\Phi$ in Proposition \ref{prop:reg-exterior}, and the estimates for $v$ in Proposition \ref{prop:reg-exterior-improved}, we have
\begin{equation*}
|f|\le Ce^{4t} \bar \w^{-4} \bar\w^{4+2\alpha}\le Ce^{4t} \bar\w^{2\alpha}\le Ce^{4t}
\end{equation*}
for any $\alpha\in (0,1)$. Moreover, by Lemma \ref{lem:phi-linear-pde} there exist constants $c_0$, $C$, $Y_0$
and degree 1 and 2 spherical harmonics $Y_1$ and $Y_2$ respectively, 
such that 
\begin{equation*}
|\Phi(t) - c_0-e^{t} Y_0 -e^{2t}Y_1 -e^{3t}Y_2 |_{C^{1,\sigma}(\mathbb{S}^2)} \le Ce^{(3+\beta)t}
\end{equation*}
for any $\sigma,\beta\in (0,1)$ and all $t\le -1$.
In order to improve this estimate to involve $C^3$-norms, we note that
\begin{equation*}
|\nabla_{g_0}f|\le Ce^{4t} \bar \w^{-5} \bar \w^{4+2\alpha}\le Ce^{4t} \bar\w^{2\alpha-1}\le Ce^{4t}
\end{equation*}
if $\alpha\in (1/2, 1)$. In fact, by choosing $\sigma\in (0,1)$ sufficiently small depending on $\alpha$, we find 
\begin{equation*}
|\nabla_{g_0}f|_{C^{\sigma}(\mathbb{S}^2)}\le  Ce^{4t}.
\end{equation*}
Hence the methods of Lemma \ref{lem:phi-linear-pde} yield
\begin{equation*}
|\Phi(t) - c_0-e^{t}Y_0 -e^{2t}Y_1 - e^{3t}Y_2|_{C^{3,\sigma}(\mathbb{S}^2)} \le Ce^{(3+\beta)t},
\end{equation*}
which gives \eqref{eq:exter-main-conv-2}. Furthermore, estimates of $t$-derivatives may be obtained in a similar manner. 
For example we have 
\begin{equation*} 
|\partial_t\Phi(t) -e^{t}Y_0 -2e^{2t}Y_1 -3 e^{3t}Y_2|_{C^{3,\sigma}(\mathbb{S}^2)} \le Ce^{(3+\beta)t},
\end{equation*}
along with an analogous estimate for $\partial_t^2\Phi$. In particular, it follows that
\begin{equation}\label{eq:exter-main-conv-2a}
|\partial_t\Phi|+|\partial^2_t\Phi| \le Ce^{t}, \quad \quad
|\nabla_{g_0}\Phi|+|\nabla_{g_0}\partial_t\Phi| \le Ce^{2t}. 
\end{equation}

Consider now the equation satisfied by $v$. Observe that 
\begin{align*}
L v +4 \nabla_g u \cdot  \nabla_g v&= \pa_t^2 v+3 \pa_t v+ \Delta_{g_0} v-4\nabla_{g_0} \ln \bar \w \cdot \nabla_{g_0} v+4\nabla_g \Phi \cdot \nabla_g v \\&
=\pa_t^2 v+3 \pa_t v + \bar \w^4\mathrm{div}_{g_0}(\bar \w ^{-4} \nabla_{g_0}v) +4\nabla_g \Phi \cdot \nabla_g v,
\end{align*}
so that the second equation of \eqref{eq:main-2z} may be expressed as
\begin{equation*}
\pa_t^2 v+3 \pa_t v + \bar \w^4\mathrm{div}_{g_0}(\bar \w ^{-4} \nabla_{g_0}v) =h,
\end{equation*}
where 
\begin{equation*}
h=-4\nabla_g \Phi \cdot \nabla_g v=-4\nabla_{g_0} \Phi \cdot \nabla_{g_0} v
-4\pa_t \Phi  \pa_t v.
\end{equation*}
By Remark \ref{rem:v-linear-pde}, we have $\bar \w ^4\mathrm{div}_{g_0}(\bar \w^{-4} \nabla_{g_0}v_0)=0$. 
Therefore, if $\xi=v-v_0$ then
\begin{equation*}
\pa_t^2 \xi+3 \pa_t \xi +\bar \w^4\mathrm{div}_{g_0}(\bar \w^{-4} \nabla_{g_0}\xi) =h. 
\end{equation*}
Notice that \eqref{eq:hat-v-est} gives
\begin{equation}\label{eq:exter-main-conv-2b}
|\nabla_{g_0} v|+|\nabla_{g_0}\pa_t v|\le C\bar\w^{2+\alpha}, \quad \quad
|\pa_t v|+|\pa^2_t v|\le C\bar\w^{3+\alpha},
\end{equation}
which together with \eqref{eq:exter-main-conv-2a} produces
\begin{equation*}
|h| \le  C e^{2t}\bar \w^{2+\alpha} +Ce^{t}\bar\w^{3+\alpha} \le Ce^{t}\bar \w^{2+\alpha}, 
\end{equation*}
and similarly 
\begin{equation*}
|\partial_th| \le   Ce^{t}\bar \w^{2+\alpha} . 
\end{equation*}
We may now apply Lemma \ref{lem:v-linear-pde} to obtain
\begin{equation*}
|\pa_t v|=|\pa_t \xi| \le  Ce^{\delta t} \bar \w^{3+\alpha},
\end{equation*}
for any $\delta\in (0,1)$. Moreover, by using the above estimate of $\partial_t v$ 
instead of the one in \eqref{eq:exter-main-conv-2b}, it follows that
\begin{equation*}
|h| \le  C e^{(1+\delta)t}\bar \w^{2+\alpha}. 
\end{equation*}
Applying Lemma \ref{lem:v-linear-pde} again prouces
\begin{equation*}
|\xi-c_1 e^{t} \bar \w^4 |\le C e^{(1+\beta) t}\bar \w^{3+\alpha},
\end{equation*}
for some constants $\beta\in (0,1)$, $c_1$, and $C>0$.
The desired expansion \eqref{eq:exter-main-conv-3} with $C^3$-norms now follows from Lemma \ref{lem:w2p-}. 
Similar arguments yield the corresponding estimates for the $t$-derivatives.
\end{proof}

\begin{proof}[Proof of Theorem \ref{thm:main-2}] 
The hypotheses guarantee the applicability of Theorems \ref{thm:reg} 
and \ref{thm:main-2a}, from which the desired result follows.
\end{proof}

\section{Near Horizon Geometry and Conical Singularities}
\label{sec7}

In this section we will give an application of the asymptotic analysis presented in previous sections to the classification of near horizon geometries, and will give a geometric interpretation of the parameter $b$ appearing in the expression of tangent maps at punctures. For the notation and background, we refer to the last portion of Section \ref{sec2}.

\subsection{Near horizon limit: proof of Theorem \ref{NHG}.}

Near horizon geometries (NHGs) arise through a limiting process in the vicinity of a degenerate (extreme) black hole, which infinitely magnifies the spacetime geometry at the horizon, see \cite{KunduriLucietti} for a detailed review. In the physics literature, a rigorous justification of the so called near horizon limit that gives rise to the NHG, is often not addressed. Here we will prove that this limit exists as a consequence of the asymptotic analysis studied in this work. 

Consider a puncture $p_l$ at the intersection of two axis rod closures $\overline{\Gamma}_l$ and $\overline{\Gamma}_{l+1}$, lying to the south and north of the puncture. We may introduce polar coordinates $(r,\theta)$ centered at this point, such that $\rho=r\sin\theta$ and $z=r\cos\theta$. As usual, write $u=\Phi-\ln\sin\theta$ so that $U=\Phi+\ln r$, then the spacetime metric becomes
\begin{equation}\label{aqoijfoinhh}
\mathbf{g} = - r^2 e^{2\Phi}d\tau^2 + e^{-2\Phi} \sin^2 \theta (d \phi +wd\tau)^2 +  e^{-2\Phi+2 \boldsymbol{\alpha}} \left(\frac{dr^2}{r^2} + d\theta^2 \right).
\end{equation}
The NHG metric is obtained from the scaling $r=\epsilon \bar{r}$, $\tau=\epsilon^{-1} \bar{\tau}$, $\phi=\bar{\phi}+\Omega \epsilon^{-1}\bar{\tau}$ by letting $\epsilon\rightarrow 0$, where $\Omega$ is the angular velocity of the black hole. Moreover, justification and computation of this near horizon limit will follow from the expansion of the metric coefficients that is implied by Theorems \ref{thm:main-1} and \ref{thm:classification-non-degeneracy}. To begin, observe that without loss of generality it may be assumed that $v=\pm a$ on $\Gamma_{l+1}$, $\Gamma_{l}$ respectively, for some $a>0$. Then for all sufficiently small $r$ this result yields
\begin{align}\label{oahaoqoihnwqh}
\begin{split}
\Phi(r,\theta) &=\bar{\Phi}(\theta)+O(r^{\beta})=-\frac{1}{2} \ln \frac{2a \sqrt{1-b^2}}{1+\cos^2 \theta+2b \cos \theta} +O(r^{\beta}), \\
v(r,\theta)&= \bar{v}(\theta)+O(r^{\beta})=a\cdot \frac{b+b \cos ^2 \theta +2\cos \theta}{1+ \cos ^2 \theta +2b \cos \theta}
+O(r^{\beta}),
\end{split}
\end{align}
where $\beta>0$ and $b\in(-1,1)$.

It remains to calculate the expansions for $\boldsymbol{\alpha}$ and $w$.  Using the relations
\begin{equation*}
\partial_{\rho}=\sin\theta \partial_r +\frac{\cos\theta}{r}\partial_{\theta},\quad\quad\quad
\partial_z =\cos\theta \partial_r -\frac{\sin\theta}{r}\partial_{\theta},
\end{equation*}
together with \eqref{alphaequation1}, and the estimates 
\begin{equation}\label{aoihfoihqoign}
r|\partial_r\Phi|+|\partial_{\theta}(\Phi-\bar{\Phi})|=O(r^{\beta}),\quad\quad\quad (\sin\theta)^{-7/2}r|\partial_r v|
+(\sin\theta)^{-5/2}|\partial_{\theta}(v-\bar{v})|=O(r^{\beta}),
\end{equation}
which follow from Proposition \ref{prop:difference-linear}, we have
\begin{align*}
r\partial_r \boldsymbol{\alpha}
&=r\sin\theta\partial_{\rho}\boldsymbol{\alpha} +r\cos\theta\partial_z \boldsymbol{\alpha}\\
&=r\sin\theta\Big[2u_{\rho}\!+\!\tfrac{1}{\rho}\!+\!\rho(u_{\rho}^2\! -\!u_z^2\!+\!e^{4u}(v_{\rho}^2\! -\!v_z^2))\Big]\!+\!r\cos\theta\big[2u_z \!+\!\rho(2u_{\rho}u_z \! +\!2e^{4u}v_{\rho}v_z)\big]\\
&=1-\sin^2 \theta\big[(\bar{\Phi}_{\theta}-\cot\theta)^2+e^{4u}\bar{v}_{\theta}^2\big]+O(r^{\beta})\\
&=O(r^{\beta}), 
\end{align*}
where in the last equality we employed the expression for $\bar{\Phi}$ as well as \eqref{eq:first-int} to find
\begin{align*}
\bar{\Phi}_{\theta}&=-\frac{\sin\theta(\cos\theta +b)}{1+\cos^2\theta+2b\cos\theta},\\
e^{4u}\bar{v}_{\theta}^2&=\tfrac{\sin^2 \theta}{4a^2}e^{-4\bar{\Phi}}+O(r^{\beta})=\frac{a(1-b^2)\sin^2\theta}{(1+\cos^2\theta +2b\cos\theta)^2}+O(r^{\beta}).
\end{align*}
Similar manipulations give rise to
\begin{align*}
\begin{split}
\partial_{\theta}\boldsymbol{\alpha}=&-\sin\theta\cos\theta+2\sin^2\theta \bar{\Phi}_{\theta}
+\sin\theta\cos\theta\left(\bar{\Phi}_{\theta}^2 +e^{4u}\bar{v}_{\theta}^2 \right)+O(r^{\beta})\\
=&-\sin\theta\cos\theta -\frac{\sin^3\theta(\cos\theta+2b)}{1+\cos^2\theta+2b\cos\theta}+O(r^{\beta}).
\end{split}
\end{align*}
It follows that
\begin{equation}\label{alphaexpansionskka}
\boldsymbol{\alpha}(r,\theta)=\ln\left(1+\cos^2 \theta +2b\cos\theta\right)+\boldsymbol{\alpha}_0
+O(r^{\beta})=:\bar{\boldsymbol{\alpha}}(\theta)+O(r^{\beta}),
\end{equation}
where $\boldsymbol{\alpha}_0$ is a constant of integration.

We will now compute the expansion of $w$. Observe that with the help of \eqref{wequations1} and \eqref{aoihfoihqoign} we have
\begin{align*}
\begin{split}
r\partial_r w &=r \sin\theta \partial_{\rho} w+r\cos\theta \partial_z w\\
&=r\sin\theta\left(2\rho e^{4u}v_z\right)-r\cos\theta\left(2\rho e^{4u}v_{\rho}\right)\\
&=2r^2 \sin\theta e^{4u}\left[\sin\theta\left(\cos\theta v_r -\tfrac{\sin\theta}{r} v_{\theta}\right)
-\cos\theta\left(\sin\theta v_r +\tfrac{\cos\theta}{r} v_{\theta}\right)\right]\\
&=-2r e^{4\bar{\Phi}}\frac{\bar{v}_{\theta}}{\sin^3 \theta}+O\Big(\frac{r^{1+\beta}}{\sin^{1/2}\theta}\Big),
\end{split}
\end{align*}
and similarly
\begin{align*}
\begin{split}
\partial_{\theta} w &=r \cos\theta \partial_{\rho} w-r\sin\theta \partial_z w\\
&=r\cos\theta\left(2\rho e^{4u}v_z\right)+r\sin\theta\left(2\rho e^{4u}v_{\rho}\right)\\
&=2r^2 \sin\theta e^{4u}\left[\cos\theta\left(\cos\theta v_r -\tfrac{\sin\theta}{r} v_{\theta}\right)
+\sin\theta\left(\sin\theta v_r +\tfrac{\cos\theta}{r} v_{\theta}\right)\right]\\
&=2r^2 e^{4\Phi} \frac{v_r}{\sin^3 \theta}\\
&=O(r^{1+\beta}).
\end{split}
\end{align*}
It follows that
\begin{equation}\label{alphaexpansionskka1}
w=w_0-2re^{4\bar{\Phi}}\frac{\bar{v}_{\theta}}{\sin^{3}\theta}+O\Big(\frac{r^{1+\beta}}{\sin^{1/2}\theta}\Big)=w_0+\frac{r}{a}+O\Big(\frac{r^{1+\beta}}{\sin^{1/2}\theta}\Big),
\end{equation}
where \eqref{eq:first-int} has been used in the last equality and $w_0$ is an integration constant that represents the sign reversed angular velocity of the black hole, $-\Omega$. We may now apply the expansions \eqref{oahaoqoihnwqh}, \eqref{alphaexpansionskka}, and \eqref{alphaexpansionskka1} to take the near horizon limit in \eqref{aqoijfoinhh} and obtain the near horizon metric
\begin{align*}
\mathbf{g}_{NH} &= - \bar{r}^2 e^{2\bar{\Phi}}d\bar{\tau}^2 + e^{-2\bar{\Phi}} \sin^2 \theta \Big(d \bar{\phi} +\frac{\bar{r}}{a}d\bar{\tau}\Big)^2 +  e^{-2\bar{\Phi}+2 \bar{\alpha}} \Big(\frac{d\bar{r}^2}{\bar{r}^2} + d\theta^2 \Big)\\
&=-\bar{r}^2\Big(\frac{1+\cos^2 \theta +2b\cos\theta}{2a\sqrt{1-b^2}}\Big)d\bar{\tau}^2
+\frac{2a\sqrt{1-b^2}\sin^2 \theta}{1+\cos^2 \theta+2b\cos\theta}\Big(d\bar{\phi}+\frac{\bar{r}}{a}d\bar{\tau}\Big)^2\\
&\qquad+\frac{a\sqrt{1-b^2}}{2}(1+\cos^2 \theta+2b\cos\theta)\Big(\frac{d\bar{r}^2}{\bar{r}^2}+d\theta^2 \Big).
\end{align*}
The integration constant of \eqref{alphaexpansionskka} has been chosen so that $e^{2\boldsymbol{\alpha}_0}=1/4$, which yields the near horizon metric of the extreme Kerr black hole (extremal Kerr throat metric) \cite{BHorowitz} when $a=2\mathcal{J}$ and $b=0$, where $\mathcal{J}$ denotes angular momentum.

\subsection{Angle defect at punctures: proof of Theorem \ref{conicalsing}.}

We will now show that the parameter $b\in(-1,1)$, that appears in the tangent map and near horizon geometry of an extreme black hole, completely determines the difference of logarithmic angle defects associated with the two neighboring axis rods.
Namely, consider the two axis rods $\Gamma_l$ and $\Gamma_{l+1}$ lying directly to the south and north of the puncture $p_l$, each having logarithmic angle defect $\mathbf{b}_l$ and $\mathbf{b}_{l+1}$, respectively. According to \eqref{alphaequation111} and \eqref{alphaexpansionskka} we find that
\begin{equation*}
\mathbf{b}_{l+1}-\mathbf{b}_{l}=\lim_{r\rightarrow 0}\left(\boldsymbol{\alpha}(r,0)-\boldsymbol{\alpha}(r,\pi)\right)
=\ln\left(\frac{1+b}{1-b}
\right)\quad \Leftrightarrow \quad b=\tanh\left(\frac{\mathbf{b}_{l+1}-\mathbf{b}_l}{2}\right).
\end{equation*}
In particular, the case $b=0$ corresponds to a `balanced horizon', in that conical singularities on both sides of the black hole may be relieved by choosing the constant $\boldsymbol{\alpha}_0$ in \eqref{alphaexpansionskka} appropriately.


\end{document}